\newcommand\R{\mathbb{R}}		
\newcommand\G{\mathbb{G}}		
\newcommand\C{\mathbb{C}}
\newcommand\Q{\mathbb{Q}}
\newcommand\N{\mathbb{N}}
\newcommand\F{\mathbb{F}}
\newcommand\M{\mathbb{M}}
\newcommand\LL{\mathscr{L}}
\newcommand\LLr{\mathscr{L}_{\mathrm{ring}}}
\newcommand\LLf{\mathscr{L}_{\mathrm{field}}}
\newcommand\MM{\mathscr{M}}
\newcommand\NN{\mathscr{N}}
\newcommand\KK{\mathscr{K}}
\newcommand\CCCC{\mathscr{C}}
\newcommand\Z{\mathbb{Z}}
\newcommand{\set}[1]{\left\{ {#1} \right\}}
\newcommand{\ol}[1]{\overline{{#1}}}
\newcommand{\vect}[1]{\langle {#1} \rangle}
\newcommand{\abs}[1]{\lvert {#1} \rvert}
\def\Ind#1#2{#1\setbox0=\hbox{$#1x$}\kern\wd0\hbox to 0ex{\hss$#1\mid$\hss}
\lower.9\ht0\hbox to 0ex{\hss$#1\smile$\hss}\kern\wd0}
\def\Notind#1#2{#1\setbox0=\hbox{$#1x$}\kern\wd0\hbox to 0ex{\mathchardef\nn="0236\hss$#1\nn$\kern1.4\wd0\hss}\hbox to 0ex{\hss$#1\mid$\hss}\lower.9\ht0
\hbox to 0ex{\hss$#1\smile$\hss}\kern\wd0}
\def\ind{\mathop{\mathpalette\Ind{}}}
\def\nind{\mathop{\mathpalette\Notind{}}}
\def\indi#1{\mathop{\ \ \hbox to 0ex{\hss$\vert^{\hbox to 0ex{$\scriptstyle#1$\hss}}$\hss}
\lower1ex\hbox to 0ex{\hss$\smile$\hss}\ \ }}
\def\nindi#1{\mathop{\ \ \hbox to 0ex{\hss$\!\not{\vert}^{\hbox to 0ex{$\scriptstyle\,#1$\hss}}$\hss}
\lower1ex\hbox to 0ex{\hss$\smile$\hss}\ \ }}
\newcommand{\findep}[1][]{%
  \mathrel{
    \mathop{
      \vcenter{
        \hbox{\oalign{\noalign{\kern-.3ex}\hfil$\vert$\hfil\cr
              \noalign{\kern-.7ex}
              $\smile$\cr\noalign{\kern-.3ex}}}
      }
    }\displaylimits_{#1}
  }
}
\newcommand{\nfindep}[1][]{%
  \mathrel{
    \mathop{
      \vcenter{
	\hbox{\oalign{\noalign{\kern-.3ex}\hfil$\!\not{\vert}$\hfil\cr
              \noalign{\kern-.7ex}
              $\smile$\cr\noalign{\kern-.3ex}}}
      }
    }\displaylimits_{#1}
  }
}
\newcommand{\setword}[2]{%
  \phantomsection
  #1\def\@currentlabel{\unexpanded{#1}}\label{#2}%
}
\newcommand{\NSOP}[1]{\mathrm{NSOP}_{#1}}
\newcommand{\ACFG}{\mathrm{ACFG}}
\newcommand{\ACF}{\mathrm{ACF}}
\newcommand{\RCF}{\mathrm{RCF}}
\newcommand{\PAC}{\mathrm{PAC}}
\newcommand{\Psf}{\mathrm{Psf}}
\newcommand{\DCF}{\mathrm{DCF}}
\newcommand{\SCF}{\mathrm{SCF}}
\newcommand{\ACFA}{\mathrm{ACFA}}
\newcommand{\acl}{\mathrm{acl}}
\newcommand{\di}{\mathrm{dim}}
\theoremstyle{definition}
\newtheorem{df}{Definition}[section]
\newtheorem{ex}[df]{Example}
\theoremstyle{theorem}
\newtheorem{prop}[df]{Proposition}		
\newtheorem{thm}[df]{Theorem}
\newtheorem*{thm*}{Theorem}
\newtheorem{lm}[df]{Lemma} 
\newtheorem{cor}[df]{Corollary} 
\newtheorem{fact}[df]{Fact}
\theoremstyle{remark}
\newtheorem*{claim}{Claim}
\newtheorem{rk}[df]{Remark}
\newsavebox{\auteurbm}
\let\oldabstract\abstract
\let\oldendabstract\endabstract
\renewenvironment{abstract}
{%
               {\list{}{\addtolength{\leftmargin}{8em} 
                        \listparindent 1.5em%
                        \itemindent    \listparindent%
                        \rightmargin   \leftmargin%
                        \parsep        \z@ \@plus\p@}%
                \item\relax}%
               {\endlist}%
\oldabstract}
{\oldendabstract}
\title{Generic Expansions by a Reduct}
\author{Christian d'Elb\'ee}\thanks{Intitut Camille Jordan, Université Lyon 1.}
\date{\small\today}
\begin{document}

\maketitle				

\begin{abstract}
  \noindent Consider the expansion $T_S$ of a theory $T$ by a predicate for a submodel of a reduct $T_0$ of $T$. We present a setup in which this expansion admits a model companion $TS$. We show that the nice features of the theory $T$ transfer to $TS$. In particular, we study conditions for which this expansion preserves the $\NSOP{1}$-ness, the simplicity or the stability of the starting theory $T$. We give concrete examples of new $\NSOP{1}$ not simple theories obtained by this process, among them the expansion of a perfect $\omega$-free $\PAC$ field of positive characteristic by generic additive subgroups, and the expansion of an algebraically closed field of \emph{any} characteristic by a generic multiplicative subgroup.
\end{abstract}

\hrulefill
\tableofcontents	
\hrulefill

\section*{Introduction}

 Existentially closed models of a theory have in general some randomness --- or \emph{generic} --- aspect, resulting from their definition, that allows a reasonable description of their algebra of definable sets. Informally, we will call \emph{generic} a theory (or a model of such theory) that axiomatises structures that are existentially closed in a reasonable class of extension.\\

 \noindent Let $T$ be a theory in a language $\LL$. Let $T_0$ be a reduct of $T$. Let $\LL_S = \LL\cup\set{S}$, for $S$ a new unary predicate symbol, and $T_S$ be the $\LL_S$-theory whose models $(\MM,\MM_0)$ consist in a model $\MM$ of $T$ in which $S$ is a predicate for a model $\MM_0$ of $T_0$ which is a substructure of $\MM$. In this paper we present a setting for an axiomatisation of generic models of $T_S$, this axiomatisation is denoted by $TS$, it is most of the time the model-companion of the theory $T_S$.\\

 \noindent This generic expansion produces numerous examples (Section~\ref{part_ex}) of new theories that are, in general, not simple, not even when $T$ is strongly minimal (see for instance the theory $\ACFG$ in Example~\ref{ex_ACFG_NSOP}). However, most of these new theories turn out to be $\NSOP 1$.\\

 \noindent $\NSOP 1$ theories, for ``not strong order property 1'', were defined by D\v{z}amonja and Shelah in~\cite{DS04} (together with $\NSOP 2$) as an extension of the $(\NSOP{n})_{n\geq 3}$ hierarchy. In~\cite{SU08} Shelah and Usvyatsov proved that $T_{feq}^*$ (the model completion of the theory of infinitely many independent parametrized equivalence relations) is $\NSOP{1}$ and not simple. For the past three years, $\NSOP{1}$ theories have been intensively studied.\\

\noindent A first breakthrough in the study of $\NSOP{1}$ theories was made by Chernikov and Ramsey in~\cite{CR16}. They proved a Kim-Pillay style result~\cite{CR16} which states that a theory is $\NSOP{1}$ provided there exists an independence relation satisfying some specific properties. This result turned out to be a very useful tool to prove that a theory is $\NSOP{1}$. The $\omega$-free $\PAC$ fields case is a good example. A $\PAC$ field is simple if~\cite{CP98} and only if~\cite{C99} it is bounded. Nonetheless, in her work~\cite{C02} on $\omega$-free PAC fields (which are unbounded), Chatzidakis defined a weak notion of independence and showed that it satisfied some nice properties, in particular, the so-called \emph{independence theorem}. It turned out that almost all the properties of the criterion~\cite{CR16} were proved at that time. Chernikov and Ramsey used this weak independence to deduce that the theory of $\omega$-free PAC fields is $\NSOP{1}$. They also showed that Granger's example of generic bilinear form over an infinite dimensional vector space over an algebraically closed field is $\NSOP{1}$ (see~\cite{G99} or~\cite[Example 6.1]{CR16}), as well as the combinatorial example of a generalised parametrized structure (see~\cite[Example 6.3]{CR16}).\\

\noindent A second breakthrough was the development of Kim-independence by Kaplan and Ramsey in~\cite{KR17}. They introduced analogues of forking and dividing --Kim-forking and Kim-dividing-- which behave nicely in $\NSOP{1}$ theories. Kim-dividing is defined as dividing with respect to some particular indiscernible sequences, namely sequences in a global invariant type. Numerous properties of forking in simple theories appear for Kim-forking in $\NSOP 1$ theories. For instance, a theory is $\NSOP{1}$ if and only if Kim-independence is symmetric. Kaplan and Ramsey also completed the Kim-Pillay style criterion in~\cite{CR16} to get a characterisation of Kim-independence in terms of properties of a ternary relation, similarly to the Kim-Pillay classical result. Using this tool, they identified Kim-independence in various $\NSOP{1}$ theories. Chatzidakis' weak independence in $\omega$-free PAC fields turned out to be Kim-independence.\\

\noindent In Sections~\ref{sec_ind}, starting from an independence relation $\indi T$ in $T$, we define independence relations in $TS$ and identify which properties of $\indi T$ are transferred to those new independence relations in $TS$, and under which conditions. This allows us to exhibit hypotheses under which the expansion from $T$ to $TS$ preserves $\NSOP 1$, simplicity or stability (Section~\ref{part_pres}). We also give a general description of Kim-independence in $TS$ in that context.\\

\noindent Finally in Section~\ref{part_ex}, we prove the existence of new generic theories and show that most of them are $\NSOP 1$. The existence is based on definability results in the theory $T$. For the expansion of a perfect $\omega$-free $\PAC_p$-field of positive characteristic by a generic additive subgroup, the elimination of $\exists^{\infty}$ is enough. However, for the expansion of an algebraically closed field (of any characteristic) by a generic multiplicative subgroup, it relies on the definability of the \emph{freeness}\footnote{
Let $W\subset K^n\setminus\set{(0,\cdots,0)}$ be an affine irreducible algebraic variety in an algebraically closed field $K$ of characteristic $p\geq 0$. We say that $W$ is \emph{free} if it is not contained in any translate of a proper algebraic subgroup of the torus $\G_m^n(K)$.} 
of a family of parametrized affine variety (Subsection~\ref{sec_genmult}). By contrast, if proving that the latter theory is $\NSOP 1$ is relatively straightforward using the results of Section~\ref{part_pres}, proving that the expansion of a perfect $\omega$-free $\PAC_p$-field of positive characteristic by a generic additive subgroup is $\NSOP 1$ is more difficult (Subsection~\ref{sec_NSOPf}).\\ 

\noindent We end Section~\ref{part_ex} with an example where the model-companion $TS$ does not exist: the expansion of a field of characteristic zero by a predicate for an additive subgroup. In the generic expansion of any field by an additive subgroup, the multiplicative stabiliser is always definable and of fixed cardinality, it is the prime ring, so either a finite field or $\Z$. In the multiplicative case (Subsection~\ref{sec_genmult}) we do not have this dichotomy on the characteristic since the “exponential” stabiliser is not definable.


 \clearpage

 \noindent\textbf{Conventions and notations.} Capital letters $A,B,C$ stands for sets whereas small latin letters $a,b,c$ designate either singletons, finite or infinite tuples. For a given theory $T$, we use standard model-theoretic notations, such as $tp^T(a/C)$, $a\equiv^T_C a'$, $\acl_T$, etc\dots

We often identify tuples and sets when dealing with independence relations, for some tuple $a=a_1,\dots$ then $c\ind_C a$ has the same meaning as $c\ind_C \set{a_1,\dots}$. 
Here is a list of properties for a ternary relation $\ind$ defined over small subsets of $\M$ a big model of some countable theory $T$.
\begin{itemize}
  \item \setword{\bsc{Invariance}}{INV}. If $ABC\equiv A'B'C'$ then $A\ind_C B$ if and only if $A'\ind_{C'} B'$.
  \item \setword{\bsc{Finite Character}}{FIN}. If $a\ind_C B$ for all finite $a\subseteq A$, then $A\ind_C B$.
  \item \setword{\bsc{Symmetry}}{SYM}. If $A\ind_C B$ then $B\ind_C A$.
  \item \setword{\bsc{Closure}}{CLO} $A\ind_C B$ if and only if $A \ind_{\acl(C)} \acl(BC)$.
  \item \setword{\bsc{Monotonicity}}{MON}. If $A\ind_C BD$ then $A\ind_C B$.
  \item \setword{\bsc{Base Monotonicity}}{BMON}. If $A\ind_C BD$ then $A\ind_{CD} B$.
  \item \setword{\bsc{Transitivity}}{TRA}. If $A \ind_{CB} D$ and $B\ind_C D$ then $AB\ind_C D$.
  \item \setword{\bsc{Existence}}{EX}. For any $C$ and $A$ we have $A\ind_{C} C$.
  \item \setword{\bsc{Full Existence}}{EXT}. For all $A,B$ and $C$ there exists $A'\equiv_C A$ such that $A'\ind_C B$.
  \item \setword{\bsc{Extension}}{EXT2}. If $A\ind_C B$, then for all $D$ there exists $A'\equiv_{CB}A$ and $A'\ind_C BD$.
  \item \setword{\bsc{Local Character}}{LOC}. For all finite tuple $a$ and infinite $B$ there exists $B_0\subset B$ with $\abs{B_0}\leq \aleph_0$ and $a\ind_{B_0} B$.
  \item \setword{\bsc{Strong Finite Character}}{STRFINC} over $E$. If $a\nind_E b$, then there is a formula $\Lambda(x,b,e)\in tp(a/Eb)$ such that for all $a'$, if $a'\models \Lambda(x,b,e)$ then $a'\nind_E b$.
\item $\ind '$-\setword{\bsc{amalgamation}}{AM} over $E$. If there exists tuples $c_1,c_2$ and sets $A,B$ such that
\begin{itemize}
\item $c_1\equiv_E c_2$
\item $A\ind'_E B$
\item $c_1\ind_E A$ and $c_2\ind_C B$
\end{itemize}
then there exists $c\ind_E A,B$ such that $c\equiv_A c_1$, $c \equiv_B c_2$, $A\indi a _{Ec} B$, $c\indi a _{EA} B$ and $c\indi a _{EB} A$.
\item \setword{\bsc{Stationnarity}}{STAT} over $E$. If $c_1 \equiv_E c_2$ and $c_1\ind_E A$, $c_2\ind_E A$ then $c_1\equiv_{EA} c_2$.
\item \setword{\bsc{Witnessing}}{WIT}. Let $a,b$ be tuples, $\MM$ a model and assume that $a\nind_{\MM} b$. Then there exists a formula $\Lambda(x,b)\in tp(a/\MM b)$ such that for any global extension $q(x)$ of $tp(b/\MM)$ finitely satisfiable in $\MM$ and for any $(b_i)_{i<\omega}$ such that for all $i<\omega$ we have  $b_i\models q\upharpoonright \MM b_{<i}$, the set
 $\set{ \Lambda(x, b_i) \mid i<\omega}$ is inconsistent.
\end{itemize}
If $A\ind _C B$, the set $C$ is called the \emph{base set}. For two ternary relations $\ind$ and $\ind'$, the notation $\ind \rightarrow \ind'$ means that for all $A,B,C$, if $A\ind _C B$ then $A\ind'_C B$. The independence relation $\indi a$ is defined by $A\indi a _C B \iff \acl_T(AC)\cap \acl_T(BC) = \acl_T(C)$, with respect to some theory $T$.

\section{The generic expansion by a predicate for a reduct}\label{part_gen_red}

Let $T$ be an $\LL$-theory. Let $\LL_0\subseteq \LL$ and let $T_0$ be a reduct of $T$ in the language $\LL_0$. Let $S$ be a new unary predicate symbol and set $\LL_S = \LL\cup\set{S}$. We denote by $T_S$ the $\LL_S$-theory of $\LL_S$-structures $(\MM,\MM_0)$ where $\MM \models T$ and $S(\MM) =\MM_0 \models T_0$ is  a substructure of $\MM\upharpoonright\LL_0$. We aim to describe a favorable context for the existence of a theory $TS$ that axiomatises generic models of $T_S$.

We denote by $\acl_0$ the algebraic closure in the sense of $T_0$. Assume that $T_0$ is pregeometric (i.e. $\acl_0$ satisfies exchange), there is an associated independence relation $\indi 0$ (see for instance $\indi{cl}$ in~\cite[C1]{TZ12}).
It is defined over every subset of any model of $T_0$ and satisfies the properties \ref{FIN}, \ref{SYM}, \ref{CLO}, \ref{MON}, \ref{BMON}, \ref{TRA}. In particular, $\indi 0$ is defined over every subset of any model of $T$, and we will only use it over small subsets of some monster model $\M$ of $T$. The property \ref{SYM} of $\indi 0$ will be tacitly used throughout this chapter.

\begin{df}
  Let $t$ be a single variable and $x,y$ two tuples of variables. We say that a formula $\psi(t,y)$ is \emph{$n$-algebraic in $t$} (or just \emph{algebraic in $t$}) if for all tuple $b$ the number of realisations of $\psi(t,b)$ is at most $n$. In that context we say that a formula $\psi(t,x,y)$ is \emph{strict in $y$} if whenever $b$ is an $\indi 0$-independent tuple over $a$, the set of realisations of $\psi(t,a,b)$ is in $\acl_0(a,b)\setminus \acl_0(a)$.
\end{df}

If $\psi(t,b)$ is an $\LL_0$-algebraic formula, there exists an $\LL_0$-formula $\tilde \psi(t,x)$ algebraic in $t$ such that $\psi(\MM,b)\subseteq \tilde \psi(\MM,b)$, for all $\MM\models T_0$.

\begin{ex}
In the language of vector spaces, the formula $t = \lambda x + \mu y$ is strict in $y$ if and only if $\mu\neq 0$.
\end{ex}

\begin{lm}\label{lm_strict}
  Assume that $T_0$ is pregeometric. Then for $u$ a singleton and tuples $a$ and $b$, if $u\in \acl_0(a,b)\setminus \acl_0(a)$, there exists an $\LL_0$-formula $\tau(t,x,y)$ algebraic in $t$ and strict in $y$ such that $u\models \tau(t,a,b)$.
\end{lm}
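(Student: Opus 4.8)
The plan is, in three moves: first produce a single uniformly algebraic-in-$t$ formula witnessing $u\in\acl_0(ab)$; then replace $b$ by a subtuple of minimal length still capturing $u$ in the $\acl_0$, so that the exchange property of the pregeometry $\acl_0$ becomes available coordinate by coordinate; and finally conjoin to that formula the algebraic witnesses that each surviving coordinate of $b$ lies in $\acl_0$ of $a$, $u$ and the remaining coordinates. This last conjunction is exactly what will force strictness.

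Before that it is worth isolating what \emph{strict in $y$} really demands. For any $\LL_0$-formula $\rho(t,z)$ algebraic in $t$ and any parameters $c$, every realisation of $\rho(t,c)$ lies in $\acl_0(c)$; hence for a formula $\psi(t,x,y)$ algebraic in $t$ the inclusion of the realisation set of $\psi(t,a,b)$ in $\acl_0(ab)$ is automatic, and ``strict in $y$'' reduces to the single requirement that, whenever $b$ is $\indi 0$-independent over $a$, no realisation of $\psi(t,a,b)$ lies in $\acl_0(a)$.

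Now to the construction. Since $u\in\acl_0(ab)$, the remark following the definition gives an $\LL_0$-formula $\phi(t,x,y)$ algebraic in $t$ \emph{uniformly} --- say $\phi(t,x',y')$ has at most $n$ realisations for all $x',y'$ --- with $\models\phi(u,a,b)$. Pick a subtuple $b'$ of $b$ of minimal length with $u\in\acl_0(ab')$; as $u\notin\acl_0(a)$ this $b'$ is nonempty, and minimality forces $b'$ to be $\indi 0$-independent over $a$ (if some coordinate of $b'$ were in $\acl_0$ of $a$ and the others, dropping it would give a shorter subtuple with $u$ still in the $\acl_0$). Relabel $b'=(b'_1,\dots,b'_k)$, with corresponding variables $y'=(y'_1,\dots,y'_k)$, and write $b'_{\neq j}$, $y'_{\neq j}$ for the subtuples omitting the $j$-th coordinate. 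For each $j\le k$, minimality gives $u\notin\acl_0(a,b'_{\neq j})$ while $u\in\acl_0(a,b')$, so exchange yields $b'_j\in\acl_0(a,u,b'_{\neq j})$; the remark again provides an $\LL_0$-formula $\sigma_j$, algebraic in its first variable uniformly, with $\models\sigma_j(b'_j,a,u,b'_{\neq j})$. Set $\tau'(t,x,y'):=\phi(t,x,y')\wedge\bigwedge_{j\le k}\sigma_j(y'_j,x,t,y'_{\neq j})$, and let $\tau(t,x,y)$ be $\tau'$ with $y'$ substituted by the subtuple of $y$ indexing $b'$ (the other coordinates of $y$ not occurring).

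It remains to verify the three requirements. We have $\models\tau(u,a,b)$ by construction, and $\tau$ is algebraic in $t$ since its realisations lie among those of $\phi$. For strictness, suppose $c=(c_1,\dots,c_k)$ is $\indi 0$-independent over $a'$ and $u'\models\tau'(t,a',c)$; if $u'\in\acl_0(a')$ then from $\models\sigma_j(c_j,a',u',c_{\neq j})$ and the algebraicity of $\sigma_j$ in its first variable we get $c_j\in\acl_0(a',u',c_{\neq j})=\acl_0(a',c_{\neq j})$, contradicting the independence of $c$ over $a'$; hence $u'\notin\acl_0(a')$, and by the preliminary remark $u'\in\acl_0(a'c)\setminus\acl_0(a')$. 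Padding back: if $b''$ is $\indi 0$-independent over $a'$, so is the subtuple of $b''$ indexing $b'$, whence the realisations of $\tau(t,a',b'')$ lie in $\acl_0(a'b'')\setminus\acl_0(a')$. The only genuine idea here is the passage to a minimal subtuple --- without it, exchange is not available to turn ``$u$ algebraic over $a,b$'' into ``each $b'_j$ algebraic over $a,u$ and the rest'' --- together with the conjunction trick; the remaining care goes into keeping the algebraicity uniform (supplied by the remark) and tracking which variables are which through the padding, and that bookkeeping is where I expect the only real friction.
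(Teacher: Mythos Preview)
Your proof is correct and follows essentially the same approach as the paper: use exchange to witness that some coordinate $b_j$ lies in $\acl_0(u,a,b_{\neq j})$, and conjoin the resulting algebraic formula to the algebraic witness for $u\in\acl_0(ab)$. The only cosmetic difference is that the paper reorders $b$ and applies exchange to a \emph{single} coordinate (one $\sigma$ suffices for the strictness contradiction), whereas you pass to a minimal subtuple $b'$ and conjoin a $\sigma_j$ for \emph{every} coordinate of $b'$ before padding back---this is harmless but unnecessary, since any one $\sigma_j$ already yields the contradiction $b'_j\in\acl_0(a',c_{\neq j})$ when $u'\in\acl_0(a')$.
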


\begin{proof}
Assume that $b = b_1,\dots,b_n$. By hypothesis and using exchange, we may assume that $b_1\in \acl_0(u,a,b_2,\dots,b_n)$. Let $\tau_1(t,a,b)$ be an $\LL_0$-formula algebraic in $t$ isolating the type $tp^{T_0}(u/ab)$ and $\tau_2(y_1, u, a,b_2,\dots,b_n)$ algebraic in $y_1$ isolating $tp^{T_0}(b_1/u,a,b_2,\dots,b_n)$. Then $\tau(t,x,y) = \tau_1(t,x,y) \wedge \tau_2(y_1,t,x,y_2,\dots,y_n)$ is strict in $y$. Indeed assume that for some independent tuple $b'$ over $a'$, and singleton $u'$ we have $\models \tau(u',a',b')$. It follows that $u'\in \acl_0(a'b')$ and $b_1'\in \acl_0(u',a',b_2'\dots,b_n')$. If $u'\in \acl_0(a')$ then $b_1'\in \acl_0(a',b_2',\dots, b_n')$ contradicting that $b'$ is $\indi0$-independent over $a'$, so $u'\notin \acl_0 ( a')$.
\end{proof}

\begin{df}
An expansion $(\MM,\MM_0)\subseteq (\NN,\NN_0)$ is \emph{strong} if $\NN_0 \indi{0}_{\MM_0} \MM$.
\end{df}

\begin{thm}\label{model_com_gen}
Assume that the following holds:
\begin{enumerate}[label={$(H_{\arabic*})$}]
  \item\label{H1} $T$ is model complete;
  \item\label{H2} $T_0$ is model complete and for all infinite $A$, $\acl_0(A) \models T_0$;
  \item\label{H3} $T_0$ is pregeometric;
  \item\label{H4} for all $\LL$-formula $\phi(x,y)$ there exists an $\LL$-formula $\theta_\phi(y)$ such that for $b\in \MM\models T$,
\begin{eqnarray*}
\MM \models \theta_\phi(b) &\iff& \text{there exists $\NN\succ \MM$ and $a\in \NN$ such that}\\
 &\mbox{ }& \text{ $\phi(a,b)$ and $a$ is an $\indi 0$-independent tuple over $\MM$.}
\end{eqnarray*}
\end{enumerate}
Then there exists a theory $TS$ containing $T_S$ such that
\begin{itemize}
\item every model of $T_S$ has a strong extension which is a model of $TS$;
\item if $(\MM,\MM_0)\models TS$ and $(\NN, \NN_0)\models T_S$  is a strong extension of $(\MM,\MM_0)$ then $(\MM,\MM_0)$ is existentially closed in $(\NN,\NN_0)$.
\end{itemize}
An axiomatization of $TS$ is given by adding to $T_S$ the following axiom scheme: for each tuple of variables $x  = x^0x^1$, for $\LL$-formula $\phi(x,y)$, and $\LL_0$-formulae $(\tau_i(t,x,y))_{i<k}$ which are algebraic in $t$ and strict in $x^1$,
$$\forall y (\theta_\phi(y) \rightarrow (\exists x \phi(x,y)\wedge x^0\subseteq S\wedge \bigwedge_{i<k} \forall t \ (\tau_i(t,x,y)\rightarrow t\notin S))).$$
\end{thm}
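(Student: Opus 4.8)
The plan is to show that the announced axiom scheme $TS$ does two things: (i) it is consistent with $T_S$ and every model of $T_S$ embeds into a model of $TS$ via a strong extension, and (ii) any model of $TS$ is existentially closed among its strong extensions inside models of $T_S$. Together with standard model-theoretic bookkeeping this gives the statement. First I would unwind the axiom scheme: a model $(\MM,\MM_0)\models T_S$ satisfies it exactly when, whenever the $\LL$-configuration described by $\theta_\phi(b)$ is present (i.e.\ there is a point $a$ realizing $\phi(\cdot,b)$ that is $\indi 0$-independent over $\MM$ in some $\LL$-extension, which by \ref{H4} is a first-order condition on $b$), one can already find such $a$ inside $\MM$ with the part $x^0$ landing in $S$ and the "strictness obstructions" $\tau_i$ keeping the $x^1$-coordinates generic enough that their $\acl_0$-consequences avoid $S$. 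The role of Lemma~\ref{lm_strict} is precisely to guarantee that if some element forced into $\acl_0$ of the new generic data were to fall into $S$, this would be witnessed by a formula $\tau_i$ algebraic in $t$ and strict in $x^1$; so forbidding $t\in S$ for all realizations of all such $\tau_i$ is the correct way to say "$S$ stays a substructure and $\NN_0\indi 0_{\MM_0}\MM$ is not violated by the new point."

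Next I would establish the existence half. Given $(\MM,\MM_0)\models T_S$, build an increasing chain $(\MM^\alpha,\MM^\alpha_0)$ of strong $T_S$-extensions handling one instance of the scheme at each step: if $\theta_\phi(b)$ holds in the current model, use \ref{H4} to pass to an $\LL$-extension $\NN\succ\MM^\alpha$ (model completeness \ref{H1} keeps it a model of $T$) containing $a$ with $a\indi 0$-independent over $\MM^\alpha$; split $a=a^0a^1$ as required by the scheme, put the $a^0$-coordinates into the predicate and take $\MM^{\alpha+1}_0=\acl_0(\MM^\alpha_0 a^0)$, which is a model of $T_0$ by \ref{H2} and a substructure, and which by the independence of $a$ over $\MM^\alpha$ together with \ref{H3} (exchange) and Lemma~\ref{lm_strict} does not meet $\acl_0$ of anything it shouldn't — so the new point also satisfies $\bigwedge_i\forall t(\tau_i(t,a,b)\to t\notin S)$ for every admissible $\tau_i$. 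Iterating over all instances and taking unions at limits, and then alternating with a step that makes $\MM_0$ itself grow to realize all of $T_0$'s needs, one reaches in $\aleph_0$ rounds a model of $T_S$ satisfying the full scheme; strongness $\NN_0\indi 0_{\MM_0}\MM$ is preserved along the chain because each new predicate element was added $\indi 0$-independently, and $\indi 0$ has \ref{TRA} and \ref{FIN}. This model is the desired $TS$-extension.

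Then the existential closure half: let $(\MM,\MM_0)\models TS$ and let $(\NN,\NN_0)\models T_S$ be a strong extension; suppose $(\NN,\NN_0)\models\exists x\,\chi(x,b)$ for a quantifier-free $\LL_S$-formula $\chi$ and $b\in\MM$. One rewrites $\chi$ into a normal form separating the $\LL$-part from the $S$-predicate part: there is a witness $c$ in $\NN$, some coordinates of which lie in $\NN_0$ and some not, and the $\LL$-quantifier-free data is captured (using model completeness \ref{H1} of $T$, so that $\LL$-formulas over $\MM$ are preserved) by an $\LL$-formula $\phi$. Using strongness $\NN_0\indi 0_{\MM_0}\MM$ and \ref{H2}–\ref{H3}, one arranges the coordinates of $c$ that lie outside $\MM$ to be $\indi 0$-independent over $\MM$ (replacing $c$ by a suitable conjugate via \ref{EXT}-type arguments for $\indi 0$, which has full existence from pregeometricity), absorbing the $\acl_0$-dependent coordinates into the base; the coordinates of $c$ that were in $\NN_0$ but outside $\MM_0$ contribute to $x^0$, and the "negative" predicate requirements in $\chi$ (elements required to be outside $S$) are exactly encoded by finitely many $\tau_i$ algebraic in $t$ and strict in $x^1$, produced via Lemma~\ref{lm_strict} from the fact that $\NN_0$ is a substructure meeting $\acl_0(\MM\,c)$ correctly. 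This verifies $\theta_\phi(b)$ holds in $\MM$ (by \ref{H4}, since $\NN$ witnesses it), so the axiom scheme fires in $(\MM,\MM_0)$ and returns a witness $x$ inside $\MM$ with $x^0\subseteq S$ and the $\tau_i$-constraints satisfied — which, unwound, gives a realization of $\chi(x,b)$ in $(\MM,\MM_0)$.

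The main obstacle I expect is the bookkeeping in the existential-closure step: correctly reducing an arbitrary quantifier-free $\LL_S$-formula to the shape $\phi(x,y)\wedge x^0\subseteq S\wedge\bigwedge_i\forall t(\tau_i(t,x,y)\to t\notin S)$, in particular (a) showing that the only way a strong extension can add a new solution is by adding $\indi 0$-generic points, so that the non-generic coordinates can always be pushed into the parameters, and (b) showing that the finitely many "not in $S$" demands are faithfully captured by strict-in-$x^1$ algebraic formulas — this is where Lemma~\ref{lm_strict} is essential and where one must be careful that strictness is stated relative to $\indi 0$-independent tuples, matching the genericity arranged for $x^1$. The transfer of $\LL$-model-completeness \ref{H1} to guarantee that $\phi$ is absolute between $\MM$ and $\NN$, and the repeated use of \ref{H2} to know $\acl_0$ of the growing predicate is always a model of $T_0$, are the other points needing care but are routine given the hypotheses.
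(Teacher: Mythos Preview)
Your overall architecture matches the paper's: build a chain for the existence half, and for the existential-closure half reduce a quantifier-free $\LL_S$-formula to the shape of one axiom instance. The existence half is fine. The gap is in the reduction step of the existential-closure half.

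You propose to ``arrange the coordinates of $c$ that lie outside $\MM$ to be $\indi 0$-independent over $\MM$ (replacing $c$ by a suitable conjugate via \ref{EXT}-type arguments).'' This does not work: if, say, $c_1=c_2+c_3$ in the $\LL_0$-sense, no conjugate of $c$ will be $\indi 0$-independent, and moving $c$ to a conjugate in some bigger structure loses the fact that $c$ witnesses the formula in $(\NN,\NN_0)$. The paper does something different: it keeps the witness $a$ and introduces a \emph{new} tuple $a'=a^{0}{}'a^{1}{}'$ which is an $\indi 0$-basis of $\acl_0(\MM a)$ over $\MM$, with $a^{0}{}'$ chosen first as a maximal $\indi 0$-independent subset of $\NN_0\cap\acl_0(\MM a)$ over $\MM_0$ (strongness is used here to get $a^{0}{}'\indi 0\MM$). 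One then picks auxiliary parameters $m^0\subset\MM_0$ and $m^1\subset\MM$ with $m^1$ $\indi 0$-independent over $\MM_0 a'$, so that $a\subseteq\acl_0(m^0m^1a')$. The axiom is applied not to the original $\psi$ but to a new formula $\phi(x,b')$ in the variables of $a'$ with the enlarged parameter $b'=bm^0m^1$, in which the original $a$ is existentially quantified and tied to $a',m^0,m^1$ via algebraic $\tau$-formulas.

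There is a second point your sketch misses: the ``not in $S$'' constraints do \emph{not} all become $\tau_i$ strict in $x^1$. The paper splits $J$ into $J_1$ (those $a_j\in\acl_0(a^{0}{}',m^0,m^1)$) and $J_2$ (the rest). Only the $J_2$ constraints are fed into the axiom as formulae strict in $x^1$. The $J_1$ constraints are encoded by formulae strict in the $m^1$-variable, and the argument that the resulting $d_j$ avoids $\MM_0$ is a separate computation using $m^1\indi 0\MM_0$: strictness gives $d_j\notin\acl_0(d^{0}{}',m^0)$, while $\acl_0(d^{0}{}',m^0,m^1)\cap\MM_0=\acl_0(d^{0}{}',m^0)$. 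Without this split your reduction would fail on witnesses $a_j\notin S$ that happen to lie in $\acl_0(\MM,a^{0}{}')$.
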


\begin{proof}
We prove the first assertion.
Let $(\MM,\MM_0)$ be a model of $T_S$, $\phi(x,y)$ an $\LL$-formula and a partition $x = x^0x^1$. Assume that for some tuple $b$ from $\MM$ we have $\theta_\phi(b)$. We show that the conclusion of the axiom can be satisfied in a strong extension $(\NN,\NN_0)$ with $\NN\succ \MM$. Then the result will follow by taking the union of a chain of models of $T_S$, which is again a model of $T_S$ because it is an elementary chain of models of $T$ with a predicate for models of $T_0$ which is inductive, by model-completeness. The fact that the union of a chain of strong extensions is again strong follows from \ref{FIN} and \ref{TRA} of $\indi{0}$, and the model-completeness of $T_0$.

By \ref{H4} there exists an extension $\NN\succ \MM$, and a tuple $a\in \NN$ satisfying $\phi(x,b)$ and such that  $a$ is $\indi 0$-independent over $\MM$. Set $\NN_0 = \acl_0(\MM_0 a^0)$. Then using \ref{MON}, \ref{BMON} and \ref{CLO} of $\indi{0}$, $a^0\MM_0\indi{0}_{\MM_0} \MM$. This means that the extension $(\MM,\MM_0)\subseteq (\NN,\NN_0)$ is strong. Now clearly $a^0\subseteq S$. Using \ref{BMON} and \ref{CLO}, it follows that $ab\indi{0}_{a^0b} \MM_0a^0$. Take any $\LL_0$-formula $\tau(t,x,y)$ algebraic in $t$ and strict in $x^1$, and assume that $u\in \NN$ satisfies $\tau(t,a,b)$. As $\tau$ is strict in $x^1$ and $a^1$ is $\indi 0$ -independent over $b a^0$, we have $u\in \acl_0(ab)\setminus \acl_0(a^0b)$. If $u\in \NN_0$ then it belongs to $\acl_0(ab)\cap \acl_0(\MM_0a^0) \subseteq \acl_0(a^0b)$, a contradiction, hence $u\notin S$. It follows that $(\NN,\NN_0)\models \phi(a,b)\wedge a^{0}\subseteq S\wedge \bigwedge_{i<k} \forall t \ (\tau_i(t,a,b)\rightarrow t\notin S)))$.

We now prove the second assertion.

Let $(\MM,\MM_0)\models TS$ and $(\NN,\NN_0)\models T_S$, a strong extension of $(\MM,\MM_0)$. Take finite tuples $a\in \NN$ and $b\in \MM$.
To understand the quantifier-free $\LL_S$-type of $a$ over $b$, it is sufficient to deal with formulae of the form
$$\psi(x,b)\wedge \bigwedge_{i\in I} x_i\in S \wedge \bigwedge_{j\in J} x_j\notin S$$
with $\psi(x,y)$ an $\LL$-formula. The reduction to formulae of this form is done by increasing the length of $x$ (replacing $\LL$-terms by variables), which may be greater than $\abs{a}$. We assume that $a$ satisfies the formula above.

\begin{claim} There exists an $\indi 0$-independent tuple $a' = a^0{}'a^1{}'$ such that $\acl_0(\MM a) = \acl_0(\MM a')$ with
\begin{enumerate}
\item $a'\indi{0} \MM$;
\item $\acl_0(a')\cap \NN_0 = \acl_0(a^0{}')$;
\item $\NN_0\cap \acl_0(\MM,a') = \acl_0(\MM_0, a^0{}')$.
\end{enumerate}
\end{claim}
\begin{proof}[Proof of the claim] Take a tuple $a^0{}'$ in $\NN_0\cap \acl_0(\MM,a)$ maximal $\indi 0$-independent over $\MM_0$.
We have $a^0{}'\indi{0} \MM_0$, and as the extension is strong we also have $a^0{}'\indi{0} \MM$ by \ref{TRA}. Now take a tuple $a^1{}'$ in $\acl_0(\MM a)$ maximal $\indi 0$-independent over $\acl_0(\MM a^0{}')$. We have $a^1{}'\indi{0}\MM a^0{}'$ and so $a^0{}'a^1{}'\indi{0} \MM$. Set $a' = a^0{}'a^1{}'$ and the claim holds.
\end{proof}

Now as $a\subseteq \acl_0(\MM,a')$ there exists a finite tuple $m^1$ from $\MM$ $\indi 0$-independent over $\MM_0a'$ such that $a\subseteq \acl_0(\MM_0m^1 a')$. Similarly there exists a finite tuple $m^0$ from $\MM_0$ with $m^0\indi{0} m^1a'$ such that $a\subseteq \acl_0(m^0 m^1 a')$.

If $i\in I$, using \textit{(3)}, we have $a_i\in \acl_0(\MM_0a^0{}')\cap \acl_0(m^0m^1 a') = \acl_0(m^0a^0{}')$. Hence there is an $\LL_0$-formula $\tau_i(t, a^0{}',m^0)$ algebraic in $t$ such that $a_i \models \tau_i(t,a^0{}',m^0)$.

Let $J_1$ be the set of indices $j\in J$ such that $a_j\in \acl_0(a^0{}',m^0,m^1)$. As $a_j\notin S$, by Lemma~\ref{lm_strict} there is an $\LL_0$-formula $\tau_j(t, x^0,y,z)$ algebraic in $t$ and strict in $z$ such that $a_j\models \tau_j(t, a^0{}',m^0,m^1)$.

Let $J_2 = J\setminus J_1$. Then for $j\in J_2$, we have $a_j\notin \acl_0(a^0{}',m^0,m^1)$ so there is an $\LL_0$-formula $\tau_j(t, x^0,x^1,y,z)$ algebraic in $t$ and strict in $x^1$ such that $a_j\models \tau_j(t, a^0{}',a^1{}',m^0,m^1)$.

We now set $b' = bm^0m^1$ and set $\phi(a',b')$ to be the following formula
\begin{eqnarray*}
\exists v \psi(v,b)&\wedge& \bigwedge_{i\in I} \tau_i(v_i,a^0{}',m^0)\\
&\wedge& \bigwedge_{j\in J_1} \tau_j(v_j,a^0{}',m^0,m^1)\\
&\wedge& \bigwedge_{j\in J_2} \tau_j(v_j,a^0{}',a^1{}',m^0,m^1).
\end{eqnarray*}

\emph{Step $(\star)$}. By model-completeness we have that $\NN\succ \MM$. As $a'$ is $\indi{0}$ independent over $\MM$ it follows that $\MM\models \theta_\phi(b')$. Using one instance of the axiom scheme, there exists $d'\in \MM$ such that $d'\models \phi(x,b')$ with $d^0{}'\subseteq \MM_0$ and for all $j\in J_2$, all the realizations of $\tau_j(t, d',m)$ are not in $\MM_0$. Let $d$ be the tuple whose existence is stated in $\phi(d',b')$, in particular $\MM \models \psi(d,b)$. For $i\in I$, we have $d_i\in \acl_0(d^0{}'m^0)\subseteq \MM_0$. For $j\in J_2$ we already saw that $d_j\notin \MM_0$. For $j\in J_1$, as $\tau_j(t,d^0{}',m^0,m^1)$ is strict in the variable of $m^1$ and $m^1$ is $\indi 0$-independent over $\MM_0$, we have that $d_j\notin \acl_0(d^0{}',m^0)$. Recall that $m^1\indi{0} \MM_0$, so $m^1\indi{0}_{d^0{}',m^0} \MM_0$ hence $\acl_0(d^0{}',m^0,m^1)\cap \MM_0 = \acl_0(d^0{}',m^0)$, so $d_j$ cannot belong to $\MM_0$.
We conclude that $$(\MM,\MM_0)\models \psi(d,b)\wedge \bigwedge_{i\in I} d_i\in S \wedge \bigwedge_{j\in J} d_j\notin S$$
which proves that $(\MM,\MM_0)$ is existentially closed in $(\NN,\NN_0)$.
\end{proof}

\begin{rk}\label{rk_genpred}
  Notice that if we consider $\LL_0 = \set{=}$, the previous Theorem gives nothing more than the generic predicate (see\cite{CP98}). The hypothesis~\ref{H4} becomes equivalent to elimination of $\exists^{\infty}$ in that case. Note also that if $T_0$ is strongly minimal and has quantifier elimination in $\LL_0$, the conditions~\ref{H2} and~\ref{H3} are satisfied.
\end{rk}

We can forget hypothesis~\ref{H1} to get this adapted version of Theorem~\ref{model_com_gen}.

\begin{prop}\label{cor_com_gen}
  Assume that the following holds.
\begin{enumerate}
  \item[$(H_2)$] $T_0$ is model complete and for all $A$ infinite, $\acl_0(A) \models T_0$;
  \item[$(H_3)$] $T_0$ is pregeometric;
  \item[$(H_4)$] for all $\LL$-formula $\phi(x,y)$ there exists an $\LL$-formula $\theta_\phi(y)$ such that for $b\in \MM\models T$
\begin{eqnarray*}
\MM \models \theta_\phi(b) &\iff& \text{there exists $\NN\succ \MM$ and $a\in \NN$ such that}\\
 &\mbox{ }& \text{  $\phi(a,b)$ and $a$ is an $\indi 0$-independent tuple over $\MM$}
\end{eqnarray*}
\end{enumerate}

Then there exists a theory $TS$ containing $T_S$ such that
\begin{itemize}
\item every model $(\MM,\MM_0)$ of $T_S$ has a strong extension $(\MM', \MM_0')$ which is a model of $TS$, such that $\MM\prec \MM'$;
\item\label{cond2} assume that $(\MM,\MM_0)\models TS$ and $(\NN,\NN_0)$ is a model of $T_S$ which is a strong extension of $(\MM,\MM_0)$. If $\MM$ is existentially closed in $\NN$ then $(\MM,\MM_0)$ is existentially closed in $(\NN,\NN_0)$.
\end{itemize}
An axiomatization of $TS$ is given by adding to $T_S$ the following axioms, for each tuple of variables $x  = x^0x^1$,  for $\LL$-formula $\phi(x,y)$, and $\LL_0$-formulae $(\tau_i(t,x,y))_{i<k}$ which are algebraic in $t$ and strict in $x^1$,
$$\forall y (\theta_\phi(y) \rightarrow (\exists x \phi(x,y)\wedge x^0\subseteq S\wedge \bigwedge_{i<k} \forall t \ (\tau_i(t,x,y)\rightarrow t\notin S))).$$
\end{prop}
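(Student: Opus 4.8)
The plan is to deduce Proposition~\ref{cor_com_gen} from Theorem~\ref{model_com_gen} by the standard trick of Morleyising. First I would pass from $T$ to its model companion: since $T_0$ is model complete and has the closure property~\ref{H2}, and since the hypotheses~\ref{H2}--\ref{H4} only involve $T$ through its elementary class and the associated $\indi 0$ (which is a reduct-invariant, hence elementarily-invariant, notion), I may replace $T$ by a model-complete theory $T^*$ with the same models up to elementary equivalence — concretely, the Morleyisation of $T$, which is model complete by construction. The only point requiring care is that the formula $\theta_\phi(y)$ supplied by~\ref{H4} for the \emph{original} language $\LL$ still works in the enlarged language: but $\theta_\phi$ is an $\LL$-formula and its defining property in~\ref{H4} is purely semantic (``there exists $\NN\succ\MM$ and $a\in\NN$ with $\phi(a,b)$ and $a$ an $\indi0$-independent tuple over $\MM$''), and this property is insensitive to adding predicates for $\LL$-definable relations, since elementary extensions and $\indi 0$ are unchanged. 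For $\LL$-formulae in the new language that are not already $\LL$-formulae, one uses the fact that each such formula is, in every model, equivalent to an $\LL$-formula, and chains together the $\theta$'s accordingly. So $T^*$ with the reduct $T_0$ satisfies all four hypotheses~\ref{H1}--\ref{H4}, and Theorem~\ref{model_com_gen} produces a theory, call it $T^*S$, together with its axiomatisation.

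Next I would transport $T^*S$ back. Define $TS$ to be $T^*S$, read as an $\LL_S$-theory via the expansion $\LL_S \subseteq \LL_S^*$ where $\LL^*$ is the Morleyised language — equivalently, $TS$ is axiomatised by $T_S$ together with the same axiom scheme displayed in the statement, with $\phi$ ranging over $\LL$-formulae and the $\tau_i$ over $\LL_0$-formulae, exactly as written. Since every $\LL^*$-formula is equivalent in $T^*$ to an $\LL$-formula, the axiom scheme over $\LL$ and the axiom scheme over $\LL^*$ are equivalent modulo $T^*$, so this $TS$ is the same theory as $T^*S$ up to the bi-interpretation given by Morleyisation; in particular its $\LL_S$-models are exactly the $\LL_S$-reducts of models of $T^*S$.

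Then I would verify the three bulleted conclusions. For the first bullet: given $(\MM,\MM_0)\models T_S$, view it as an $\LL_S^*$-structure; Theorem~\ref{model_com_gen} gives a strong extension $(\MM',\MM_0')\models T^*S$ with $\MM\prec^* \MM'$; taking $\LL_S$-reducts, $(\MM,\MM_0)\subseteq(\MM',\MM_0')\models TS$ is strong (strongness is about $\indi 0$, which is a $T_0$-notion and unaffected), and $\MM\prec\MM'$ since an $\LL^*$-elementary extension is $\LL$-elementary. For the second bullet: suppose $(\MM,\MM_0)\models TS$ and $(\NN,\NN_0)\models T_S$ is a strong extension with $\MM$ existentially closed in $\NN$ as an $\LL$-structure. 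I want $(\MM,\MM_0)$ existentially closed in $(\NN,\NN_0)$ as an $\LL_S$-structure. The point is that an $\LL$-embedding $\MM\hookrightarrow\NN$ under which $\MM$ is existentially closed in $\NN$ is automatically an $\LL^*$-\emph{elementary} embedding — this is the standard fact that existential closedness between models of a theory, together with model-completeness of its Morleyisation, upgrades to elementarity — so $(\MM,\MM_0)$, now regarded in $\LL_S^*$, is a strong extension inside $(\NN,\NN_0)$ with $\MM\prec^*\NN$; but $\MM\models T^*S = TS$, so by the second bullet of Theorem~\ref{model_com_gen} (which only needs $\NN\succ\MM$, here supplied), $(\MM,\MM_0)$ is existentially closed in $(\NN,\NN_0)$ as an $\LL_S^*$-structure, hence \emph{a fortiori} as an $\LL_S$-structure.

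The main obstacle, and the only place real checking is needed, is the bookkeeping in the first paragraph: confirming that the Morleyisation interacts correctly with hypothesis~\ref{H4}, i.e.\ that the auxiliary formulae $\theta_\phi$ can be chosen in $\LL$ even after enlarging the language, and that passing to $T^*$ does not disturb $T_0$, $\indi 0$, or the notion of strongness. All three are genuinely routine once one notes that $T_0\subseteq T^*$ is still a reduct (with the \emph{same} $\LL_0$), that $\acl_0$ and $\indi 0$ are computed in $T_0$ and so are untouched, and that ``$\NN\succ\MM$'' in the original $\LL$ is exactly what the weakened hypothesis leaves us to assume by hand in the second bullet. Everything else is a verbatim re-run of the proof of Theorem~\ref{model_com_gen}; in fact one could alternatively give a direct proof by copying that argument and replacing each invocation of ``by model-completeness, $\NN\succ\MM$'' (at \emph{Step} $(\star)$ and at the start of the existential-closedness argument) with the hypothesis that $\MM$ is existentially closed in $\NN$, which is all that step actually used.
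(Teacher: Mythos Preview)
Your Morleyisation route has a genuine gap in the treatment of the second bullet. You assert that if $\MM$ is existentially closed in $\NN$ as an $\LL$-structure, then under Morleyisation this upgrades to an $\LL^*$-elementary embedding. That is false: for $\MM$ even to be an $\LL^*$-\emph{substructure} of $\NN$ one already needs $\MM \prec_\LL \NN$, since the new predicate $R_\phi$ is interpreted as $\phi^\MM$ in $\MM$ and as $\phi^\NN$ in $\NN$. Existential closedness is strictly weaker than elementarity. A concrete witness: let $T$ be the theory of graphs, $\MM$ the countable random graph, and $\NN$ the graph obtained from $\MM$ by adjoining one isolated vertex. Then $\MM$ is existentially closed in $\NN$ (being an existentially closed model of $T$), but $\MM \not\prec_\LL \NN$ since $\NN$ fails the random-graph extension axioms. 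Hence $(\NN,\NN_0)$ is not an $\LL_S^*$-extension of $(\MM,\MM_0)$ at all, and the second bullet of Theorem~\ref{model_com_gen} for $T^*$ cannot be invoked.

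Your closing parenthetical --- copy the proof of Theorem~\ref{model_com_gen} directly and replace the appeal ``by model-completeness, $\NN\succ\MM$'' at Step~$(\star)$ by the assumed existential closedness --- is precisely what the paper does and is the correct approach. One point both you and the paper leave implicit: at Step~$(\star)$ the original proof uses $\NN\succ\MM$ with \ref{H4} to get $\MM\models\theta_\phi(b')$. With only $\MM$ existentially closed in $\NN$, first embed $\NN$ over $\MM$ into some $\MM^*\succ\MM$. The image of $a'$ still satisfies $\phi(x,b')$ in $\MM^*$, because $\phi$ was built as $\exists v\,[\psi(v,b)\wedge\bigwedge\tau_i(\dots)]$ with $\psi$ quantifier-free (we are testing existential closedness, so the starting $\LL_S$-formula is quantifier-free) and each $\tau_i$ an $\LL_0$-formula, hence equivalent to an existential one by \ref{H2}; thus $\phi$ is preserved upward by embeddings. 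The image of $a'$ also remains $\indi 0$-independent over $\MM$, since $T_0$ is model-complete and so $\NN\prec_{\LL_0}\MM^*$. Now \ref{H4} applies in $\MM$ with $\MM^*$ as the witnessing elementary extension.
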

\begin{proof}
  The same proof as for Theorem~\ref{model_com_gen} works. In the proof of Theorem~\ref{model_com_gen}, the model-completeness of $T$ was used to ensure that given any model $\NN$ of $T$ extending $\MM$, then $\MM$ is existentially closed in $\NN$, which is now part of the second bullet. In the first bullet, the model $\MM'$ of $T$ extending $\MM$ is the union of an elementary chain of extensions hence is an elementary extension of $\MM$, this condition does not use the model-completeness of $T$. 
\end{proof}

\begin{rk}[A weak version of \ref{H4}]\label{rk_mcvar}
  Assume that $T,T_0$ satisfies~\ref{H1}, \ref{H2} and \ref{H3}. Assume that there is a class $\CCCC$ of $\LL$-formula such that for all $\MM\models T$, for all $\LL$-formula $\phi(x,b)$ with parameters in $\MM$, there exists a tuple $c$ from $\MM$ and formulae $\vartheta_1(x,z),\cdots,\vartheta_n(x,z)\in \CCCC$ such that $$\phi(\MM,b) = \vartheta_1(\MM,c)\cup \cdots\cup \vartheta_n(\MM,c).$$
  Assume that condition \ref{H4} holds only for formulae $\vartheta(x,z)\in \CCCC$. Then the conclusion of Theorem~\ref{model_com_gen} applies, with the axiom-scheme restricted to formulae in $\CCCC$. It is clear that the proof of the first assertion works similarly, considering only formulae in $\CCCC$. For the second assertion, the proof changes at Step $(\star)$, we need to show that there exists a realisation of $\phi(x,b')$ that satisfies the right properties using the axioms. By assumption $\phi(\MM,b') = \vartheta_1(\MM,c)\cup \cdots\cup \vartheta_n(\MM,c)$ for some $\vartheta_1(x,z),\cdots,\vartheta_n(x,z)\in \CCCC$ and tuple $c$ from $\MM$. This decomposition holds also in $\NN$ by model-completeness of $T$. Now as $a'\models \phi(x,b')$, there is some $i\leq n$ such that $a'\models \vartheta_i(x,c)$ hence $\MM\models \theta_{\vartheta_i}(c)$. Using one instance of the axiom, there exists $d'$ in $\MM$ satisfying $\vartheta_i(x,c)$, hence also $\phi(x,b')$, and that satisfies the adequat repartition of its coordinate in or out of $S$, and the end of the proof is similar. The main example for the class $\CCCC$ is the class of quasi-affine varieties in the theory $\ACF$, see Theorem~\ref{thm_genmult}.
\end{rk}

\begin{rk}[A weak converse for Theorem~\ref{model_com_gen}]
  It is possible to prove a weak converse of Theorem~\ref{model_com_gen}, using the same method as in~\cite[2.11. Proposition]{CP98}. It can be stated as follows: if \ref{H1}, \ref{H2} and \ref{H3} are satisfied and if $TS$ exists as in Theorem~\ref{model_com_gen} then:
  \begin{center}for all $\LL$-formula $\phi(x,y)$ and all $1\leq k\leq \abs{x}$, there exists an $\LL$-formula $\theta_\phi^k(y)$ such that for all tuple $b$ in an $\aleph_0$-saturated model $\MM$ of $T$,
    \begin{eqnarray*}
  \MM \models \theta_\phi^k(b) &\iff& \text{there exists some realisation $a$ of $\phi(x,b)$ in $\MM$ such that}\\
  &\mbox{ }& \text{ $a_k \notin \acl_0(\acl_T(b), a_1,\dots,a_{k-1},a_{k+1},\cdots,a_{\abs{x}})$.}
\end{eqnarray*}
\end{center}
In particular $T$ eliminates $\exists^{\infty}$.  A question one might ask is wether elimination of $\exists^{\infty}$ is a sufficient condition for the existence of the theory $TS$. The answer is no, the theory $\ACF_0$ eliminates $\exists^\infty$ but the model companion of the theory of algebraically closed fields of characteristic $0$ with a predicate for an additive subgroup is not first order axiomatisable, see Proposition~\ref{prop_car0}. On the other hand, the existence of $TS$ under the reduction of the hypothesis \ref{H4} to formulae $\phi(x,y)$ with $\abs{x} = 2$ would be a good improvement, as it would be much easier to check. A full converse is not reasonnable to ask, in view of the example of Subsection~\ref{sec_genmult}, where the model-companion exists without having the full \ref{H4} hypothesis.
\end{rk}


\begin{df}\label{def_suitabletriple}
  We say that a triple $(T, T_0,\LL_0)$ is \emph{suitable} if it satisfies the following
 \begin{enumerate}[label={$(H_{\arabic*})$}]
\item $T$ is model complete;
\item $T_0$ is model complete and for all infinite $A$, $\acl_0(A) \models T_0$;
\item[$(H_3^+)$] $\acl_0$ defines a modular pregeometry;
\item[$(H_4)$] for all $\LL$-formula $\phi(x,y)$ there exists an $\LL$-formula $\theta_\phi(y)$ such that for $b\in \MM\models T$
\begin{eqnarray*}
\MM \models \theta_\phi(b) &\iff& \text{there exists  $\NN\succ \MM$ and $a\in \NN$ such that}\\
&\mbox{ }& \text{$\phi(a,b)$ and $a$ is $\indi 0$-independent over $\MM$.}
 \end{eqnarray*}
\end{enumerate}
\end{df}

Hypothesis $(H_3^+)$ makes obsolete the notion of \emph{strong} extension. As a consequence, the theory $TS$, if it exists, is the model-companion of the theory $T_S$.

\begin{rk}\label{rk_hyp5}
Let $(T,T_0,\LL_0)$ be a suitable triple. By~\cite[Proposition 1.5]{A09}, in $T$, the relation $\indi a$ defined by $A\indi a _C B$ if and only if $\acl_T(AC)\cap \acl_T(BC) = \acl_T(C)$ satisfies \ref{EXT}, so for all $A,B,C$ subsets of $\M$ there exists $A'\equiv_C^T A$ such that $\acl_T(A'C)\cap \acl_T(BC) = \acl_T(C)$. As $\acl_0$ is modular, it follows that $\acl_T(A'C)\indi 0_{\acl_T(C)} \acl_T(BC)$.
\end{rk}
From Theorem~\ref{model_com_gen}, we immediately get the following.

\begin{prop}\label{cor_suitabletriple}
  Let $(T,T_0,\LL_0)$ be a suitable triple. Then $TS$ exists and is the model-companion of the theory $T_S$.
\end{prop}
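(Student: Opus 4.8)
The plan is to derive everything from Theorem~\ref{model_com_gen}, the single new ingredient being that hypothesis $(H_3^+)$ forces \emph{every} $\LL_S$-extension between models of $T_S$ to be strong, so that the ``strong extension'' clauses in Theorem~\ref{model_com_gen} become statements about arbitrary extensions.

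First I would observe that a modular pregeometry is in particular a pregeometry, so $(H_3^+)$ implies $(H_3)$ and the suitable triple $(T,T_0,\LL_0)$ satisfies $(H_1)$, $(H_2)$, $(H_3)$, $(H_4)$. Theorem~\ref{model_com_gen} then provides a theory $TS\supseteq T_S$ such that every model of $T_S$ has a strong extension which is a model of $TS$, and such that a model of $TS$ is existentially closed in every strong extension of it that is a model of $T_S$.

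The crux is the claim that every extension $(\MM,\MM_0)\subseteq(\NN,\NN_0)$ between models of $T_S$ is strong, i.e. $\NN_0\indi{0}_{\MM_0}\MM$. Fixing a monster model $\M\models T$ and computing $\acl_0$ inside $\M\!\upharpoonright\!\LL_0$, I would first record that models of $T_0$ are $\acl_0$-closed: for infinite $N_0\models T_0$ we have $\acl_0(N_0)\models T_0$ by $(H_2)$, and model-completeness of $T_0$ gives $N_0\preceq\acl_0(N_0)$, whence $\acl_0(N_0)=N_0$. Applying this to $\MM_0$, to $\NN_0$, and to $\MM\!\upharpoonright\!\LL_0$ (a model of $T_0$ because $T_0$ is a reduct of $T$) yields $\acl_0(\MM_0)=\MM_0$, $\acl_0(\NN_0)=\NN_0$ and $\acl_0(\MM)=\MM$. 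Since an $\LL_S$-extension preserves the predicate $S$, we have $\NN_0\cap\MM=\MM_0$, hence $\acl_0(\NN_0)\cap\acl_0(\MM)=\acl_0(\MM_0)$; modularity of the pregeometry, applied to the closed sets $\NN_0$ and $\MM$ whose intersection is the closed set $\MM_0$, then gives $\NN_0\indi{0}_{\MM_0}\MM$, proving the claim.

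It then remains to assemble the model-companion statement. Mutual model-consistency is immediate: every model of $T_S$ has a (strong, hence in particular an) extension to a model of $TS$ by the first bullet of Theorem~\ref{model_com_gen}, and conversely every model of $TS$ is already a model of $T_S$. For model-completeness I would invoke Robinson's test: if $(\MM,\MM_0)\subseteq(\NN,\NN_0)$ are models of $TS$, then they are models of $T_S$, the extension is strong by the claim, so $(\MM,\MM_0)$ is existentially closed in $(\NN,\NN_0)$ by the second bullet of Theorem~\ref{model_com_gen}; as this holds for every such pair, $TS$ is model complete, and a model-complete theory mutually model-consistent with $T_S$ is its model companion. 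The only genuine content beyond bookkeeping is the claim, and within it the delicate point is the passage from ``$\acl_0$-closed sets meeting in an $\acl_0$-closed set'' to $\indi{0}$-independence, which is exactly what modularity buys; I expect that to be the main (and essentially the only) obstacle.
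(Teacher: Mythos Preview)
Your proposal is correct and takes essentially the same approach as the paper: the paper simply remarks that ``Hypothesis $(H_3^+)$ makes obsolete the notion of \emph{strong} extension'' and then says the result follows immediately from Theorem~\ref{model_com_gen}. You have spelled out in detail the one point the paper leaves implicit, namely that modularity forces every $\LL_S$-extension of models of $T_S$ to be strong (via the observation that $\MM_0$, $\NN_0$, and $\MM$ are $\acl_0$-closed with $\NN_0\cap\MM=\MM_0$), and then assembled the model-companion conclusion via Robinson's test; this is exactly the intended argument, just made explicit.
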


\begin{rk} In the sense of~\cite{KTW18}, the theory $TS$ is the interpolative fusion of $T$ with the theory of generic pairs of models of $T_0$.
\end{rk}
\begin{lm}\label{amalgam_T_S}
  Let $(\MM,\MM_0)$ and $(\NN,\NN_0)$ are two models of $T_S$, such that $\MM_0\indi{0}_{\NN_0} \NN$ and $\NN_0\indi{0}_{\MM_0} \MM$.
  Then, there exists a model $(\KK,\KK_0)$ of $TS$ extending both $(\MM,\MM_0)$ and $(\NN,\NN_0)$. If furthermore $(\MM,\MM_0)$ and $(\NN,\NN_0)$ are models of $TS$, then $(\KK,\KK_0)$ is an elementary extension of both $(\MM,\MM_0)$ and $(\NN,\NN_0)$.
\end{lm}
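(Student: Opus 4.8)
The plan is to build the amalgam in two stages: first produce a common $\LL_S$‑extension that is merely a model of $T_S$, and then absorb it into a model of $TS$ using that $TS$ is the model companion of $T_S$ (Proposition~\ref{cor_suitabletriple}). Throughout I work inside a monster model $\M$ of $T$ containing $\MM$ and $\NN$ (hence also $\MM_0$ and $\NN_0$) as substructures; such a $\M$ exists because the hypotheses $\MM_0\indi{0}_{\NN_0}\NN$ and $\NN_0\indi{0}_{\MM_0}\MM$ presuppose that all four sets lie in a common model of $T$. By model‑completeness of $T$ we have $\MM\preceq\M$ and $\NN\preceq\M$, so $\MM$ and $\NN$ are $\acl_T$‑closed and, since $\LL_0\subseteq\LL$, also $\acl_0$‑closed. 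Similarly, as $\MM_0$ is a model of the model‑complete theory $T_0$ sitting inside $\M\upharpoonright\LL_0\models T_0$, it is an elementary $\LL_0$‑substructure of $\M\upharpoonright\LL_0$, whence $\acl_0(\MM_0)=\MM_0$; likewise $\acl_0(\NN_0)=\NN_0$.

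Next I set $\KK_0:=\acl_0(\MM_0\NN_0)$ (computed in $\M$). Since $\MM_0\cup\NN_0$ is infinite, $(H_2)$ gives $\KK_0\models T_0$, and $\KK_0$ is an $\LL_0$‑substructure of $\M\upharpoonright\LL_0$ containing both $\MM_0$ and $\NN_0$. The crucial point is that $\KK_0\cap\MM=\MM_0$ and $\KK_0\cap\NN=\NN_0$. By modularity $(H_3^+)$, the relation $\indi{0}$ satisfies $A\indi{0}_C B\iff \acl_0(AC)\cap\acl_0(BC)=\acl_0(C)$ (as in the discussion around Remark~\ref{rk_hyp5}). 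Applying this to $\NN_0\indi{0}_{\MM_0}\MM$ and using $\acl_0(\MM_0\MM)=\acl_0(\MM)=\MM$ gives $\KK_0\cap\MM=\acl_0(\MM_0\NN_0)\cap\MM=\acl_0(\MM_0)=\MM_0$; symmetrically, $\MM_0\indi{0}_{\NN_0}\NN$ yields $\KK_0\cap\NN=\NN_0$. Consequently $(\M,\KK_0)\models T_S$, and the identity maps realise $\LL_S$‑embeddings $(\MM,\MM_0)\hookrightarrow(\M,\KK_0)$ and $(\NN,\NN_0)\hookrightarrow(\M,\KK_0)$: the $\LL$‑reducts are the elementary inclusions $\MM,\NN\preceq\M$, and the predicate restricts correctly precisely by the two intersection identities.

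To finish, use that $TS$ is the model companion of $T_S$: the model $(\M,\KK_0)$ of $T_S$ has an extension $(\KK,\KK_0')\models TS$. Since $(\M,\KK_0)\subseteq(\KK,\KK_0')$ is an inclusion of $\LL_S$‑structures we have $\KK_0'\cap\M=\KK_0$, so $\KK_0'\cap\MM=\KK_0\cap\MM=\MM_0$ and $\KK_0'\cap\NN=\NN_0$; hence $(\KK,\KK_0')$ is an $\LL_S$‑extension of both $(\MM,\MM_0)$ and $(\NN,\NN_0)$, which (after identifying it with the $(\KK,\KK_0)$ of the statement) proves the first assertion. For the second assertion, assume in addition that $(\MM,\MM_0),(\NN,\NN_0)\models TS$; then, $TS$ being a model companion and therefore model complete, the two $\LL_S$‑embeddings into $(\KK,\KK_0')\models TS$ are elementary, so $(\KK,\KK_0')$ is an elementary extension of both.

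I expect no genuine difficulty in this argument; the one place to be careful is the computation $\KK_0\cap\MM=\MM_0$, which genuinely uses the modularity hypothesis $(H_3^+)$ (to identify $\indi{0}$ with the algebraic‑intersection relation) together with the algebraic closedness of $\MM$ in the sense of $T$ and of $\MM_0$ in the sense of $T_0$ --- both coming from model‑completeness.
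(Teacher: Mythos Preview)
Your proof is correct and follows essentially the same approach as the paper: take a common $T$-extension, put $\KK_0=\acl_0(\MM_0\NN_0)$ on it to obtain a model of $T_S$ extending both pairs, then pass to a model of $TS$ and conclude by model-completeness. The paper's version is terser --- it records the independence $\KK_0\indi{0}_{\MM_0}\MM$ and $\KK_0\indi{0}_{\NN_0}\NN$ rather than the intersection identities, and cites Theorem~\ref{model_com_gen} directly --- but the content is the same; your added detail on why $\KK_0\cap\MM=\MM_0$ is helpful and accurate (though note that the implication from $\indi{0}$ to the intersection condition holds in any pregeometry, so modularity is not strictly needed at that step).
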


\begin{proof}
  Let $\KK'$ be a model of $T$ extending $\MM$ and $\NN$. Now set $\KK_0' = \acl_0(\MM_0,\NN_0)$. Clearly $(\KK',\KK'_0)$ is a model of $T_S$. By hypothesis we have $\KK'_0\indi{0}_{\MM_0} \MM$ and $\KK'_0\indi{0}_{\NN_0} \NN$. Using Theorem~\ref{model_com_gen}, there is a model $(\KK,\KK_0)$ of $TS$ extending $(\KK',\KK'_0)$, $(\MM,\MM_0)$ and $(\NN,\NN_0)$. We conclude by model-completeness. 
\end{proof}

\begin{prop}\label{cor_com}
  Let $(T,T_0,\LL_0)$ be an adapted triple.
  \begin{enumerate}
    \item Let $(\MM,\MM_0)$ and $(\NN,\NN_0)$ be two models of $TS$ and $A$ be a common subset of $\MM$ and $\NN$. Then we have
      \begin{eqnarray*}
    (\MM,\MM_0)\equiv^{TS}_A (\NN,\NN_0) &\iff& \text{ there exists $f : \acl_T(A)\rightarrow \acl_T(A)$}\\
    &\mbox{}& \text{ $T$-elementary bijection over $A$,} \\
        &\mbox{}& \text{ such that $f(\MM_0\cap \acl_T(A)) = \NN_0\cap \acl_T(A)$.}
      \end{eqnarray*}

    \item For any $a,b, A$ in a monster model of $TS$
      \begin{eqnarray*}
    a\equiv^{TS}_A b  &\iff& \text{ there exists $f : \acl_T(Aa)\rightarrow \acl_T(Ab)$}\\
    &\mbox{}& \text{ a $T$-elementary bijection over $A$ with $f(a) = b$,} \\
        &\mbox{}& \text{ such that $f(S( \acl_T(Aa))) = S( \acl_T(Ab))$.}
    .
      \end{eqnarray*}
      We call such a function a \emph{$T$-elementary $\LL_S$-isomorphism between \\
     $(\acl_T(Aa), S(\acl_T(Aa))$ and $(\acl_T(Ab), S( \acl_T(Ab))$}.
      \item The completions of $TS$ are given by the $T$-elementary $\LL_S$-isomorphism types of $$(\acl_T(\emptyset), S(\acl_T(\emptyset))).$$
    \item For all $A$, $\acl_{TS}(A) = \acl_{T}(A)$.
  \end{enumerate}
\end{prop}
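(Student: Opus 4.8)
The plan is to prove (1) first, as it is the engine for all four items; the key tool is the amalgamation lemma (Lemma~\ref{amalgam_T_S}) together with the characterization of $TS$ as the model-companion of $T_S$ (Proposition~\ref{cor_suitabletriple}) and a back-and-forth argument. For the left-to-right direction of (1), suppose $(\MM,\MM_0)\equiv^{TS}_A(\NN,\NN_0)$. Passing to monster models and applying automorphisms, I may assume $(\MM,\MM_0)$ and $(\NN,\NN_0)$ sit inside a common monster model of $TS$ and that there is an $\LL_S$-elementary map fixing $A$; restricting this map to $\acl_T(A)$ and using (4) of Proposition~\ref{cor_com} — or rather proving (4) as a byproduct, see below — gives a $T$-elementary bijection $f:\acl_T(A)\to\acl_T(A)$ over $A$ sending $\MM_0\cap\acl_T(A)$ to $\NN_0\cap\acl_T(A)$, since membership in $S$ is part of the $\LL_S$-structure and $S$ restricted to $\acl_T(A)$ is $\MM_0\cap\acl_T(A)$. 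For the right-to-left direction, given such an $f$, I want a back-and-forth system of partial $\LL_S$-isomorphisms between $\aleph_0$-saturated models: start from $f$, and at each step, given a finite $a$ in $\MM$, I must find a matching tuple in $\NN$. The point where the hypotheses really get used is that an amalgam of the two sides over $\acl_T(A)$ (with the $S$-predicates glued via $f$) is, by Lemma~\ref{amalgam_T_S} after arranging the relevant $\indi 0$-independence (which holds because $\acl_0$ is modular and $H_3^+$ is available — this is exactly the content of Remark~\ref{rk_hyp5}), again a model of $TS$ extending both elementarily; existential closedness of each side in the amalgam then lets one transfer quantifier-free $\LL_S$-types across, and model-completeness of $TS$ upgrades these to full types.

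For (2), apply (1) with the roles played by $(\MM,\MM_0)=(\NN,\NN_0)$ a single monster model and $A$ replaced by $Aa$ on one side and $Ab$ on the other: more precisely, $a\equiv^{TS}_A b$ iff there is an automorphism of the monster fixing $A$ and sending $a\mapsto b$, and its restriction to $\acl_T(Aa)$ is the desired $T$-elementary $\LL_S$-isomorphism onto $\acl_T(Ab)$; conversely such an isomorphism, by (1) applied over $Aa$ (identifying $\acl_T(Aa)$ and $\acl_T(Ab)$ via $f$), extends to an elementary map, whence $a\equiv^{TS}_A b$. Item (3) is the special case $A=\emptyset$ combined with the standard fact that completions of a theory correspond to types over $\emptyset$: two models of $TS$ are elementarily equivalent iff they realize the same $\LL_S$-sentences, which by (1) with $A=\emptyset$ is governed precisely by the $T$-elementary $\LL_S$-isomorphism type of $(\acl_T(\emptyset),S(\acl_T(\emptyset)))$, and every such type is realized since one can build a model of $TS$ on top of any model of $T_S$ (using the axiom scheme / Proposition~\ref{cor_suitabletriple}) whose algebraic closure of $\emptyset$ realizes a prescribed type.

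Item (4) I would actually prove early, as it is needed to even make sense of restricting maps to $\acl_T(A)$ in (1)–(3). The inclusion $\acl_T(A)\subseteq\acl_{TS}(A)$ is immediate since $TS\supseteq T$. For the reverse, given $c\in\acl_{TS}(A)$, I must show $c\in\acl_T(A)$: if not, then $\tp^T(c/\acl_T(A))$ has infinitely many realizations, and using full existence for $\indi a$ in $T$ (Remark~\ref{rk_hyp5}) together with Lemma~\ref{amalgam_T_S} I can produce infinitely many $A$-conjugates of $c$ inside a model of $TS$ — concretely, realize many copies of $c$ each $\indi 0$-independent from the others over $\acl_T(A)$, amalgamate, and observe the resulting structure is a model of $TS$ in which these copies are $\LL_S$-conjugate over $A$ because they are already $\LL$-conjugate and lie outside $S$ (or can be arranged to have the same $S$-status, namely all outside $S$ by strictness considerations). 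This contradicts $c\in\acl_{TS}(A)$. The main obstacle I anticipate is the bookkeeping in (4) and in the back-and-forth of (1): one must be careful that when amalgamating and adding new realizations, the $S$-predicate on the newly generated algebraic closures behaves as dictated by $f$ and does not accidentally force extra elements into or out of $S$ — this is precisely where strictness of the formulas $\tau_i$ (Lemma~\ref{lm_strict}) and modularity of $\acl_0$ do the work, as in the proof of Theorem~\ref{model_com_gen}, so the argument should go through without genuinely new ideas, only careful reuse of the machinery already established.
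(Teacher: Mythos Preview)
Your proposal is correct and uses the same key ingredients as the paper (Remark~\ref{rk_hyp5} to arrange $\indi 0$-independence, then Lemma~\ref{amalgam_T_S} to amalgamate), but you are working harder than necessary in (1). The paper does \emph{not} run a back-and-forth: after using $f$ to identify the two copies of $\acl_T(A)$ (so that $\MM_0\cap\acl_T(A)=\NN_0\cap\acl_T(A)$), it takes a single $T$-conjugate $\MM'$ of $\MM$ over $\acl_T(A)$ with $\MM'\indi{0}_{\acl_T(A)}\NN$, transports $\MM_0$ to $\MM_0'$, and then Lemma~\ref{amalgam_T_S} produces a model of $TS$ that is simultaneously an \emph{elementary} extension of both $(\MM',\MM_0')$ and $(\NN,\NN_0)$. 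That immediately gives $(\MM,\MM_0)\cong(\MM',\MM_0')\equiv^{TS}_A(\NN,\NN_0)$ in one step. You actually describe this amalgamation yourself, but then wrap it in a back-and-forth that is never needed; once the common elementary extension exists, there is nothing left to prove. Your invocation of ``existential closedness'' and ``transfer of quantifier-free types'' is likewise redundant, since model-completeness of $TS$ already makes the extensions in Lemma~\ref{amalgam_T_S} elementary.

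For (4), your strategy is the right one and matches the paper, but the parenthetical ``namely all outside $S$ by strictness considerations'' is a wrong turn: the element $b$ might well lie in $S$, and nothing in the argument forces its conjugates outside $S$. The paper instead takes an $\LL_S$-isomorphic copy $(\NN,\NN_0)$ of the entire model $(\MM,\MM_0)$, arranged so that $\NN\indi 0_{\acl_T(A)}\MM$; then $b':=f^{-1}(b)$ is automatically a $TS$-conjugate of $b$ over $A$ with the \emph{same} $S$-status, and the amalgamation of (1) puts $b$ and $b'$ in a common model. No strictness of the $\tau_i$ is invoked here. Finally, you do not need (4) as a prerequisite for the left-to-right direction of (1): an $\LL_S$-elementary map is in particular $\LL$-elementary, so it already stabilizes $\acl_T(A)$ setwise.
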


\begin{proof}
  \textit{(1)} The left to right implication is standard. From right to left. Note that, under hypotheses, we may assume that $A = \acl_T(A)$ is a subset of both $\MM$ and $\NN$ and that $\MM_0\cap A = \NN_0\cap A$.
  By Remark~\ref{rk_hyp5}, there exists $\MM'\equiv^T_A \MM$ such that $\MM' \indi{0}_{A} \NN$. There is an $\LL$-isomorphism $g$ between $\MM'$ and $\MM$ that fixes $A$, so we may define $\MM_0' = g^{-1}(\MM_0)$ and turn $(\MM',\MM_0')$ into a model of $TS$. By \ref{MON} and \ref{BMON} we have $\MM_0' \indi{0}_{\NN_0} \NN$. Similarly we have $\NN_0 \indi 0 _{\MM_0'} \MM'$ hence by Lemma~\ref{amalgam_T_S} there exists a model $(\KK,\KK_0)$ of $TS$ that is an elementary extension of both $(\MM',\MM_0')$ and $(\NN,\NN_0)$, hence $(\MM',\MM_0')\equiv^{TS}_A (\KK,\KK_0)\equiv^{TS}_A (\NN,\NN_0)$.

    \textit{(2)} This is similar to \textit{(1).}

    \textit{(3)} This is an obvious application of \textit{(1).}

  \textit{(4)} We only need to show that $\acl_{TS}(A)\subseteq \acl_T(A)$. Assume that $b\notin \acl_T(A)$.
  Let $(\MM,\MM_0)$ be a model of $TS$ containing $b$. There exists a model $\NN$ of $T$ and a $T$-isomorphism $f:\NN\rightarrow \MM$ over $A$ such that $\NN\indi{0}_{\acl_T(A)}\MM$. Consider $\NN_0 = f^{-1}(\MM_0)$, then $(\NN,\NN_0)$ and $(\MM,\MM_0)$ are $\LL_S$-isomorphic.
  Now set $b' = f^{-1}(b)$, we have $b'\equiv^{TS}_A b $ and $b\neq b'$ because $b\indi{0}_{\acl_T(A)} b'$ and $b\notin \acl_{T}(A)$. Since $\NN\indi{0}_{\acl_T(A)}\MM$, we may do as in \textit{(1)} and find a model of $TS$ extending both $\MM$ and $\NN$ in which the condition \textit{(3)} is satisfied. Similarly we can produce as many conjugates of $b$ over $A$ as we want inside some bigger model so $b\notin \acl_{TS}(A)$.
  \end{proof}

\begin{prop}\label{prop_type}
  Let $\M$ be a monster model of $T$. Let $\MM\prec \M$ and $\MM_0\subseteq \MM$ such that $(\MM,\MM_0)$ is a model of $TS$. Let $B\subset \MM$, and $X$ a small subset of $\M$. Let $S_{XB}\subseteq \acl_T(XB)\subset \M$ be some $\acl_0$-closed set containing $S(\acl_T(B))$ and such that:
\begin{enumerate}
  \item\label{hyp_cons} $S_{XB}\cap \MM = S(\acl_T(B))$
  \item\label{hyp_type} $\acl_T(XB)\cap \MM = \acl_T(B)$.
\end{enumerate}
Then the type (over $B$) associated to the $T$-elementary $\LL_S$-isomorphism type of $(\acl_T(XB),S_{XB})$ is consistent in $Th(\MM,\MM_0)$. 
\end{prop}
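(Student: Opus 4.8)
The plan is to realise the prescribed configuration inside an elementary extension of $(\MM,\MM_0)$, obtained by amalgamating $(\MM,\MM_0)$ with the $\LL_S$-structure $(\acl_T(XB),S_{XB})$ and then passing to the model-companion. Concretely, I would first fix inside $\M$ a model $\KK'\models T$ with $\MM\prec\KK'$ (automatic from model-completeness of $T$) and $\acl_T(XB)\subseteq\KK'$ — e.g.\ a small elementary submodel of $\M$ containing $\MM\cup\acl_T(XB)$ — and set $\KK'_0:=\acl_0(\MM_0\cup S_{XB})$, computed in $\M$. This is an $\acl_0$-closed, infinite subset of $\KK'$, hence a model of $T_0$ by $(H_2)$ and an $\LL_0$-substructure of the $\LL_0$-reduct of $\KK'$, so $(\KK',\KK'_0)\models T_S$.

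The heart of the matter is the pair of equalities (a) $\KK'_0\cap\MM=\MM_0$ and (b) $\KK'_0\cap\acl_T(XB)=S_{XB}$. Writing $\vee$ for $\acl_0(\,\cdot\cup\cdot\,)$ and $\wedge$ for intersection of $\acl_0$-closed sets, and using that every $\acl_T$-closed set is $\acl_0$-closed and that $S(\acl_T(B))=\MM_0\cap\acl_T(B)$, both are instances of the modular law $P\wedge(Q\vee R)=(P\wedge Q)\vee R$ (valid for $\acl_0$-closed sets with $R\le P$), which holds by $(H_3^+)$. For (a), taking $P=\MM$, $Q=S_{XB}$, $R=\MM_0$ and using $\MM_0\le\MM$: $\KK'_0\cap\MM=\MM\wedge(S_{XB}\vee\MM_0)=(\MM\cap S_{XB})\vee\MM_0=S(\acl_T(B))\vee\MM_0=\MM_0$, by hypothesis~\ref{hyp_cons} and $S(\acl_T(B))\subseteq\MM_0$. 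For (b), taking $P=\acl_T(XB)$, $Q=\MM_0$, $R=S_{XB}$ and using $S_{XB}\le\acl_T(XB)$: $\KK'_0\cap\acl_T(XB)=\acl_T(XB)\wedge(\MM_0\vee S_{XB})=(\acl_T(XB)\cap\MM_0)\vee S_{XB}=S(\acl_T(B))\vee S_{XB}=S_{XB}$, since $\acl_T(XB)\cap\MM_0=\acl_T(XB)\cap\MM\cap\MM_0=\acl_T(B)\cap\MM_0$ by hypothesis~\ref{hyp_type} and $S(\acl_T(B))\subseteq S_{XB}$. In particular (a) says $(\MM,\MM_0)\subseteq(\KK',\KK'_0)$ as $\LL_S$-structures.

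To finish: since $(T,T_0,\LL_0)$ is suitable, $TS$ is the model-companion of $T_S$ (Proposition~\ref{cor_suitabletriple}), so I would extend $(\KK',\KK'_0)$ to an $\LL_S$-overstructure $(\KK,\KK_0)\models TS$; as $(\MM,\MM_0)\models TS$ and a model-companion is model-complete, $(\MM,\MM_0)\prec(\KK,\KK_0)$. Then $X\subseteq\KK$ and, $\acl_T$ being absolute among the models of $T$ in play (model-completeness of $T$), $\acl_T^{(\KK,\KK_0)}(XB)=\acl_T(XB)$, which equals $\acl_{TS}(XB)$ by Proposition~\ref{cor_com}(4); moreover $S(\acl_T(XB))=\KK_0\cap\acl_T(XB)=\KK'_0\cap\acl_T(XB)=S_{XB}$ by (b), using $\KK_0\cap\KK'=\KK'_0$. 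Hence the $\LL_S$-structure induced by $(\KK,\KK_0)$ on $\acl_T(XB)$ is exactly $(\acl_T(XB),S_{XB})$, fixed pointwise over $B$, so by (the evident extension to small tuples of) Proposition~\ref{cor_com}(2), $X$ realises over $B$ the type associated to the $T$-elementary $\LL_S$-isomorphism type of $(\acl_T(XB),S_{XB})$. Since $(\MM,\MM_0)\prec(\KK,\KK_0)$ and $B\subseteq\MM$, that type is consistent with $Th(\MM,\MM_0)$.

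I expect (a) and (b) to be the main obstacle: gluing the designated predicate $S_{XB}$ onto $\acl_T(XB)$ over $(\MM,\MM_0)$ without creating spurious elements of $S$ inside $\MM$ (beyond $\MM_0$) or on $\acl_T(XB)$ (beyond $S_{XB}$) is precisely what modularity of $\acl_0$ buys — for a merely pregeometric $\acl_0$ one can have $\acl_0(\MM_0\cup S_{XB})\cap\MM\supsetneq\MM_0$ — and it is exactly why strong extensions become vacuous under $(H_3^+)$. The remaining points (that the $\acl_0$-closed $S$-parts, namely $\KK'_0$ and $S_{XB}$ itself, are genuine models of $T_0$, and the absoluteness of $\acl_T$) are routine, handled by $(H_2)$ and model-completeness of $T$.
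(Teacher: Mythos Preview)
Your proof is correct and follows essentially the same route as the paper: define the amalgamated predicate $\acl_0(\MM_0\cup S_{XB})$, use modularity twice to verify the intersections with $\MM$ and with $\acl_T(XB)$, pass to the model-companion $TS$, and conclude by model-completeness. The only cosmetic difference is that you work inside a small $\KK'\prec\M$ while the paper places the amalgamated predicate directly on $\M$ itself; your version is arguably tidier in that respect.
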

\begin{proof}
  Let $\M_0' = \acl_0(\MM_0,S_{XB})$. We have that $(\M,\M_0')$ is a model of $T_S$ and an extension of $(\MM,\MM_0)$. Indeed, $\M_0'\cap \MM = \acl_0(\MM_0, S_{XB})\cap \MM = \acl_0(\MM_0, S_{XB}\cap \MM)$ by modularity. By hypothesis~\textit{(\ref{hyp_cons})}, $S_{XB}\cap \MM = S(\acl_T(B)\subseteq \MM_0$ hence $\M_0'\cap \MM = \MM_0$. By Theorem~\ref{model_com_gen} there exists a model $(\NN,\NN_0)$ of $TS$ extending $(\M,\M_0')$ which is an elementary extension of $(\MM,\MM_0)$. Now
    \begin{align*}
   \acl_T(XB)\cap \NN_0   &= \acl_T(XB)\cap \M_0                   & \\
                          &= \acl_T(XB)\cap \acl_0(\MM_0, S_{XB})  & \\
                          &= \acl_0(S_{XB}, \acl_T(XB)\cap \MM_0)   &\text{ by modularity }\\
                          &= \acl_0(S_{XB}, \acl_T(B)\cap \MM_0)    &\text{ by \textit{(\ref{hyp_type})} }\\
                          &= \acl_0(S_{XB}, S(\acl_T(B)))          & \\
                          &= S_{XB}.
    \end{align*}
    It follows that in $(\NN,\NN_0)$, $tp^{TS}(X/B)$ is given by the $T$-elementary $\LL_S$-isomorphism type of $(\acl_T(XB), S_{XB})$.
  \end{proof}

\section{Iterating the construction}\label{chap1_part_it}

Let $T$ be an $\LL$-theory, $\LL_1,\cdots,\LL_n$ be sublanguages of $\LL$ and let $T_i = T\upharpoonright \LL_i$. Let $S_1,\cdots,S_n$ be new unary predicate and let $\LL_{S_1\ldots S_n}$ be the language $\LL\cup \set{S_1,\cdots,S_n}$. Let $T_{S_1\ldots S_n}$ be the $\LL_{S_1\ldots S_n}$-theory which models are models $\MM$ of $T$ in which $\MM_i:= S_i(\MM)$ is an $\LL_i$-substructure of $\MM$ and a model of $T_i$. The following give a condition for the existence of a model companion for $T_{S_1\ldots S_n}$.


\begin{prop}\label{prop_iterate}
  Assume inductively that $(TS_1\dots S_i, T_{i+1}, \LL_{i+1})$ is a suitable triple for $i=0,\cdots,n-1$, and let $TS_1\ldots S_{i+1}$ be the model companion of the theory ${TS_1,\ldots,S_i}_{S_{i+1}}$ of models of $TS_1,\ldots,S_i$ with a predicate $S_{i+1}$ for an $\LL_{i+1}$ submodel of $T_{i+1}$. 
  Then $TS_1\cdots S_n$ is the model-companion of the theory $T_{S_1\ldots S_n}$.
\end{prop}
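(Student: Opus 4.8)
The plan is to argue by induction on $n$, the case $n=1$ being exactly Proposition~\ref{cor_suitabletriple} applied to the suitable triple $(T,T_1,\LL_1)$. So suppose $n\geq 2$ and, as induction hypothesis, that $TS_1\cdots S_{n-1}$ is the model-companion of $T_{S_1\ldots S_{n-1}}$. Since $(TS_1\cdots S_{n-1},T_n,\LL_n)$ is assumed to be a suitable triple, Proposition~\ref{cor_suitabletriple} applies to it: the theory $TS_1\cdots S_n=(TS_1\cdots S_{n-1})S_n$ exists and is the model-companion of the theory $(TS_1\cdots S_{n-1})_{S_n}$ of models of $TS_1\cdots S_{n-1}$ equipped with a predicate $S_n$ for an $\LL_n$-submodel of $T_n$; in particular $TS_1\cdots S_n$ is model complete. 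It therefore remains to show that $TS_1\cdots S_n$ and $T_{S_1\ldots S_n}$ are companions, i.e.\ that every model of one embeds into a model of the other.

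One direction is purely formal. A model of $TS_1\cdots S_n$ is a model of $(TS_1\cdots S_{n-1})_{S_n}$; its $\LL_{S_1\ldots S_{n-1}}$-reduct is a model of $TS_1\cdots S_{n-1}$, hence — by the induction hypothesis, as $TS_1\cdots S_{n-1}$ contains $T_{S_1\ldots S_{n-1}}$ — a model of $T_{S_1\ldots S_{n-1}}$; and the extra predicate $S_n$ names an $\LL_n$-substructure that is a model of $T_n$. This is precisely the definition of a model of $T_{S_1\ldots S_n}$, so every model of $TS_1\cdots S_n$ is in fact already a model of $T_{S_1\ldots S_n}$.

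For the converse, take $(\MM,\MM_1,\ldots,\MM_n)\models T_{S_1\ldots S_n}$ and forget the last predicate, obtaining $(\MM,\MM_1,\ldots,\MM_{n-1})\models T_{S_1\ldots S_{n-1}}$. By the induction hypothesis there is an $\LL_{S_1\ldots S_{n-1}}$-embedding of this structure into a model $(\MM',\MM_1',\ldots,\MM_{n-1}')\models TS_1\cdots S_{n-1}$. As this is in particular an $\LL$-embedding, $\MM$ is an $\LL_n$-substructure of $\MM'$, whence $\MM_n$ — an $\LL_n$-substructure of $\MM$ and a model of $T_n$ — is still an $\LL_n$-substructure of $\MM'$ and a model of $T_n$. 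Thus $(\MM',\MM_1',\ldots,\MM_{n-1}',\MM_n)$ is a model of $(TS_1\cdots S_{n-1})_{S_n}$ extending $(\MM,\MM_1,\ldots,\MM_n)$. Now, since $TS_1\cdots S_n$ is the model-companion of $(TS_1\cdots S_{n-1})_{S_n}$, this extends further to a model of $TS_1\cdots S_n$. Composing the two extensions embeds $(\MM,\MM_1,\ldots,\MM_n)$ into a model of $TS_1\cdots S_n$. Together with model completeness of $TS_1\cdots S_n$, this identifies it as the model-companion of $T_{S_1\ldots S_n}$.

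The only point requiring care — and the place where the argument could slip — is the reduct bookkeeping in the inductive step: one must check that running the $(n-1)$-fold construction on the $\LL_{S_1\ldots S_{n-1}}$-reduct does not disturb the status of $\MM_n$ as an $\LL_n$-substructure, and model of $T_n$, inside the enlarged structure. This works because the extension supplied by the induction hypothesis is a bona fide $\LL_{S_1\ldots S_{n-1}}$-embedding (and, suitable triples being in play, even $\LL$-elementary on the reduct, although only the substructure relation is needed here), so $\LL$-substructures and the interpretation of $T_n$ are preserved. Everything else is the formal transitivity of ``every model of $A$ embeds into a model of $B$''.
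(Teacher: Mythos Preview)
Your proof is correct and follows essentially the same strategy as the paper's. Both arguments establish the extension step identically (forget the last predicate, extend the remaining structure to a model of $TS_1\cdots S_{n-1}$, reinstate $S_n$, then extend once more); the only organizational difference is that the paper verifies existential closedness of models of $TS_1\cdots S_n$ in extensions that are models of $T_{S_1\ldots S_n}$ directly, whereas you shortcut this by observing that $TS_1\cdots S_n$ is model complete (being a model companion by construction) and checking mutual embeddability instead. Your appeal to the containment $TS_1\cdots S_{n-1}\supseteq T_{S_1\ldots S_{n-1}}$ is justified by the paper's axiomatization (each $TS$ is obtained by adding axioms to $T_S$), though strictly speaking it is not part of the induction hypothesis as stated --- it is an easy side induction.
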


\begin{proof}
  We show the following:
\begin{enumerate}
  \item every model $(\MM,\MM_1,\dots \MM_n)$ of $T_{S_1\dots S_n}$ can be extended to a model $(\NN,\NN_1,\cdots,\NN_n)$ of $TS_1\dots S_n$;
  \item every model $(\NN,\NN_1,\dots,\NN_n)$ of $TS_1\dots S_n$ is existentially closed in an extension $(\MM,\MM_1,\dots, \MM_n)$ model of $T_{S_1,\dots S_n}$.
\end{enumerate}

  (1) Start by extending $(\MM, \MM_1)$ to a model $(\NN^1,\NN_1^1)$ of $TS_1$. Then $(\NN^1,\NN_1^1,\MM_2)$ is a model of ${TS_1}_{S_2}$ so can be extended to a model $(\NN^2,\NN_1^2,\NN_2^2)$ of $TS_1 S_2$. The structure $(\NN^2,\NN_1^2,\NN_2^2)$ is also an extension of $(\MM,\MM_1,\MM_2)$. We iterate this process to end with a model $(\NN^n,\NN_1^n,\cdots,\NN_n^n)$ of $TS_1\cdots S_n$ extending $(\MM,\MM_1,\cdots,\MM_n)$.\\
(2) Let $(\NN,\NN_1,\cdots,\NN_n)$ be a model of $TS_1\cdots S_n$ and $(\MM,\MM_1,\cdots,\MM_n)$ be a model of $T_{S_1\ldots S_n}$ extending it. By (1) there exists a model $(\MM',\MM_1',\cdots,\MM_n')$ of $TS_1\cdots S_n$ extending $(\MM,\MM_1,\cdots,\MM_n)$. As $(\NN,\NN_1,\cdots,\NN_n)$ is a model of $TS_1\cdots S_n$
it is existentially closed in any model of ${TS_1\cdots S_{n-1}}_{S_n}$ extending it, in particular, it is existentially closed in $(\MM',\MM_1',\cdots,\MM_n')$ and hence also in $(\MM,\MM_1,\ldots,\MM_n)$.
\end{proof}

In a model of $TS_1\cdots S_n$, the relations between the $S_i$ are very generic. For example, it is not possible that $S_i\subseteq S_j$ for some $i,j$, since one can always extend the predicate $S_i$ by a new element which is not in $S_j$. In a sense, those generic predicates are invisible from one another. A way to impose relations between the $S_i$, is by considering, for instance, a slightly stronger version of the generic expansion by a reduct --analogously to the generic predicate in~\cite{CP98}. Consider a suitable triple $(T,T_0,\LL_0)$ and $P$ a $0$-definable predicate in $T$ such that in any model $\MM$ of $T$, $P$ is a model of $T_0$ which is a substructure of $\MM$. One may do the construction of the generic expansion by a substructure $S$ inside $P$. In that case, assume that $T_i = T_j$ for all $i,j\leq n$. One may construct $TS_1$ then add a generic substructure $S_2$ inside $S_1$ and iterate. This would be the model companion of the theory $T_{S_1\dots S_n}\cup \set{S_1\supseteq S_2 \supseteq \dots \supseteq S_n}$. One may also consider the case in which $T_i$ is not the theory of a substructure but of a structure $0$-definable in $T$.

\section{Independence relations in $T$ and $TS$}\label{sec_ind}

We set up the context for this section and Section~\ref{part_pres}. Let $(T,T_0,\LL_0)$ be a suitable triple (see Definition~\ref{def_suitabletriple} and Proposition~\ref{cor_suitabletriple}). We work in a monster model $(\M,\M_0)$ of $TS$ such that $\M$ is a monster model of $T$. In particular we fix some completion of $TS$. All small sets $A,B,C,\dots$ or models $\MM,\NN$ of $T$, or models $(\MM,\MM_0),(\NN,\NN_0)$ of $TS$ are seen as subsets of $\M$, respectively elementary substructures of $\M$ or elementary substructures of $(\M,\M_0)$. For instance we have $S(\MM) = \MM\cap S(\M) = \MM\cap \M_0 = \MM_0$. We will start with a ternary relation ($\indi T$) defined over subsets of $\M$ and construct from it a ternary relation ($\indi w$) taking into account the predicate $S(\M) = \M_0$.

We denote by $\overline{A}$ the set $\acl_T(A)$ which, as we saw, equals $\acl_{TS}(A)$.

~

\noindent \textbf{Assumption.} There exists a ternary relation $\indi{T}$ defined over subsets of $\M$, such that $\indi T\rightarrow \indi a$, where $A\indi a _C B \iff \ol{AC}\cap \ol{BC} = \ol{C}$. \\

In particular, if $A\indi T _C B$ then $\acl_T(AC)\indi 0 _{\acl_T(C)} \acl_T(BC)$, by modularity.

~

\begin{df}\label{def_weakstrong}
  We call \emph{weak independence} the relation $\indi w$ defined by
$$A\indi{w}_C B \iff A\indi{T}_C B \text{ and } S(\acl_0(\overline{AC},\overline{BC})) = \acl_0(S(\overline{AC}),S(\overline{BC})).$$
We call \emph{strong independence} the relation $\indi{st}$ defined by
$$A\indi{st}_C B \iff A\indi{T}_C B \text{ and } S(\ol{ABC}) = \acl_0(S(\overline{AC}),S(\overline{BC})).$$
Obviously $\indi{st}\rightarrow \indi w$.
\end{df}

We will show that if $\indi T$ satisfies most of the properties listed above relatively to the theory $T$, then so does $\indi w$ relatively to the theory $TS$. The property \ref{SYM} of $\indi 0$, $\indi T$ and $\indi w$ will be tacitly used throughout this chapter.

\begin{lm}\label{lm_propbase} 
If $\indi{T}$ satisfies \ref{INV}, \ref{CLO}, \ref{SYM}, \ref{EX} and \ref{MON}, then so does $\indi{w}$.
\end{lm}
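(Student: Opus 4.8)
The plan is to verify each of the five properties for $\indi{w}$ in turn, pulling the first conjunct ($A\indi{T}_C B$) directly from the corresponding property of $\indi{T}$, and checking separately that the second conjunct --- the equality $S(\acl_0(\overline{AC},\overline{BC})) = \acl_0(S(\overline{AC}),S(\overline{BC}))$ --- is preserved. The key observation throughout is that this second conjunct is phrased entirely in terms of the $T$-algebraic closures $\overline{AC}=\acl_T(AC)$ and $\overline{BC}=\acl_T(BC)$, the $\acl_0$-closure operator, and the predicate $S(\M)=\M_0$, so it is insensitive to most of the combinatorial rearrangements the properties demand.

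First, \textbf{\ref{INV}}: if $ABC\equiv^{TS} A'B'C'$, an automorphism of $(\M,\M_0)$ carries $\overline{AC}$ to $\overline{A'C'}$, $\overline{BC}$ to $\overline{B'C'}$, commutes with $\acl_0$ (since it is in particular an $\LL_0$-automorphism) and preserves $S(\M)=\M_0$; hence it carries one instance of the defining equality to the other, and combined with \ref{INV} for $\indi T$ this gives \ref{INV} for $\indi w$. For \textbf{\ref{CLO}}, note $\overline{AC}=\overline{\acl_T(C)\,A}$ and $\overline{BC}=\overline{\acl_T(BC)\,\acl_T(C)}=\overline{BC}$, i.e.\ replacing $C$ by $\acl_T(C)$ and $B$ by $\acl_T(BC)$ does not change either $\overline{AC}$ or $\overline{BC}$ (using $\acl_T\acl_T=\acl_T$ and $\acl_{TS}=\acl_T$ from Proposition~\ref{cor_com}(4)); so the second conjunct is literally unchanged, and \ref{CLO} for $\indi T$ finishes it. For \textbf{\ref{SYM}}, the defining formula for $\indi w$ is visibly symmetric in $\overline{AC}$ and $\overline{BC}$, so it transfers from \ref{SYM} of $\indi T$ at once. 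For \textbf{\ref{EX}}, we must check $A\indi w_C C$: here $\overline{AC}=\overline{AC}$ and $\overline{CC}=\overline{C}\subseteq\overline{AC}$, so $\acl_0(\overline{AC},\overline{C})=\acl_0(\overline{AC})=\overline{AC}$ (it is already $\acl_0$-closed, being $T$-algebraically closed), hence $S$ of it is just $S(\overline{AC})=\acl_0(S(\overline{AC}))\subseteq\acl_0(S(\overline{AC}),S(\overline{C}))\subseteq S(\overline{AC})$, giving equality; combined with \ref{EX} for $\indi T$ this yields \ref{EX}. For \textbf{\ref{MON}}: assume $A\indi w_C BD$, so $A\indi T_C BD$ and $S(\acl_0(\overline{AC},\overline{BDC}))=\acl_0(S(\overline{AC}),S(\overline{BDC}))$; we want the same with $\overline{BC}$ in place of $\overline{BDC}$. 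From \ref{MON} for $\indi T$ we get $A\indi T_C B$. For the second conjunct, $\overline{BC}\subseteq\overline{BDC}$, and one intersects the hypothesis equality with the smaller set: since $\acl_0(\overline{AC},\overline{BC})\subseteq\acl_0(\overline{AC},\overline{BDC})$, any element of $S$ on the left lies in $\acl_0(S(\overline{AC}),S(\overline{BDC}))\cap\acl_0(\overline{AC},\overline{BC})$, and by modularity of $\acl_0$ (hypothesis $(H_3^+)$) this intersection equals $\acl_0(S(\overline{AC}),\,S(\overline{BDC})\cap\acl_0(\overline{AC},\overline{BC}))$; one then notes $S(\overline{BDC})\cap\acl_0(\overline{AC},\overline{BC})\subseteq S(\overline{BC})$ --- here using that $\overline{BDC}\cap\acl_0(\overline{AC},\overline{BC})\subseteq\overline{BC}$, which follows because $A\indi a_C BD$ gives $\overline{AC}\cap\overline{BDC}=\overline{C}$ together with modularity --- so the left side is contained in $\acl_0(S(\overline{AC}),S(\overline{BC}))$, and the reverse inclusion is trivial.

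The step I expect to be the main obstacle is \ref{MON}, precisely because it is the only one where the two instances of the defining equality are genuinely different sets and one must run an intersection-and-modularity argument rather than a substitution. The delicate point is the inclusion $S(\overline{BDC})\cap\acl_0(\overline{AC},\overline{BC})\subseteq S(\overline{BC})$: it rests on combining $\indi T\to\indi a$ (to control $T$-algebraic-closure intersections), modularity of $\acl_0$, and the consistency-type machinery of Proposition~\ref{prop_type}; the remark immediately following Definition~\ref{def_weakstrong} --- that $A\indi T_C B$ forces $\acl_T(AC)\indi 0_{\acl_T(C)}\acl_T(BC)$ by modularity --- is exactly the lever for this, so I would invoke it rather than re-deriving it. All the other four properties reduce to the bookkeeping observations above and should be dispatched in a line or two each.
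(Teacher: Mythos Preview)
Your proposal is correct and follows essentially the same route as the paper: \ref{INV}, \ref{CLO}, \ref{SYM}, \ref{EX} are dispatched in one line each, and \ref{MON} is handled by intersecting the hypothesis equality with $\acl_0(\overline{AC},\overline{BC})$, pushing the intersection inside via modularity, and then using $\indi T\to\indi a$ together with modularity (equivalently \ref{BMON} of $\indi 0$) to get $\overline{BCD}\cap\acl_0(\overline{AC},\overline{BC})=\overline{BC}$. One minor remark: your aside that the \ref{MON} step ``rests on \dots\ the consistency-type machinery of Proposition~\ref{prop_type}'' is off --- that proposition plays no role here; only $\indi T\to\indi a$ and modularity of $\acl_0$ are needed.
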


\begin{proof}
  \ref{INV} is clear because  $S(\acl_0(\overline{AC},\overline{BC})) = \acl_0(S(\overline{AC}),S(\overline{BC}))$ is an $\LL_S$-invariant condition. \ref{CLO}, \ref{SYM} and \ref{EX} are trivial. 
  
  For \ref{MON}, let $A,B,C,D$ such that $A\indi{w}_C BD$. By hypothese, $A\indi{T}_C B$. Now 
  \begin{align*}
    S(\acl_0(\overline{AC},\overline{BC})) &=  S(\acl_0(\overline{AC},\overline{BCD}))\cap \acl_0(\overline{AC},\overline{BC})\\
        &=\acl_0(S(\overline{AC}),S(\overline{BCD}))\cap \acl_0(\overline{AC},\overline{BC}).
      \end{align*}
      Since $S(\ol{AC})\subseteq \acl_0(\overline{AC},\overline{BC})$, we have by modularity 
      $$\acl_0(S(\overline{AC}),S(\overline{BCD}))\cap \acl_0(\overline{AC},\overline{BC}) = \acl_0(S(\overline{AC}),S(\overline{BCD})\cap \acl_0(\overline{AC},\overline{BC})).$$ Using that $\indi T\rightarrow \indi a$, it follows from the hypotheses that $\ol{AC}\indi 0 _{\ol{C}} \ol{BCD}$ hence by \ref{BMON} of $\indi{0}$ we have $\overline{BCD}\cap \acl_0(\overline{AB},\overline{BC}) = \overline{BC}$ hence $$S(\overline{BCD})\cap \acl_0(\overline{AC},\overline{BC}) = S(\overline{BC}).$$ 
  It follows that $S(\acl_0(\overline{AC},\overline{BCD})) = \acl_0(S(\overline{AC}),S(\overline{BC}))$ and so $A\indi w _C B$.
\end{proof}

\begin{lm}\label{lm_ext}
  If $\indi T$ satisfies~\ref{EXT}, then $\indi{st}$ and $\indi w$ satisfy~\ref{EXT}.
\end{lm}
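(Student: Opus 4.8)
The plan is to prove \ref{EXT} for $\indi{st}$; since $\indi{st}\rightarrow\indi w$ the same witness then works for $\indi w$, so this is enough. Replacing $C$ by $\acl_T(C)$ (harmless: the extra clause defining $\indi{st}$ only involves the sets $\acl_T(AC)$, $\acl_T(BC)$, $\acl_T(ABC)$, which are unchanged, and $\indi T$ absorbs algebraic closures into its base), I fix small sets $A$, $B$, $C=\acl_T(C)$ and a small model $(\MM,\MM_0)\prec(\M,\M_0)$ with $\acl_T(BC)\subseteq\MM$; the goal is to produce some $A''$ with $A''\equiv^{TS}_C A$ and $A''\indi{st}_C B$.

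First I would build a $T$-independent, re-decorated copy of $A$. By \ref{EXT} for $\indi T$ (applied to $A$, $\MM$, $C$) there is a $T$-elementary map $f\colon\acl_T(AC)\to\M$ fixing $C$ pointwise, with $A':=f(A)$ satisfying $A'\indi T_C\MM$, hence also $A'\indi T_C B$. From the monotonicity properties of $\indi T$ together with $\indi T\to\indi a$ and the modularity of $\acl_0$ one gets $\acl_T(A'C)\cap\MM=C$, $\acl_T(A'BC)\cap\MM=\acl_T(BC)$, and $\acl_T(A'C)\indi 0_C\acl_T(BC)$. Now set $S_1:=f(S(\acl_T(AC)))$ — an $\acl_0$-closed subset of $\acl_T(A'C)$ containing $S(C)$, since $f$ fixes $C$ — and put
$$S^*:=\acl_0(S_1\cup S(\acl_T(BC)))\subseteq\acl_T(A'BC).$$
Using modularity of $\acl_0$ and the $\acl_0$-independence above, together with $S(\acl_T(BC))\cap\acl_T(A'C)=S(C)\subseteq S_1$, one checks that $S^*\cap\acl_T(A'C)=S_1$ and, more generally, $S^*\cap\MM=S(\acl_T(BC))=\MM_0\cap\acl_T(BC)$.

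Then I would realize this configuration inside the monster. Proposition~\ref{prop_type}, applied with the model $(\MM,\MM_0)$, base set $\acl_T(BC)$, $X:=A'$ and $S_{XB}:=S^*$ — whose two hypotheses are exactly the two equalities just verified — shows that the $T$-elementary $\LL_S$-isomorphism type of $(\acl_T(A'BC),S^*)$ over $\acl_T(BC)$ is consistent with $Th(\MM,\MM_0)$, hence realized in $(\M,\M_0)$ by some $A''$. Restricting the realizing isomorphism to $\acl_T(A''C)$ and composing with $f^{-1}$ gives a $T$-elementary $\LL_S$-isomorphism $(\acl_T(A''C),S(\acl_T(A''C)))\to(\acl_T(AC),S(\acl_T(AC)))$ over $C$, so $A''\equiv^{TS}_C A$ by Proposition~\ref{cor_com}(2). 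Moreover $A''\equiv^T_{\acl_T(BC)}A'$, so $A''\indi T_C B$ by \ref{INV} of $\indi T$, and transporting the defining equation of $S^*$ through that isomorphism yields $S(\acl_T(A''BC))=\acl_0(S(\acl_T(A''C)),S(\acl_T(BC)))$. Hence $A''\indi{st}_C B$, which completes the argument.

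The step I expect to be most delicate is the re-decoration bookkeeping of the middle paragraph: verifying that $S^*$ meets $\acl_T(A'C)$ and $\MM$ in precisely the prescribed sets. This is exactly where modularity of $\acl_0$ — and the $\acl_0$-independence it extracts from $\indi T$ via the standing Assumption — is indispensable, and where it is essential that the copy $A'$ was taken $\indi T$-independent over $C$ from the whole small model $\MM$ (not just from $B$), so that $\acl_T(A'BC)\cap\MM=\acl_T(BC)$ and Proposition~\ref{prop_type} applies. Once the configuration is realized, the fact that $A''$ inherits both the $TS$-type of $A$ over $C$ and strong independence from $B$ is a routine transport along isomorphisms, using only \ref{INV} (and the basic monotonicity) of $\indi T$.
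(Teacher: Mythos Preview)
Your argument is essentially the paper's: get $A'\equiv^T_C A$ with $A'\indi T_C\MM$, transport $S$ along the $T$-isomorphism to define $S_1$, set $S^*=\acl_0(S_1,S(\overline{BC}))$, check the two intersection conditions, and realize via Proposition~\ref{prop_type}.

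The one step to tighten is the claim $\overline{A'BC}\cap\MM=\overline{BC}$. From $A'\indi T_C\MM$ and $\indi T\to\indi a$ you only obtain $\overline{A'C}\cap\MM=C$; promoting this to $\overline{A'BC}\cap\MM=\overline{BC}$ is exactly \ref{BMON} (for $\indi T$ or for $\indi a$), which is not among the hypotheses of the lemma, so ``monotonicity properties of $\indi T$'' does not cover it. The paper handles this by a separate application of \ref{EXT} for $\indi a$ (always available, see Remark~\ref{rk_hyp5}): having secured $A'\indi T_C B$, move $A'$ over $\overline{BC}$ to arrange $\overline{A'BC}\cap\MM=\overline{BC}$; this move preserves $A'\indi T_C B$ by \ref{INV}. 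With that adjustment your proof and the paper's coincide verbatim.
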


\begin{proof}
  We show that $\indi{st}$ satisfies \ref{EXT}. Let $A,B,C$ be contained in some model $(\MM,\MM_0)$ of $TS$. By \ref{EXT} for $\indi T$, there exists $A'\equiv^{T}_C A$ with $A'\indi T _C \MM$, in particular $\ol{A'C}\cap \ol{BC} = \ol C$. Using \ref{EXT} of $\indi a$ we may assume that $\overline{A'BC}\cap \MM = \overline{BC}$. Let $f : \overline{A'C}\rightarrow \overline{AC}$ be a $T$-elementary isomorphism over $C$ and $S_{A'C} := f^{-1}(S(\overline{AC}))$. Let $S_{A'BC} = \acl_0(S_{A'C}, S(\overline{BC}))$.  It is easy to see that 
\begin{itemize}
  \item $S_{A'BC}\cap \MM = S_{A'BC}\cap \overline{BC} = S(\overline{BC})$
\item $S_{A'BC}\cap \overline{A'C} = S_{A'C}$
\end{itemize} 
Using $\overline{A'BC}\cap \MM = \overline{BC}$ and the first item, the type over $BC$ defined by the pair $(\overline{A'BC},S_{A'BC})$ is consistent (see Proposition~\ref{prop_type}).  We may assume that $A'\subseteq \M$ realizes this type. From the second item, we have that $A'\equiv^{TS}_C A$, and it is clear that $S(\overline{A'BC})$ is equal to $\acl_0(S(\ol{A'C}),S(\overline{BC}))$ so $A'\indi{st}_C B$. We conclude that \ref{EXT} is satisfied by $\indi{st}$. As $\indi{st}\rightarrow \indi w$, \ref{EXT} is also satisfied by $\indi w$.
\end{proof}

\begin{lm}\label{lm_str}
  If $\indi T$ satisfies \ref{STRFINC} over algebraically closed sets, then the relation $\indi w$ satisfies~\ref{STRFINC} over algebraically closed sets.
\end{lm}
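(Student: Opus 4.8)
The plan is to split on whether $a\indi{T}_E b$. If $a\nindi{T}_E b$, then since $\indi{T}$ has \ref{STRFINC} over algebraically closed sets there is an $\LL$-formula $\Lambda(x,b,e)\in tp^T(a/Eb)$ witnessing $\nindi{T}$; as every $\LL$-formula is an $\LL_S$-formula and $A\indi{w}_C B$ implies $A\indi{T}_C B$ by definition, the very same $\Lambda$ witnesses $\nindi{w}$. So from now on assume $a\indi{T}_E b$. Since $a\nindi{w}_E b$ and the inclusion $\supseteq$ always holds in Definition~\ref{def_weakstrong}, we can fix some $c\in S(\acl_0(\overline{aE},\overline{bE}))\setminus\acl_0(S(\overline{aE}),S(\overline{bE}))$.

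Two preliminary remarks. Because $\LL_0\subseteq\LL$, every $\acl_T$-closed set is $\acl_0$-closed; hence if $c$ lay in $\overline{aE}$ it would lie in $S(\overline{aE})\subseteq\acl_0(S(\overline{aE}),S(\overline{bE}))$, a contradiction, so $c\notin\overline{aE}$ and symmetrically $c\notin\overline{bE}$. Next, choose finite $\hat a\subseteq\overline{aE}$ and $\hat b\subseteq\overline{bE}$ with $c\in\acl_0(\hat a\hat b)$ and $|\hat a|+|\hat b|$ minimal; minimality forces $\hat a\hat b$ to be an $\indi{0}$-independent tuple (a coordinate algebraic over the rest could be dropped), so in particular $\hat b$ is an $\indi{0}$-independent tuple over $\hat a$, and conversely. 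Pick a finite $e\subseteq E$ with $\hat a\in\acl_T(ae)$ and $\hat b\in\acl_T(be)$, together with $\LL$-formulas $\alpha(\hat y,x,e)$, $\beta(\hat w,y,e)$, algebraic in $\hat y$ resp.\ $\hat w$, satisfied by $\hat a$ at $x=a$ resp.\ by $\hat b$ at $y=b$. Since $c\in\acl_0(\hat a\hat b)\setminus(\acl_0(\hat a)\cup\acl_0(\hat b))$, the construction in the proof of Lemma~\ref{lm_strict}, carried out in both variables, produces an $\LL_0$-formula $\tau(t,x',y')$ that is algebraic in $t$ and strict in both $x'$ and $y'$, with $c\models\tau(t,\hat a,\hat b)$.

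Now set
\[ \Lambda(x,b,e)\ :=\ \exists\hat y\,\hat w\,t\ \bigl(\alpha(\hat y,x,e)\wedge\beta(\hat w,b,e)\wedge\tau(t,\hat y,\hat w)\wedge S(t)\bigr). \]
By construction $a\models\Lambda(x,b,e)$ (witnessed by $\hat a,\hat b,c$) and $\Lambda$ is an $\LL_S$-formula with parameters in $Eb$, so $\Lambda\in tp^{TS}(a/Eb)$. Given any $a'\models\Lambda(x,b,e)$, extract witnesses $\hat a',\hat b'',c'$: then $c'\in\M_0$, $\hat a'\subseteq\acl_T(a'e)\subseteq\overline{a'E}$, $\hat b''\subseteq\acl_T(be)\subseteq\overline{bE}$, and since $\tau$ is algebraic in $t$, $c'\in\acl_0(\hat a'\hat b'')\subseteq\acl_0(\overline{a'E},\overline{bE})$, whence $c'\in S(\acl_0(\overline{a'E},\overline{bE}))$. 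If $a'\nindi{T}_E b$ we are done; otherwise $a'\indi{T}_E b$, so $\overline{a'E}\indi{0}_{\overline E}\overline{bE}$, and it remains to show $c'\notin\acl_0(S(\overline{a'E}),S(\overline{bE}))$ — for then $c'$ witnesses the failure of the $S$-equality in Definition~\ref{def_weakstrong} and $a'\nindi{w}_E b$, as wanted.

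This last verification is the heart of the matter, and where I expect the real difficulty. The issue is that strictness of $\tau$ only bites once $\hat b''$ is in general position over $\hat a'$, which is not automatic since $\hat b''$ is controlled only up to being algebraic over $be$. The plan is to use modularity of $\acl_0$: from $\overline{a'E}\indi{0}_{\overline E}\overline{bE}$ one gets $S(\overline{a'E})\indi{0}_{S(\overline E)}S(\overline{bE})$, and membership $c'\in\acl_0(S(\overline{a'E}),S(\overline{bE}))$ reduces to membership in the $\acl_0$ of finite $S$-subtuples lying inside $\overline{a'E}$ and $\overline{bE}$; combining this with the strictness of $\tau$ (refining $\Lambda$, if needed, by further strict-algebraic clauses recording that $c'$ is algebraic over no $S$-element produced from $\hat a'$ and $\hat b''$) should force a contradiction. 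The delicate bookkeeping is the case where $\hat b''$ is not generic over $\hat a'$: modularity then places the non-generic part of $\hat b''$ inside $\overline E$, where it can be absorbed into the base, reducing to the generic case. I expect this absorption argument, run uniformly so that it applies to every realisation of $\Lambda$, to be the bulk of the work.
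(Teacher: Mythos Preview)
Your overall shape is right and matches the paper's approach up to the point where you define $\Lambda$, but there is a genuine gap in the final verification, and your sketch of how to close it does not work.

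The problem is your choice of $\alpha(\hat y,x,e)$ and $\beta(\hat w,y,e)$ as $\LL$-formulae. These only pin down the $T$-algebraic locus of $\hat a$ and $\hat b$; they say nothing about how the predicate $S$ sits on a realization. So for an arbitrary $a'\models\Lambda$ and witnesses $\hat a',\hat b'',c'$, nothing prevents $\hat b''$ (or pieces of it) from lying in $\acl_0(S(\overline{bE}),E)$, and then $c'$ may well land in $\acl_0(S(\overline{a'E}),S(\overline{bE}))$. Your proposed fix --- adding ``strict-algebraic clauses recording that $c'$ is algebraic over no $S$-element'' --- cannot be carried out by a single formula: there is no uniform finite list of $S$-elements to exclude, and the absorption argument you describe concerns $\indi 0$-genericity of $\hat b''$ over $\hat a'$, which is a different issue from controlling $S$ on $\hat b''$.

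The missing idea is this: since $u\in\overline{Ea}$ and $v\in\overline{Eb}$ are \emph{algebraic} over $Ea$ and $Eb$ (and $\acl_{TS}=\acl_T$), their full $TS$-types are isolated by $\LL_S$-formulae $\psi_u(y,a,e)$ and $\psi_v(z,b,e)$. Use these in place of your $\alpha,\beta$. One first proves (by modularity and exchange) that $v\notin\acl_0(S(\overline{bE}),E)$; because $\psi_v$ isolates $tp^{TS}(v/Eb)$, \emph{every} realization $v'$ of $\psi_v$ inherits this property. Then for any $a'\models\Lambda$ with $a'\indi T_E b$ and witnesses $u',v',s'$, if $s'\in\acl_0(S(\overline{a'E}),S(\overline{bE}))$ one uses strictness of $\tau$ in the $v$-variable to get $v'\in\acl_0(s',u')\subseteq\acl_0(\overline{a'E},S(\overline{bE}))$, and modularity together with $\overline{a'E}\cap\overline{bE}=\overline E$ forces $v'\in\acl_0(S(\overline{bE}),E)$, a contradiction. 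A secondary simplification in the paper is to use modularity at the outset to reduce $\hat a,\hat b$ to \emph{singletons} $u,v$; this makes the strictness and exchange arguments much cleaner than working with tuples.
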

\begin{proof}
  Assume that $a \nindi{w}_C b$ and $C = \ol{C}$. If $a \nindi{T}_C b$, we have a formula witnessing \ref{STRFINC} over $C$ by hypothesis. Otherwise, assume that $a\indi{T}_C b$, set $A = \overline{Ca}$, $B = \overline{Cb}$ and assume that there exists $s\in S(\acl_0(A,B))  \setminus \acl_0(S(A),S(B))$. Let $u\in A\setminus S(A)$ and $v\in B\setminus S(B)$ be such that $s\in \acl_0(u,v)$. There exists $\LL_S$-formulae $\psi_u(y,a,c)$ and $\psi_v(z,b,c)$ isolating respectively $tp^{TS}(u/Ca)$ and $tp^{TS}(v/Cb)$ for some tuple $c$ in $C$. There is also an $\LL_0$-formula $\phi(t,y,z)$ algebraic in $t$, strict in $y$ and strict in $z$, such that $s\models \phi(t,u,v)$.\\
    
  \begin{claim} $v\notin \acl_0(S(B), C)$. \end{claim}
  \begin{proof}[Proof of the claim] Assuming otherwise, by modularity there exists singletons $s_b\in S(B)$ and $c\in C$ such that $v\in \acl_0(s_b,c)$ and so $s\in \acl_0(s_b,c,u)$. As $cu\subseteq A$, by modularity there exists a singleton $u'\in A$ such that $s\in \acl_0(s_b,u')$ and by Exchange $u'\in \acl_0(s_b, s)\cap A \subseteq S(A)$, this contradicts the hypothesis on $s$.\end{proof}
    
    In particular for any other realisation $v'$ of $\psi_v(z,b,c)$ we have $v'\notin \acl_0(S(B), C)$. Now let $\Lambda(x, b, c)$ be the following formula
    $$\exists y \exists z \exists t \psi_u(y,x,c)\wedge \psi_v(z,b,c) \wedge \phi(t,y,z)\wedge t\in S.$$

    We have that $\Lambda(x, b, c)\in tp^{TS}(a/bC)$. Assume that $a'\models \Lambda(x, b, c)$. If $a'\nindi{T}_C b$ then we are done, so we may assume that $a'\indi{T}_C b$, in particular $\overline{Ca'}\cap B = C$ as $C$ is algebraically closed. There exists $u'\in \overline{Ca'}$ and $v'\in B\setminus \acl_0(S(B),C)$ such that there is $s'\in \acl_0(u',v')\cap S$. In particular $v'\in \acl_0(s',u')$ as $\phi(t,y,z)$ is strict in $z$. Now assume that $s'\in \acl_0(S(B),S(\overline{Ca'}))$, then $v'\in \acl_0(\overline{Ca'},S(B))$ and also $v'\in B$. By modularity, 
    $$\acl_0(S(B),\overline{Ca'})\cap B = \acl_0(S(B),\ol{Ca'}\cap B) = \acl_0(S(B),C)$$
    so $v'\in \acl_0(S(B),C)$, a contradiction. We conclude that $$s'\in S(\acl_0(\ol{Ca'},B))\setminus \acl_0(S(\ol{Ca'}),S(B))$$ so $a'\nindi w _C B$.
  \end{proof}

\begin{thm}\label{thm_ind}
  Assume that $\indi{T}$ satisfies the hypotheses of Lemmas~\ref{lm_propbase}. Assume that for some subset $E$ of $\M$, the following two properties hold:
  \begin{enumerate}[label={$(A_\arabic*)$}]
    \item $\ind'$-\ref{AM} over $E$ for some $\ind'\rightarrow \indi a$, $\ind '$ satisfying \ref{MON}, \ref{SYM} and \ref{CLO};
\item\label{hyp_inter} For all $A,B,C$ algebraically closed containing $E$, if $C\indi T _E A,B$ and $A\ind' _E B$ then $$(\overline{AC},\overline{BC}) \indi{0}_{A,B} \overline{AB}.$$
\end{enumerate}
Then $\indi{w}$ satisfies $\ind'$-\ref{AM} over $E$.
\end{thm}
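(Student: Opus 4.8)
The plan is to lift the amalgam from $T$ to $TS$: first produce a $T$-amalgam using hypothesis $(A_1)$, then equip it with the correct predicate, whose consistency is read off from hypothesis $(A_2)$ together with modularity of $\acl_0$, and finally realise it with Proposition~\ref{prop_type}. Using \ref{CLO} (and \ref{SYM}, \ref{MON}) of $\indi w$ and $\ind'$, I first reduce to $E=\ol E$, $A=\ol{EA}$, $B=\ol{EB}$, so that $E\subseteq A\cap B$ and all three are $\acl_T$-closed (the $\equiv^{TS}_A$, $\equiv^{TS}_B$ and $\indi a$-conclusions are insensitive to this, since $\acl_T$ is idempotent). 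From $c_1\equiv^{TS}_E c_2$ we get $c_1\equiv^T_E c_2$, and $c_1\indi w_E A$, $c_2\indi w_E B$ give $c_1\indi T_E A$, $c_2\indi T_E B$; so $(A_1)$ applies and yields $c_*$ with $c_*\indi T_E AB$, $c_*\equiv^T_A c_1$, $c_*\equiv^T_B c_2$, $A\indi a_{Ec_*}B$, $c_*\indi a_{EA}B$, $c_*\indi a_{EB}A$. Fix $T$-elementary isomorphisms $f_A\colon\ol{c_*A}\to\ol{c_1A}$ over $A$ and $f_B\colon\ol{c_*B}\to\ol{c_2B}$ over $B$ with $f_A(c_*)=c_1$, $f_B(c_*)=c_2$; one may arrange (re-choosing $c_*$ inside the amalgam, which is possible since $\ol{Ec_*}\indi T_E B$ leaves enough room over $\ol{Ec_*}$) that they restrict compatibly on $\ol{Ec_*}$, i.e.\ $f_B|_{\ol{Ec_*}}=g\circ f_A|_{\ol{Ec_*}}$ where $g\colon\ol{Ec_1}\to\ol{Ec_2}$ is the $\LL_S$-isomorphism over $E$ witnessing $c_1\equiv^{TS}_E c_2$. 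Put $S_E:=f_A^{-1}(S(\ol{Ec_1}))=f_B^{-1}(S(\ol{Ec_2}))\subseteq\ol{Ec_*}$. Fixing a model $(\MM,\MM_0)\models TS$ with $\ol{AB}\subseteq\MM$, I replace $c_*$ by a $T$-conjugate over $\ol{AB}$ so that moreover $\ol{c_*AB}\cap\MM=\ol{AB}$, by \ref{EXT} of $\indi a$ in $T$ (Remark~\ref{rk_hyp5}); this preserves everything above by invariance.

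Next I equip $\ol{c_*AB}$ with the generic amalgam of the $S$-structures. Put $S_A:=f_A^{-1}(S(\ol{c_1A}))\subseteq\ol{c_*A}$ and $S_B:=f_B^{-1}(S(\ol{c_2B}))\subseteq\ol{c_*B}$; these are $\acl_0$-closed with $S_A\cap\ol{Ec_*}=S_B\cap\ol{Ec_*}=S_E$, $S_A\cap A=S(A)$, $S_B\cap B=S(B)$, and — since $c_1\indi w_E A$ and $c_2\indi w_E B$ — with $S_A\cap\acl_0(\ol{Ec_*},A)=\acl_0(S_E,S(A))$ and $S_B\cap\acl_0(\ol{Ec_*},B)=\acl_0(S_E,S(B))$. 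Define $S_*:=\acl_0\big(S_A,S_B,S(\ol{AB})\big)\subseteq\ol{c_*AB}$, and apply Proposition~\ref{prop_type} with $X=c_*$ over the base $\ol{AB}$: $S_*$ is $\acl_0$-closed, contains $S(\acl_T(\ol{AB}))=S(\ol{AB})$, and $\acl_T(c_*\ol{AB})\cap\MM=\ol{c_*AB}\cap\MM=\ol{AB}$, so the only point to check is $S_*\cap\MM=S(\ol{AB})$. Since $\ol{c_*AB}\cap\MM=\ol{AB}$ and $S_*\subseteq\ol{c_*AB}$, this reduces (by the modularity identity $\acl_0(P,Q)\cap R=\acl_0(P,Q\cap R)$ for $P\subseteq R$, as in the proof of Lemma~\ref{lm_propbase}) to $\acl_0(S_A,S_B)\cap\ol{AB}\subseteq S(\ol{AB})$. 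Here hypothesis $(A_2)$ enters, applied with $C=\ol{Ec_*}$ (legitimate, as $\ol{Ec_*}\indi T_E A,B$ and $A\ind'_E B$): it gives $(\ol{c_*A},\ol{c_*B})\indi 0_{A,B}\ol{AB}$, i.e.\ $\acl_0(\ol{c_*A},\ol{c_*B})\cap\ol{AB}=\acl_0(A,B)$, whence $\acl_0(S_A,S_B)\cap\ol{AB}\subseteq\acl_0(A,B)$. Combining this with $A\indi a_{Ec_*}B$ (hence $\ol{c_*A}\indi 0_{\ol{Ec_*}}\ol{c_*B}$), $c_*\indi a_{EA}B$, $c_*\indi a_{EB}A$, $\ol{Ec_*}\indi T_E AB$, the displayed identities for $S_A,S_B$, and modularity, a routine diagram chase shows $\acl_0(S_A,S_B)\cap\acl_0(A,B)\subseteq\acl_0(S(A),S(B))\subseteq\MM_0$, so indeed $S_*\cap\MM=S(\ol{AB})$. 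The same manipulations (using in addition $S_B\cap\ol{c_*A}\subseteq\ol{Ec_*}$) give the restriction identities $S_*\cap\ol{c_*A}=S_A$ and $S_*\cap\ol{c_*B}=S_B$.

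Now realise the type: by Proposition~\ref{prop_type} the $T$-elementary $\LL_S$-isomorphism type of $(\ol{c_*AB},S_*)$ over $\ol{AB}$ is consistent, so let $c$ realise it inside $\M$, with $h\colon(\ol{c_*AB},S_*)\to(\ol{cAB},S(\ol{cAB}))$ the witnessing $\LL_S$-isomorphism over $\ol{AB}$, $h(c_*)=c$. Then $c\equiv^T_{AB}c_*$, so $c\indi T_E AB$, $A\indi a_{Ec}B$, $c\indi a_{EA}B$, $c\indi a_{EB}A$ follow by invariance of $\indi T$ and $\indi a$ (and $\acl_{TS}=\acl_T$, Proposition~\ref{cor_com}(4)). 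Since $S(\ol{cA})=h(S_*\cap\ol{c_*A})=h(S_A)$ and $f_A(S_A)=S(\ol{c_1A})$, the composite $f_A\circ h^{-1}\colon\ol{cA}\to\ol{c_1A}$ is a $T$-elementary $\LL_S$-isomorphism over $A$ sending $c$ to $c_1$, hence $c\equiv^{TS}_A c_1$; symmetrically $c\equiv^{TS}_B c_2$. By invariance of $\indi w$ this yields $c\indi w_E A$ and $c\indi w_E B$. It remains to check $c\indi w_E AB$: the $T$-part is already there, and for the predicate one must show $S(\acl_0(\ol{Ec},\ol{AB}))=\acl_0(S(\ol{Ec}),S(\ol{AB}))$. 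Writing $S(\ol{cAB})=h(S_*)=\acl_0(h(S_A),h(S_B),S(\ol{AB}))$ with $h(S_A)\subseteq\ol{cA}$, $h(S_B)\subseteq\ol{cB}$, and repeating the argument of the previous paragraph with $C=\ol{Ec}$ in $(A_2)$, one sees that $\acl_0(h(S_A),h(S_B))$ meets $\acl_0(\ol{Ec},\ol{AB})$ only inside $\acl_0(S(\ol{Ec}),S(\ol{AB}))$, which gives the desired equality. Thus $c$ witnesses $\ind'$-\ref{AM} for $\indi w$ over $E$.

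The main obstacle is exactly the verification that $S_*$ is a legitimate, correctly-restricting predicate — above all the consistency equality $S_*\cap\MM=S(\ol{AB})$ and its analogue in the last step. This is precisely where $(A_2)$ is indispensable: it asserts that the $\acl_0$-spans of $\ol{c_*A}$ and $\ol{c_*B}$ are in general position over $\acl_0(A,B)$ relative to $\ol{AB}$, which is what prevents the generic amalgam $S_*$ from collapsing fresh $S$-material into $S(\ol{AB})$. Combined with the hypotheses $c_1\indi w_E A$, $c_2\indi w_E B$ (which pin down $S_A$, $S_B$ on the relevant $\acl_0$-spans) and the coherence of the underlying $T$-amalgam over $\ol{Ec_*}$ (needed for $S_*$ to be well defined as an $\LL_S$-structure at all), modularity of $\acl_0$ then forces all new predicate elements to lie away from the base, which is the content of the diagram chases above.
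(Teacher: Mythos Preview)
Your proof follows the same architecture as the paper's: produce a $T$-amalgam via $(A_1)$, endow it with the ``free'' predicate $S_* = \acl_0(S_A,S_B,S(\ol{AB}))$, verify the restriction identities using $(A_2)$ and modularity, and realise via Proposition~\ref{prop_type}. The diagram chases you defer as ``routine'' are exactly what the paper carries out in detail (its Claim and the final paragraph), and your listed ingredients are the right ones.

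Two points need tightening. First, the reduction to $E=\ol E$ is neither justified nor harmless: $c_1\equiv^{TS}_E c_2$ does not imply $c_1\equiv^{TS}_{\ol E} c_2$, and the hypotheses $(A_1),(A_2)$ are only given over the fixed $E$. The paper avoids this by reducing only $A,B$ to $\acl_T$-closed sets containing $E$ and keeping $E$ untouched; you should do the same. Second, your compatibility step for $f_A,f_B$ is a genuine gap as written. Given only $c_*\equiv^T_A c_1$ and $c_*\equiv^T_B c_2$, there is no reason the witnessing isomorphisms agree on $\ol{Ec_*}$ up to $g$, and ``re-choosing $c_*$'' does not obviously fix this: one would need a stationarity-type statement to extend $g\circ f_A|_{\ol{Ec_*}}$ to an isomorphism over $B$. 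The paper's solution is clean and you should adopt it: enumerate $C_1=\ol{Ec_1}$, set $C_2=h(C_1)$ (so $C_1\equiv^T_E C_2$), and apply $(A_1)$ to these \emph{tuples}. The resulting $C\equiv^T_A C_1$, $C\equiv^T_B C_2$ then force $g\upharpoonright C = h\circ(f\upharpoonright C)$ by construction, since the enumerations are carried along.

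With these two fixes the argument is correct and coincides with the paper's.
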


\begin{proof}
Let $c_1,c_2, A, B$ be in a $(\MM,\MM_0)\prec (\M,\M_0)$ such that
\begin{itemize}
\item $c_1\equiv_E^{TS} c_2$
\item $A\ind'_E B$
\item $c_1\indi{w}_E A$ and $c_2\indi{w}_E B$ 
\end{itemize}
As $\ind' $ satisfies \ref{SYM}, \ref{CLO} and \ref{MON}, we have that $A\ind' _E B  \iff \overline{AE}\ind '_{E} \overline{BE}$, hence we may assume that $A,B$ are algebraically closed and contain $E$. By hypothesis there is a $T$-elementary $\LL_S$-isomorphism $h:\overline{E c_1}\rightarrow \overline{E c_2}$ over $E$ sending $c_1$ to $c_2$. Let $C_1$ be an enumeration of $\overline{E c_1}$ and let $C_2$ be the enumeration $h(C_1)$. We have $C_1 \equiv^T_E C_2$.  

We have $C_1\indi{T}_E A$, $C_2\indi{T}_E B$ and $C_1\equiv_E^T C_2$. By $(A_1)$, there exists $C$ such that $C\equiv^T_A C_1$, $C\equiv^T_B C_2$ with $C\indi{T}_E AB$, $A\indi a _C B$, $C\indi a _B A$ and $C\indi a _A B$. We may assume that $\overline{ABC}\cap \MM = \overline{AB}$ using \ref{EXT} of $\indi a$.
There exists two $T$-elementary bijections $f : \overline{AC} \rightarrow \overline{AC_1}$ over $A$ and $g : \overline{BC}\rightarrow \overline{BC_2}$ over $B$ such that $g\upharpoonright C = h\circ (f\upharpoonright C)$.

We define $S_{AC} = f^{-1}(S(\overline{AC_1})) \subseteq \overline{AC}$ and $S_{BC} = g^{-1}(S(\overline{BC_2}))\subseteq \overline{BC}$, and set $S_{ABC} = \acl_0(S_{AB},S_{AC},S_{BC})$, with $S_{AB} = S(\overline{AB})$.
The following is easy to check, it uses that $A\indi a _C B$, $C\indi a _B A$ and $C\indi a _A B$:
\begin{itemize}
      \item $S_{AB}\cap S_{AC} = S_{AB}\cap A = S_{AC} \cap A =S(A) =:S_A$;
      \item $S_{AB}\cap S_{BC} = S_{AB}\cap B = S_{BC} \cap B = S(B) =: S_B$;
      \item $S_{AC}\cap S_{BC} = S_{AC}\cap C = S_{BC} \cap C = f^{-1}(S(C_1)) = g^{-1}(S(C_2))=: S_C$.
\end{itemize}

 Furthermore, with $S_{AB}^- = S_{AB}\cap \acl_0(A,B)$, $S_{AC}^- = S_{AC}\cap \acl_0(A, C)$ and $S_{BC}^- = S_{BC}\cap \acl_0(B,C)$, it follows from $c_1\indi{w}_E A$ and $c_2\indi{w}_E B$ that 
    \begin{enumerate}
      \item $S_{AC}^-= \acl_0(S_A,  S_C)$;
      \item $S_{BC}^-= \acl_0(S_B, S_C)$.
    \end{enumerate}
  \begin{claim}
      We have the following
     \begin{itemize}
      \item $S_{ABC}\cap \overline{AB} = S_{AB}$;
      \item $S_{ABC}\cap \overline{AC} = S_{AC}$;
      \item $S_{ABC}\cap \overline{BC} = S_{BC}$.
    \end{itemize}
  \end{claim}
  \begin{proof}[Proof of the claim]
As $A\indi a _C B$, $C\indi a _B A$ and $C\indi a _A B$, we have that $\overline{AC}\indi 0 _C \overline{BC}$, $\overline{BC}\indi 0 _B \overline{AB}$ and $\overline{AC}\indi 0 _A \overline{AB}$.
By hypothesis $(A_2)$ and \ref{TRA} of $\indi 0$ we have the following:
\begin{itemize}
  \item $(\overline{AC},\overline{BC}) \indi{0}_{A,B} \overline{AB}$;
  \item $(\overline{AB},\overline{BC}) \indi{0}_{A,C} \overline{AC}$;
  \item $(\overline{AC},\overline{AB}) \indi{0}_{B,C} \overline{BC}$.
\end{itemize}

    In order to prove the first item of the claim, by modularity, it suffices to show that $\acl_0(S_{AC},S_{BC})\cap \overline{AB} \subseteq S_{AB}$. We will in fact show that 
    $$\acl_0(S_{AC},S_{BC})\cap \overline{AB} = S_{AB}^-.$$
    We have that $(\overline{AB},\overline{BC}) \indi{0}_{A,C} \overline{AC}$. Since $S_{AC}^- = S_{AC}\cap \acl_0(A,C)$ and $S_{BC}\subseteq \overline{BC}$ we deduce $S_{AC}\indi{0}_{S_{AC}^-} \overline{AB}, S_{BC}$. Now since $S_{AC}^-= \acl_0(S_A, S_C)$ we can use \ref{BMON} of $\indi{0}$ and the fact that $S_C\subseteq S_{BC}$ to get  $$S_{AC}\indi{0}_{S_A,S_B, S_{BC}} \overline{AB}.$$
    
    On the other hand, $\overline{BC}\cap \overline{AB} = B$ so $S_{BC} \indi{0}_{S_B} \overline{AB}$. Using \ref{BMON} of $\indi 0$ we also have that $S_{BC} \indi{0}_{S_A,S_B} \overline{AB}$ so using \ref{TRA} of $\indi{0}$ it follows that $(S_{AC},S_{BC}) \indi{0}_{S_A,S_B} \overline{AB}$.

    For the second item, it is sufficient to prove that $\acl_0(S_{AB},S_{BC})\cap \overline{AC}\subseteq S_{AC}$. We do similarly as before paying attention to the fact that $S_{AB}$ and $S_{AC}$ do not play a symmetric role. We get first that $S_{BC}\indi{0}_{S_{BC}^-}( \overline{AC}, S_{AB})$ using $(\overline{AC},\overline{AB}) \indi{0}_{B,C} \overline{BC}$. Now $S_{BC}^- = \acl_0(S_B,S_C)$, so we deduce $S_{BC}\indi{0}_{S_C,S_B}( \overline{AC}, S_{AB})$ and by \ref{BMON} of $\indi 0$ and the fact that $S_B,S_A\subseteq S_{AB}$ we deduce 
     $$S_{BC}\indi{0}_{S_C, S_A,S_{AB}} \overline{AC}.$$ 
     Now by \ref{BMON} of $\indi 0$, we have $S_{AB}\indi{0}_{S_A,S_C} \overline{AC}$. We conclude using \ref{TRA} of $\indi 0$ that $(S_{AB},S_{BC})\indi{0}_{S_A,S_C} \overline{AC}$. The proof of the last assertion is similar. 
   \end{proof}

     We know that $\overline{ABC}\cap \MM = \overline{AB}$. Moreover, it follows from the first point of the claim that $S_{ABC}\cap \MM = S_{ABC}\cap \overline{AB} = S_{AB}$. Consequently, by Proposition~\ref{prop_type}, the type in the sense of the theory $TS$ defined by the pair $(\overline{ABC}, S_{ABC})$ is consistent, so we may consider that it is realised in $(\M,\M_0)$, by say $C$. It follows that $C = \overline{Ec}$ with $c$ such that $c\equiv_A^{TS} c_1$ and $c \equiv_B^{TS} c_2$.
     What remains to show is that $C\indi{w}_E A,B$. We already have that $C\indi{T}_E A,B$ so we will prove that $$S(\acl_0(C, \overline{AB})) = \acl_0(S(C),S(\overline{AB})).$$
     By modularity, it suffices to show that $\acl_0(S_{AC},S_{BC})\cap \acl_0(C,\overline{AB})\subseteq \acl_0(S_C, S_{AB})$. We in fact prove that $(S_{AC},S_{BC})\indi{0}_{S_A,S_B,S_C}( \overline{AB},C)$.
     As before, using $(\overline{AB},\overline{BC}) \indi{0}_{A,C} \overline{AC}$ we have that $S_{AC}\indi{0}_{S_{AC}^-} (\overline{AB},\overline{BC})$, so as $S_{AC}^-= \acl_0(S_A , S_C)$ we have $$S_{AC}\indi{0}_{S_A, S_C}(\overline{AB},S_{BC},C).$$ Using \ref{BMON} of $\indi 0$, we have $$S_{AC}\indi{0}_{S_A,S_B, S_C,S_{BC}}( \overline{AB},C).$$
     On the other hand, from $(\overline{AC},\overline{AB}) \indi{0}_{B,C} \overline{BC}$ and \ref{MON} of $\indi 0$, we have that $\overline{BC}\indi 0 _{B,C}( \overline{AB},C)$. It follows that $S_{BC}\cap \acl_0(\overline{AB},C)\subseteq S_{BC}^- = \acl_0(S_B,S_C)$ so $S_{BC}\indi{0}_{S_B,S_C} (\overline{AB},C)$. Using \ref{BMON} of $\indi 0$ we have $$S_{BC}\indi{0}_{S_B,S_A, S_C}( \overline{AB},C).$$
     Now using \ref{TRA} of $\indi 0$, we get $(S_{AC}, S_{BC})\indi{0}_{S_A,S_B, S_C} (\overline{AB},C)$.
    \end{proof}

\begin{lm}\label{lm_wit_str}
Assume that $a\nindi w _C b$ and $a\indi T_C b$ with $C=\ol C$. Then there is a formula $\Lambda(x,b,c)\in tp(a/Cb)$ such that for all sequence $(b_i)_{i<\omega}$ such that
\begin{enumerate}
  \item $b_i\equiv^{TS}_C b$ for all $i<\omega$,
\item $b_i\indi a _C b_j$ and $S(\acl_0(\overline{Cb_i},\overline{Cb_j})) = \acl_0(S(\overline{Cb_i}),S(\overline{Cb_j}))$ for all $i,j<\omega$,
\end{enumerate}
 the partial type $\set{ \Lambda(x,b_i,c)\mid i<\omega}$ is inconsistent.
\end{lm}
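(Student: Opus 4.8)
The plan is to reuse the construction of $\Lambda$ from the proof of Lemma~\ref{lm_str} and, instead of deriving $\nindi w$ once, to iterate it along the sequence, using the hypotheses on $(b_i)$ --- crucially the $S$-compatibility $S(\acl_0(\overline{Cb_i},\overline{Cb_j}))=\acl_0(S(\overline{Cb_i}),S(\overline{Cb_j}))$ --- to reach a contradiction. Since $a\nindi w_C b$ while $a\indi T_C b$, it is the $S$-part of weak independence that fails, so there is $s\in S(\acl_0(\overline{Ca},\overline{Cb}))\setminus\acl_0(S(\overline{Ca}),S(\overline{Cb}))$; as $\overline{Ca}\indi 0_C\overline{Cb}$ by modularity, one picks singletons $u\in\overline{Ca}$, $v\in\overline{Cb}$ with $s\in\acl_0(u,v)$, and then, exactly as in Lemma~\ref{lm_str}, $u,v\notin\acl_0(C)$, $u\notin\acl_0(v)$, $v\notin\acl_0(u)$ and $s\notin\acl_0(u)\cup\acl_0(v)$. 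Choose an $\LL_0$-formula $\phi(t,y,z)$ algebraic in $t$ and strict in $y$ and in $z$ with $s\models\phi(t,u,v)$, and $\LL_S$-formulae $\psi_u(y,a,c)$, $\psi_v(z,b,c)$ isolating $tp^{TS}(u/Ca)$ and $tp^{TS}(v/Cb)$ for a tuple $c$ in $C$; as $u\in\acl_{TS}(Ca)$ we may take $\psi_u$ to be $N$-algebraic in $y$ for a fixed $N$ uniformly in $x$. As in Lemma~\ref{lm_str} the Claim $v\notin\acl_0(S(\overline{Cb}),C)$ holds, and since $\psi_v$ isolates a complete $TS$-type and $b_i\equiv^{TS}_C b$, every realization $v'$ of $\psi_v(z,b_i,c)$ satisfies $v'\notin\acl_0(S(\overline{Cb_i}),C)$, in particular $v'\notin C$. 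Set
$$\Lambda(x,b,c):=\exists y\,\exists z\,\exists t\ \psi_u(y,x,c)\wedge\psi_v(z,b,c)\wedge\phi(t,y,z)\wedge t\in S,$$
which lies in $tp^{TS}(a/Cb)$, witnessed by $u,v,s$.

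Suppose now that some $a'$ realises $\Lambda(x,b_i,c)$ for all $i<\omega$, and fix witnesses $u_i\models\psi_u(y,a',c)$ (so $u_i\in\overline{Ca'}$), $v_i\models\psi_v(z,b_i,c)$ (so $v_i\in\overline{Cb_i}$ and $v_i\notin\acl_0(S(\overline{Cb_i}),C)$), and $s_i\models\phi(t,u_i,v_i)$ with $s_i\in S$. Because $\psi_u$ is $N$-algebraic in $y$, the finite set $\psi_u(\M,a',c)$ contains every $u_i$, so by pigeonhole there are an infinite $I\subseteq\omega$ and a single $e$ with $u_i=e$ for all $i\in I$. One first disposes of the degenerate cases $v_i\in\acl_0(e)$ or $e\in\acl_0(v_i)$: in either case $s_i\in\acl_0(u_i,v_i)$ is pushed into $S(\overline{Ca'})$ or into $\overline{Cb_i}$, and a counting argument over $i\in I$ --- using the pairwise $\indi a$-independence of the $b_i$ and modularity of $\acl_0$ --- shows that this occurs for only finitely many $i$. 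So we may assume $v_i\notin\acl_0(e)$ and $e\notin\acl_0(v_i)$ for all $i\in I$, and then strictness of $\phi$ in $z$ and in $y$ together with exchange give $v_i\in\acl_0(e,s_i)$ and $s_i\in\acl_0(e,v_i)$, so that $\acl_0(\overline{Ca'},v_i)=\acl_0(\overline{Ca'},s_i)$ with $s_i\in S(\M)$ and $v_i\notin S(\M)$.

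The contradiction comes from comparing two indices $i\neq j$ in $I$. Since $\phi$ is a single strict formula, $s_i$ and $s_j$ are controlled interalgebraic combinations of the common element $e$ with $v_i$, resp.\ with $v_j$; computing intersections with $S(\M)$ via modularity, one extracts from this an element $s^*\in S(\acl_0(\overline{Cb_i},\overline{Cb_j}))$ for which $v_i\in\acl_0(s^*,\overline{Cb_j},C)$. By the $S$-compatibility hypothesis, $s^*\in\acl_0(S(\overline{Cb_i}),S(\overline{Cb_j}))$, so $v_i$ differs from an element of $S(\overline{Cb_i})$ by an element of $\overline{Cb_j}$; but $b_i\indi a_C b_j$ means $\overline{Cb_i}\cap\overline{Cb_j}=\overline{C}=C$, so that element lies in $C$ (by modularity), giving $v_i\in\acl_0(S(\overline{Cb_i}),C)$, contradicting the Claim. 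I expect this last paragraph to be the main obstacle: in a vector space $s^*$ is just a scalar multiple of $s_i-s_j=v_i-v_j$ and the computation is immediate, but in a general modular pregeometry one has to extract $s^*$ and check the interalgebraicities carefully from the strict formula $\phi$, and to justify the degenerate-case reduction of the previous paragraph; note also that, unlike in~\ref{WIT}, the infinitude of the index is used essentially, through the pigeonhole on $\psi_u(\M,a',c)$.
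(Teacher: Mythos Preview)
Your strategy is the paper's: same $\Lambda$, pigeonhole on the (finitely many) realisations of $\psi_u(y,a',c)$, then a modularity computation at two indices sharing a common witness. Two simplifications are worth noting. First, you need only two such indices --- if $\psi_u$ is $k$-algebraic then among the first $k+1$ indices two already share a witness $u'$; no infinite $I$ is required. Second, the ``degenerate case'' reduction is unnecessary: the key step $u'\in\acl_0(s_j,v_j)$ holds in every case (trivially when $u'\in\acl_0(v_j)$, and by strictness of $\phi$ in $y$ together with exchange otherwise), so you may proceed directly to the comparison of two indices.

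Your anticipated obstacle in the final paragraph is precisely what the paper computes, and it goes through cleanly by repeated use of modularity. With indices $1,2$ sharing $u'$: from $u'\in\acl_0(s_2,v_2)$ one gets $s_1\in\acl_0(u',v_1)\subseteq\acl_0(s_2,v_1,v_2)$; modularity then produces a single $w\in\acl_0(v_1,v_2)$ with $s_1\in\acl_0(s_2,w)$ and $w\in\acl_0(s_1,s_2)\subseteq S(\M)$ (since $\M_0\prec\M\upharpoonright\LL_0$ is $\acl_0$-closed). The $S$-compatibility hypothesis now gives $w\in\acl_0(s_1^b,s_2^b)$ for some $s_k^b\in S(\overline{Cb_k})$. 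Since $v_1\notin C=\overline{Cb_1}\cap\overline{Cb_2}$ we have $v_1\notin\acl_0(v_2)$, hence $v_1\in\acl_0(w,v_2)\subseteq\acl_0(s_1^b,s_2^b,v_2)$; one more modularity step yields $v_2'\in\acl_0(s_2^b,v_2)\subseteq\overline{Cb_2}$ with $v_1\in\acl_0(s_1^b,v_2')$, whence $v_2'\in\acl_0(s_1^b,v_1)\cap\overline{Cb_2}\subseteq\overline{Cb_1}\cap\overline{Cb_2}=C$, so $v_1\in\acl_0(S(\overline{Cb_1}),C)$ --- exactly the contradiction you were after. This is your element $s^*$ made explicit; no further obstacle remains.
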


\begin{proof}
Let $A = \overline{Ca}$, $B = \overline{Cb}$. As $a\nindi w _C b$ there exists $s\in S(\acl_0(A,B))  \setminus \acl_0(S(A),S(B))$.
As we saw in the proof of Lemma~\ref{lm_str}, there exist $u\in A\setminus S(A)$, $v\in B\setminus S(B)$ and $\LL_S(C)$-formulae $\psi_u(y,a)$ algebraic in $y$ and $\psi_v(z,b)$ algebraic in $z$, satisfied respectively by $u$ and $v$. There is also an $\LL_0$-formula $\phi(t,y,z)$ algebraic in $t$, strict in $y$ and strict in $z$, such that $s\models \phi(t,u,v)$. Again, as $v\notin \acl_0(S(B), C)$ and $\psi_v(z,b)$ isolates the type $tp^{TS}(v/Cb)$, every $v'$ satisfying $\psi_v(z,b)$ will satisfy $v'\notin \acl_0(S(B), C)$. Let $\Lambda(x, b, c)\in tp^{TS}(a/Cb)$ be the following formula, for a tuple $c$ from $C$
$$\exists y \exists z \exists t \psi_u(y,x)\wedge \psi_v(z,b) \wedge \phi(t,y,z)\wedge t\in S.$$
As we saw in the proof of Lemma~\ref{lm_str}, it witnesses \ref{STRFINC} over $C$. Note that if $b'\equiv^{TS}_C b$, then no realization of $\psi_v(y,b')$ is in $\acl_0(S(\ol{Cb'}),C)$.

Now let $(b_i)_{i<\omega}$ be as in the hypothesis. By contradiction, assume that $\set{ \Lambda(x,b_i,c)\mid i<\omega}$ is consistent, and realised by some $a'$. Assume that $\psi_u(t,a')$ does not have more than $k$ distinct realisations. As $$\bigwedge_{i<k+1} \Lambda(a', b_i,c)$$
is consistent, there is $u'\in \overline{Ca'}$ and $i<j<k+1$ such that $v_i,v_j$ are two realisations of $\psi_v(z,b_i)$ and $\psi_v(z,b_j)$ respectively  --we assume $i = 1,j=2$ for convenience-- and such that there exist $s_1\in \acl_0(u',v_1)\cap S$ and $s_2\in \acl_0(u', v_2)\cap S$. As $v_2\notin\acl_0(S(\ol{Cb_2}), C)$ it follows that $v_2\notin\acl_0(u')$, hence $u'\in \acl_0(s_2, v_2)$ so $s_1\in \acl_0(s_2,v_1,v_2)$. By modularity, it means that there is some $w\in \acl_0(v_1,v_2)$ such that $s_1\in \acl_0(s_2,w)$. We have that $w\in \acl_0(s_1,s_2)$, so $w\in \acl_0(v_1,v_2)\cap S$.
As $S(\acl_0(\overline{Cb_1}), \acl_0(\overline{Cb_2})) =  \acl_0(S(\acl_0(\overline{Cb_1}),S( \acl_0(\overline{Cb_2})))$ there is some $s_1^b\in S(\overline{Cb_1})$ and $s_2^b\in S(\overline{Cb_2})$ such that $w\in \acl_0(s_1^b, s_2^b)$. Now, as $v_1\notin C$, it follows that $v_1\notin \acl_0(v_2)$ hence $v_1\in \acl_0(w, v_2)$, and so $v_1\in \acl_0(s_1^b,s_2^b, v_2)$. So there is $v_2'\in \acl_0(s_2^b,v_2)\subseteq \overline{Cb_2}$ such that $v_1\in \acl_0(s_1^b,v_2')$. It follows that $v_2'\in \acl_0(s_1^b, v_1)$ so $v_2'\in \overline{Cb_1}\cap \overline{Cb_2} = C$, hence $v_2'\in C$. Now $v_1\in \acl_0(S(\overline{Cb_1}),C)$ and this is a contradiction. 
\end{proof}

\begin{lm}\label{lm_finsat}
  Let $\ind$ be a relation satisfying \ref{SYM}, \ref{MON}, \ref{EX} and \ref{STRFINC} over $C$. If $tp^{T}(a/Cb)$ is finitely satisfiable in $C$ then $a\ind _C b$.
\end{lm}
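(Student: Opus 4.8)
The plan is to argue by contradiction, using \ref{STRFINC} to compress the hypothetical failure of independence into a single $\LL$-formula, and then to use finite satisfiability to place a witness of that failure \emph{inside} $C$ --- which is impossible by \ref{EX}, \ref{MON} and \ref{SYM}.

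Concretely, I would assume toward a contradiction that $tp^T(a/Cb)$ is finitely satisfiable in $C$ but that $a\nind_C b$. By \ref{STRFINC} over $C$ there is a formula $\Lambda(x,b,e)\in tp^T(a/Cb)$, with $e$ a finite tuple from $C$ and $x$ a tuple of variables matching the length of $a$, such that every $a'\models\Lambda(x,b,e)$ satisfies $a'\nind_C b$. Since $\Lambda(x,b,e)$ is a single formula belonging to $tp^T(a/Cb)$, finite satisfiability of that type in $C$ produces a tuple $a'$ \emph{from $C$} with $a'\models\Lambda(x,b,e)$; by the choice of $\Lambda$ this gives $a'\nind_C b$.

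On the other hand, $a'\subseteq C$, so \ref{EX} yields $b\ind_C C$, \ref{MON} yields $b\ind_C a'$, and \ref{SYM} yields $a'\ind_C b$, contradicting $a'\nind_C b$. Hence $a\ind_C b$, as desired.

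The only points needing (minimal) attention are that a single formula is enough for the finite-satisfiability step, that the produced realization $a'$ genuinely lies in $C$ so that both $a'\ind_C b$ and $a'\nind_C b$ are meaningful statements about tuples of the right length, and that the extra parameters $e$ in $\Lambda$ are harmless because they already come from $C$. I do not expect any real obstacle: the lemma is essentially the formal observation that \ref{STRFINC} makes $\nind_C$ type-definable over $Cb$, so it cannot hold of a type that is approximable from $C$, where $\ind_C$ is automatic.
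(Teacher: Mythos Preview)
Your proposal is correct and follows exactly the paper's own argument: assume $a\nind_C b$, extract a witnessing formula via \ref{STRFINC}, use finite satisfiability to realize it by a tuple from $C$, and derive a contradiction from \ref{EX}, \ref{MON}, \ref{SYM}. The only cosmetic difference is that you apply \ref{EX}/\ref{MON}/\ref{SYM} directly while the paper takes the contrapositive route to $b\nind_C C$; the substance is identical.
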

\begin{proof}
  Indeed, assume $a\nind _C b$ then by \ref{STRFINC} there is a formula $\phi(x,b)\in tp^{T}(a/Cb)$ such that if $a'\models \phi(x,b)$ then $a'\nind _C b$. As $tp^{T}(a/Cb)$ is finitely satisfiable in $C$ there is $c\in C$ such that $c\models \phi(x,b)$, so $c\nind _C b$, so by \ref{SYM} and \ref{MON} $b\nind _C C$ which contradicts \ref{EX}.
\end{proof}

\begin{lm}\label{lm_wit}
  Assume that $\indi T$ satisfies the hypothesis of Lemma~\ref{lm_propbase} and~\ref{lm_str}. If $\indi T$ satisfies \ref{WIT}, then so does $\indi w$.
\end{lm}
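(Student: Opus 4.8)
The plan is to distinguish, exactly as in Lemma~\ref{lm_str}, the two ways in which $a\nindi w_\MM b$ can occur (here $(\MM,\MM_0)\models TS$): either $a\nindi T_\MM b$, or else $a\indi T_\MM b$ while $S(\acl_0(\overline{\MM a},\overline{\MM b}))\neq\acl_0(S(\overline{\MM a}),S(\overline{\MM b}))$.

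\emph{First case: $a\nindi T_\MM b$.} Let $\Lambda(x,b)$ be an $\LL$-formula witnessing \ref{WIT} for $\indi T$; it lies in $tp^T(a/\MM b)\subseteq tp^{TS}(a/\MM b)$. Given a global $\LL_S$-type $q\supseteq tp^{TS}(b/\MM)$ finitely satisfiable in $\MM$ and tuples $(b_i)_{i<\omega}$ with $b_i\models q\upharpoonright\MM b_{<i}$, the restriction $q\upharpoonright\LL$ is a global $\LL$-type extending $tp^T(b/\MM)$, still finitely satisfiable in $\MM$, and $b_i\models(q\upharpoonright\LL)\upharpoonright\MM b_{<i}$ for all $i$, since $(q\upharpoonright\MM b_{<i})\upharpoonright\LL=(q\upharpoonright\LL)\upharpoonright\MM b_{<i}$. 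Hence $\set{\Lambda(x,b_i)\mid i<\omega}$ is inconsistent by \ref{WIT} for $\indi T$, so the same $\Lambda$ witnesses \ref{WIT} for $\indi w$.

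\emph{Second case: $a\indi T_\MM b$.} Since $\MM=\acl_{TS}(\MM)$, Lemma~\ref{lm_wit_str} furnishes a formula $\Lambda(x,b,c)\in tp^{TS}(a/\MM b)$ (with $c$ a tuple from $\MM$) such that $\set{\Lambda(x,b_i,c)\mid i<\omega}$ is inconsistent as soon as $(b_i)_{i<\omega}$ satisfies conditions (1) and (2) of that lemma with $C=\MM$. So it suffices to show that any $(b_i)$ with $b_i\models q\upharpoonright\MM b_{<i}$, for $q$ a global extension of $tp^{TS}(b/\MM)$ finitely satisfiable in $\MM$, meets (1) and (2). Condition (1) is immediate from $q\supseteq tp^{TS}(b/\MM)$. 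For (2), let $R$ be the ternary relation defined just like $\indi w$ but with $\indi a$ in place of $\indi T$, i.e. $A\mathrel R_C B$ iff $A\indi a_C B$ and $S(\acl_0(\overline{AC},\overline{BC}))=\acl_0(S(\overline{AC}),S(\overline{BC}))$. One checks that $R$ satisfies \ref{SYM}, \ref{EX} and \ref{MON} — the last two exactly as $\indi w$ does in Lemma~\ref{lm_propbase}, using that $\acl_T$-closed sets are $\acl_0$-closed, modularity, and \ref{BMON} of $\indi 0$ — and that $R$ satisfies \ref{STRFINC} over algebraically closed sets: if $a'\nindi a_\MM b$ one invokes strong finite character of $\indi a$ itself (for $e\in\overline{\MM a}\cap\overline{\MM b}\setminus\MM$, a common realisation of the algebraic $\LL$-formulae isolating $tp^T(e/\MM a)$ and $tp^T(e/\MM b)$ gives the witnessing formula), while if $a'\indi a_\MM b$ but the $S$-equality fails for $(a',b)$, the construction in the proof of Lemma~\ref{lm_str} goes through verbatim with $\indi a$ replacing $\indi T$, as there the only consequence of $a'\indi T_\MM b$ used was $\overline{\MM a'}\cap\overline{\MM b}=\MM$. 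Now for $i\neq j$ we have $tp^{TS}(b_j/\MM b_i)=q\upharpoonright\MM b_i$, which is finitely satisfiable in $\MM$, so Lemma~\ref{lm_finsat} applied to $R$ over $\MM$ gives $b_j\mathrel R_\MM b_i$ — that is, exactly condition (2).

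\emph{Main obstacle.} The only genuinely technical point is \ref{STRFINC} for $R$ over algebraically closed sets: recognising that $\indi a$ already has strong finite character there, and re-reading the proof of Lemma~\ref{lm_str} to confirm that its $S$-part never used more about $\indi T$ than $\acl$-disjointness over the base. Everything else reduces to the case split and direct appeals to Lemmas~\ref{lm_wit_str} and~\ref{lm_finsat}.
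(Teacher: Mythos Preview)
Your proof is correct and follows the same architecture as the paper's: split on whether $a\nindi T_\MM b$ (then reduce to \ref{WIT} for $\indi T$ via the $\LL$-restriction of $q$) or $a\indi T_\MM b$ (then invoke Lemma~\ref{lm_wit_str}, after checking via Lemma~\ref{lm_finsat} that any coheir sequence $(b_i)$ meets conditions (1) and (2)).

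The one place you diverge is in verifying condition (2). You introduce an auxiliary relation $R$ (defined like $\indi w$ but with $\indi a$ in place of $\indi T$) and check \ref{SYM}, \ref{EX}, \ref{MON} and \ref{STRFINC} for $R$ from scratch, then feed $R$ into Lemma~\ref{lm_finsat}. The paper avoids this entirely: it applies Lemma~\ref{lm_finsat} directly to $\indi w$, which by the standing hypotheses (Lemmas~\ref{lm_propbase} and~\ref{lm_str}) already satisfies \ref{SYM}, \ref{MON}, \ref{EX} and \ref{STRFINC} over models. This yields $b_i\indi w_\MM b_j$ outright, and since $\indi w\rightarrow\indi a$, both halves of condition (2) fall out immediately. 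Your detour through $R$ is sound---your argument for \ref{STRFINC} of $\indi a$ over algebraically closed sets is fine, and your observation that the proof of Lemma~\ref{lm_str} only uses $\indi T\rightarrow\indi a$ is correct---but it re-proves facts already available.
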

\begin{proof}
  Assume that $a\nindi w_\MM b$, and let $\Lambda(x,b,m)$ be as in Lemma~\ref{lm_wit_str} and set $p(x) = tp^{TS}(a/\MM b)$, $p_\LL = p\upharpoonright\LL=tp^{T}(a/\MM b)$. Let $q(x)$ be a global extension of $tp^{TS}(b/\MM)$ finitely satisfiable in $\MM$, $q_\LL = q\upharpoonright \LL$. It is clear that $q_\LL$ is finitely satisfiable in $\MM$. Let $(b_i)_{i<\omega}$ be a sequence in $\M$ such that $b_i\models q\upharpoonright \MM b_{<i}$ for all $i<\omega$. Observe that for $j<i$ we have $tp^{TS}(b_i/\MM b_j)$ is finitely satisfiable in $\MM $. By hypothesis, $\indi w$ satisfies  in particular \ref{SYM}, \ref{MON}, \ref{EX}, and \ref{STRFINC} over models, hence by Lemma~\ref{lm_finsat}, $b_i\indi w_\MM b_j$. In particular $b_i\indi a _\MM b_j$ and $S(\acl_0(\overline{\MM b_i},\overline{\MM b_j})) = \acl_0(S(\overline{\MM b_i}),S(\overline{\MM b_j}))$ for all $i,j<\omega$.
  If $\set{\Lambda(x,b_i,m)\mid i<\omega}$ is inconsistent, we conclude. If $\set{\Lambda(x,b_i,m)\mid i<\omega}$ is consistent, by Lemma~\ref{lm_wit_str} we have $a\nindi T _\MM b$. Now also $b_i\models q_\LL\upharpoonright \MM b_{<i}$, hence as $\indi T$ satisfies \ref{WIT}, we conclude. 
 \end{proof} 
 
\begin{lm}\label{lm_bmon}
Assume that $\indi T$ satisfies \ref{BMON}. The following are equivalent.
\begin{enumerate} 
\item\label{hyp_nots} $\indi w$ satisfies \ref{BMON};
\item\label{hyp_bmon} For all algebraically closed sets $A,B,C,D$ such that $A,B,D$ contain $C$ and $A\indi T _C BD$, the following holds
$$\acl_0(A,\overline{BD})\cup \overline{AD} =\acl_0(\overline{AD},\overline{BD}).$$
\end{enumerate}
In particular if $\acl_0$ is trivial or if $\acl_0 = \acl_T$ then $\indi w$ satisfies \ref{BMON}.
\end{lm}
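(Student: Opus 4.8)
The plan is to prove the two implications separately and then deduce the last sentence from $(2)\Rightarrow(1)$. Throughout I use two elementary observations. Since $T_0$ is model complete, the substructure $S(\M)=\M_0$ is an elementary $\LL_0$-substructure of $\M\upharpoonright\LL_0$, hence $\acl_0$-closed; therefore $S(X)=X\cap\M_0$ is $\acl_0$-closed whenever $X$ is, and every $\acl_T$-closed set is $\acl_0$-closed because $\acl_0\subseteq\acl_T$. It follows at once that $\acl_0(S(\overline{AD}),S(\overline{BD}))\subseteq S(\acl_0(\overline{AD},\overline{BD}))$ always holds, so for each relation in play the ``$S$-part'' of \ref{BMON} reduces to the reverse inclusion. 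I also use freely that, by modularity, $\acl_0(P,R)\cap Q=\acl_0(P,R\cap Q)$ for $\acl_0$-closed sets $P\subseteq Q$, together with exchange for $\acl_0$.

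\emph{Proof of $(2)\Rightarrow(1)$.} Let $A\indi{w}_C BD$. By \ref{BMON} of $\indi T$ we get $A\indi{T}_{CD}B$, so only $S(\acl_0(\overline{ACD},\overline{BCD}))\subseteq\acl_0(S(\overline{ACD}),S(\overline{BCD}))$ remains. As this involves only algebraic closures, I may replace $A,B,C,D$ by $\overline{AC},\overline{BC},\overline C,\overline{CD}$ and assume them algebraically closed with $C\subseteq A\cap B\cap D$ — the hypothesis $A\indi T_C BD$ passes to these closures using \ref{CLO} of $\indi T$, which holds in all intended applications. Invoke $(2)$: $\acl_0(\overline{AD},\overline{BD})=\acl_0(A,\overline{BD})\cup\overline{AD}$. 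If $s\in S(\M)\cap\acl_0(\overline{AD},\overline{BD})$, then either $s\in\overline{AD}$, so $s\in S(\overline{AD})$, or $s\in\acl_0(A,\overline{BD})$, so $s\in S(\acl_0(A,\overline{BD}))=\acl_0(S(A),S(\overline{BD}))$ by the $S$-part of $A\indi{w}_C BD$; in either case $s\in\acl_0(S(\overline{AD}),S(\overline{BD}))$.

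\emph{Proof of $(1)\Rightarrow(2)$, by contraposition.} If $(2)$ fails there are algebraically closed $A,B,D\supseteq C$ with $A\indi T_C BD$ and a singleton $e\in\acl_0(\overline{AD},\overline{BD})$ with $e\notin\overline{AD}$ and $e\notin\acl_0(A,\overline{BD})$; in particular $e\notin\overline{BD}$ and $e\notin E_0:=\acl_T(\emptyset)$. Fix a small $(\MM,\MM_0)\prec(\M,\M_0)$; using \ref{EXT} of $\indi a$ (Remark~\ref{rk_hyp5}) replace $ABCDe$ by a $T$-elementary conjugate over $E_0$ with $\overline{ABD}\cap\MM=E_0$, which preserves $A\indi T_C BD$ (by \ref{INV} of $\indi T$) and all the displayed (non)membership facts about $e$. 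Then apply Proposition~\ref{prop_type} with base $E_0$ to realise inside $(\M,\M_0)$ the type of an enumeration of $\overline{ABD}$ in which $S$ restricted to $\overline{ABD}$ is the smallest admissible set $\acl_0(S(E_0),e)$: its hypotheses hold, the only point being that this set meets $\MM$, equivalently $E_0$, exactly in $S(E_0)$, which is the modularity-and-exchange identity applied to $e\notin E_0$. In $(\M,\M_0)$ the same computation applied to $e\notin\overline{AD}$, $e\notin\overline{BD}$, $e\notin\acl_0(A,\overline{BD})$ gives $S(A)=S(\overline{BD})=S(\overline{AD})=S(\acl_0(A,\overline{BD}))=S(E_0)$, while $e\in S(\overline{ABD})$. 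Hence $A\indi{w}_C BD$, but $e\in S(\M)\cap\acl_0(\overline{AD},\overline{BD})$ together with $e\notin S(E_0)=\acl_0(S(\overline{AD}),S(\overline{BD}))$ shows $A\nindi{w}_{CD}B$, contradicting \ref{BMON} of $\indi w$.

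\emph{The special cases.} If $\acl_0=\acl_T$ then $\overline{AD}=\acl_0(AD)$, so $\acl_0(A,\overline{BD})=\acl_0(ABD)=\acl_0(\overline{AD},\overline{BD})\supseteq\overline{AD}$ and $(2)$ holds; if $\acl_0$ is trivial then $\acl_0(X)=X$ for $\acl_T$-closed $X$ and $\acl_0$ distributes over unions, so $\acl_0(A,\overline{BD})\cup\overline{AD}=A\cup\overline{BD}\cup\overline{AD}=\overline{AD}\cup\overline{BD}=\acl_0(\overline{AD},\overline{BD})$. In both cases $\indi w$ satisfies \ref{BMON} by $(2)\Rightarrow(1)$. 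The main obstacle is the direction $(1)\Rightarrow(2)$: one must see that a failure of the purely $\acl_0$-geometric identity $(2)$ can be promoted, via Proposition~\ref{prop_type} and the minimal choice of predicate $\acl_0(S(E_0),e)$, into a genuine failure of base monotonicity of $\indi w$, and verify that this minimal predicate meets each of $A$, $\overline{BD}$, $\overline{AD}$ and $\acl_0(A,\overline{BD})$ only in $S(E_0)$.
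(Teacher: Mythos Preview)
Your proof is correct and follows essentially the same approach as the paper: for $(1)\Rightarrow(2)$ both arguments are by contrapositive, choosing a witness $e$ (the paper's $w$) to the failure of $(2)$ and realising the type of $\overline{ABD}$ with $S$-predicate $\acl_0(S(\acl_T(\emptyset)),e)$, then checking this yields $A\indi w_C BD$ but $A\nindi w_{CD}B$. The only presentational difference is that you prove $(2)\Rightarrow(1)$ directly (casing on whether $s\in\overline{AD}$ or $s\in\acl_0(A,\overline{BD})$) while the paper does it by contrapositive, and you are more explicit than the paper about invoking Proposition~\ref{prop_type} and about the tacit use of \ref{CLO} for $\indi T$ when passing to algebraic closures.
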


\begin{proof}
  Assume that there exist $A,B,C,D$ that do not satisfy \textit{(\ref{hyp_bmon})}. Let $w\in  \acl_0(\overline{AD},\overline{BD})\setminus (\acl_0(A,\overline{BD})\cup \overline{AD})$, and  $S_0:= S(\acl_T(\emptyset))$. We define $S_{ABD} = \acl_0(S_0,w)$. The type (over $\emptyset$) defined by the pair $(\overline{ABD},S_{ABD})$ is consistent. As $S_{ABD}\cap \acl_0(A,\overline{BD}) =S_{ABD}\cap A =S_{ABD}\cap \overline{BD} = S_0$ and $A\indi{T}_C BD$ we have that $A\indi w _C BD$. Now $w\in S_{ABD}\cap  \acl_0(\overline{AD},\overline{BD})$ whereas $S_{ABD}\cap  \overline{AD} = S_{ABD}\cap \overline{BD}=S_0$, hence
$$S_0 = \acl_0(S_{ABD}\cap  \overline{AD}, S_{ABD}\cap \overline{BD})\subsetneq S_{ABD}\cap  \acl_0(\overline{AD},\overline{BD}).$$
It follows that $A\nindi w _D B$, so $\indi w$ doesn't satisfies \ref{BMON}. 

Conversely if $\indi w$ doesn't satisfies \ref{BMON}, it means that there exist $A,B,C,D$ such that $A\indi w_C BD$ and $A\nindi w _{CD} B$. We may assume that $A,B,D$ are algebraically closed and contains $C$. As $\indi T$ satisfies \ref{BMON} we have that $$S(\acl_0(\overline{AD},\overline{BD})) \supsetneq \acl_0(S(\overline{AD}), S(\overline{BD})).$$
Let $w$ be in $S(\acl_0(\overline{AD},\overline{BD})) \setminus \acl_0(S(\overline{AD}), S(\overline{BD}))$. As $w\in S$ we have that $w\notin \overline{AD}$ and $w\notin \overline{BD}$. It remains to show that $w\notin \acl_0(A,\overline{BD})$. Assume that $w\in \acl_0(A,\overline{BD})$. As $w\in S$ we have that $w\in S(\acl_0(A,\overline{BD}))$. From $A\indi w_C BD$ we have that $S(\acl_0(A,\overline{BD})) = \acl_0(S(A),S(\overline{BD}))$ so $w\in \acl_0(S(A),S(\overline{BD}))$ which contradicts that $w\notin \acl_0(S(\overline{AD}), S(\overline{BD}))$. So it follows that $w\in \acl_0(\overline{AD},\overline{BD})\setminus (\acl_0(A,\overline{BD})\cup \overline{AD} )$.
\end{proof}

\begin{lm}\label{lm_propst}
  Assume that $\indi T$ satisfies \ref{INV}, \ref{FIN}, \ref{SYM}, \ref{CLO}, \ref{MON}, \ref{BMON},\ref{TRA}, \ref{EXT} then so does $\indi{st}$. Furthermore, for any $E = \ol{E}$, if $\indi T$ satisfies \ref{STAT} over $E = \ol{E}$, so does $\indi{st}$. 
\end{lm}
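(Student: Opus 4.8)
The strategy is to check each property separately, in every case reducing to the corresponding property of $\indi T$ together with modularity of $\acl_0$ and the implication $\indi T\rightarrow\indi a$. The properties \ref{INV}, \ref{SYM} and \ref{CLO} are immediate: the extra conjunct $S(\ol{ABC})=\acl_0(S(\ol{AC}),S(\ol{BC}))$ in the definition of $\indi{st}$ is an $\LL_S$-invariant condition, is symmetric in $A$ and $B$, and is literally unchanged when $C$ is replaced by $\ol C$ and $B$ by $\ol{BC}$; and $\indi T$ satisfies all three. For \ref{EXT} there is nothing to prove: this is part of Lemma~\ref{lm_ext}. Throughout one uses that for an $\acl_T$-closed set $X$ the set $S(X)=X\cap\M_0$ is $\acl_0$-closed, so that $\acl_0(S(\ol{AC}),S(\ol{BC}))\subseteq S(\ol{ABC})$ always holds and only the reverse inclusion carries content.

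For \ref{FIN} one invokes finite character of $\acl_T$: if $x\in S(\ol{ABC})$ then $x\in\acl_T(abc)$ for some finite $a\subseteq A$, $b\subseteq B$, $c\subseteq C$, whence $x\in S(\ol{aBC})=\acl_0(S(\ol{aC}),S(\ol{BC}))\subseteq\acl_0(S(\ol{AC}),S(\ol{BC}))$ by the hypothesis applied to the finite tuple $a$; the first conjunct of $A\indi{st}_C B$ comes from \ref{FIN} for $\indi T$. The properties \ref{TRA} and \ref{BMON} are then pure bookkeeping. For \ref{BMON}, from $A\indi{st}_C BD$ we get $A\indi T_{CD}B$ by \ref{BMON} of $\indi T$ and $S(\ol{ABCD})=\acl_0(S(\ol{AC}),S(\ol{BCD}))$; since $S(\ol{AC})\subseteq S(\ol{ACD})\subseteq S(\ol{ABCD})$, the same set equals $\acl_0(S(\ol{ACD}),S(\ol{BCD}))$, which is exactly the second conjunct of $A\indi{st}_{CD}B$. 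For \ref{TRA}, from $A\indi{st}_{CB}D$ and $B\indi{st}_C D$ we get $AB\indi T_C D$ by \ref{TRA} of $\indi T$, and $S(\ol{ABCD})=\acl_0(S(\ol{ABC}),S(\ol{BCD}))=\acl_0(S(\ol{ABC}),S(\ol{BC}),S(\ol{CD}))=\acl_0(S(\ol{ABC}),S(\ol{CD}))$ using $S(\ol{BC})\subseteq S(\ol{ABC})$. The property \ref{MON} needs a touch more: from $A\indi{st}_C BD$ the first conjunct is \ref{MON} for $\indi T$, and for the second, $S(\ol{ABC})=\ol{ABC}\cap S(\ol{ABCD})=\ol{ABC}\cap\acl_0(S(\ol{AC}),S(\ol{BCD}))=\acl_0\bigl(S(\ol{AC}),\,\ol{ABC}\cap S(\ol{BCD})\bigr)$ by modularity; applying \ref{BMON} of $\indi T$ gives $A\indi T_{CB}D$, hence $\ol{ABC}\cap\ol{BCD}=\ol{BC}$ via $\indi T\rightarrow\indi a$, so $\ol{ABC}\cap S(\ol{BCD})=S(\ol{BC})$ and thus $S(\ol{ABC})=\acl_0(S(\ol{AC}),S(\ol{BC}))$.

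The real content is \ref{STAT} over $E=\ol E$. Suppose $c_1\equiv^{TS}_E c_2$ and $c_1\indi{st}_E A$, $c_2\indi{st}_E A$. By Proposition~\ref{cor_com}(2) pick a $T$-elementary $\LL_S$-isomorphism $g\colon\ol{Ec_1}\rightarrow\ol{Ec_2}$ over $E$ with $g(c_1)=c_2$. Let $\hat c_1$ be an enumeration of the whole of $\ol{Ec_1}$ and set $\hat c_2=g(\hat c_1)$, so $\hat c_1\equiv^T_E\hat c_2$ and the bijection $\hat c_1\mapsto\hat c_2$ is exactly $g$. From $c_i\indi{st}_E A$ and \ref{CLO} of $\indi T$ we get $\hat c_1\indi T_E\ol{EA}$ and $\hat c_2\indi T_E\ol{EA}$, so \ref{STAT} of $\indi T$, applied over $E$ with the parameter set $\ol{EA}=\acl_T(EA)$, produces a $T$-elementary bijection $f\colon\ol{EAc_1}\rightarrow\ol{EAc_2}$ fixing $\ol{EA}$ pointwise with $f(\hat c_1)=\hat c_2$; in particular $f\upharpoonright\ol{Ec_1}=g$ and $f$ fixes $S(\ol{EA})$ pointwise. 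Now $c_i\indi{st}_E A$ gives $S(\ol{EAc_i})=\acl_0(S(\ol{Ec_i}),S(\ol{EA}))$, so since $f$ is $T$-elementary, hence commutes with $\acl_0$,
\begin{align*}
f\bigl(S(\ol{EAc_1})\bigr)&=\acl_0\bigl(f(S(\ol{Ec_1})),f(S(\ol{EA}))\bigr)=\acl_0\bigl(g(S(\ol{Ec_1})),S(\ol{EA})\bigr)\\
&=\acl_0\bigl(S(\ol{Ec_2}),S(\ol{EA})\bigr)=S(\ol{EAc_2}).
\end{align*}
Hence $f$ is a $T$-elementary $\LL_S$-isomorphism over $EA$ with $f(c_1)=c_2$, and $c_1\equiv^{TS}_{EA}c_2$ by Proposition~\ref{cor_com}(2).

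I expect \ref{STAT} to be the only genuinely delicate point: the naive witness for $c_1\equiv^T_{EA}c_2$ coming from \ref{STAT} of $\indi T$ need not be an $\LL_S$-isomorphism, since a $T$-elementary bijection over $EA$ can fail to preserve $S$ on $\acl_T(EA)\setminus EA$ and need not restrict to the prescribed $\LL_S$-isomorphism on $\ol{Ec_1}$. Both defects are repaired by the two adjustments above — enumerating $\ol{Ec_1}$ in full, so that $f\upharpoonright\ol{Ec_1}$ is forced to be $g$, and using $\acl_T(EA)$ rather than $A$ as the parameter set, so that $f$ is forced to fix $S(\acl_T(EA))$ pointwise — after which the strong-independence description of $S$ on the amalgam makes $f$ automatically $\LL_S$-elementary.
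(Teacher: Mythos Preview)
Your proof is correct and follows essentially the same approach as the paper's: each property is checked by reducing to the corresponding property of $\indi T$ together with modularity of $\acl_0$ and $\indi T\rightarrow\indi a$, and \ref{STAT} is handled by enumerating $\ol{Ec_1}$ so that the $T$-elementary map produced by \ref{STAT} of $\indi T$ is forced to extend the given $\LL_S$-isomorphism, then using the strong-independence description of $S$ on the amalgam. You are slightly more explicit than the paper in places (the \ref{FIN} argument, and spelling out the use of \ref{BMON} of $\indi T$ inside the \ref{MON} step to get $\ol{ABC}\cap\ol{BCD}=\ol{BC}$), but there is no substantive difference.
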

\begin{proof}
  \ref{INV}, \ref{FIN}, \ref{SYM}, \ref{CLO} are trivial.
 \ref{EXT} is Lemma~\ref{lm_ext}. It remains to show \ref{MON}, \ref{BMON}, \ref{TRA} and \ref{STAT} over algebraically closed sets.\\

 \ref{MON}. Assume that $A\indi{st}_C BD$. We only need to check that $S(\overline{ABC}) = \acl_0(S(\overline{AC}),S(\overline{BC})$. We have 
   \begin{align*}
   S(\overline{ABC}) &= \acl_0(S(\overline{AC}),S(\overline{BCD}))\cap \overline{ABC} & \\
   &= \acl_0(S(\overline{AC}), S(\overline{BCD})\cap \overline{ABC}  &\text{(by modularity)} \\
   &=  \acl_0(S(\overline{AC}),S(\overline{BC})) &\text{ as $\overline{BCD}\cap \overline{ABC}=\overline{BC}$ ($\indi T\rightarrow \indi a$)}. & 
   \end{align*}

   \ref{BMON}. If $A\indi{st}_C BD$ then by \ref{BMON} of $\indi{T}$ we have $A\indi{T}_{CD} B$. As $S(\overline{ABCD}) = \acl_0(S(\overline{CA}),S(\overline{CBD})$, in particular $$S(\overline{ABCD}) \subseteq \acl_0(S(\overline{ACD}), S(\overline{BCD})) \subseteq S(\overline{ABCD})$$ so $A\indi{st}_{CD} B$.\\

       \ref{TRA}. Assume that $A \indi{st} _{CB} D$ and $B\indi{st}_C D$. By \ref{CLO}, we may assume that $A = \overline{ABC},B=\overline{BC}, D=\overline{CD}$. By \ref{MON}, it is sufficient to show that $A\indi{st}_C D$. We have $A\indi{T}_C D$ by \ref{TRA} of $\indi{T}$. We show that $S(\overline{AD}) = \acl_0(S(A), S(D)$. By $A\indi{st} _B D$ we have $S(\overline{AD}) = \acl_0(S(A),S(\overline{BD}))$. By $B\indi{st} _C D$, $S(\overline{BD}) = \acl_0(S(B),S(D))$ hence $S(\overline{AD}) = \acl_0(S(A),S(B),S(D)) = \acl_0(S(A),S(D)$.\\

 \ref{STAT}. 
Assume that $c_1\indi{st} _E A$ and $c_2 \indi{st}_E A$ and $c_1\equiv_E ^{TS} c_2$. We may assume that $A$ is algebraically closed and contains $E$. There is a $T$-elementary $S$-preserving map $f : \ol{Ec_1} \rightarrow \ol{Ec_2}$ over $E$. By \ref{STAT} over $E$, we can extend $f$ to $\tilde f : \ol{Ac_1} \rightarrow \ol{Ac_2}$ $T$-elementary over $A$. But as $S(\ol{Ac_1})) = \acl_0(S(\ol{Ec_1}), S(A))$ and $S(\ol{Ac_2}) = \acl_0(S(\ol{Ec_2}), S(A))$, $\tilde f$ preserves $S$, so $c_1\equiv_B^{TS} c_2$.
\end{proof}

\section{Preservation of $\NSOP{1}$, simplicity and stability}\label{part_pres}

In this section, we use the results of the previous section to prove that if $T$ is $\NSOP{1}$ and $T$ satisfies an additional hypothesis then $TS$ is also $\NSOP{1}$. This additional hypothesis (namely \hyperlink{hypn}{$(A)$} below) translates how $\indi 0$ in the reduct $T_0$ is controlled by $\indi T$ in $T$.  We work in the same context as the previous section, with small sets and small models in a monster model for $TS$, when $(T,\LL_0, T_0)$ is a suitable triple.

\begin{thm}~\label{thm_KF}
Assume that $(T,\LL_0,T_0)$ is a suitable triple. Assume that $T$ is $\NSOP{1}$ and that $\indi T$ is the Kim-independence relation in $T$. If 
\begin{enumerate}
  \item[$(A)$]  \hypertarget{hypn} all $\MM \models T$ and $A,B,C$ algebraically closed containing $\MM$, if $C\indi T _\MM A,B$ and $A\indi T_\MM B$ then $$(\overline{AC},\overline{BC}) \indi{0}_{A,B} \overline{AB}.$$
\end{enumerate}
Then $TS$ is $\NSOP{1}$ and the Kim-independence relation in $TS$ is given by $\indi w$, i.e. the relation
$$A\indi T _\MM B \text{ and } S(\acl_0(\overline{A\MM},\overline{B\MM})) = \acl_0(S(\overline{A\MM}),S(\overline{B\MM})).$$
\end{thm}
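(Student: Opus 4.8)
The plan is to apply the Chernikov--Ramsey--Kaplan--Ramsey criterion for $\NSOP{1}$: I would verify that $\indi w$, defined over models $\MM \models TS$, satisfies the list of properties that characterise Kim-independence in an $\NSOP{1}$ theory --- namely \ref{INV}, \ref{MON}, \ref{SYM}, a suitable existence/\ref{EXT}-type property, \ref{STRFINC} over models, the independence theorem (full $\indi a$-amalgamation over models), and \ref{WIT}. Since $T$ is $\NSOP{1}$ and $\indi T$ is Kim-independence in $T$, $\indi T$ satisfies all these properties over models of $T$; the work is to transfer them to $\indi w$ using the lemmas of Section~\ref{sec_ind}. Concretely: \ref{INV}, \ref{SYM}, \ref{MON} come from Lemma~\ref{lm_propbase}; \ref{EXT} from Lemma~\ref{lm_ext}; \ref{STRFINC} over models from Lemma~\ref{lm_str} (models are algebraically closed); the independence theorem comes from Theorem~\ref{thm_ind}, applied with $E = \MM$ a model and $\ind' = \indi a$ (so $(A_1)$ is the independence theorem for $\indi T$ in the $\NSOP{1}$ theory $T$ with base a model, and $(A_2)$ is exactly hypothesis \hyperlink{hypn}{$(A)$}); and \ref{WIT} comes from Lemma~\ref{lm_wit}, whose hypotheses are met.

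Having checked these, the criterion of Kaplan--Ramsey yields two things simultaneously: that $TS$ is $\NSOP{1}$, and that the abstract relation satisfying those axioms \emph{is} Kim-independence over models. So I would cite the relevant theorem (the Kim-Pillay-style characterisation of Kim-independence, e.g. from \cite{KR17} together with \cite{CR16}) and conclude that $\indi w$ restricted to bases that are models equals $\indi K$ in $TS$, which is precisely the displayed relation in the statement. One subtlety: the characterisation requires the relation to be defined with arbitrary small sets on the sides but a model as the base, which matches how $\indi w$ is set up here, so no extra massaging is needed; I would just make sure the finite-character direction needed for the criterion (or the use of \ref{FIN}) is addressed --- $\indi T$ has \ref{FIN} and the $S$-condition in the definition of $\indi w$ is automatically of finite character since $\acl_0$ and $\acl_T$ are finitary, so $\indi w$ has \ref{FIN} too, although this may not even be needed for the $\NSOP{1}$ criterion.

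The main obstacle is verifying the independence theorem for $\indi w$, i.e. checking that the hypotheses of Theorem~\ref{thm_ind} are genuinely available. The two inputs are: $\indi a$-amalgamation for $\indi T$ over a model (this is the independence theorem for Kim-independence in the $\NSOP{1}$ theory $T$, which holds by \cite{KR17} --- note that Kim-independence implies $\indi a$, and the independence theorem is stated for $\indi K$, so one gets the weaker $\indi a$-conclusions demanded in property \ref{AM}); and hypothesis \hyperlink{hypn}{$(A)$}, which is exactly $(A_2)$ of Theorem~\ref{thm_ind} with $E = \MM$. Everything else in Theorem~\ref{thm_ind} ($\ind' = \indi a$ satisfies \ref{MON}, \ref{SYM}, \ref{CLO}, and $\ind' \to \indi a$ trivially) is immediate. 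So the proof is essentially: \emph{assemble the pieces}. I would also remark, as the statement implicitly asks, that the conclusion identifies Kim-forking in $TS$ explicitly, which requires knowing that an abstract relation with these properties is unique --- again part of the Kaplan--Ramsey characterisation.

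\begin{proof}[Proof sketch to be expanded]
By the assumptions and Lemmas~\ref{lm_propbase}, \ref{lm_ext}, \ref{lm_str}, \ref{lm_wit}, the relation $\indi w$ (with base a model) inherits from $\indi T$ the properties \ref{INV}, \ref{SYM}, \ref{MON}, \ref{FIN}, \ref{EXT}, \ref{STRFINC} over models, and \ref{WIT}. For the independence theorem, apply Theorem~\ref{thm_ind} with $E = \MM \models T$ and $\ind' = \indi a$: hypothesis $(A_1)$ holds because $T$ is $\NSOP{1}$ and $\indi T = \indi K$ satisfies the independence theorem over models (and $\indi K \to \indi a$), while $(A_2)$ is precisely hypothesis \hyperlink{hypn}{$(A)$}. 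Hence $\indi w$ satisfies $\indi a$-\ref{AM} over every model, which is the independence theorem. By the Kaplan--Ramsey characterisation of Kim-independence in terms of these properties (\cite{KR17}, building on \cite{CR16}), $TS$ is $\NSOP{1}$ and $\indi w$ restricted to bases which are models is Kim-independence in $TS$; this is the displayed relation.
\end{proof}
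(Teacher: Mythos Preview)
Your overall strategy matches the paper's: transfer the relevant properties from $\indi T$ to $\indi w$ via Lemmas~\ref{lm_propbase}, \ref{lm_str}, \ref{lm_wit} and Theorem~\ref{thm_ind}, then invoke \cite{CR16} for $\NSOP 1$ and \cite{KR17} to identify $\indi w$ with Kim-independence. That is exactly what the paper does.

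There is, however, a genuine error in your application of Theorem~\ref{thm_ind}: you take $\ind' = \indi a$, whereas the correct choice (and the paper's) is $\ind' = \indi T$. With your choice, neither $(A_1)$ nor $(A_2)$ is available. For $(A_1)$: the independence theorem for Kim-independence in an $\NSOP 1$ theory has $A\indi T_\MM B$ in its premise, i.e.\ it gives $\indi T$-\ref{AM}; it does \emph{not} give $\indi a$-\ref{AM}, which would require amalgamating under the strictly weaker hypothesis $A\indi a_\MM B$. Your parenthetical ``$\indi K\to\indi a$'' points in the wrong direction: it lets you weaken conclusions, not strengthen premises. For $(A_2)$: with $\ind'=\indi a$ the hypothesis of $(A_2)$ reads ``$A\indi a_\MM B$'', but hypothesis~\hyperlink{hypn}{$(A)$} of the theorem only guarantees the conclusion under the stronger assumption ``$A\indi T_\MM B$''; so \hyperlink{hypn}{$(A)$} is not ``precisely'' $(A_2)$ for your $\ind'$.

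Replacing $\ind'$ by $\indi T$ fixes both points at once: $(A_1)$ becomes exactly the independence theorem for $\indi K$ over models (as in \cite{KrR18}), and $(A_2)$ becomes literally hypothesis~\hyperlink{hypn}{$(A)$}. Note also that $\indi T\to\indi a$ and $\indi T$ satisfies \ref{MON}, \ref{SYM}, \ref{CLO}, so the side conditions on $\ind'$ in Theorem~\ref{thm_ind} are met. The conclusion is then $\indi T$-\ref{AM} for $\indi w$; since $\indi w\to\indi T$, this in particular yields $\indi w$-\ref{AM}, which is what the \cite{CR16}/\cite{KR17} criterion needs.
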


\begin{proof}
  From~\cite{KR17}, if $T$ is $\NSOP{1}$ the Kim-independence $\indi T$ satisfies  \ref{INV}, \ref{SYM}, \ref{MON}, \ref{EX} and \ref{STRFINC} all over models. Furthermore, by~\cite[Theorem 2.21]{KrR18}, it also satisfies $\indi T$-\ref{AM} over models. By Lemmas~\ref{lm_propbase},~\ref{lm_str} and Theorem~\ref{thm_ind}, all these properties are also satisfied over models by $\indi w$ (relatively to the theory $TS$). By Proposition 5.3 in~\cite{CR16}, $TS$ is $\NSOP{1}$. As $\indi T$ satisfies \ref{WIT}, so does $\indi w$ by Lemma~\ref{lm_wit}. Using~\cite[Theorem 9.1]{KR17} (and~\cite[Remark 9.2]{KR17}), it follows that $\indi w$ and Kim-independence in $TS$ coincide over models.
\end{proof}

The results of the previous section give more than the previous Theorem. Indeed, most of the nice features that may happen in $T$ for $\indi T$ are preserved when expanding $T$ to $TS$. For instance, if $\indi T$ is defined over every small base set, so is $\indi w$. If the independence theorem in $T$ is satisfied by $\indi T$ not only over models but over a wider class of small sets then the same holds in $TS$ for $\indi w$. We summarize these features in the next result.

\begin{thm}\label{thm_conserve}
Assume that $(T,\LL_0, T_0)$ is a suitable triple. Assume that there is a ternary relation $\indi{T}$ over small sets of a monster model of $T$ that satisfies
\begin{itemize}
\item \ref{INV};
\item \ref{SYM};
\item \ref{CLO};
\item \ref{MON};
\item \ref{EX};
\item \ref{EXT};
\item \ref{STRFINC} over $E$ for $E=\ol E$;
\item $\ind '$-\ref{AM} over $E$ for $E=\ol E$, where $\ind'$ is such that $\indi T \rightarrow \ind' \rightarrow \indi a$ and $\ind'$ satisfies~\ref{MON},~\ref{SYM} and \ref{CLO}; 
\item[$(A)$] For $E=\ol{E}$ and $A,B,C$ algebraically closed containing $E$, if $C\indi T _E A,B$ and $A\indi T _E B$ then $\overline{AC}\indi 0 _C \overline{BC}$ and $$(\overline{AC},\overline{BC}) \indi{0}_{A,B} \overline{AB};$$
\item \ref{WIT}.
\end{itemize}
(In particular $T$ is $\NSOP{1}$, and $\indi{T}$ coincide with Kim-independence over models of $T$, by~\cite[Proposition 5.3]{CR16} and~\cite[Theorem 9.1]{KR17}). \\

Then any completion of $TS$ is $\NSOP{1}$ and $\indi{w}$ and the Kim-forking independence relation in $TS$ coincide over models. Furthermore $\indi w$ satisfies all these properties, relatively to the theory $TS$.
\end{thm}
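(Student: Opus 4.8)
The plan is to verify, one item at a time, that $\indi{w}$ inherits from $\indi{T}$ each property on the list --- this is exactly what the lemmas of Section~\ref{sec_ind} are for --- and then to feed the resulting data into the $\NSOP{1}$ criterion of~\cite[Proposition~5.3]{CR16} and the Kim-independence criterion of~\cite[Theorem~9.1]{KR17}. Concretely, \ref{INV}, \ref{SYM}, \ref{CLO}, \ref{MON} and \ref{EX} for $\indi{w}$ come from Lemma~\ref{lm_propbase}; \ref{EXT} from Lemma~\ref{lm_ext} (via $\indi{st}$); \ref{STRFINC} over $E=\ol{E}$ from Lemma~\ref{lm_str}; and $\ind'$-\ref{AM} over $E=\ol{E}$ from Theorem~\ref{thm_ind}, whose hypothesis $(A_1)$ is the assumed $\ind'$-\ref{AM} over $E$ (with $\ind'\to\indi{a}$ satisfying \ref{MON}, \ref{SYM}, \ref{CLO}, all part of the hypotheses) and whose hypothesis $(A_2)$ is condition $(A)$. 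Finally \ref{WIT} for $\indi{w}$ follows from Lemma~\ref{lm_wit}, whose inputs --- the hypotheses of Lemmas~\ref{lm_propbase} and~\ref{lm_str} together with \ref{WIT} for $\indi{T}$ --- are all available. Since $E=\ol{E}$ ranges in particular over all (algebraically closed) models of $TS$, each of these holds over models as well. This already establishes the last sentence of the statement.

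Next I would deduce $\NSOP{1}$. Fix a completion of $TS$ and work in its monster model. By the previous paragraph $\indi{w}$ is, relative to $TS$, an $\mathrm{Aut}$-invariant ternary relation defined over models satisfying \ref{SYM}, \ref{MON}, \ref{EX} and \ref{STRFINC} over models. Moreover, since $\indi{w}\to\indi{T}\to\ind'$ by Definition~\ref{def_weakstrong} and the standing Assumption, any instance $A\indi{w}_{E}B$ is in particular an instance $A\ind'_{E}B$, so the $\ind'$-\ref{AM} property of $\indi{w}$ over models yields the independence theorem for $\indi{w}$ over models (that is, $\indi{w}$-\ref{AM}). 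By~\cite[Proposition~5.3]{CR16}, the chosen completion of $TS$ is $\NSOP{1}$; as the completion was arbitrary, every completion of $TS$ is $\NSOP{1}$.

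It then remains to identify Kim-independence. Since $\indi{w}$ additionally satisfies \ref{WIT}, one invokes~\cite[Theorem~9.1]{KR17} (together with~\cite[Remark~9.2]{KR17}), exactly as in the proof of Theorem~\ref{thm_KF}: an invariant relation over models satisfying the Kim--Pillay-style list of properties together with witnessing must coincide with Kim-independence, so $\indi{w}$ and the Kim-forking independence relation of $TS$ agree over models of $TS$. (The parenthetical remark in the statement, that $T$ itself is $\NSOP{1}$ with $\indi{T}$ equal to Kim-independence over models, is the same argument carried out in $T$ rather than in $TS$, using that $\indi{T}$ was assumed to satisfy all the listed properties.)

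Finally, the present statement is essentially a bookkeeping corollary, so its proof contains no real obstacle: the genuine difficulty has already been met in Theorem~\ref{thm_ind}, where transferring $\ind'$-amalgamation forces one to control the $\acl_0$-closures of the amalgam through modularity and condition $(A)$ (this is where the bulk of the work, and the hypothesis $(A)$, is spent). The only point in the corollary itself that deserves a line of care is the reduction of the independence theorem for $\indi{w}$ to $\ind'$-\ref{AM} via the implication $\indi{w}\to\ind'$; everything else is a direct citation of the preceding lemmas and of the criteria of~\cite{CR16} and~\cite{KR17}.
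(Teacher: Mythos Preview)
Your proof is correct and mirrors the paper's own (implicit) argument: the theorem is stated in the paper without a separate proof, precisely because it is the bookkeeping assembly of Lemmas~\ref{lm_propbase}, \ref{lm_ext}, \ref{lm_str}, \ref{lm_wit} and Theorem~\ref{thm_ind}, followed by the citations of \cite[Proposition~5.3]{CR16} and \cite[Theorem~9.1]{KR17}, exactly as you lay out.

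One small point deserves a word of care. You write that hypothesis $(A_2)$ of Theorem~\ref{thm_ind} \emph{is} condition $(A)$, but they are not literally identical: $(A_2)$ has $A\ind'_E B$ in its antecedent, while $(A)$ here has the stronger $A\indi{T}_E B$. The clean fix is to apply Theorem~\ref{thm_ind} with $\indi{T}$ itself playing the role of the auxiliary relation: since $\indi{T}\to\ind'$, the assumed $\ind'$-\ref{AM} for $\indi{T}$ yields $\indi{T}$-\ref{AM} for $\indi{T}$, and then $(A)$ matches $(A_2)$ on the nose, and $\indi T\rightarrow\indi a$ together with \ref{MON}, \ref{SYM}, \ref{CLO} are already in the list. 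The output is that $\indi{w}$ satisfies $\indi{T}$-\ref{AM} over $E=\ol{E}$; since $\indi{w}\to\indi{T}\to\indi{a}$, this is an instance of the bulleted amalgamation property for $\indi{w}$ and is what you actually feed into \cite{CR16} and \cite{KR17}. Everything else in your write-up is exactly right.
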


Finally, using \cite[Proposition 8.8]{KR17} we give a condition on $(T,T_0,\LL_0)$ that characterizes the simplicity of $TS$, assuming that $T$ satisfies the hypotheses of Theorem~\ref{thm_conserve}.

\begin{cor}\label{cor_notsimple}
Let $(T,\LL_0,T_0)$ be a suitable triple satisfying all the assumptions of Theorem~\ref{thm_conserve}. The following are equivalent.
\begin{enumerate} 
\item\label{hyp_nots} Any completion of $TS$ is not simple 
\item\label{hyp_bmon} $T$ is not simple or there exist algebraically closed sets $A,B,C,D$ such that $A,B,D$ contain $C$ and $A\indi T _C BD$, and such that
$$\acl_0(A,\overline{BD})\cup \overline{AD} \neq \acl_0(\overline{AD},\overline{BD}).$$
\end{enumerate}
In particular if $\acl_0$ is trivial or if $\acl_0 = \acl_T$ the theory $TS$ is simple if and only if $T$ is simple. If $TS$ is simple, $\indi w$ is forking independence over models.
\end{cor}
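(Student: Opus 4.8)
The plan is to derive Corollary~\ref{cor_notsimple} by combining the characterization of simplicity via Kim-independence with the $\ref{BMON}$-criterion from Lemma~\ref{lm_bmon}. Recall from \cite[Proposition 8.8]{KR17} that an $\NSOP{1}$ theory is simple if and only if Kim-independence satisfies $\ref{BMON}$. By Theorem~\ref{thm_conserve}, any completion of $TS$ is $\NSOP{1}$ and $\indi w$ coincides with Kim-independence in $TS$ over models. Hence the first move is: any completion of $TS$ is simple if and only if $\indi w$ satisfies $\ref{BMON}$ (over models, but one checks this suffices since a failure of $\ref{BMON}$ can be produced over a model using the freedom in the construction).

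Next I would invoke Lemma~\ref{lm_bmon}: since $\indi T$ is Kim-independence in $T$, it satisfies $\ref{BMON}$ precisely when $T$ is simple (again by \cite[Proposition 8.8]{KR17}). Thus there are two cases. If $T$ is not simple, then $\indi T$ fails $\ref{BMON}$, and since $\indi{st} \rightarrow \indi w$ and $\indi{st}$ restricted to the $\LL$-structure is essentially $\indi T$, one checks directly that $\indi w$ fails $\ref{BMON}$ as well (the $S$-condition in $\indi w$ is vacuous when we forget $S$, so a failure of $\ref{BMON}$ for $\indi T$ lifts). If $T$ is simple, then $\indi T$ satisfies $\ref{BMON}$, and Lemma~\ref{lm_bmon} tells us that $\indi w$ satisfies $\ref{BMON}$ if and only if condition \textit{(\ref{hyp_bmon})} of that lemma holds, i.e. for all algebraically closed $A,B,C,D$ with $A,B,D \supseteq C$ and $A\indi T _C BD$ we have $\acl_0(A,\overline{BD})\cup \overline{AD} = \acl_0(\overline{AD},\overline{BD})$. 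Negating: $\indi w$ fails $\ref{BMON}$ iff such $A,B,C,D$ witnessing the inequality exist. Assembling the two cases gives the equivalence of \textit{(\ref{hyp_nots})} and \textit{(\ref{hyp_bmon})}.

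For the final sentences: if $\acl_0$ is trivial, then $\acl_0(\overline{AD},\overline{BD}) = \overline{AD}\cup\overline{BD} \subseteq \acl_0(A,\overline{BD})\cup\overline{AD}$ always, so the problematic inequality never occurs and $TS$ is simple iff $T$ is; if $\acl_0 = \acl_T$, then $\acl_0(A,\overline{BD}) = \overline{A\,BD} = \acl_0(\overline{AD},\overline{BD})$ since $\overline{AD}\subseteq \overline{ABD}$, so again the inequality fails and the same conclusion holds --- this is exactly the ``in particular'' clause already recorded in Lemma~\ref{lm_bmon}. Finally, when $TS$ is simple, $\ref{BMON}$ holds for $\indi w$, so $\indi w$ satisfies all the axioms of a strict independence relation; by Theorem~\ref{thm_conserve} together with \ref{EXT} and the Kim-Pillay theorem for simple theories, $\indi w$ must coincide with forking-independence over models.

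The main obstacle is the bookkeeping at the reduction step: one must be careful that a failure of $\ref{BMON}$ witnessed over arbitrary algebraically closed sets can genuinely be realized inside a monster model of $TS$ over a \emph{model} --- this is where Proposition~\ref{prop_type} and the construction of suitable $S$-sets (as already deployed in the proof of Lemma~\ref{lm_bmon}) do the work, and one needs that $\indi T$ being Kim-independence is base-monotone-testable over models only, which is precisely the content of \cite[Proposition 8.8]{KR17}. Everything else is a direct translation through Lemma~\ref{lm_bmon} and Theorem~\ref{thm_conserve}.
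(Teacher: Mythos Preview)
Your approach is essentially the paper's: reduce simplicity of $TS$ to \ref{BMON} for $\indi w$ via \cite[Proposition 8.8]{KR17}, then invoke Lemma~\ref{lm_bmon}. The paper's proof is terse and does not make the case split ($T$ simple versus $T$ not simple) explicit, whereas you do; this is a genuine improvement in clarity, since Lemma~\ref{lm_bmon} literally assumes $\indi T$ satisfies \ref{BMON}.

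However, your argument in the case ``$T$ not simple'' does not work as written. You claim that a failure of \ref{BMON} for $\indi T$ lifts to one for $\indi w$ by passing through $\indi{st}$. But applying \ref{EXT} of $\indi{st}$ to get $A'\equiv^{TS}_C A$ with $A'\indi{st}_C BD$ only gives you control of the type of $A'$ over $C$, not over $CD$; you cannot conclude $A'\nindi T_{CD} B$ from $A\nindi T_{CD} B$. The correct and much simpler argument is that simplicity passes to reducts: since $T$ is a reduct of every completion of $TS$, if $T$ is not simple then no completion of $TS$ is simple. This disposes of that case entirely without touching $\indi w$.

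For the last sentence, the paper cites \cite[Proposition 8.4]{KR17} directly (in a simple theory Kim-independence and forking independence coincide), which is more direct than routing through the Kim--Pillay theorem, though your argument also works.
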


\begin{proof}
  From Theorem~\ref{thm_conserve}, we know that the relation $\indi w$ is Kim-independence over models. By \cite[Proposition 8.8]{KR17}, $TS$ is simple if and only if $\indi w$ satisfies \ref{BMON}. The equivalence follows from Lemma~\ref{lm_bmon}. The fact that Kim-independence and forking independence coincide is \cite[Proposition 8.4]{KR17}.
\end{proof}

\begin{cor}\label{cor_iterateNSOP}
  Assume that $T$ is a complete $\LL$-theory and $\LL_1,\dots,\LL_n$ are sublanguages of $\LL$. Let $T_1 = T\upharpoonright\LL_1, \dots,T_n = T\upharpoonright \LL_n$ such that $(TS_1\dots S_i, T_{i+1}, \LL_{i+1})$ is a suitable triple for each $i=0,\cdots,n-1$. By Proposition~\ref{prop_iterate}, let $TS_1\dots S_n$ be the model companion of the theory of models of $T$ with a predicate $S_i$ for an $\LL_i$ substructure. 
\begin{enumerate}
\item Assume that $T$ is $\NSOP{1}$, with Kim-independence $\indi T$ in $T$ and that for all $i$ we have (for $A,B,C$ algebraically closed containing $\MM \models T$)
\begin{center} if $C\indi T _\MM A,B$ and $A\indi T_\MM B$ then $(\overline{AC},\overline{BC}) \indi{i}_{A,B} \overline{AB}.$
\end{center}
Then $TS_1\dots S_n$ is $\NSOP{1}$ and Kim-independence in $TS$ is given by 
$$ A\indi T_\MM B \mbox{ and for all $i\leq n$ } S_i(\acl_i(\overline{A\MM},\overline{B\MM})) = \acl_i(S_i(\overline{A\MM}),S_i(\overline{B\MM}))$$
(for $\acl_i$, $\indi i$ the algebraic closure and independence in the sense of the pregeometric theory $T_i$).

\item If there exists $\indi T$ that satisfies the hypotheses of Theorem~\ref{thm_conserve} (relatively to each theory $T_i$), then $TS_1\dots S_n$ is $\NSOP{1}$ and the relation
$$ A\indi T_C B \mbox{ and for all $i\leq n$ } S_i(\acl_i(\overline{AC},\overline{BC})) = \acl_i(S_i(\overline{AC}),S_i(\overline{BC})) $$
 agrees with Kim-independence over models. Furthermore this relation satisfies all the properties listed in Theorem~\ref{thm_conserve}.
 \end{enumerate}
\end{cor}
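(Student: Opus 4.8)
The plan is to obtain both items by running the single-step transfer theorems $n$ times, Theorem~\ref{thm_KF} for item (1) and Theorem~\ref{thm_conserve} for item (2). The structural input is Proposition~\ref{prop_iterate}: for each $i$ the theory $TS_1\dots S_{i+1}$ is exactly the model companion $TS$ produced by applying the generic expansion by the reduct $(T_{i+1},\LL_{i+1})$ to $TS_1\dots S_i$; and by Proposition~\ref{cor_com}(4), applied inductively, $\acl_{TS_1\dots S_i}=\acl_T$ at every stage, so $\ol{\cdot}=\acl_T(\cdot)$ is the same operator throughout. I would fix, for $0\le i\le n$, the relation $\indi{w_i}$ on a monster model of $TS_1\dots S_i$ obtained from $\indi T$ by the $i$-fold iteration of the weak-independence construction of Definition~\ref{def_weakstrong} along the successive triples $(TS_1\dots S_j,T_{j+1},\LL_{j+1})$; thus $\indi{w_0}=\indi T$, each $\indi{w_{i+1}}$ is the weak independence built from $\indi{w_i}$ for the triple $(TS_1\dots S_i,T_{i+1},\LL_{i+1})$, and unfolding the definitions shows $\indi{w_n}$ is the relation displayed in the statement (with the base restricted to a model in item (1)). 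Since weak independence refines its starting relation, $\indi{w_{i+1}}\rightarrow\indi{w_i}$, hence $\indi{w_i}\rightarrow\indi{w_0}=\indi T$ for all $i$.

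For item (1), I would induct on $i$ on the statement: $TS_1\dots S_i$ is $\NSOP 1$ and $\indi{w_i}$ is its Kim-independence over models. The base case $i=0$ is the hypothesis on $T$. For the step, the triple $(TS_1\dots S_i,T_{i+1},\LL_{i+1})$ is suitable, $TS_1\dots S_i$ is $\NSOP 1$, and its Kim-independence is $\indi{w_i}$, so Theorem~\ref{thm_KF} applies provided hypothesis $(A)$ holds for this triple. This is the only inference with content, and the monotonicity $\indi{w_i}\rightarrow\indi T$ settles it: if $A,B,C$ are algebraically closed, contain a model $\MM\models TS_1\dots S_i$, and satisfy $C\indi{w_i}_\MM A,B$ and $A\indi{w_i}_\MM B$, then a fortiori $C\indi T_\MM A,B$ and $A\indi T_\MM B$, so the corollary's hypothesis at index $i+1$ yields $(\ol{AC},\ol{BC})\indi{i+1}_{A,B}\ol{AB}$, which is precisely $(A)$. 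Theorem~\ref{thm_KF} then gives that $TS_1\dots S_{i+1}$ is $\NSOP 1$ with Kim-independence over models equal to the relation built from $\indi{w_i}$ by one more step of Definition~\ref{def_weakstrong}, i.e. $\indi{w_{i+1}}$. Taking $i=n$ is the assertion.

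For item (2), the same induction runs with Theorem~\ref{thm_conserve} in place of Theorem~\ref{thm_KF}, over an arbitrary base $E=\ol E$ rather than over models, and with the inductive statement enlarged to record also that $\indi{w_i}$ satisfies every property in the list of Theorem~\ref{thm_conserve} relatively to $TS_1\dots S_i$, for the same auxiliary relation $\ind'$ (which still satisfies $\indi{w_i}\rightarrow\indi T\rightarrow\ind'\rightarrow\indi a$ and \ref{MON}, \ref{SYM}, \ref{CLO}). In the inductive step every hypothesis of Theorem~\ref{thm_conserve} for the triple $(TS_1\dots S_i,T_{i+1},\LL_{i+1})$ except $(A)$ comes directly from the inductive statement, and for $(A)$ one again passes from $\indi{w_i}$ to $\indi T$ and invokes the corollary's hypothesis at index $i+1$ to obtain both $\ol{AC}\indi{i+1}_C\ol{BC}$ and $(\ol{AC},\ol{BC})\indi{i+1}_{A,B}\ol{AB}$. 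The conclusion of Theorem~\ref{thm_conserve}, in particular its ``Furthermore'' clause, then carries the enlarged inductive statement over to $\indi{w_{i+1}}$, and $i=n$ finishes.

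The part to watch is organisational rather than mathematical: at each stage one must confirm that what Theorem~\ref{thm_KF} (resp. Theorem~\ref{thm_conserve}) consumes --- the Kim-independence of $TS_1\dots S_i$, its algebraic closure, the auxiliary relation $\ind'$ --- is literally what the previous stage manufactured, so that ``$C\indi{w_i}_\MM A,B$'' is the hypothesis that actually appears in condition $(A)$ and the output relation unfolds to $\indi{w_{i+1}}$. Granting those identifications, the whole argument is a relay of the single-step theorems, whose only non-formal ingredient is the weakening $\indi{w_i}\rightarrow\indi T$ used to discharge $(A)$ each time.
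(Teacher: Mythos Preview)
Your proposal is correct and is precisely the intended argument: the paper states this corollary without proof, treating it as the evident $n$-fold iteration of Theorem~\ref{thm_KF} (for item (1)) and Theorem~\ref{thm_conserve} (for item (2)), with Proposition~\ref{prop_iterate} and Proposition~\ref{cor_com}(4) supplying the structural identifications you spell out. Your observation that the only nontrivial point is discharging hypothesis $(A)$ at each stage via the weakening $\indi{w_i}\rightarrow\indi T$ is exactly right, and your bookkeeping (same $\acl_T$ throughout, same auxiliary $\ind'$, unfolding of $\indi{w_n}$) is what a careful write-up of the omitted proof would contain.
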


\begin{prop}~\label{prop_stable}
  If $T$ is stable and $\acl_0 = \acl_T$, then the theory $TS$ is stable.
\end{prop}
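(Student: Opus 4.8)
The plan is to bound the number of complete $TS$-types over a parameter set, using Proposition~\ref{cor_com}(2). It suffices to bound $|S^{TS}_n(A)|$ for \emph{algebraically closed} $A$, since $S^{TS}_n(A)$ embeds into $S^{TS}_n(\acl_T(A))$ and $|\acl_T(A)| \le |A| + \aleph_0$ ($T$ being countable); taking $A$ algebraically closed is what makes the isomorphisms in Proposition~\ref{cor_com}(2) fix $S(A)$ pointwise. Since $\acl_0 = \acl_T$, every $\acl_T$-closed set is $\acl_0$-closed and, by $(H_3^+)$, $\acl_T$ is a modular pregeometry on $\M$; I write $\dim$ for its dimension function, recall that $\indi 0$ is the associated pregeometry independence, and use that modularity means any two $\acl_0$-closed sets are $\indi 0$-independent over their intersection.

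Fix algebraically closed $A$ with $|A| = \lambda$, put $P_0 := S(A) = A \cap \M_0$ (a fixed subset of $A$), and let $a$ be an $n$-tuple. By Proposition~\ref{cor_com}(2), $tp^{TS}(a/A)$ is determined by $tp^T(a/A)$ together with the position of the trace $\Sigma := S(\acl_T(Aa)) = \acl_T(Aa) \cap \M_0$ inside $\acl_T(Aa)$, modulo the finite group $\mathrm{Aut}(\acl_T(Aa)/Aa)$. Now $\Sigma$ is $\acl_0$-closed, contains $P_0$, and $\Sigma \cap A = \acl_T(Aa) \cap \M_0 \cap A = A \cap \M_0 = P_0$; since $A = \acl_T(A)$, modularity gives $\Sigma \indi 0_{P_0} \acl_T(A)$, so any finite subtuple of $\Sigma$ that is $\acl_0$-independent over $P_0$ remains so over $\acl_T(A)$, and therefore
\[ \dim(\Sigma/P_0) = \dim(\Sigma/\acl_T(A)) \le \dim(\acl_T(Aa)/\acl_T(A)) = \dim(a/A) \le n . \]
Picking a basis $s$ of $\Sigma$ over $P_0$ we get $|s| \le n$ and $\Sigma = \acl_0(P_0 \cup s)$. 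Hence, once $tp^T(a/A)$ is fixed (equivalently, after fixing a realization $a_p$ and the $\LL(A)$-structure $\acl_T(Aa_p)$ with the distinguished tuple $a_p$), the transported trace is an $\acl_0$-closed subset of $\acl_T(Aa_p)$ of the form $\acl_0(P_0 \cup s)$ with $|s| \le n$, so there are at most $|\acl_T(Aa_p)|^n \le \lambda^n = \lambda$ of them.

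Consequently, for every infinite $\lambda$ with $\lambda^{\aleph_0} = \lambda$ — for which $T$ is stable, i.e.\ $|S^T_n(A)| \le \lambda$ whenever $|A| \le \lambda$ — and every algebraically closed $A$ with $|A| \le \lambda$ we get $|S^{TS}_n(A)| \le |S^T_n(A)| \cdot \lambda \le \lambda$. Since such $\lambda$ are cofinal in the cardinals, $TS$ is stable. (Alternatively, one may avoid counting: when $\acl_0 = \acl_T$ it is immediate from Definition~\ref{def_weakstrong} that $\indi w = \indi{st}$; taking $\indi T$ to be forking in $T$, Corollary~\ref{cor_notsimple} shows $TS$ is simple with $\indi w$ its forking independence over models, and by Lemma~\ref{lm_propst} the relation $\indi{st}$ inherits \ref{STAT} over models from $\indi T$, so forking in $TS$ is stationary over models; a simple theory whose forking independence is stationary over models is stable.)

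The step I expect to be the main obstacle is the dimension estimate above: one must use that $\acl_0 = \acl_T$ is a \emph{modular} pregeometry, together with $\Sigma \cap \acl_T(A) = P_0$, to force the otherwise arbitrary $\acl_0$-closed set $\Sigma$ to have finite dimension — at most $n$ — over the fixed base $P_0$. This is precisely what collapses the a priori $2^\lambda$-many candidate traces to $\lambda$-many. The reduction to algebraically closed $A$ and the final cardinal arithmetic are routine.
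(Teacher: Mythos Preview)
Your main argument is correct and takes a genuinely different route from the paper. The paper argues exactly as in your parenthetical alternative: since $\acl_0=\acl_T$, one has $\indi{st}=\indi{w}$; Corollary~\ref{cor_notsimple} gives that $TS$ is simple with $\indi w$ forking independence over models, and Lemma~\ref{lm_propst} transfers \ref{STAT} over models from $\indi T$ to $\indi{st}=\indi w$, so forking in $TS$ is stationary over models, hence $TS$ is stable. Your type-counting argument is more elementary and more quantitative: it bypasses the machinery of Sections~\ref{sec_ind}--\ref{part_pres} entirely and shows directly that $TS$ is $\lambda$-stable whenever $T$ is. The paper's route, by contrast, is conceptual and slots the result into the general transfer framework.

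Two small points on your counting argument. First, the group $\mathrm{Aut}(\acl_T(Aa)/Aa)$ need not be finite (think of absolute Galois groups); this is harmless, since you only use it to pass from subsets to orbits, and the upper bound $\lambda$ on the number of candidate traces already bounds the number of orbits. Second, your assertion that $\Sigma$ (and hence $P_0$) is $\acl_0$-closed deserves a word of justification: since $T_0$ is model-complete and $\M_0\models T_0$ is an $\LL_0$-substructure of $\M\upharpoonright\LL_0\models T_0$, we have $\M_0\preceq_{\LL_0}\M$, so every $\LL_0$-algebraic formula over $\M_0$ has all its solutions already in $\M_0$; thus $\M_0$ is $\acl_0$-closed in $\M$, and $\Sigma=\M_0\cap\acl_T(Aa)$ is an intersection of $\acl_0$-closed sets. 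With that in hand, your modularity step $\Sigma\indi 0_{P_0} A$ and the dimension bound $\dim(\Sigma/P_0)\le n$ go through as written.
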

\begin{proof}
  By Corollary~\ref{cor_notsimple}, $TS$ is simple and $\indi w$ is the forking independence. As $\acl_T = \acl_0$ it follows that $\indi{st} = \indi w$, hence as $\indi T$ is stationnary over models, so is $\indi w$ by Lemma~\ref{lm_propst}. The stability of $TS$ follows since forking independence is stationnary over models.
\end{proof}

\begin{rk}
Assume that $T$ is stable and that $\acl_0$ is trivial, then $TS$ is not necessary stable. From Corollary~\ref{cor_notsimple}, $TS$ is simple and $\indi w$ is forking independence. As $\acl_0$ is trivial, we have $\indi w = \indi T$, (with $\indi T$ forking independence in $T$) which is not likely to be stationnary. The easiest example of a reduct $T_0$ for which $\acl_0$ is trivial is the particular case of $\LL_0 = \set{=}$. Then $TS$ is the theory of the generic predicate on $T$ (see Remark~\ref{rk_genpred} and \cite{CP98}), which does not preserve stability. Indeed~\cite[(2.10) Proposition, Errata]{CP98} gives a sufficient condition on $T$ so that $TS$ have the independence property (hence is unstable): there exists a model $\MM$ of $T$ and two elements $a$ and $b$ such that $b\indi u _\MM a$ and $\ol{\MM ab}\neq \ol{\MM a}\cup\ol{\MM b}$. It follows that adding a generic predicate to an algebraically closed field result in a simple unstable theory (take $a$ and $b$ two generics independent over $\MM$).
\end{rk}

\begin{rk}[Mock stability]
  A theory $T$ is \emph{mock stable} if there is a relation satisfying \ref{INV}, \ref{FIN}, \ref{CLO}, \ref{SYM}, \ref{MON}, \ref{BMON}, \ref{TRA}, \ref{EXT}, \ref{STAT} over models. In the original definition of mock stability~\cite{Ad08}, Adler asks for slightly different properties but it is an easy exercice to check that our set of properties is equivalent to the one in~\cite{Ad08}. Using Lemma~\ref{lm_propst}, if $T$ is mock stable then so is $TS$.
\end{rk}

\section{Examples of generic expansions by a reduct}\label{part_ex}

\subsection{Generic vector subspace over a finite field}
Let $\F_q$ be a finite field. In this subsection, we let $\LL_0 = \set{(\lambda_\alpha)_{\alpha\in \F_q}, +, 0}$, and $\LL$ a language containing $\LL_0$. We let $T$ be a complete $\LL$-theory which contains the $\LL_0$-theory $T_0$ of infinite-dimensional $\F_q$-vector spaces. For $A$ a subset of a model of $T$, the set $\acl_0(A)$ is the vector space spanned by $A$, and we denote it by $\vect{A}$. 
Let $\LL_V = \LL\cup \set{V}$, with $V$ a unary predicate and $T_V$ the $\LL_V$-theory whose models are the models of $T$ in which $V$ is an infinite vector subspace.

\textbf{Definability and notations}. For $\alpha = \alpha_1,\dots , \alpha_n\in \F_q$ and any $n$-tuple $x$ of variables let $\lambda_\alpha(x)$ be the term 
$$\lambda_{\alpha_1}(x_1)+\dots +\lambda_{\alpha_n}(x_n).$$
Let $z$ be a tuple of variables of length $s = q^{n} - 1$ and $z' = z_0z$ a tuple of length $s+1 = q^{n}$. Let $\psi(t)$ be any $\LL_V$-formula, $t$ a single variable. We fix an enumeration $\alpha^1,\dots,\alpha^s$ of $(\F_q)^n\setminus (0,\dots,0)$. We denote by 
\begin{align*}
  &z = \vect{x}_0 & &\text{ the formula }  &   &\bigwedge_{i=1,\dots,s} z_i = \lambda_{\alpha^i}(x)\\
  &z'=\vect{x}  &   &\text{ the formula }  &    &z_0 = 0\wedge z = \vect{x}_0\\
  &t\in \vect{x} &    &\text{ the formula }    &   &\forall z' \left(z' = \vect{x}\rightarrow \bigvee_{i=0}^{s} t = z_i\right)\\
  &t\in \vect{xy}\setminus \vect{y} &    &\text{ the formula }    &   &t\in \vect{xy}\wedge \neg t\in \vect{y}\\
  &\vect{x}\cap \psi = \vect{y}   &   &\text{ the formula } 
  &    &\forall t \left( t\in \vect{x}\wedge  \psi(t) \leftrightarrow t\in \vect{y}\right).
\end{align*}
The formulae above have the obvious meaning, for instance, for any $a,b$ in a model of $T$, if $\MM\models b = \vect{a}_0$ then $b$ is an enumeration of all non-trivial $\F_q$-linear combinations of $a$.

The following is~\cite[Lemma 2.3]{CP98}:
\begin{fact}\label{fact_ChaPil}
  Assume that $T$ is a theory that eliminates the quantifier $\exists^{\infty}$. 
  Then for any formula $\phi(x,y)$ there is a formula $\theta_\phi(y)$ such that in any $\aleph_0$-saturated model $\MM$ of $T$ the set $\theta_\phi(\MM)$ consists of tuples $b$ from $\MM$ such that there exists a realisation $a$ of $\phi(x,b)$ with $a_i\notin \acl_T(b)$ for all $i$.
\end{fact}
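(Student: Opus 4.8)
The plan is to argue by induction on the length $m = |x|$ of the existential tuple, the whole proof being powered by one observation, which is the only place the elimination of $\exists^{\infty}$ is used.

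\emph{Key observation.} If $\MM \models T$ is $\aleph_0$-saturated, $b$ is a finite tuple from $\MM$ and $\delta(x,b)$ is a formula over $b$ in a single variable $x$, then $\delta(\MM,b)$ has an element outside $\acl_T(b)$ if and only if $\delta(\MM,b)$ is infinite. For the forward direction, an element outside $\acl_T(b)$ has infinitely many $b$-conjugates and all of them satisfy $\delta(x,b)$. For the converse, when $\delta(\MM,b)$ is infinite the partial type $\{\delta(x,b)\} \cup \{\neg\chi(x,b) : \chi(\MM,b)\text{ finite}\}$ is finitely satisfiable --- deleting finitely many finite sets from an infinite set leaves a non-empty set --- hence consistent, hence realised in $\MM$ by $\aleph_0$-saturation over the finite set $b$, and any realisation lies in $\delta(\MM,b) \setminus \acl_T(b)$. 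Now elimination of $\exists^{\infty}$ yields, for each formula $\delta(x,z)$, an integer $n_\delta$ such that ``$\delta(\MM,b)$ is infinite'' is equivalent to $\exists^{\geq n_\delta}x\,\delta(x,b)$; and since a definable subset of $\MM^k$ is infinite precisely when one of its (definable) coordinate projections is, ``infiniteness'' of an arbitrary definable family is again uniformly first order. This already gives the case $m = 1$, with $\theta_\phi(y) := \exists^{\geq n_\phi}x\,\phi(x,y)$.

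For the inductive step, given $\phi(x,y)$ with $x = (x_1,\dots,x_m)$ and $m \geq 2$, I would analyse a hypothetical witness $a$ (with $\phi(a,b)$ and each $a_i \notin \acl_T(b)$) according to which slices through it are infinite. If, for some $i$, the slice obtained by freezing the coordinates $a_j$ ($j \neq i$) and letting $x_i$ vary is infinite --- equivalently $a_i \notin \acl_T(b\,a_{\neq i})$ --- then one absorbs $x_i$ into $\phi^{(i)}(x_{\neq i},y) := \exists^{\geq n_\phi}x_i\,\phi(x,y)$, a formula with $m-1$ existential variables: by the Key observation, a realisation of $\phi^{(i)}(x_{\neq i},b)$ with all coordinates outside $\acl_T(b)$ extends to a realisation of $\phi(x,b)$ with the same property, and conversely; so this case is covered by the induction hypothesis applied to $\phi^{(i)}$. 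After taking the disjunction over $i$, it remains to treat witnesses all of whose single-variable slices are finite, i.e. realisations of the rigid formula $\phi^{\sharp}(x,y) := \phi(x,y) \wedge \bigwedge_{i}\neg\exists^{\geq n_\phi}x_i\,\phi(x_1,\dots,x_i,\dots,x_m,y)$, for which every coordinate is algebraic over $b$ together with the remaining coordinates.

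The rigid case is the crux, and the hard part will be making it precise: one can no longer peel off existential variables one at a time, so ``$a_i \notin \acl_T(b)$'' must be certified using only finiteness conditions. The idea is to iterate the slice analysis in higher arity. For a rigid witness $a$, choose $I \subseteq \{1,\dots,m\}$ minimal with the property that the set of tuples $x_I$ satisfying $\phi^{\sharp}(a_{[m]\setminus I},x_I,b)$ is infinite; since single-variable slices are finite, $|I| \geq 2$. If $I$ is proper, one freezes the complementary coordinates and recurses, via the induction hypothesis, on the projection of $\phi^{\sharp}$ to the $x_I$-coordinates, which has $|I| < m$ existential variables, then rebuilds the frozen coordinates as algebraic functions of the generic ones and puts them outside $\acl_T(b)$ by a finite-satisfiability argument as in the Key observation but over the enlarged base. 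If $I = \{1,\dots,m\}$, then by minimality every slice obtained by freezing one coordinate at its value in $a$ is finite, so all coordinates of $a$ are pairwise interalgebraic over $b$, whence ``$a_j \notin \acl_T(b)$ for all $j$'' collapses to ``$a_1 \notin \acl_T(b)$'' and the matching disjunct is $\exists^{\geq N}x_1\,\exists x_{\neq 1}\,\phi^{\sharp}(x,b)$ by the Key observation once more. Each branch of this recursion yields a first-order formula and $\theta_\phi(y)$ is their finite disjunction; controlling the proper-$I$ branch amounts to keeping track of the pattern of algebraic dependencies among the coordinates, which is the bookkeeping performed in \cite[Lemma 2.3]{CP98}.
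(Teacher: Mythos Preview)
The paper does not give its own proof; the fact is simply quoted from \cite[Lemma~2.3]{CP98}. Your sketch has the right architecture --- induction on $|x|$, peeling off a coordinate when its slice is infinite, and analysing finiteness patterns in the rigid remainder --- and this is indeed the shape of the Chatzidakis--Pillay argument.

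Where your write-up falls short is soundness of the rigid-case disjuncts. Take $m=3$ and $\phi(x_1,x_2,x_3,y):=(x_2=y)\wedge(x_3=x_1)$: every single-variable slice is a singleton, so $\phi^\sharp=\phi$, and for every $b$ there is no realisation with all coordinates outside $\acl_T(b)$, since $a_2=b$ is forced. Yet your $I=[m]$ disjunct $\exists^{\geq N}x_1\,\exists x_2 x_3\,\phi^\sharp(x,b)$ holds in any infinite model. The disjunct does not encode the minimality of $I$: nothing forces the extension $(a_1,a_2,a_3)$ it produces to have pairwise interalgebraic coordinates over $b$, so the collapse of ``all $a_j\notin\acl_T(b)$'' to ``$a_1\notin\acl_T(b)$'' is illegitimate here. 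The proper-$I$ branch is similarly underspecified: minimality of $I$ tells you that proper subtuples of $a_I$ are algebraic over $b$ together with the remaining coordinates, but it does not make $a_{[m]\setminus I}$ algebraic over $b,a_I$, so ``rebuilds the frozen coordinates as algebraic functions of the generic ones'' is unjustified. Both gaps are repaired by first conjoining to $\phi^\sharp$ the definable conditions recording the exact finiteness pattern of the slices and then running the induction on that refinement --- precisely the bookkeeping you defer to \cite{CP98} but do not actually carry out.
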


\begin{thm}\label{thm_gensubvect}
If $T$ is model complete and eliminates the quantifier $\exists^\infty$, then $(T,T_0,\LL_0)$ is a suitable triple. It follows that the theory $T_V$ admits a model companion, which we denote by $TV$. 
\end{thm}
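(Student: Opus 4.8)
The plan is to verify that the triple $(T,T_0,\LL_0)$ is \emph{suitable} in the sense of Definition~\ref{def_suitabletriple}; the existence of the model companion $TV$ of $T_V$ is then immediate from Proposition~\ref{cor_suitabletriple}. Conditions $(H_1)$, $(H_2)$, $(H_3^+)$ are quick: $(H_1)$ is the hypothesis; for $(H_2)$, the theory $T_0$ of infinite-dimensional $\F_q$-vector spaces has quantifier elimination and is strongly minimal, hence model complete, and if $A$ is infinite then $\acl_0(A)=\vect A$ has more than $q^d$ elements for every $d$ (as $\F_q$ is finite), so it is infinite-dimensional, whence a model of $T_0$; for $(H_3^+)$, linear span satisfies exchange by Steinitz, and the pregeometry is modular because the lattice of $\F_q$-subspaces is modular --- concretely, if $v\in\vect{U,W}$ with $U,W$ subspaces then $v=u+w$ with $u\in U$, $w\in W$, so $v\in\vect{u,w}$.

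The content is $(H_4)$. Fix $\phi(x,y)$ with $|x|=n$, set $s=q^{n}-1$, and fix an enumeration $\alpha^{1},\dots,\alpha^{s}$ of $\F_q^{n}\setminus\{0\}$. The first observation is that, since $\vect\MM=\MM$ for $\MM\models T$, a tuple $a$ is $\indi 0$-independent over $\MM$ if and only if $\lambda_{\alpha^{j}}(a)\notin\MM$ for every $j\leq s$ (no nontrivial $\F_q$-combination of the coordinates of $a$ lies in $\MM$). Since $T$ is model complete, every model of $T$ extending $\MM$ is an elementary extension of it, so the right-hand side of $(H_4)$ for $(\MM,b)$ is equivalent to the consistency over $\MM$ of the partial type $\Gamma_b(x):=\phi(x,b)\cup\{\lambda_{\alpha^{j}}(x)\neq m : j\leq s,\ m\in\MM\}$, and by compactness to: for every finite $F\subseteq\MM$ there is a realisation $a$ of $\phi(x,b)$ with $\lambda_{\alpha^{j}}(a)\notin F$ for all $j$.

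The key reduction is that this holds if and only if there is a realisation $a$ of $\phi(x,b)$ with $\lambda_{\alpha^{j}}(a)\notin\acl_T(b)$ for all $j\leq s$; one direction is trivial since $\acl_T(b)\subseteq\MM$. For the converse --- the step I expect to be the main obstacle, since $T$ is not assumed simple and so there is no forking calculus available --- I would move $a$ by an automorphism: given such $a$ and a finite $F=\{f_{1},\dots,f_{k}\}\subseteq\MM$, each set $\{\sigma\in\mathrm{Aut}(\M/\acl_T(b)) : \sigma(\lambda_{\alpha^{j}}(a))=f_{i}\}$ is empty or a coset of $\mathrm{Aut}(\M/\acl_T(b)\,\lambda_{\alpha^{j}}(a))$, a subgroup of infinite index because $\lambda_{\alpha^{j}}(a)\notin\acl_T(b)$ has infinite orbit; by B.~H.~Neumann's lemma a group is not covered by finitely many cosets of subgroups of infinite index, so some $\sigma$ avoids all of them, and $\sigma(a)$ is the desired realisation.

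Finally, the reduced condition is exactly of the form handled by Fact~\ref{fact_ChaPil}. Apply it to $\phi^{\sharp}(z,y):=\exists x\,(\phi(x,y)\wedge z=\vect x_{0})$, where $z$ has length $s$: a realisation $z$ of $\phi^{\sharp}(z,b)$ with all $z_{i}\notin\acl_T(b)$ is precisely $\vect a_{0}$ for a realisation $a$ of $\phi(x,b)$ with all $\lambda_{\alpha^{i}}(a)\notin\acl_T(b)$, and conversely. Since $T$ eliminates $\exists^\infty$, Fact~\ref{fact_ChaPil} provides a formula $\theta_{\phi^{\sharp}}(y)$ defining, in every $\aleph_0$-saturated model, the set of such $b$; as this set depends only on $\mathrm{tp}^T(b)$, the single formula $\theta_\phi:=\theta_{\phi^{\sharp}}$ satisfies $(H_4)$ in every model of $T$ (to evaluate it at $b\in\MM$, pass to an $\aleph_0$-saturated elementary extension of $\MM$ and use the chain of equivalences above). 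Hence $(T,T_0,\LL_0)$ is suitable, and $TV$ exists as the model companion of $T_V$ by Proposition~\ref{cor_suitabletriple}.
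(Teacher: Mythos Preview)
Your proof is correct and follows essentially the same route as the paper: verify $(H_1)$--$(H_3^+)$ quickly, then for $(H_4)$ apply Fact~\ref{fact_ChaPil} to the auxiliary formula $\exists x\,(\phi(x,y)\wedge z=\vect{x}_0)$ (your $\phi^\sharp$ is the paper's $\tilde\phi$), reducing the question to finding a realisation whose nontrivial $\F_q$-combinations avoid $\acl_T(b)$. The only difference is in the bridge from ``$\lambda_{\alpha^j}(a)\notin\acl_T(b)$'' to ``$a$ can be made $\indi 0$-independent over the whole model'': the paper dispatches this in one line by invoking \ref{EXT} for $\indi a$ (Remark~\ref{rk_hyp5}, ultimately Adler's result), whereas you unfold that citation into its underlying mechanism, Neumann's lemma on cosets of infinite-index subgroups --- so your argument is really the same step made explicit, and your handling of the $\aleph_0$-saturation issue at the end is a detail the paper leaves to the reader.
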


\begin{proof}
  We have to show that the triple $(T, T_0, \LL_0)$ is suitable, the existence of the model-companion then follows from Proposition~\ref{cor_suitabletriple}.
We check the conditions of Definition~\ref{def_suitabletriple}: 
\begin{enumerate}[label={$(H_\arabic*)$}]
\item $T$ is model complete;
\item $T_0$ model complete and for all infinite $A$, $\vect{A} \models T_0$;
\item[($H_3^+$)] $\vect{\cdot}$ defines a modular pregeometry;
\item[($H_4$)] for all $\LL$-formula $\phi(x,y)$ there exists an $\LL$-formula $\theta_\phi(y)$ such that for $b\in \MM\models T$
\begin{eqnarray*}
\MM \models \theta_\phi(b) &\iff& \text{there exists a saturated $\NN\succ \MM$ and $a\in \NN$ such that}\\ 
 &\mbox{ }& \text{ $\phi(a,b)$ and $a$ is $\indi 0$-independent over $\MM$.}  
\end{eqnarray*}
\end{enumerate}
Condition $(H_1)$ holds by hypothesis. Conditions $(H_2)$ and $(H_3^+)$ are also clear, these are basic properties of the theory of infinite dimensional vector spaces. As $A$ is infinite, $\vect{A}$ is an infinite dimensional $\F_q$-vector space.

We prove condition $(H_4)$. Let $\phi(x,y)$ be an $\LL$-formula.
For some tuple of variables $z$ of suitable length, let $\tilde{\phi}(z,y)$ be the following formula
$$\exists x\  z = \vect{x}_0 \wedge \phi(x,y).$$ 

Now apply Fact~\ref{fact_ChaPil} with $\tilde{\phi}(z,y)$. We get a formula $\theta_{\tilde{\phi}}(y)$ such that for any $\aleph_0$-saturated model $\NN$ of $T$ and $b\in \NN$ we have that $\NN\models \theta_{\tilde{\phi}}(b)$ if and only if there exist tuples $a$ and $c$ in $\NN$ such that $\phi(a,b)$ holds, $c = \vect{a}_0$ and for all $i$, $c_i\notin \acl_T(b)$. Equivalently $\NN\models \theta_{\tilde{\phi}}(b)$ if and only if there exists a tuple $a$ from $\NN$  such that $a$ is $\F_q$-linearly independent over $\acl_T(b)$ and $\NN\models \phi(a,b)$. 
Using \ref{EXT} of $\indi a$, it is an easy exercise to prove that this condition is equivalent to $(H_4)$, hence the triple $(T,T_0,\LL_0)$ is suitable.
\end{proof}

\begin{lm}\label{lm_ax2}
  Let $\psi(x,y)$ be an $\LL_V$-formula. Assume that in a saturated model $(\MM,V)$ of $T_V$ the following holds for some tuple $b$ from $\MM$, for all $\LL$-formula $\phi(x,y)$: 
  $$\theta_\phi(b)\rightarrow \exists x \phi(x,b)\wedge \psi(x,b).$$
  Then for all $\phi(x,y)$, if $\MM\models \theta_\phi(b)$ then there exists a realisation $a$ of $\phi(x,b)\wedge \psi(x,b)$ such that $a$ is linearly independent over $\acl_T(b)$.
\end{lm}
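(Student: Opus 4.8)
The plan is to obtain the conclusion by a compactness argument inside the saturated model $(\MM,V)$. First observe that $\acl_T(b)$ is an $\F_q$-subspace of $\MM$, being closed under the $\LL_0$-definable operations $+$ and $(\lambda_\alpha)_{\alpha\in\F_q}$; hence for a tuple $a$ with $\abs a = \abs x =: n$ we have that $a$ is $\F_q$-linearly independent over $\acl_T(b)$ if and only if $\lambda_\alpha(a)\notin\acl_T(b)$ for every $\alpha\in\F_q^n\setminus\set 0$. I would therefore work with the partial $\LL_V$-type $\Phi(x)$ over $b$ consisting of $\phi(x,b)\wedge\psi(x,b)$ together with, for every $\alpha\in\F_q^n\setminus\set 0$, every $\LL$-formula $\delta(t,y)$ and every $m<\omega$, the formula
$$\neg\left(\delta(\lambda_\alpha(x),b)\wedge\exists^{\leq m}t\,\delta(t,b)\right)$$
(with $\exists^{\leq m}$ the usual first-order abbreviation). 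A realisation of $\Phi(x)$ in $(\MM,V)$ is precisely a realisation of $\phi(x,b)\wedge\psi(x,b)$ that is linearly independent over $\acl_T(b)$, so by saturation it suffices to prove that $\Phi(x)$ is finitely satisfiable in $(\MM,V)$.

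Fix a finite subset of $\Phi(x)$; adding $\phi(x,b)\wedge\psi(x,b)$ if necessary, it is contained in $\set{\phi(x,b)\wedge\psi(x,b)}$ together with finitely many formulas $\neg(\delta_k(\lambda_{\alpha^k}(x),b)\wedge\exists^{\leq m_k}t\,\delta_k(t,b))$ for $k<N$, each $\alpha^k\neq 0$. The idea is to fold these finitely many conditions into the formula $\phi$: let $\phi^*(x,y)$ be the $\LL$-formula
$$\phi(x,y)\wedge\bigwedge_{k<N}\neg\left(\delta_k(\lambda_{\alpha^k}(x),y)\wedge\exists^{\leq m_k}t\,\delta_k(t,y)\right).$$
I would then verify that $\MM\models\theta_{\phi^*}(b)$: since $\MM\models\theta_\phi(b)$, condition $(H_4)$ (see Theorem~\ref{thm_gensubvect}) provides $\NN\succ\MM$ and a tuple $a\in\NN$ with $\NN\models\phi(a,b)$ such that $a$ is $\indi 0$-independent over $\MM$. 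Were some conjunct of $\phi^*(a,b)$ to fail, say $\NN\models\delta_k(\lambda_{\alpha^k}(a),b)\wedge\exists^{\leq m_k}t\,\delta_k(t,b)$, then $\lambda_{\alpha^k}(a)\in\acl_T(b)\subseteq\MM$; but $\alpha^k\neq 0$ and $a$ is $\indi 0$-independent over $\MM$, so the nontrivial $\F_q$-combination $\lambda_{\alpha^k}(a)$ cannot lie in $\MM$ --- a contradiction. Hence $\NN\models\phi^*(a,b)$, and the same pair $(\NN,a)$ witnesses $\theta_{\phi^*}(b)$ via $(H_4)$.

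It then remains to apply the hypothesis of the lemma to the $\LL$-formula $\phi^*$: from $\MM\models\theta_{\phi^*}(b)$ it yields a realisation $a$ of $\phi^*(x,b)\wedge\psi(x,b)$ in $(\MM,V)$, and by construction this $a$ satisfies $\phi(x,b)\wedge\psi(x,b)$ as well as each of the $N$ chosen formulas, hence realises the fixed finite subset of $\Phi(x)$. This establishes finite satisfiability of $\Phi(x)$, and any realisation of $\Phi(x)$ is the desired tuple. The step I expect to require the most care --- and the reason one must pass through the partial type $\Phi$ rather than a single formula --- is that linear independence over $\acl_T(b)$ is not first-order; what makes the compactness argument go through is precisely that the witness supplied by $(H_4)$ is $\indi 0$-independent over all of $\MM$, not merely over $b$, so each of its nontrivial $\F_q$-linear combinations avoids $\acl_T(b)$.
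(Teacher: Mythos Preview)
Your proof is correct and follows essentially the same approach as the paper: both define a partial type expressing linear independence over $\acl_T(b)$, fold a finite piece of it into the $\LL$-formula $\phi$, verify that $\theta$ of the enlarged formula still holds at $b$, and then apply the hypothesis and compactness. The only cosmetic difference is that the paper invokes the characterisation of $\theta_\phi(b)$ as ``there is a realisation of $\phi(x,b)$ linearly independent over $\acl_T(b)$'' directly, whereas you go through $(H_4)$ and independence over all of $\MM$; both routes work.
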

\begin{proof}
  Let $\Sigma(x,y)$ be the partial type expressing ``$x$ is linearly independent over $\acl_T(y)$''. We claim that $\set{\phi(x,b)\wedge \psi(x,b)}\cup \Sigma(x,b)$ is consistent. Indeed, let $\Lambda(x,b)$ be a finite conjunction of formulae in $\Sigma(x,b)$. As $\theta_\phi(b)$ holds, there exists a realisation $a$ of $\phi(x,b)$ which is $\F_q$-linearly independent over $\acl_T(b)$, hence in particular $a$ satisfies $\phi(x,b)\wedge \Lambda(x,b)$, hence $\MM\models \theta_{\phi\wedge\Lambda}(b)$. By hypothesis, the formula $\phi(x,b)\wedge \Lambda(x,b)\wedge \psi(x,b)$ is consistent, hence we conclude by compactness.
\end{proof}

\begin{prop}[Axioms for $TV$]\label{prop_axV}
The theory $TV$ is axiomatised by adding to $T_V$ the following $\LL_V$-sentences, for all tuples of variable $y_V\subset y$, $x_V \subset x$ and $\LL$-formula $\phi(x,y)$
\[
  \forall y (\vect{y}\cap V = \vect{y_V} \wedge \theta_{\phi}(y)) \rightarrow (\exists x \phi(x,y)\wedge \vect{xy}\cap V = \vect{x_V y_V}).\tag{$A_1$}
\]
Equivalently, the theory $TV$ is axiomatised by adding to $T_V$ the following $\LL_V$-sentences, for all tuples of variable $y^1\subseteq y$, $x_V \subset x$ and $\LL$-formula $\phi(x,y)$
\[
  \forall y (\vect{y^1}\cap V = \set{0} \wedge \theta_{\phi}(y)) \rightarrow (\exists x \phi(x,y)\wedge \vect{xy^1}\cap V = \vect{x_V}).\tag{$A_2$}
\]
\end{prop}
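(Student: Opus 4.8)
The plan is to derive Proposition~\ref{prop_axV} from Theorem~\ref{model_com_gen} and Proposition~\ref{cor_suitabletriple} by showing that, modulo $T_V$, each of the schemes $(A_1)$ and $(A_2)$ is equivalent to the axiom scheme that (by Theorem~\ref{thm_gensubvect} and Proposition~\ref{cor_suitabletriple}) axiomatises $TV$ over $T_V$; this gives that $T_V+(A_1)$ and $T_V+(A_2)$ both axiomatise $TV$, hence also that $(A_1)$ and $(A_2)$ are interderivable over $T_V$. Two observations make the comparison concrete. Over $\F_q$ one has $\acl_0(\cdot)=\vect{\cdot}$, a finite set, and (after the reduction recorded just after the definition of ``algebraic in $t$'') every $\LL_0$-formula algebraic in $t$ has its realisations contained in finitely many combination-formulae $t=\lambda_\alpha(x)+\lambda_\beta(y)$, such a formula being strict in $x^1$ precisely when $\alpha$ is nonzero on some coordinate of $x^1$; so a general-scheme instance for a partition $x=x^0x^1$ may be taken with the $\tau_i$ ranging over all combination-formulae with nonzero $x^1$-part (finitely many, as $\F_q$ is finite). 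Also, $\theta_\phi$ denotes the formula produced by $(H_4)$ in the proof of Theorem~\ref{thm_gensubvect}, so that in a saturated model $\theta_\phi(b)$ holds iff $\phi(x,b)$ has a realisation $\F_q$-linearly independent over $\acl_T(b)$; it suffices to argue in saturated models.

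\emph{General scheme $\Rightarrow(A_1)$ (and $(A_2)$).} Let $(\MM,V)\models TV$, fix $\phi$, $x_V\subseteq x$, $y_V\subseteq y$, and take $b$ with $\vect{b}\cap V=\vect{b_V}$ and $\theta_\phi(b)$. Apply the general scheme with $x^0=x_V$, $x^1$ the rest, and all combination-formulae with nonzero $x^1$-part: one gets $a=a^0a^1$ with $\phi(a,b)$, $a^0\subseteq V$, and $\lambda_\alpha(a)+\lambda_\beta(b)\notin V$ whenever $\alpha$ is nonzero on $x^1$. Then $\vect{ab}\cap V=\vect{a^0b_V}$: the inclusion $\supseteq$ is clear, and any $w\in\vect{ab}\cap V$ can have no representation with nonzero $x^1$-part (such a one would force $w\notin V$), so $w\in\vect{a^0b}$, whence $\lambda_\beta(b)=w-\lambda_{\alpha^0}(a^0)\in\vect{b}\cap V=\vect{b_V}$ and $w\in\vect{a^0b_V}$. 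The same computation, using the hypothesis $\vect{b^1}\cap V=\{0\}$ and combination-formulae in $x$ and $y^1$ only, gives the conclusion $\vect{ab^1}\cap V=\vect{a^0}$ of $(A_2)$.

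\emph{$(A_1)\Rightarrow$ general scheme (the crux), and likewise $(A_2)$.} Let $(\MM,V)\models T_V+(A_1)$ be saturated, fix a general-scheme instance given by $\phi$, a partition $x=x^0x^1$ and formulae $\tau_1,\dots,\tau_k$ algebraic in $t$ and strict in $x^1$, and take $b$ with $\theta_\phi(b)$. Enlarge $b$ to $b'=bv$, where $v$ is a basis of the finite space $\vect{b}\cap V$, and let $\phi'$ be $\phi$ read with dummy variables for $v$; since $v\subseteq\vect{b}$ we have $\acl_T(b')=\acl_T(b)$, so $\theta_{\phi'}(b')$ holds and $\vect{b'}\cap V=\vect{v}$. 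The instance of $(A_1)$ for $\phi'$ with $x_V:=x^0$ and $y'_V:=v$ is exactly the hypothesis of Lemma~\ref{lm_ax2} for $b'$ and $\psi(x,y'):=(\vect{xy'}\cap V=\vect{x^0 y'_V})$; hence Lemma~\ref{lm_ax2} produces $a$ with $\phi(a,b)$ and $\vect{ab}\cap V=\vect{a^0v}$ that is linearly independent over $\acl_T(b)$. Then $a^0\subseteq\vect{a^0v}\subseteq V$, giving the conjunct ``$x^0\subseteq S$'', and $a^1$ is linearly independent over $\vect{a^0b}$, so by strictness each realisation $u$ of each $\tau_i(t,a,b)$ lies in $\vect{ab}\setminus\vect{a^0b}$; were $u\in V$ it would lie in $\vect{ab}\cap V=\vect{a^0v}\subseteq\vect{a^0b}$, a contradiction, so $u\notin V$, and the scheme instance holds. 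For $(A_2)$ one first completes $v$ to a basis $v,w$ of $\vect{b}$ (hence $\vect{w}\cap V=\{0\}$), takes $b'=bvw$ with designated parameter sub-tuple $w$, and reaches the final contradiction from the conclusion $\vect{a^0a^1w}\cap V=\vect{a^0}$ of $(A_2)$ after removing from $u$ its component in $\vect{v}\subseteq V$.

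Combining the four implications shows that $T_V+(A_1)$, $T_V+(A_2)$ and $TV$ have the same models, which is the statement. I expect the main obstacle to be the direction $(A_i)\Rightarrow$ general scheme: an instance of $(A_i)$ by itself only yields \emph{some} realisation of $\phi$, whereas exploiting strictness of the $\tau_i$ needs one linearly independent over the parameters --- which is exactly what Lemma~\ref{lm_ax2} supplies --- and one must also arrange beforehand that the hypothesis ``$\vect{\cdot}\cap V$ is spanned by a sub-tuple'' of the relevant $(A_i)$-instance is met, hence the enlargement of $b$ by a basis of $\vect{b}\cap V$ (and, for $(A_2)$, a complement). The remaining steps are linear algebra in the finite spaces $\vect{ab}$, together with the fact that over $\F_q$ the condition ``$\vect{xy}\cap V=\vect{x_Vy_V}$'' is quantifier-free, which is what lets these conclusions be expressed as $\LL_V$-axioms in the first place.
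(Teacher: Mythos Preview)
Your proof is correct. The overall strategy matches the paper's --- show that each of $(A_1)$, $(A_2)$ is equivalent (over $T_V$) to the general scheme of Theorem~\ref{model_com_gen} --- but you supply considerably more detail and handle $(A_2)$ differently.

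For the equivalence of $(A_1)$ with the general scheme, the paper simply asserts this is ``clear'' and gives no argument; you spell out both directions, and in particular invoke Lemma~\ref{lm_ax2} to obtain a realisation linearly independent over $\acl_T(b')$, which is indeed what is needed to exploit strictness of the $\tau_i$. This is a genuine gap-filling: a single $(A_1)$-instance only yields \emph{some} realisation, and your observation that the full scheme $(A_1)$ feeds the hypothesis of Lemma~\ref{lm_ax2} is the right way to upgrade it. For $(A_2)$ the paper takes a shorter route: rather than proving $(A_2)\Rightarrow$ general scheme directly, it shows $(A_2)\Rightarrow(A_1)$ by choosing $b^1$ to be a basis of $\vect b$ over $\vect{b_V}$, applying one $(A_2)$-instance, and then using \ref{BMON} of $\indi 0$ to pass from $\vect{ab^1}\cap V=\vect{a_V}$ to $\vect{ab}\cap V=\vect{a_Vb_V}$. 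Your direct argument (adjoin a basis $v$ of $\vect b\cap V$ and a complement $w$, apply $(A_2)$ with $y^1=w$ via Lemma~\ref{lm_ax2}, then subtract the $\vect v$-component) is longer but self-contained and avoids relying on the already-established $(A_1)\Leftrightarrow$ general scheme. Either way works; the paper's is more economical, yours is more explicit about where the linear-independence of the witness actually enters.
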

\begin{proof}
  It is clear that the system of axioms $(A_1)$ is equivalent to the one given in Theorem~\ref{model_com_gen}. It is also clear that the system of axioms $(A_1)$ implies the system of axioms $(A_2)$. We show that the two systems are equivalent. 
  Assume that the system $(A_2)$ is satisfied in an $\aleph_0$ saturated model $(\MM,V)$ of $T_V$. Let $\phi(x,y)$ be given, and subtuples $y_V$ of $y$ and $x_V$ of $x$. We show that $(\MM,V)$ satisfies the axiom of the form $(A_1)$ given by $y_V\subset y$, $x_V \subset x$ and $\phi(x,y)$. 
  Assume that for some tuple $b$ from $\MM$, the formula $\vect{b}\cap V = \vect{b_V}\wedge \theta_\phi(b)$ holds. Let $b^1$ be a subtuple of $b$ which is a basis of $\vect{b}$ over $\vect{b_V}$. We have $\vect{b^1}\cap V = \set{0}$ hence using an instance of an axiom $(A_2)$, there exists a realisation $a$ of $\phi(x,b)$ such that $\vect{ab^1}\cap V = \vect{a_V}$. Since $b_V\subseteq V$, it follows from \ref{BMON} that $\vect{ab}\cap V = \vect{a_V b_V}$.
\end{proof}

\begin{lm}\label{lm_TV_inf}
Assume that $T$ is model complete and eliminates the quantifier $\exists^\infty$. Then $TV$ eliminates the quantifier $\exists^{\infty}$, so $(TV,T_0,\LL_0)$ is also a suitable triple.
\end{lm}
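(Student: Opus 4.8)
The plan is to show that $TV$ eliminates $\exists^\infty$ by bounding, uniformly, the size of finite solution sets of $\LL_V$-formulae. Given an $\LL_V$-formula $\psi(t,y)$ in a single variable $t$, I want to produce a bound $N$ such that in any model $(\MM,V)\models TV$, if $\psi(\MM,b)$ is finite then $|\psi(\MM,b)|\leq N$. The first step is a normal-form reduction: by the description of types in Proposition~\ref{cor_com} (parts~(2) and~(4)) and the axiomatization via the scheme $(A_1)$/$(A_2)$ in Proposition~\ref{prop_axV}, every $\LL_V$-formula is, modulo $TV$, a Boolean combination of $\LL$-formulae and formulae asserting membership in $V$ of $\LL$-definable elements; hence I can reduce to analysing $\psi(t,b)$ of the form $\chi(t,b)\wedge t\in V$ and $\chi(t,b)\wedge t\notin V$ for $\chi$ an $\LL$-formula (after replacing $\LL$-terms by variables and existentially quantifying, as in the proof of Theorem~\ref{model_com_gen}).

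The core case is $\psi(t,b)=\chi(t,b)\wedge t\in V$. Working in a saturated model and using that $T$ eliminates $\exists^\infty$, I distinguish two subcases. If $\chi(t,b)$ already has finitely many solutions in $T$, then by elimination of $\exists^\infty$ in $T$ there is a uniform bound $n_\chi$, and $|\psi(\MM,b)|\leq n_\chi$. If $\chi(t,b)$ has infinitely many solutions, I claim $\psi(\MM,b)$ is then forced to be infinite whenever it is nonempty — so it is never a "new" finite set to worry about. Indeed, suppose $a\in V$ with $\chi(a,b)$. Since $\chi(\MM,b)$ is infinite, $\exists t(\chi(t,b)\wedge t\neq a_1\wedge\dots)$ is witnessed by an element not in $\acl_T(b)$, and then by genericity (an instance of axiom $(A_2)$ applied to the formula $\phi(x,y)$ expressing $\chi(x_1,y)\wedge x_1 = x_1$, say, together with the one-dimensional relation $x_1 = \lambda\cdot a$ for $\lambda\in\F_q$) one can place a further solution of $\chi(t,b)$ inside $V$; iterating, $\psi(\MM,b)$ is infinite. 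The point is exactly the one exploited already in Lemma~\ref{lm_ax2}: once $\theta_\phi(b)$ holds we get solutions of $\phi$ with coordinates linearly independent over $\acl_T(b)$, and the predicate $V$ can be made to contain or avoid prescribed linear data. The case $\psi(t,b)=\chi(t,b)\wedge t\notin V$ is easier and symmetric: if $\chi(\MM,b)$ is infinite, genericity lets us keep infinitely many of its elements outside $V$, and if $\chi(\MM,b)$ is finite we use the bound from $T$.

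Assembling these, every $\LL_V$-formula $\psi(t,y)$ gets a uniform finite bound, which is elimination of $\exists^\infty$ for $TV$; then by Theorem~\ref{thm_gensubvect} applied with $TV$ in place of $T$ (noting $TV$ is model complete, being a model companion, and still contains $T_0$ with the same $\acl_0 = \vect{\cdot}$), the triple $(TV,T_0,\LL_0)$ is again suitable. The main obstacle is the "dichotomy" claim in the core case — verifying carefully that an infinite $\LL$-definable set $\chi(\MM,b)$ cannot trace out a finite nonempty intersection with $V$ in a model of $TV$. This requires setting up the right instance of the axiom scheme $(A_2)$: one must encode "add one more solution of $\chi$ to $V$ in general position" as a formula $\phi(x,y)$ for which $\theta_\phi(b)$ is guaranteed by the infinitude of $\chi(\MM,b)$ together with \ref{EXT} of $\indi a$, exactly as in the final paragraph of the proof of Theorem~\ref{thm_gensubvect}. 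The remaining bookkeeping (the normal-form reduction, the symmetric $t\notin V$ case) is routine given Propositions~\ref{cor_com} and~\ref{prop_axV}.
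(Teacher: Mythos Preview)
Your normal-form reduction is where the argument breaks. You claim that every $\LL_V$-formula in one variable $t$ is, modulo $TV$, a Boolean combination of $\LL$-formulae and atoms $t\in V$ (or $\tau(t,b)\in V$ for $\LL$-terms $\tau$). This is false in general. The description of types in Proposition~\ref{cor_com} says that $tp^{TV}(t/b)$ is determined by $tp^T(t/b)$ together with the trace of $V$ on $\acl_T(tb)$, and elements of $\acl_T(tb)$ are typically \emph{not} $\LL$-term-definable from $t,b$. The correct normal form (the one the paper extracts by compactness) is
\[
\exists z\ \bigl(\psi(t,z,b)\wedge \vect{tz}\cap V=\vect{z_V}\bigr),
\]
with auxiliary variables $z$ carrying the $V$-constraints. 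For a concrete obstruction, take $T=\ACF_p$ and $\phi(t)=\exists z\,(z^2=t\wedge z\in V)$: this defines $\{g^2:g\in V\}$, which is not a Boolean combination of constructible sets in $t$ and the single atom $t\in V$ (in $\ACF$, one-variable constructible sets are finite or cofinite, while $\{g^2:g\in V\}\cap V$ and $\{g^2:g\in V\}\setminus V$ are both infinite and co-infinite in a generic model).

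Your dichotomy argument is correct for the special shape $\chi(t,b)\wedge t\in V$: if $\chi$ is infinite then $\theta_\chi(b)$ holds and one instance of $(A_2)$ produces a realisation in $V$ linearly independent over $\acl_T(b)$, hence outside $\acl_T(b)$. But once the $V$-constraints live on a tuple $z$ algebraic over $t,b$, the naive dichotomy ``$\exists z\,\psi$ infinite in $t$ $\Rightarrow$ full formula infinite in $t$'' is no longer what you need to prove, and the bookkeeping of which coordinates go into $V$ interacts with the question of whether $t$ itself can be taken outside $\acl_T(b)$. The paper handles this by an explicit change of basis: it rewrites $\vect{tz}=\vect{uv}$ with $v$ a basis of the part lying in $\acl_T(b)$ and $u$ a complementary basis, encodes ``$t\notin\vect v$'' into an $\LL$-formula $\Gamma^{uv}_{u_Vv_V}(u,yv)$, and then takes $\Lambda(y)$ to be the disjunction over all shapes of $\exists v\,(\vect v\cap V=\vect{v_V}\wedge\theta_\Gamma(yv))$. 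The equivalence $\Lambda(b)\Leftrightarrow(\exists t\notin\acl_T(b)\ \phi(t,b))$ is then checked in both directions using $(A_1)$ and Lemma~\ref{lm_ax2}. Your idea of invoking the axioms to push a solution outside $\acl_T(b)$ is the right instinct, but it has to be applied to $\Gamma$ (with the auxiliary $z$ unpacked into $u,v$), not to a putative $\chi(t,b)$ that does not exist.
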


\begin{proof}
  Assume that $\abs{x} = 1$. From the description of types (see Proposition~\ref{cor_com}), types in $TS$ are obtained by adding to the types in $T$ the description of $V$ on the algebraic closure. By compactness, every $\LL_V$-formula $\phi(x,y)$ is equivalent to a disjunction of formulae of the form 
$$\exists z  \psi(x,z,y) \wedge \vect{xz}\cap V = \vect{z_V}$$
where $\psi(x,z,y)$ is an $\LL$-formula (not necessarily quantifier-free) and $z_V$ a subtuple of variables of $z$\footnote{Actually we might assume that every realisation of $z$ in $\psi$ is algebraic over the realisations of $x,y$ in $\psi$, but we don't need this fact here. Also, we may replace the condition $\vect{xz}\cap V = \vect{z_V}$ by $\vect{z}\cap V = \vect{z_V}$, but we assume that the formula gives a description of $V$ on $\vect{xz}$ in order to simplify the proof.}. In order to prove elimination of $\exists^{\infty}$, by the pigeonhole principle , we may assume that $\phi(x,y)$ is equivalent to such a formula. Now let $u,v$ be two tuples of variables such that $\abs{u}+\abs{v}\leq \abs{z}+1$, and let $u_V\subset u$, $v_V \subset v$ be two subtuples. Let $\Gamma^{uv}_{u_Vv_V}(u, yv)$ be the following $\LL$-formula
$$ \exists xz \psi(x,z,y)\wedge \vect{xz} = \vect{uv} \wedge \vect{z_v} = \vect{u_V v_V}\wedge x\in \vect{uv}\setminus \vect{v}.$$
Let $\Lambda(y)$ be the formula 
$$\bigvee_{\abs{uv}\leq \abs{z}+1, u_V\subseteq u,v_V\subseteq v, \abs{u}\geq 1} \exists v (\vect{v}\cap V = \vect{v_V} \wedge \theta_{\Gamma^{uv}_{u_Vv_V}} (yv)).$$
\emph{Claim: } For all tuple $b$ from a saturated model $(\MM, V)$ of $TV$, $(\MM,V )\models \Lambda(b)$ if and only if there exists $a\in \MM$ such that $(\MM,V)\models \phi(a,b)$ and $a\notin \acl_T(b)$.

From left to right. If $\Lambda(b)$ holds for some $b$, there exists a formula $\Gamma = \Gamma_{u_Vv_V}^{uv}$ and some tuple $e$ from $\MM$ and a subtuple $e_V$ of $e$ such that $V\cap \vect{e} = \vect{e_V}$ and $\MM \models \theta_{\Gamma}(be)$. Using one instance of the axioms $(A_1)$ (Proposition~\ref{prop_axV}) and Lemma~\ref{lm_ax2}, there exists a realisation $d$ of $\Gamma(u,be)$ such that $\vect{dbe}\cap V = \vect{d_Vb_Ve_V}$, for $d_V$ the subtuple associated to the variables $u_V$ and such that $d$ is linearly independent over $\acl_T(be)$. Using that $d$ is linearly independent over $\vect{de}$, we obtain that $\vect{de}\cap V = \vect{d_Ve_V}$. As $(\MM, V)\models \Gamma(d,be)$, there exists $a$ and a tuple $c$ from $\MM$ such that 
\begin{itemize}
\item $\MM \models \psi(a, c,b)$
\item $\vect{ac} = \vect{de}$
\item $\vect{c_V} = \vect{d_V e_V}$
\item $a\in \vect{de}\setminus \vect{e}$.
\end{itemize}
Now as $\vect{de}\cap V = \vect{d_Ve_V}$ we have $\vect{ac}\cap V = \vect{c_V}$ so $(\MM,V)\models \phi(a,b)$. Now as $d$ is linearly independent over $\acl_T(be)$ and $a\in \vect{de}\setminus \vect{e}$ we have $a\notin \acl_T(be)$ so $a\notin \acl_T(b)$. 

From right to left. Assume that $(\MM,V)\models \phi(a,b)$ and $a\notin \acl_T(b)$. Let $c$ be such that $c\models \psi(a,z,b)$ and $\vect{ac}\cap V = \vect{c_V}$. Let $e_V$ be a basis of $\acl_T(b)\cap V \cap \vect{ac}$, and complete it in a basis $e$ of $\acl_T(b)\cap \vect{ac}$. Let $d_V$ be a basis of a complement of $\vect{e_V}$ inside $\vect{ac}\cap V$ and complete it in a basis $d$ of a complement of $\vect{ed_V}$ inside $\vect{ac}$. As $a\in \vect{de}\setminus \acl_T(b)$ we have $a\in \vect{de}\setminus \vect{e}$. It is clear that $(\MM,V)\models \Gamma_{u_Vv_V}^{uv}(d,be)$ for the appropriate choice of subtuple of variables $u_V\subseteq u$ and $v_V\subseteq v$. Furthermore, as $d$ is linearly independent over $\acl_T(b) = \acl_T(be)$, we have $\theta_\Gamma(be)$, and so $\Lambda(b)$ holds.
\end{proof}

\begin{cor}\label{cor_evgen}
  Assume that $T$ is model-complete and eliminates $\exists^{\infty}$. Let $T_{V_1\dots V_n}$ be the theory whose models are models of $T$ in which $V_i$ is a predicate for a vector subspace over $\F_q$. Then $T_{V_1\dots V_n}$ admits a model companion $TV_1\dots V_n$.
\end{cor}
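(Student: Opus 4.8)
The plan is to reduce the statement to the iteration framework of Section~\ref{chap1_part_it} and apply Proposition~\ref{prop_iterate}. Here every sublanguage and reduct is the same one: for each $i$ one takes $\LL_i = \LL_0 = \set{(\lambda_\alpha)_{\alpha\in\F_q},+,0}$ and $T_i = T_0$, the theory of infinite dimensional $\F_q$-vector spaces, with the predicate $V_i$ in the role of $S_i$. Since a model of $T$, or of any of its expansions by some of the $V_j$, restricts on $\LL_0$ to an infinite dimensional $\F_q$-vector space, its $\LL_0$-reduct is always a model of $T_0$; hence $T_{V_1\dots V_n}$ is exactly the theory $T_{S_1\dots S_n}$ of that framework, and Proposition~\ref{prop_iterate} will deliver the model companion once its hypothesis is checked.

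What remains is to verify that $(TV_1\dots V_i,T_0,\LL_0)$ is a suitable triple for every $i = 0,\dots,n-1$ (with $TV_1\dots V_0 := T$). I would prove, by induction on $i$, the slightly stronger statement that $TV_1\dots V_i$ exists, is model complete, eliminates $\exists^\infty$, and forms a suitable triple with $(T_0,\LL_0)$. The base case $i = 0$ is immediate from the hypotheses on $T$ together with Theorem~\ref{thm_gensubvect}. For the inductive step, note that $TV_1\dots V_{i+1}$ is by construction the generic vector subspace expansion $(TV_1\dots V_i)V_{i+1}$: it exists by Proposition~\ref{cor_suitabletriple} because $(TV_1\dots V_i,T_0,\LL_0)$ is suitable, and it is model complete since it is a model companion. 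Then Lemma~\ref{lm_TV_inf}, applied with $TV_1\dots V_i$ in place of $T$ --- allowable since by induction $TV_1\dots V_i$ is model complete and eliminates $\exists^\infty$ --- shows both that $TV_1\dots V_{i+1}$ eliminates $\exists^\infty$ and that $(TV_1\dots V_{i+1},T_0,\LL_0)$ is suitable, closing the induction.

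Feeding this into Proposition~\ref{prop_iterate} yields that $TV_1\cdots V_n$ is the model companion of $T_{V_1\dots V_n}$. I do not expect a genuine obstacle: the construction is a straightforward iteration, and the reduct $(T_0,\LL_0)$ never changes, so Theorem~\ref{thm_gensubvect} and Lemma~\ref{lm_TV_inf} may be reused verbatim at each stage. The one subtlety to keep track of is that the induction hypothesis must include both model completeness and elimination of $\exists^\infty$ --- not merely suitability of the triple --- because Lemma~\ref{lm_TV_inf} consumes both at every step; but that is precisely the pair of properties that lemma propagates.
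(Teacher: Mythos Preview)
Your proposal is correct and follows exactly the route the paper takes: the paper's proof is the one-line ``immediate consequence of Lemma~\ref{lm_TV_inf} and Proposition~\ref{prop_iterate}'', and you have simply spelled out the induction that is implicit in combining those two results. Your remark that both model completeness and elimination of $\exists^\infty$ must be carried through the induction is the right bookkeeping, and it is precisely what Lemma~\ref{lm_TV_inf} is designed to propagate.
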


\begin{proof}
  This is an immediate consequence of Lemma~\ref{lm_TV_inf} and Proposition~\ref{prop_iterate}. 
\end{proof}

\begin{ex}[Generic vector subspace of a vector space]\label{ex_vspace}
Consider the theory $T$ of infinite $\F_q$-vector spaces in the language $\LL = \set{(\lambda_\alpha)_{\alpha\in \F_q}, +, 0 }$. Applying Corollary~\ref{cor_evgen} the theory $T_{V_1\dots V_n}$ admits a model companion $TV_1\dots V_n$. Proposition~\ref{prop_stable} gives us inductively that $TV_1\cdots V_n$ is stable for all $n\in \N$. It is easy to check that $TV_1$ is the theory of belles paires (see~\cite{Po83}) of the theory $T$. 
\end{ex}

\subsection{Field of positive characteristic with generic vector subspaces}

Let $p>0$ be a prime number.
Let $\LL = \set{+,-,\cdot,0,1,\dots}$ and $T$ an $\LL$-theory of an infinite field of characteristic $p$. Let $\F_{q_1},\cdots,\F_{q_n}$ be finite subfields in any model of $T$. Consider the theory $T'$ obtained by adding to the language a constant symbol for each element of $\F_{q_1}\cup \cdots \cup \F_{q_n}$. Then $T$ and $T'$ have the same models. It follows that for each $i$ we may consider that the theory of infinite $\F_{q_i}$-vector space in the language $\LL_i = \set{+,0,(\lambda_\alpha)_{\alpha\in \F_{q_i}}}$ is a reduct of $T$.

\begin{prop}\label{prop_gensubvectfields}
  Let $\LL \supseteq \LLr$ and $T$ an $\LL$-theory of an infinite field of characteristic $p$. Let $\F_{q_1},\cdots,\F_{q_n}$ be finite subfields in any model of $T$. Assume that
  \begin{enumerate}
    \item $T$ is model-complete;
    \item $T$ eliminates $\exists^\infty$. 
  \end{enumerate}
  Let $T_{V_1\dots V_n}$ be the theory whose models are models of $T$ in which each $V_i$ is a predicate for an $\F_{q_i}$-vector subspace. By Corollary~\ref{cor_evgen} the theory $T_{V_1\dots V_n}$ admits a model-companion.
\end{prop}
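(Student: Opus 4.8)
The plan is to run the iteration from the proof of Corollary~\ref{cor_evgen}, the only change being that at stage $i$ the reduct used is the $\F_{q_i}$-vector space structure of the field rather than a fixed one, so one must check that the hypotheses of Theorem~\ref{thm_gensubvect} survive each step. First a harmless preliminary reduction: as in the paragraph preceding the statement, let $T'$ be obtained from $T$ by adjoining a constant symbol $c_\alpha$ for each of the finitely many elements $\alpha$ of $\F_{q_1}\cup\cdots\cup\F_{q_n}$. These constants are $0$-algebraic (each $\alpha$ is a root of $X^{q_i}-X$, a polynomial over the prime field), so $T$ and $T'$ have the same models once the constants are forgotten, and naming finitely many parameters preserves both model-completeness and elimination of $\exists^{\infty}$; it therefore suffices to produce a model companion for $T'_{V_1\dots V_n}$, which after forgetting the added constants is also a model companion for $T_{V_1\dots V_n}$. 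The point of passing to $T'$ is that now every scalar multiplication $\lambda_\alpha$ with $\alpha\in\F_{q_i}$ is a term of $\LL'$, so $\LL_i:=\set{+,0,(\lambda_\alpha)_{\alpha\in\F_{q_i}}}$ is a genuine sublanguage of $\LL'$ and the complete theory $T_i$ of infinite-dimensional $\F_{q_i}$-vector spaces is a reduct of $T'$: every model of $T'$ is an infinite field of characteristic $p$ containing $\F_{q_i}$, hence an infinite-dimensional $\F_{q_i}$-vector space.

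Next I would prove, by induction on $i=0,\dots,n-1$, that $(T'V_1\dots V_i,\,T_{i+1},\,\LL_{i+1})$ is a suitable triple and that the model companion $T'V_1\dots V_{i+1}$ of the theory of models of $T'V_1\dots V_i$ expanded by a predicate $V_{i+1}$ for an $\F_{q_{i+1}}$-vector subspace exists, is model-complete, and eliminates $\exists^{\infty}$. For $i=0$ this is Theorem~\ref{thm_gensubvect}, Proposition~\ref{cor_suitabletriple} and Lemma~\ref{lm_TV_inf} applied with $\F_{q_1}$. For the inductive step, note that adding the predicates $V_1,\dots,V_i$ leaves the $\LL'$-reduct of $T'V_1\dots V_i$ equal to $T'$, so its models are still infinite fields of characteristic $p$ containing $\F_{q_{i+1}}$; in particular $T'V_1\dots V_i$ contains the theory $T_{i+1}$ of infinite-dimensional $\F_{q_{i+1}}$-vector spaces, and by the induction hypothesis it is model-complete and eliminates $\exists^{\infty}$. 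So Theorem~\ref{thm_gensubvect} (with $\F_{q_{i+1}}$) shows $(T'V_1\dots V_i,\,T_{i+1},\,\LL_{i+1})$ is suitable, Proposition~\ref{cor_suitabletriple} produces $T'V_1\dots V_{i+1}$ (model-complete, as a model companion), and Lemma~\ref{lm_TV_inf} (with $\F_{q_{i+1}}$) shows it eliminates $\exists^{\infty}$. This closes the induction; feeding the resulting chain of suitable triples into Proposition~\ref{prop_iterate} shows that $T'V_1\dots V_n$ is the model companion of $T'_{V_1\dots V_n}$, and undoing the reduction gives a model companion of $T_{V_1\dots V_n}$.

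Almost everything here is bookkeeping around Theorem~\ref{thm_gensubvect}, Lemma~\ref{lm_TV_inf} and Proposition~\ref{prop_iterate}, and the combinatorics is identical to that in the proof of Corollary~\ref{cor_evgen}. The one point that genuinely requires care is the preliminary reduction to $T'$: one has to verify that naming the elements of the finite subfields does not change the class of models and that a model companion for $T'_{V_1\dots V_n}$ descends to one for $T_{V_1\dots V_n}$. Within the induction, the only thing to watch is that each expanded theory $T'V_1\dots V_i$ still contains the theory of infinite-dimensional $\F_{q_{i+1}}$-vector spaces, which is immediate because its $\LL'$-reduct is unchanged.
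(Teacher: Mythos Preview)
Your proposal is correct and follows the same approach as the paper: the paper performs the reduction to $T'$ (adding constants for the elements of the finite subfields) in the paragraph immediately preceding the proposition, and then simply invokes Corollary~\ref{cor_evgen} in the statement itself, with no separate proof. Your write-up is more explicit than the paper's in one respect: Corollary~\ref{cor_evgen} is literally stated for a single fixed $\F_q$, and you spell out why the iteration via Theorem~\ref{thm_gensubvect}, Lemma~\ref{lm_TV_inf} and Proposition~\ref{prop_iterate} goes through unchanged when the finite field is allowed to vary from step to step; the paper leaves this as implicit.
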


An additive subgroup of a field of characteristic $p$ is an $\F_p$-vector space, hence Proposition~\ref{prop_gensubvectfields} translates as follows.

\begin{prop}\label{prop_genfield}
  Let $\LL \supseteq \LLr$ and $T$ an $\LL$-theory of an infinite field of characteristic $p$. Assume that
  \begin{enumerate}
    \item $T$ is model-complete;
    \item $T$ eliminates $\exists^\infty$. 
  \end{enumerate}
  Let $T_{G_1\dots G_n}$ be the theory whose models are models of $T$ in which each $G_i$ is a predicate for an additive subgroup. By Corollary~\ref{cor_evgen} the theory $T_{G_1\dots G_n}$ admits a model-companion, which we denote by $TG_1\dots G_n$.
\end{prop}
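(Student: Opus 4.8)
The plan is to reduce the statement directly to Proposition~\ref{prop_gensubvectfields} (equivalently to Corollary~\ref{cor_evgen}) by observing that, in a field of characteristic $p$, an \emph{additive subgroup} is the same thing as an \emph{$\F_p$-linear subspace}. First I would record the elementary language bookkeeping: the prime field $\F_p$ lies in every model of $T$, and for each $\alpha\in\F_p$ the scalar multiplication $x\mapsto\alpha x$ is $\alpha$-fold addition, hence is given by an $\LLr$-term (with $\alpha$ the value of a closed $\LLr$-term). Since $\LLr\subseteq\LL$, the operations $\lambda_\alpha$ are quantifier-free, parameter-free $\LL$-definable, so $\LL_i:=\set{+,0,(\lambda_\alpha)_{\alpha\in\F_p}}$ is, up to term-definability, a sublanguage of $\LL$, and $T\upharpoonright\LL_i$ is exactly the theory $T_0$ of infinite-dimensional $\F_p$-vector spaces from the previous subsection.

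Next I would verify the set-theoretic fact underlying the reduction: for $\MM\models T$, a subset $G\subseteq\MM$ is an additive subgroup of $(\MM,+,0)$ if and only if it is an $\F_p$-subspace of $\MM$ for the operations $\lambda_\alpha$. The only point needing a word is that a nonempty additive subgroup is automatically closed under each $\lambda_\alpha$, which is immediate since $\lambda_\alpha(g)$ is a sum of finitely many copies of $g$. Consequently the $\LL_S$-theory $T_{G_1\dots G_n}$ coincides with the theory $T_{V_1\dots V_n}$ of Proposition~\ref{prop_gensubvectfields} taken with $q_1=\dots=q_n=p$: these two theories have literally the same models, so a model companion of one is a model companion of the other. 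Since our hypotheses are precisely that $T$ is model-complete and eliminates $\exists^\infty$, Proposition~\ref{prop_gensubvectfields} (which rests on Corollary~\ref{cor_evgen}, hence on Theorem~\ref{thm_gensubvect} and Proposition~\ref{prop_iterate}) provides a model companion of $T_{V_1\dots V_n}$, and therefore of $T_{G_1\dots G_n}$; I would call it $TG_1\dots G_n$.

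I do not anticipate a genuine obstacle: all the model-theoretic work --- existence and axiomatisation of the generic expansion by a vector subspace, its iteration, and the preservation of elimination of $\exists^\infty$ needed for that iteration --- is already carried out in Lemma~\ref{lm_TV_inf} and Corollary~\ref{cor_evgen}. The only item to treat with a little care is the passage between $\LL$ and $\LL$ enriched by the symbols $\lambda_\alpha$; because these symbols are quantifier-free parameter-free $\LLr$-definable, this passage changes neither substructures, nor $\acl_0$, nor reducts, nor the suitability of the relevant triples, so identifying $T\upharpoonright\LL_i$ with $T_0$ is harmless. I would make this identification explicit in a sentence rather than belabour it.
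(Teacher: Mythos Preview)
Your proposal is correct and follows exactly the paper's approach: the paper itself gives no separate proof of this proposition, stating only (in the sentence preceding it) that ``an additive subgroup of a field of characteristic $p$ is an $\F_p$-vector space, hence Proposition~\ref{prop_gensubvectfields} translates as follows.'' Your careful language bookkeeping about the term-definability of the $\lambda_\alpha$ is precisely the content of the paragraph the paper places just before Proposition~\ref{prop_gensubvectfields}, so you have simply made explicit what the paper leaves implicit.
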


\begin{ex}\label{ex_fields}
  The hypotheses of Propositions~\ref{prop_gensubvectfields} and \ref{prop_genfield} are satisfied by the following theories:
\begin{itemize}
  \item $\ACF_p$, $\SCF_{p,e}$ for $e$ finite or infinite, $\Psf_c$,
\item $\ACFA_p$, $\DCF_p$.
\end{itemize}
All these theories are model-complete. Concerning the elimination of $\exists^{\infty}$, all perfect $\PAC$ fields are \emph{algebraically bounded}~\cite{ChaHru04}, which implies the elimination of $\exists^{\infty}$ for $\ACF_p$ and $\Psf_p$. Elimination of $\exists^{\infty}$ for $\SCF_{p,e}$ follows from~\cite[Proposition 61.]{Del88}. The theory $\ACFA_p$ eliminates $\exists^\infty$ in all characteristic, this follows easily from the definability of the $\sigma$-degree (see~\cite[Section 7]{CH99}). For all $p$ prime or $0$, the theory $\DCF_p$ eliminates the quantifier $\exists^\infty$, this follows from the proof of this result in ~\cite[Theorem 2.13, p51]{MMP96}, although it was proved in the characteristic $0$ case, the proof works in all characteristics.
\end{ex}

\begin{ex}[$\ACF\mathrm{V_1\cdots V}_n$ and $\ACFG$]\label{ex_ACFG}
  Let $\F_{q_1},\cdots,\F_{q_n}$ be any finite fields of characteristic $p$. We denote by $\ACF\mathrm{V_1\cdots V}_n$ and $\ACFG$ respectively the theories $\ACF_p\mathrm{V_1\cdots V}_n$ and $\ACF_p G$. \cite{dE18B} is dedicated to a detailed study of the theory $\ACFG$, which is $\NSOP{1}$ and not simple (see also Example~\ref{ex_ACFG_NSOP}).
\end{ex}

Recall that a pseudo-algebraically closed field is a field $K$ which is existentially closed in every regular extension. The theory $\PAC$ is incomplete but eliminates $\exists^\infty$ if the field is perfect. 
\begin{prop}\label{prop_PACG}
  Let $\PAC_{pG}$ be the theory whose models are perfect $\PAC_p$-fields in $\LLr$ with a predicate $G$ for an additive subgroup. Then there exists a theory $\PAC_pG$ such that 
  \begin{enumerate}
    \item every model $(F,G')$ of $\PAC_{pG}$ extends to a model $(K,G)$ of $\PAC_pG$ such that $K$ is a regular extension of $F$;
    \item every model $(K,G)$ of $\PAC_pG$ is existentially closed in every extension $(F,G')$ such that $F$ is a regular extension of $K$.
  \end{enumerate}
  Let $T$ be a theory of perfect $\PAC_p$-fields in a language containing $\LLr$ such that $T$ is model-complete, and $T_{G_1\cdots G_n}$ be the theory whose models are models of $T$ with predicates $G_i$ for additive subgroups. Then $T_{G_1\cdots G_n}$ admits a model-companion, $TG_1 \cdots G_n$.
\end{prop}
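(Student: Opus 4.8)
The plan is to treat the last assertion by a one-line reduction to the model-complete case, and to build $\PAC_pG$ from Proposition~\ref{cor_com_gen}: the theory $\PAC_p$ is not model complete, but — by definition of PAC — its models are existentially closed in every regular extension, and this is precisely the property that replaces hypothesis $(H_1)$ in that proposition. For the last assertion itself, if $T$ is a model-complete theory whose models are perfect $\PAC_p$-fields, then these fields are algebraically bounded by~\cite{ChaHru04}, so $T$ eliminates $\exists^\infty$; since an additive subgroup of a field of characteristic $p$ is an $\F_p$-vector space, Proposition~\ref{prop_genfield} applies verbatim and yields the model companion $TG_1\cdots G_n$ of $T_{G_1\cdots G_n}$.

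For the construction of $\PAC_pG$, I would take $\LL_0 = \set{+, 0, (\lambda_\alpha)_{\alpha \in \F_p}}$ and $T_0$ the theory of infinite $\F_p$-vector spaces, a reduct of $\PAC_p$, so that $\acl_0(A) = \vect{A}$. Hypotheses $(H_2)$ and $(H_3^+)$ hold exactly as in the proof of Theorem~\ref{thm_gensubvect}; in particular $(H_3^+)$ (modularity of $\vect{\cdot}$), by the remark following Definition~\ref{def_suitabletriple}, makes the notion of strong extension vacuous, so that the two bullets of Proposition~\ref{cor_com_gen} read with ``strong extension'' replaced by ``extension''. Hypothesis $(H_4)$ holds because perfect $\PAC_p$-fields are algebraically bounded~\cite{ChaHru04}, hence eliminate $\exists^\infty$: Fact~\ref{fact_ChaPil} supplies the formulae $\theta_\phi$, and applying it to $\tilde\phi(z,y) = \exists x\,(z = \vect{x}_0 \wedge \phi(x,y))$, then using \ref{EXT} of $\indi a$ to upgrade $\F_p$-linear independence over $\acl_T(b)$ to $\F_p$-linear independence over the whole model, recovers $(H_4)$ just as in that proof. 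Note that $(H_1)$ is unavailable, but Proposition~\ref{cor_com_gen} does not require it.

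I would then set $\PAC_pG := \PAC_{pG} \cup \Sigma$, where $\Sigma$ is the axiom scheme of Theorem~\ref{model_com_gen}, and read off the two bullet points from Proposition~\ref{cor_com_gen} together with two elementary observations. First, an elementary extension of a perfect $\PAC_p$-field is a regular extension of it: a monic polynomial with coefficients in the base has the same, necessarily finite, number of roots in any elementary extension, so the base is relatively algebraically closed in the extension, which is moreover separable since the base is perfect; hence the extension $K$ produced by the first bullet of Proposition~\ref{cor_com_gen}, being $\LL$-elementary over the base $F$, is in particular regular over $F$. Second, by the defining property of PAC, if $K \models \PAC_p$ and $F$ is a regular extension of $K$ then $K$ is existentially closed in $F$, so the hypothesis ``$K$ existentially closed in $F$'' in the second bullet of Proposition~\ref{cor_com_gen} is automatic whenever $F/K$ is regular. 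Two harmless normalisations remain: a model $(F,G')$ of $\PAC_{pG}$ with $G'$ finite is replaced, without changing $F$, by one in which $G'$ is an infinite additive subgroup of $F$ — possible since an infinite field of characteristic $p$ has infinite $\F_p$-dimension — so that it becomes a model of $T_S$; conversely, any field extension with predicate extending a model of $\PAC_pG$ already has infinite predicate, since $\Sigma$ forces the predicate to be infinite-dimensional over $\F_p$. Combining these facts with the two bullets of Proposition~\ref{cor_com_gen} gives both assertions about $\PAC_pG$, and the argument for the second named theory $TG_1\cdots G_n$ was given in the first paragraph.

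The only genuinely non-formal ingredient is the verification of $(H_4)$, i.e. elimination of $\exists^\infty$ in $\PAC_p$, which rests on the algebraic boundedness of perfect PAC fields~\cite{ChaHru04}; everything else amounts to the observation that ``existential closedness in regular extensions'', the defining feature of PAC fields, is exactly the weakening of model completeness that Proposition~\ref{cor_com_gen} is designed to accommodate.
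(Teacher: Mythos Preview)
Your proposal is correct and follows the paper's own approach: the first statement is deduced from Proposition~\ref{cor_com_gen} (using that the proof of $(H_4)$ in Theorem~\ref{thm_gensubvect} via elimination of $\exists^\infty$ does not require model-completeness), and the second from Corollary~\ref{cor_evgen}/Proposition~\ref{prop_genfield}. You fill in details the paper leaves implicit --- the translation between ``regular extension'' and the bullets of Proposition~\ref{cor_com_gen} via the PAC property, and the normalisation for finite $G'$ --- but the architecture is identical.
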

\begin{proof}
  Perfect $\PAC_p$-fields in $\LLr$ satisfies $(H_4)$, the proof of this in Theorem~\ref{thm_gensubvect} does not use the model-completeness of the theory $T$, so the first statement follows from Proposition~\ref{cor_com_gen}. The second statement is Corollary~\ref{cor_evgen}.
\end{proof}
\begin{rk}
  Note that the \emph{perfect} assumption is only here to ensure that the fields eliminate the quantifier $\exists^\infty$. It should be true that all $\PAC$ fields eliminate the quantifier $\exists^\infty$ although we did not find any reference in the literature. 
\end{rk}

\subsection{$\NSOP 1$ fields of positive characteristic with generic vector subspaces}\label{sec_NSOPf}

Now we give some condition under which the theory obtained in Proposition~\ref{prop_gensubvectfields} is $\NSOP 1$.
In this subsection, for $A$ in some field, we denote by $\acl_T$ the model-theoretic algebraic closure, $A^{s}$ the separable closure and $\ol{A}$ the field theoretic algebraic closure. For three subsets $A,B,C$ of some field, the notation $A\indi{ld}_C B$ stands for “the field spanned by $A$ and $B$ are linearly disjoint over the field spanned by $C$”. Recall that a field extension $A\subseteq B$ (which we will denote $B/A$) is \emph{regular} if $B\indi{ld}_A \ol A$.

We denote by $\LLr=\set{+,-,\cdot,0,1}$ the language of rings. By an \emph{arbitrary} theory of fields $T$, we mean a theory of field in a language $\LL\supseteq \LLr$.
\begin{fact}\label{fact_regacl}
  \begin{enumerate}
    \item Let $T$ be an arbitrary theory of fields. Let $F\models T$ and $A\subseteq F$. Then $F/acl_T(A)$ is a regular extension~\cite[(1.17)]{C99}.
    \item    Let $E\subset K\cap L$ be three fields. If $K/E$, $L/E$ are regular and $K\indi{ld}_E L$ then $K^s\indi{ld}_{E^s} L^s$ \cite[Lemma 3.1 (1)]{C99}.
    \item  Assume that $A,B,C$ are separably closed subfields of a separably closed field $F$ such that $C\subseteq A\cap B$. If $A\indi{ld}_C B$ and $F/AB$ is separable, then $tp^{SCF_{p,e}}(A/B)$ does not fork over $C$~\cite[Remark after (1.2)]{C02}.
\end{enumerate}
\end{fact}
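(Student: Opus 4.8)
All three assertions are classical facts about fields, and the plan is to indicate the algebraic content of each rather than to reprove them in detail; full proofs are in the indicated references.

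For the first item, I would begin by noting that $\acl_T(A)$ is a subfield of $F$, since the ring operations are $0$-definable and hence $\dcl_T$, a fortiori $\acl_T$, is closed under them. Next, $\acl_T(A)$ is relatively algebraically closed in $F$: any $b\in F$ that is a root of a nonzero polynomial over $\acl_T(A)$ lies in a finite $\acl_T(A)$-definable set, so $b\in\acl_T(\acl_T(A))=\acl_T(A)$. In characteristic $0$ this already yields regularity. In characteristic $p$ one must additionally check that $F/\acl_T(A)$ is separable, and this is where the model-theoretic content is used: $\acl_T(A)$ is closed under every finite-to-one $\acl_T(A)$-definable correspondence --- in particular under the unique solution of $x^p=c$ whenever $c\in\acl_T(A)$ has such a solution in $F$ --- and from this one deduces, via Mac Lane's criterion (Frobenius preserves $\acl_T(A)$-linear independence of tuples from $F$, equivalently $F$ and $\acl_T(A)^{1/p}$ are linearly disjoint over $\acl_T(A)$), that $F/\acl_T(A)$ is separable, hence regular. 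This is \cite[(1.17)]{C99}.

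For the second item, I would work inside a fixed algebraic closure of $KL$ and proceed in three steps. First, from $K\indi{ld}_E L$ and the regularity of $K/E$ and $L/E$, upgrade the relations $K\cap\ol{E}=E$ and $L\cap\ol{E}=E$ to $K^s\cap\ol{E}=E^s$ and $L^s\cap\ol{E}=E^s$, and check that $K^s/E^s$, $L^s/E^s$ are again regular. Second, use that a regular extension is linearly disjoint from the separable closure of its base, together with transitivity of linear disjointness, to propagate $K\indi{ld}_E L$ along the tower $E\subseteq E^s\subseteq\ol{E}$. Third, handle the separable-algebraic part by Galois theory: the restriction maps identify the relevant Galois groups over $K$ (resp.\ $L$) with those over $E$, which forces linear disjointness of $K^s$ and $L^s$ over $E^s$. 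The full argument is \cite[Lemma 3.1 (1)]{C99}.

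For the third item, I would invoke the explicit description of forking in $SCF_{p,e}$: over a separably closed base, non-forking of $tp^{SCF_{p,e}}(A/B)$ over $C$ is controlled by linear disjointness of suitable inseparable (perfect-hull) closures. Starting from $A\indi{ld}_C B$ and using that $F/AB$ is separable, a Mac Lane-type argument as in the first item lifts linear disjointness to the perfect hulls, so that $A^{1/p^\infty}$ and $B^{1/p^\infty}$ are linearly disjoint over $C^{1/p^\infty}$, and this is precisely the independence condition computing non-forking in $SCF_{p,e}$; see \cite[Remark after (1.2)]{C02}. Among the three, I expect item (2) to be the only genuine obstacle to a self-contained proof: items (1) and (3) are essentially an unwinding of the definition of $\acl_T$, respectively of the known forking calculus of $SCF_{p,e}$, combined with the standard separability criterion, whereas transferring linear disjointness up to the full separable closures in (2) requires the Galois-theoretic bookkeeping and is not purely formal.
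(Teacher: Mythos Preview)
The paper does not prove this statement at all: it is presented as a \emph{Fact} with bare citations to \cite{C99} and \cite{C02}, and no argument is given. Your proposal therefore already goes further than the paper does, and your sketches of the algebraic content behind each item are reasonable and point to the right ingredients (relative algebraic closedness plus Mac~Lane's criterion for (1), transitivity of linear disjointness and Galois restriction for (2), the known forking calculus in $SCF_{p,e}$ for (3)). Since the paper's own ``proof'' is just the references, there is nothing to compare your approach against; if you want to match the paper, you can simply cite the three sources as stated.
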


The following gives a behaviour of the Kim-independence in \emph{any} theory of fields.
\begin{fact}[{\cite[Proposition 9.28]{KR17}}, {\cite[Theorem 3.5]{C99}}]\label{fact_kimfield}
  Let $T$ be an arbitrary theory of fields, and $E\prec F\models T$. Let $A,B$ be $\acl_T$-closed subsets of $F$ containing $E$, such that $A\indi K _E B$. Then
  \begin{enumerate}
    \item $A\indi{ld}_E B$;
    \item $F/AB$ is a separable extension;
    \item $\acl_T(AB)\cap A^sB^s = AB$.
    \end{enumerate}
\end{fact}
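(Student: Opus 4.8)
This is \cite[Proposition 9.28]{KR17} combined with \cite[Theorem 3.5]{C99}; I indicate the shape of the argument. Items (1) and (2) are proved by contraposition, exploiting that Kim-dividing over a model is cheap to produce: since $E\prec F$ one has $E$-invariant (coheir) global extensions of $tp^T(B/E)$, so to see that a formula $\varphi(x,b)\in tp^T(A/EB)$ (with $b$ a finite tuple from $B$) Kim-divides over $E$ it is enough to check that $\{\varphi(x,b_i)\mid i<\omega\}$ is inconsistent along a coheir Morley sequence $(b_i)_{i<\omega}$ of $tp^T(b/E)$ over $E$; this would contradict $A\indi K_E B$.

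For (1): if $A\nindi{ld}_E B$ there are $a_1,\dots,a_n\in A$ linearly independent over $E$ but lying in a proper subspace over the field $E(b)$ generated by $E$ and some finite tuple $b$ from $B$. Using the $\theta$-type definability available in $T$ (of the flavour of Fact~\ref{fact_ChaPil}) one writes down a single $\LL$-formula $\varphi(x,b)$, with parameters in $Eb$, that lies in $tp^T(A/Eb)$ and expresses both that the distinguished coordinates are linearly dependent over $E(b)$ and that they remain ``as $E$-independent as possible''; the precise form of such a $\varphi$, with control on its complexity, is the real content of \cite[Proposition 9.28]{KR17}. One then checks that $\{\varphi(x,b_i)\mid i<\omega\}$ is inconsistent by a dimension count: a common realisation would produce, for each $i$, a nontrivial linear relation over $E(b_i)$ among coordinates spanning a fixed finite-dimensional $E$-space, and finitely many such relations already over-determine that space. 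Item (2) follows by the same scheme, the linear dependence being replaced by a $p$-th power relation $\sum_j\alpha_jf_j^p=0$ (with $\alpha_j$ in the compositum $AB$ and $f_j\in F$ linearly independent over it) witnessing a failure of separability of $F/AB$, whose $B$-part is again spread along a coheir sequence.

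For (3): the inclusion $AB\subseteq\acl_T(AB)\cap A^sB^s$ is clear. For the converse: since $E\prec F$ and $A,B$ are $\acl_T$-closed, $E$ is algebraically closed (in the field sense) in both $A$ and $B$, and $F/E$ is regular by Fact~\ref{fact_regacl}(1), hence separable; so $A/E$ and $B/E$ are regular extensions. By (1) and Fact~\ref{fact_regacl}(2) this yields $A^s\indi{ld}_{E^s}B^s$. An element $c\in\acl_T(AB)\cap A^sB^s$ is then separably algebraic over $AB$ (the extension $A^sB^s/AB$ being separable) and lies in $F$, with $F/AB$ separable by (2); a Galois-theoretic argument using $A^s\indi{ld}_{E^s}B^s$ and the regularity of $A/E$ and $B/E$ forces $c\in AB$, which is the heart of \cite[Theorem 3.5]{C99}. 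The main difficulty is thus not the model-theoretic bookkeeping but these two ingredients --- the bounded-complexity definability behind (1) and (2), and the linear-disjointness/Galois step in (3) --- so the economical course, adopted here, is simply to cite \cite[Proposition 9.28]{KR17} and \cite[Theorem 3.5]{C99}.
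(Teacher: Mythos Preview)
The paper gives no proof of this statement at all: it is recorded as a \emph{Fact} and simply attributed to \cite[Proposition 9.28]{KR17} and \cite[Theorem 3.5]{C99}. Your final sentence --- that the economical course is to cite those two results --- is therefore exactly what the paper does, and is the correct disposition.

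The sketch you add on top is broadly in the right spirit (contrapose, produce a Kim-dividing formula along a coheir sequence for (1) and (2); use linear disjointness of separable closures plus Galois theory for (3)), but one point is misleading. You invoke ``the $\theta$-type definability available in $T$ (of the flavour of Fact~\ref{fact_ChaPil})'' to encode that the distinguished tuple stays ``as $E$-independent as possible''. Fact~\ref{fact_ChaPil} requires elimination of $\exists^{\infty}$, which is \emph{not} assumed here: $T$ is an arbitrary theory of fields. The actual argument in \cite{KR17} does not need this; linear (in)dependence of a fixed finite tuple over the subfield generated by given parameters is already a first-order condition in the field language (via vanishing of suitable determinants), so the dividing formula can be written directly without any appeal to $\exists^{\infty}$-elimination. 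If you keep the sketch, you should replace that reference by the determinant remark; otherwise your bottom line already matches the paper.
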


\begin{lm}\label{lm_CH}
  Let $T$ be an arbitrary theory of fields, and $F\models T$. Let $A,B,C,D$ be subsets of $F$, containing some set $k\subseteq F$, and such that $A,B\subseteq D$. Assume that $A$ and $B$ are $\acl_T$-closed and that $tp^T(D/C)$ is finitely satisfiable in $k$. Then we have the following results.
  \begin{enumerate}
    \item $(F\cap (AC)^s+F\cap (BC)^s)\cap D = A+B$;
    \item $[(F\cap \ol{AC})\cdot (F\cap \ol{BC}))] \cap D = A\cdot B$ (where $U\cdot V= \set{u\cdot v \mid u\in U,\ v\in V}$).
    \end{enumerate}
\end{lm}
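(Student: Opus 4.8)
The plan is to prove the two inclusions of each part separately; the inclusions $\supseteq$ are immediate ($A\subseteq F\cap(AC)^s$ and $B\subseteq F\cap(BC)^s$, resp.\ $A\subseteq F\cap\ol{AC}$ and $B\subseteq F\cap\ol{BC}$, while $A+B\subseteq D$ and $A\cdot B\subseteq D$ since $A,B\subseteq D$ and $D$ is a subfield in the situations of interest), so all the content is in $\subseteq$. Throughout I will use that, by Fact~\ref{fact_regacl}(1), $F/A$ and $F/B$ are regular extensions; in particular $A$ and $B$ are relatively algebraically closed in $F$, so any element of $F$ that is algebraic over $A$ (resp.\ over $B$) already lies in $A$ (resp.\ in $B$). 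This is what will let me return inside $A$ and $B$ at the end.

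Next I would isolate the single use of the finite-satisfiability hypothesis, in the following shape: if $tp^T(D/C)$ is finitely satisfiable in $k$ and $k\subseteq C$, then $\ol D\cap\ol C\cap F=\ol k\cap F$, where $\ol{\,\cdot\,}$ denotes field-theoretic algebraic closure of the subfield generated. Indeed, if $\alpha\in F$ is algebraic over the field generated by a finite $\bar d\subseteq D$, witnessed by a polynomial $\nu(X,\bar d)$ over the prime field, and over the field generated by a finite $\bar c\subseteq C$ with minimal polynomial $\mu(X,\bar c)$, then the formula ``$\exists v\,(\nu(v,\bar d)=0\wedge\mu(v,\bar c)=0)$'' holds of the tuple $(\bar d,\bar c)$ with $\bar d\subseteq D$; finite satisfiability produces $\bar d'\subseteq k$ and a common root $v_0$ of $\nu(\cdot,\bar d')$ and $\mu(\cdot,\bar c)$, so $v_0\in\ol k$. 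Since $\mu(X,\bar c)$ is irreducible over the field generated by $\bar c$, and that field contains $k$, the root $v_0$ is conjugate to $\alpha$ over the field generated by $k$, whence $\alpha\in\ol k$. Adjoining to the formula the clauses $\partial_X\nu(v_0,\bar d')\ne0$ and $\partial_X\mu(v_0,\bar c)\ne0$ yields the separable refinement: joint separable algebraicity of $\alpha$ over $D$ and over $C$ forces separable algebraicity over $k$.

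For the inclusion $\subseteq$ in part~(1), take $d=e+f\in D$ with $e\in F\cap(AC)^s$, $f\in F\cap(BC)^s$, and first reduce to finite data: fix finite $\bar a\subseteq A$, $\bar b\subseteq B$, $\bar c\subseteq C$ with $e$ separably algebraic over the field generated by $\bar a,\bar c$ and $f$ over the field generated by $\bar b,\bar c$. To bring finite satisfiability to bear on $d$ itself, rather than only on copies of $d$ inside $k$, I would pass to a coheir Morley sequence $(D_i)_{i<\omega}$ of $tp^T(D/C)$ over $C$ with $D_0=D$; this sequence is $C$-indiscernible, because a global extension of $tp^T(D/C)$ finitely satisfiable in $k$ is $\mathrm{Aut}(\M/k)$-invariant and $k\subseteq C$. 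The decomposition transports along the sequence, giving $d_i=e_i+f_i$ with $e_i$ (resp.\ $f_i$) separably algebraic over the field generated by $\bar a_i,\bar c$ (resp.\ $\bar b_i,\bar c$), the tuple $\bar c$ staying fixed. Combining the $C$-indiscernibility with the consequence of finite satisfiability isolated above (applied to $D_j$ over $CD_{<j}$), I would show that the $\bar c$-contribution to the decomposition must collapse, so that $d$ can be rewritten as $e'+f'$ with $e'$ algebraic over $A$ and $f'$ algebraic over $B$; since $A$ and $B$ are relatively algebraically closed in $F$, this gives $e'\in A$ and $f'=d-e'\in B$, i.e.\ $d\in A+B$. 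Part~(2) follows the same template, with sums replaced by products (dividing by $e$ instead of subtracting it) and with field-theoretic algebraic closure used throughout.

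The hard part is the step where the $\bar c$-contribution is shown to collapse. Finite satisfiability of $tp^T(D/C)$ in $k$ yields only algebraic disjointness of $D$ from $C$ over $k$, not full linear disjointness, which can genuinely fail over imperfect fields; so this collapse cannot be obtained by a naive linear-disjointness argument and must be organised carefully through the coheir sequence together with the regularity of $F/A$ and $F/B$.
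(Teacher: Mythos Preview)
Your proposal has a genuine gap at exactly the point you flag as ``the hard part''. You are trying to show that, for the \emph{given} decomposition $d=e+f$, the $\bar c$-contribution must collapse, and you reach for a coheir Morley sequence to force this. That route is unnecessarily elaborate and you do not complete it.

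The paper's argument is a one-liner that sidesteps the whole difficulty: it does not try to salvage the original witnesses $e,f$, but instead uses finite satisfiability to produce \emph{new} witnesses. Concretely, fix monic separable $P(X,\bar a,\bar c)$ and $Q(X,\bar b,\bar c')$ (coefficients in the subfields generated by $\bar a\bar c$ and $\bar b\bar c'$) with $P(e,\bar a,\bar c)=0$ and $Q(f,\bar b,\bar c')=0$. The formula
\[
\phi(z_1,z_2,z_3;\bar c,\bar c')\ :\ \exists x\,\exists y\ \bigl(x+y=z_1\wedge P(x,z_2,\bar c)=0\wedge Q(y,z_3,\bar c')=0\bigr)
\]
lies in $tp^T(d,\bar a,\bar b/C)$; since $d,\bar a,\bar b\in D$ and $tp^T(D/C)$ is finitely satisfiable in $k$, there are $\bar d,\bar d'\in k$ with $F\models \phi(d,\bar a,\bar b;\bar d,\bar d')$. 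The witnessing $x,y\in F$ now satisfy $x+y=d$, with $x$ a root of $P(X,\bar a,\bar d)$ and $y$ a root of $Q(X,\bar b,\bar d')$. As $k\subseteq A\cap B$, these polynomials have coefficients in $A$ and $B$ respectively, so $x\in\acl_T(A)=A$ and $y\in\acl_T(B)=B$, giving $d\in A+B$.

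The conceptual point you missed is that finite satisfiability should be applied with the $D$-tuple being $(d,\bar a,\bar b)$ and the $C$-parameters being $\bar c,\bar c'$; it then \emph{replaces} $\bar c,\bar c'$ by elements of $k$ rather than showing they were redundant all along. No Morley sequence, no regularity of $F/A$ or $F/B$, and no algebraic-disjointness lemma are needed.
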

\begin{proof}
  We prove \textit{(1)}, the other is proved by a similar argument. Let $v_1\in F\cap (AC)^s$, $v_2\in F\cap (BC)^s$ and $u\in D$ be such that $u = v_1+v_2$. There exist nontrivial separable polynomials $P(X,a,c)$ and $Q(X,b,c')$ with leading coefficients $1$ such that $v_1$ is a root of $P(X,a,c)$ and $v_2$ is a root of $Q(X,b,c')$, $a$ a tuple in $A$, $b$ a tuple in $B$. The formula $\phi(z_1,z_2,z_3,c,c')$
  $$ \exists x\exists y \ x+y = z_1\wedge P(x,z_2,c) = 0\wedge Q(y,z_3, c') = 0$$
  is in $tp^{T}(u,a,b/C)$, which is finitely satisfiable in $k$. Hence, there exists $d,d'\in k$ such that $\phi(z_1,z_2,z_3,d,d')\in tp^{T}(u,a,b/k)$ and so $u\in A+B$ as $A$ and $B$ are $\acl_T$-closed.
\end{proof}

\begin{lm}\label{lm_sepinter}
Let $A,B$ be two extensions of some field $E$, such that $AB/E$ is regular and $A\indi{ld}_E B$. Then $(A^s + B^s)\cap AB = A+B$.
\end{lm}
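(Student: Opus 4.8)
The plan is to lift the two hypotheses to the separable closures using Fact~\ref{fact_regacl}(2), then to split a given element of $(A^s+B^s)\cap AB$ into an ``$A$-part'' and a ``$B$-part'' by two applications of the tower law for linear disjointness, and finally to discard the separable-algebraic elements over $E$ coming from $E^s$. The inclusion $A+B\subseteq (A^s+B^s)\cap AB$ is immediate, so only the converse needs argument. First I would observe that a subextension of a regular extension is regular (linear disjointness from $\ol E$ passes to intermediate fields, and $\ol E\cap A\subseteq \ol E\cap AB=E$), so $A/E$ and $B/E$ are regular; together with the hypothesis $A\indi{ld}_E B$ this lets me apply Fact~\ref{fact_regacl}(2) to obtain $A^s\indi{ld}_{E^s}B^s$. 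I would also record the elementary facts that $E^s\subseteq A^s\cap B^s$ (an element separable over $E$ is separable over any overfield) and that $A\indi{ld}_E E^s$, $B\indi{ld}_E E^s$ and $AB\indi{ld}_E E^s$, all obtained by restricting to $E^s$ the linear disjointness from $\ol E$ supplied by regularity.

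The key step is to apply the tower law for linear disjointness (if $X\indi{ld}_k Y$ and $k\subseteq Z\subseteq Y$ then $XZ\indi{ld}_Z Y$) to $A^s\indi{ld}_{E^s}B^s$. Taking $Z=BE^s$, which sits between $E^s$ and $B^s$ since $E^s/E$ and $BE^s/B$ are separable algebraic, and using $E^s\subseteq A^s$ to identify $A^s\cdot BE^s=A^sB$, I get $A^sB\indi{ld}_{BE^s}B^s$, hence $A^sB\cap B^s=BE^s$; by the symmetric argument $AB^s\cap A^s=AE^s$. Now take $v\in(A^s+B^s)\cap AB$ and write $v=v_1+v_2$ with $v_1\in A^s$ and $v_2\in B^s$. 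Then $v_2=v-v_1\in AB+A^s\subseteq A^sB$, so $v_2\in A^sB\cap B^s=BE^s$, and likewise $v_1\in AB^s\cap A^s=AE^s$; therefore $v\in AE^s+BE^s$.

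It then remains to prove $(AE^s+BE^s)\cap AB=A+B$. Fix an $E$-basis $(c_\lambda)_\lambda$ of $E^s$ with $c_0=1$. From $A\indi{ld}_E E^s$ the subring $A[E^s]$ equals the $A$-span $\sum_\lambda Ac_\lambda$, and being a domain integral over the field $A$ it is a field, so $AE^s=\sum_\lambda Ac_\lambda$; likewise $BE^s=\sum_\lambda Bc_\lambda$, whence $AE^s+BE^s=\sum_\lambda (A+B)c_\lambda$. On the other hand $AB\indi{ld}_E E^s$ says that $(c_\lambda)_\lambda$ is linearly independent over $AB$. Hence if $x=\sum_\lambda w_\lambda c_\lambda\in AB$ with each $w_\lambda\in A+B$, then $(w_0-x)c_0+\sum_{\lambda\neq 0}w_\lambda c_\lambda=0$ is a linear dependence over $AB$, which forces $x=w_0\in A+B$. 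Combining this with the previous paragraph gives $(A^s+B^s)\cap AB\subseteq A+B$, completing the proof.

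I expect the only delicate point to be the tower law for linear disjointness invoked in the key step: it is standard (e.g.\ Lang, or Fried--Jarden, \emph{Field Arithmetic}) but is not recorded earlier in the excerpt, so it should be cited, and one must check carefully that the auxiliary fields $BE^s$ and $AE^s$ are indeed situated between $E^s$ and $B^s$, resp.\ $A^s$. The remaining verifications (subextensions of regular extensions are regular, $A[E^s]$ is a field, the various restrictions of linear disjointness from $\ol E$) are routine, and the two hypotheses enter precisely through Fact~\ref{fact_regacl}(2) and through the regularity of $A/E$, $B/E$, $AB/E$.
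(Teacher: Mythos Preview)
Your proof is correct and follows essentially the same route as the paper's: both first use Fact~\ref{fact_regacl}(2) and the tower law to obtain $A^sB\cap B^s=BE^s$ and $AB^s\cap A^s=AE^s$, reducing to showing $(AE^s+BE^s)\cap AB=A+B$, and then finish by a basis argument using $AB\indi{ld}_E E^s$. The only cosmetic difference is that the paper passes to a finite subextension $L\subseteq E^s$ containing the summands and works with a finite $E$-basis $\{1,u_2,\dots,u_n\}$ of $L$, whereas you work directly with a full $E$-basis of $E^s$ and the observation that $A[E^s]$ is already a field; both arguments are the same linear-disjointness coordinate comparison.
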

\begin{proof}
  First, observe that $A^sB\cap B^s = E^s B$. Indeed $A/E$ and $B/E$ are regular so by Fact~\ref{fact_regacl} \textit{(2)}, we have that $A^s\indi{ld}_{E^s} B^s$ hence $A^sB \indi{ld} _{E^sB} B^s$ and so $A^sB\cap B^s = E^s B$. Symmetrically, we have $AB^s \cap A^s = E^s A$. If $v\in AB$ is such that $v = \alpha+\beta$ for $\alpha\in A^s$ and $\beta\in B^s$, then $\alpha = v-\beta\in AB^s \cap A^s = E^s A$. Similarly $\beta\in E^s B$. Let $L$ be a finite extension of $E$ inside $E^s$ such that $\alpha\in AL$ and $\beta\in BL$. We can complete $\set{1}$ to a basis $\set{1,u_2,\dots,u_n}$ of the $E$-vector space $L$. As $AB\indi{ld}_E L$, it is also a basis of the $AB$-vector space $LAB$. As $AB\indi{ld}_A LA$ and $AB\indi{ld}_B LB $, it is also a basis of the $A$-vector space $LA$ and of the $B$-vector space $LB$. Now the coordinates of $v\in AB$ in the $AB$-vector space $LAB$ are $(v,0,\dots,0)$ as $v = v+0u_2+\dots+0u_n$. Let $(a_1,\dots,a_n)$ (respectively $(b_1,\dots,b_n)$) be the coordinates of $\alpha$ with respect to the basis $(1,u_2,\dots,u_n)$ of the $A$-vector space $LA$ (respectively of $\beta$ in this basis of the $B$-vector space $LB$). As $v = \alpha+\beta$, we have, looking at the first coordinate that $v = a_1+b_1$, so $v\in A+B$.
\end{proof}

\begin{thm}\label{thm_NSOPfields}
  Let $T$ be an arbitrary theory of fields which is model-complete, $\NSOP{1}$, and eliminates $\exists^{\infty}$. Let $\F_{q_1},\cdots,\F_{q_n}$ be subfields. Assume that $T$ satisfies the following assumption for all $\acl_T$-closed $A,B$ and $E\models T$ contained in $A$ and $B$:
  \begin{center}if $A \indi T _E B$ then $\acl_T(AB)\subseteq \ol{AB}$.\end{center}

Then $TV_1\dots V_n$ is $\NSOP{1}$ and Kim-independence in $TV_1\dots V_n$ is given by
$$A\indi w _E B \iff A\indi T _E B \text{ and for all $i\leq n$ } V_i(A+B) = V_i(A)+V_i(B)$$
(for $A,B,C$ $\acl_T$-closed, $A,B$ containing $E$, $E\models T$).
\end{thm}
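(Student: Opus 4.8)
The plan is to read the statement off Corollary~\ref{cor_iterateNSOP}(1). Since $T$ is model complete and eliminates $\exists^{\infty}$, Theorem~\ref{thm_gensubvect} together with Lemma~\ref{lm_TV_inf} gives, by induction on $i$, that each triple $(TV_1\dots V_i, T_{i+1},\LL_{i+1})$ is suitable (this is precisely the iteration carried out in Corollary~\ref{cor_evgen}), so $TV_1\dots V_n$ exists. Let $\indi{T}$ be Kim-independence in $T$; as $T$ is $\NSOP{1}$, $\indi{T}$ satisfies \ref{INV}, \ref{SYM}, \ref{CLO}, \ref{MON}, \ref{EX}, \ref{STRFINC}, $\indi{T}$-\ref{AM} and \ref{WIT}, all over models (\cite{KR17}, \cite{KrR18}, \cite{CR16}). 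By Corollary~\ref{cor_iterateNSOP}(1) it then remains to check, for each $i\leq n$, that for all $\acl_T$-closed $A,B,C$ containing a common model $\MM\models T$, if $C\indi{T}_\MM AB$ and $A\indi{T}_\MM B$ then $(\acl_T(AC),\acl_T(BC))\indi{i}_{A,B}\acl_T(AB)$, where $\indi i$ is the (modular) independence of the pregeometry of $\F_{q_i}$-linear span. Since $A,B,C$ are subfields containing $\F_{q_i}$, they equal their own $\F_{q_i}$-spans, so — writing $U+W$ for the sum of $\F_{q_i}$-subspaces — this condition unwinds to
$$(\acl_T(AC)+\acl_T(BC))\cap\acl_T(AB)=A+B,$$
in which "$\supseteq$" is clear; the content is "$\subseteq$".

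Fix such $A,B,C,\MM$ and $i$. From $C\indi{T}_\MM AB$ we obtain $A\indi{T}_\MM C$ and $B\indi{T}_\MM C$ by \ref{MON} and \ref{SYM}, and $\acl_T(AB)\indi{T}_\MM C$ by \ref{CLO}. Each of the pairs $(A,C)$, $(B,C)$, $(A,B)$, $(\acl_T(AB),C)$ consists of $\acl_T$-closed sets Kim-independent over the common submodel $\MM$, so the assumption of the theorem applies to each and yields $\acl_T(AC)\subseteq\ol{AC}$, $\acl_T(BC)\subseteq\ol{BC}$, $\acl_T(AB)\subseteq\ol{AB}$ and $\acl_T(ABC)\subseteq\ol{ABC}$. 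By Fact~\ref{fact_kimfield}(2) applied to $A\indi{T}_\MM C$ and to $B\indi{T}_\MM C$, the extensions $F/AC$ and $F/BC$ are separable, hence so are $\acl_T(AC)/AC$ and $\acl_T(BC)/BC$; being moreover algebraic, we get $\acl_T(AC)\subseteq (AC)^{s}$ and $\acl_T(BC)\subseteq (BC)^{s}$. Finally Fact~\ref{fact_kimfield}(1) and Fact~\ref{fact_regacl}(1) supply the linear-disjointness and regularity data ($A\indi{ld}_\MM C$, $B\indi{ld}_\MM C$, $A\indi{ld}_\MM B$, $C\indi{ld}_\MM AB$, all composita regular over $\MM$) that put $A,B,C$ in general position over $\MM$; transitivity of linear disjointness then yields $AC\indi{ld}_C BC$ with $ABC/C$ regular.

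Now take $x\in(\acl_T(AC)+\acl_T(BC))\cap\acl_T(AB)$ and write $x=\alpha+\beta$ with $\alpha\in\acl_T(AC)\subseteq F\cap(AC)^{s}$ and $\beta\in\acl_T(BC)\subseteq F\cap(BC)^{s}$. It remains to exploit that $C$ is Kim-generic over $AB$. Since the identity to be proved is invariant under replacing $C$ by any $\MM AB$-conjugate, I would first invoke Kim's Lemma for $\NSOP{1}$ theories: from $C\indi{T}_\MM AB$, take a coheir Morley sequence over $\MM$ beginning at $C$ along which $tp^{T}(AB/\MM C)$ stays consistent, and, after extracting an $\MM AB$-indiscernible sub-configuration, reduce to the case where $tp^{T}(\acl_T(ABC)/C)$ is finitely satisfiable in $\MM$ (this disturbs neither $A\indi{T}_\MM B$ nor the target identity). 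In that case Lemma~\ref{lm_CH}(1) applies with $D=\acl_T(ABC)$, which contains $A$, $B$ and $x$, giving $\big((F\cap(AC)^{s})+(F\cap(BC)^{s})\big)\cap\acl_T(ABC)=A+B$; since $\alpha,\beta\in F$ and $x\in\acl_T(AB)\subseteq\acl_T(ABC)$, this forces $x\in A+B$. (Alternatively, with $\acl_T(AC),\acl_T(BC)$ now trapped between the composita and their separable closures, one can finish via Lemma~\ref{lm_sepinter} applied to the regular, linearly disjoint pair $AC\indi{ld}_C BC$.) This establishes "$\subseteq$", hence the displayed identity for every $i$, and Corollary~\ref{cor_iterateNSOP}(1) then gives that $TV_1\dots V_n$ is $\NSOP{1}$ with Kim-independence $\indi{w}$; unravelling that description over a model $\MM$, where $\acl_i(\ol{A\MM},\ol{B\MM})=A+B$ and $S_i(\,\cdot\,)=V_i(\,\cdot\,)$, yields exactly the statement.

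The main obstacle is the verification of the displayed identity: on the one hand reconciling the model-theoretic $\acl_T$ with the field-theoretic separable and algebraic closures (this is where the theorem's hypothesis, together with Fact~\ref{fact_kimfield}, is used), and on the other hand manufacturing, out of the bare Kim-independence $C\indi{T}_\MM AB$, the finite-satisfiability hypothesis needed to apply Lemma~\ref{lm_CH} — the standard but delicate coheir/Morley-sequence maneuver, which must be carried out without losing $A\indi{T}_\MM B$ or the invariance of the target identity.
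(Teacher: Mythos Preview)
Your reduction via Corollary~\ref{cor_iterateNSOP} to the identity $(\acl_T(AC)+\acl_T(BC))\cap\acl_T(AB)=A+B$ is correct, and so is the use of the theorem's hypothesis and Fact~\ref{fact_kimfield} to place $\acl_T(AC)$, $\acl_T(BC)$, $\acl_T(AB)$ inside $(AC)^s$, $(BC)^s$, $(AB)^s$. The gap is the ``coheir/Morley-sequence maneuver''. From $C\indi{T}_\MM AB$ in an $\NSOP{1}$ theory you can indeed produce an $\MM AB$-indiscernible coheir Morley sequence $(C_i)_i$ in $tp(C/\MM)$ with $C_0=C$; but what this gives is that $tp(C_{i+1}/\MM C_{\leq i})$ is finitely satisfiable in $\MM$, not that $tp(C_i/\MM AB)$ is. No amount of extracting or replacing $C$ by an $\MM AB$-conjugate will force $tp^T(\acl_T(AB)/C)$ (the hypothesis Lemma~\ref{lm_CH} actually needs; note your choice $D=\acl_T(ABC)$ contains $C$ and so cannot work) to be finitely satisfiable in $\MM$: that would amount to upgrading Kim-independence over a model to a coheir extension, which already fails in simple unstable theories. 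Your alternative via Lemma~\ref{lm_sepinter} applied to $AC\indi{ld}_C BC$ only yields $((AC)^s+(BC)^s)\cap ABC=AC+BC$, and $\acl_T(AB)$ is not contained in the compositum $ABC$, so this does not intersect down to $A+B$ either.

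The paper avoids this obstruction by changing the ambient theory rather than the tuple: one passes to $F^s$ and works in $\SCF_{p,e}$, which is \emph{stable}. From $C\indi{ld}_\MM AB$ and regularity over $\MM$ one gets $C^s\indi{ld}_{\MM^s}(AB)^s$ with $F^s/C^s(AB)^s$ separable (Fact~\ref{fact_regacl}), hence $tp_{\SCF}(C^s/(AB)^s)$ does not fork over the model $\MM^s$; by stability this is automatically a coheir, and now Lemma~\ref{lm_CH} applies in $\SCF$ to give $((AC)^s+(BC)^s)\cap(AB)^s=A^s+B^s$. One then descends: Fact~\ref{fact_kimfield}(3) gives $A^sB^s\cap\acl_T(AB)=AB$, so $(A^s+B^s)\cap\acl_T(AB)\subseteq(A^s+B^s)\cap AB$, and Lemma~\ref{lm_sepinter} applied to $A\indi{ld}_\MM B$ finishes with $(A^s+B^s)\cap AB=A+B$. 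The idea you are missing is precisely this detour through the stable theory $\SCF$, where finite satisfiability comes for free.
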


\begin{proof}
  We prove that $\indi T$ satisfies the conditions of Corollary~\ref{cor_iterateNSOP}. Let $\indi{i}$ the independence in the sense of $\F_{q_i}$-vector space, we want to show that for all $i=1,\ldots,n$,
\begin{itemize}
  \item[$(A)$] for all model $E$ of $T$ and $A,B,C$ algebraically closed containing $E$, if $C\indi T _E A,B$ and $A\indi T _E B$ then 
  $$(\acl_T(AC),\acl_T(BC)) \indi{i}_{A,B} \acl_T(AB).$$
\end{itemize}
Let $F\models T$, let $E\prec F$ and $A,B,C$ in $F$ containing $E$, with $C\indi T _E A,B$ and $A\indi T _E B$. For all $i=1,\cdots,n$, the condition $(\acl_T(AC),\acl_T(BC)) \indi{i}_{A,B} \acl_T(AB)$ is equivalent to $$(\acl_T(AC)+\acl_T(BC))\cap \acl_T(AB)=A+B.$$
From Fact~\ref{fact_kimfield} \textit{(2)}, $F/AB$, $F/BC$ and $F/AC$ are separable extension. By our assumptions on $T$ and $A,B$ and $C$ we have that $\acl_T(AB)\subseteq (AB)^s$, $\acl_T(AC)\subseteq (AC)^s$ and $\acl_T(BC)\subseteq (BC)^s$, so 
$$(\acl_T(AC)+\acl_T(BC))\cap \acl_T(AB)\subseteq ((AC)^s+(BC)^s)\cap (AB)^s.$$

\begin{claim} $((AC)^s+(BC)^s)\cap (AB)^s = A^s + B^s$\end{claim}
\textit{Proof of the claim.} First, observe that as fields, $E^s$ is an elementary substructure of $F^s$. Indeed, by model completeness of $Th(E^s)$ (which is $\mathrm{SCF}_{p,e}$ for some $e\leq \infty$) we have to check that they have the same imperfection degree (which is clear as $F\succ E$) and that $F^s/E^s$ is separable (the later follows from the fact that $F/E$ is a regular extension). Now by Fact~\ref{fact_kimfield} \textit{(1)} we have $C\indi{ld}_E AB$. As $E$ is a model, $C/E$ and $AB/E$ are regular extensions\footnote{In fact here we only use that $E = \acl_T(E)$, and Fact \ref{fact_regacl} \textit{(1)}.}, by Fact~\ref{fact_regacl} \textit{(2)} we have that 
$$C^s\indi{ld}_{E^s} \ (AB)^s. \quad (*)$$ 
Moreover $F^s/ABC$ is separable, (as so are $F^s/F$ and $F/ABC$, the latter using ~Fact~\ref{fact_kimfield} \textit{(2)}) and so is $C^s(AB)^s/ABC$. It follows that the following extension is separable
$$F^s/C^s(AB)^s. \quad (**)$$
From $(*)$ and $(**)$, using Fact~\ref{fact_regacl} \textit{(3)} we have that $tp_{\SCF}(C^s/(AB)^s)$ does not fork over $E^s$. By stability, as $E^s$ is an elementary submodel of the ambiant model $F^s$ of $\SCF_{p,e}$, $tp_{\SCF}(C^s/(AB)^s)$ is a coheir of $tp_{\SCF}(C^s/E^s)$. 
From Lemma~\ref{lm_CH}, it follows that $((AC)^s+(BC)^s)\cap (AB)^s = A^s + B^s$.

By the claim $(\acl_T(AC)+\acl_T(BC))\cap \acl_T(AB)\subseteq (A^s+B^s)\cap \acl_T(AB)$. Now by Fact~\ref{fact_kimfield} \textit{(3)}, we have that $A^sB^s\cap \acl_T(AB) = AB$ so $(A^s+B^s)\cap \acl_T(AB) \subseteq (A^s+B^s)\cap AB$. 
Finally, by Lemma~\ref{lm_sepinter}, as $AB/E$ is regular and $A\indi{ld}_E B$, we have $(A^s+B^s)\cap AB = A+B$. 
\end{proof}

I am grateful to Michel Matignon for pointing out to me this nice proof of the following simple fact.
\begin{lm}\label{lm_sumprodgen}
  Let $K$ be a field and $K(X,Y)$ be a rational function field in two variables. Then
  $$XY\notin K(X)+ K(Y);$$
  $$X+ Y \notin K(X)\cdot K(Y);$$
  where $K(X)\cdot K(Y) = \set{uv \mid u\in K(X),v\in K(Y)}$.
\end{lm}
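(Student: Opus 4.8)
The plan is to prove the two statements by a valuation-theoretic or transcendence-degree argument; I will focus on the first claim $XY \notin K(X) + K(Y)$, the second being entirely symmetric after replacing the additive structure by the multiplicative one (or by a dual argument). First I would argue by contradiction: suppose $XY = f(X) + g(Y)$ for some $f \in K(X)$ and $g \in K(Y)$. The idea is to specialise the variables carefully. For a fixed $b \in K$ (or in an extension of $K$), specialising $Y \mapsto b$ would give $bX = f(X) + g(b)$, which forces $f(X) = bX - g(b)$, a linear polynomial whose leading coefficient is $b$. But the left-hand side $f(X)$ does not depend on the choice of $b$, so as $b$ ranges over infinitely many values we obtain a contradiction — provided $g$ is actually defined at those values of $b$. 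The mild technical point is that $g \in K(Y)$ has only finitely many poles, so only finitely many $b$ are forbidden; picking two distinct admissible values $b_1 \neq b_2$ already yields $b_1 X - g(b_1) = b_2 X - g(b_2)$, which is impossible by comparing the coefficient of $X$.

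To make the specialisation argument rigorous over an arbitrary (possibly finite) field $K$, I would work instead inside $K(X,Y)$ directly, using the place of $K(X,Y)/K(X)$ associated with a linear factor, or — cleaner — the $X$-adic or degree valuation. Concretely: regard everything in $K(X)(Y)$, write $g(Y) \in K(X)(Y)$ as an element of the rational function field $K(Y)$ over $K$, and use the valuation $v_\infty$ on $K(Y)$ at $Y = \infty$. Then $v_\infty(XY) = v_\infty(Y) \cdot (\deg\text{ in }Y)$ is controlled, while $v_\infty(f(X)) \ge 0$ since $f(X)$ is a "constant" for this valuation; comparing orders forces $g(Y)$ to have a pole of order exactly $1$ at $\infty$ with residue-type data equal to $X$, i.e. $g(Y) = XY + (\text{lower order in }Y\text{, i.e. in }K(X))$ — but $X \notin K$, contradicting $g \in K(Y)$. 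The same valuation bookkeeping, carried out symmetrically at one further place, pins down the contradiction without any appeal to infinitude of $K$.

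The main obstacle I anticipate is purely bookkeeping: making sure the chosen valuation or specialisation "sees" the variable $X$ as a genuine transcendental that cannot be absorbed into $K$, and handling the case of a finite base field $K$ (where naive specialisation of $Y$ to many elements of $K$ is not available). The valuation-theoretic formulation sidesteps this: one works over $K(X)$ as the new base field and exploits that $K(X) \supsetneq K$, so any identity forcing a coefficient to lie in $K$ while equalling $X$ is immediately contradictory. For the second statement $X + Y \notin K(X) \cdot K(Y)$, the analogue would be to take the identity $X + Y = f(X) g(Y)$ and apply $v_\infty$ in $Y$: the left side has a simple zero-order-$(-1)$ pole at $Y = \infty$, so $g(Y)$ must have a simple pole at $\infty$ with leading coefficient a scalar multiple of $1/f(X)$; but then the "constant term in $Y$" of $f(X) g(Y)$ equals a $K(X)$-multiple of $f(X)$, which cannot equal $X$ unless $f(X)$ divides $X$ in a way forcing $g$ to be essentially $(X+Y)/f(X)$ with $f(X) \in K$ — again contradicting that $X+Y$ genuinely involves $Y$ with a coefficient outside $K$. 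I would close the argument by a short degree-counting lemma in the two function fields.
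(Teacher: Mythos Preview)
Your approach is correct but differs from the paper's. The paper uses the partial derivative $D = \partial/\partial X$ on $K(X,Y)$, which vanishes on $K(Y)$ and restricts to the usual derivation on $K(X)$. Applying $D$ to a hypothetical identity $XY = u(X) + v(Y)$ gives $Y = Du \in K(X)$, an immediate contradiction; applying it to $X+Y = u(X)v(Y)$ gives $1 = v\cdot Du$, so $v \in K(X)\cap K(Y) = K$, whence $Y = uv - X \in K(X)$, again a contradiction. This disposes of both claims in two lines each and works uniformly in all characteristics.

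Your valuation argument --- expanding at $Y=\infty$ over the base field $K(X)$ and matching the leading and next-to-leading coefficients --- is a legitimate alternative and also characteristic-free. It is a bit longer to write out cleanly (and your sketch of the second claim is somewhat tangled: the clean line is that the $Y$-coefficient forces $f(X)\cdot c_{-1}=1$ with $c_{-1}\in K^*$, hence $f(X)\in K^*$, and then the $Y^0$-coefficient forces $X = f(X)c_0 \in K$). The derivative trick buys brevity; your approach makes the ``coefficient comparison'' explicit and avoids invoking the existence of a derivation extending $d/dX$ and killing $K(Y)$, which the paper cites from an outside reference. Both ultimately exploit the same phenomenon: $XY$ (resp.\ $X+Y$) genuinely entangles the two variables, and any operator that isolates one variable exposes the other as an obstruction.
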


\begin{proof}
  There exists a derivative $D:K(X,Y)\rightarrow K(X,Y)$ such that $D(K(Y)) = \set{0}$ and $D$ extends the canonical derivation on $K(X)$ (namely the \emph{partial} derivative with respect to $X$, see~\cite[Proposition 23.11]{Mor96}). Let $u\in K(X)$ and $v\in K(Y)$. If $XY = u+v$ then applying $D$ we get $Y = Du\in K(X)$ a contradiction. If $X+Y = uv$ then applying $D$ we get $1 = v Du$ hence, as $Du\in K(X)$, $v\in K(X)\cap K(Y) = K$. Now $Y = uv-X\in K(X)$ a contradiction.
\end{proof}

\begin{prop}\label{prop_fieldsnotsimple}
  Let $T$ be an arbitrary theory of fields satisfying the same hypotheses as Theorem~\ref{thm_NSOPfields}. Then $TV_1\cdots V_n$ is not simple.
\end{prop}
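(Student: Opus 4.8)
The plan is to reduce, via Corollary~\ref{cor_notsimple}, to the construction of one explicit configuration, and then to reach a contradiction with Lemma~\ref{lm_sumprodgen}. First, it is enough to treat the case $n=1$: if $(\MM,V_1,\dots,V_n)\models TV_1\cdots V_n$, then its $\LL\cup\set{V_1}$-reduct is a model of $TV_1$. Indeed, given any $\LL_{V_1}$-extension $(\NN,W_1)$ of $(\MM,V_1)$ with $\NN\models T$ and $W_1$ an $\F_{q_1}$-subspace, expand it to $(\NN,W_1,V_2,\dots,V_n)\models T_{V_1\cdots V_n}$ by keeping the predicates $V_i$ for $i\geq 2$; since $(\MM,V_1,\dots,V_n)$ is existentially closed in this expansion and the formulas in question only mention $V_1$, every existential $\LL_{V_1}$-formula with parameters in $\MM$ satisfied in $(\NN,W_1)$ is satisfied in $(\MM,V_1)$. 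Hence $TV_1\cdots V_n$ is an expansion of $TV_1$, and as an expansion of a non-simple theory is non-simple (a formula witnessing the tree property still witnesses it after expansion), it suffices to show that $TV_1$ is not simple.

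By Corollary~\ref{cor_notsimple} applied to the triple $(T,\LL_1,T_1)$ --- whose hypotheses, namely suitability and the assumptions of Theorem~\ref{thm_conserve} (in particular $(A)$ for the $\F_{q_1}$-linear span, which is where the standing hypothesis on $T$ is used), were checked in the proof of Theorem~\ref{thm_NSOPfields} --- it is enough to exhibit $\acl_T$-closed sets $A,B,C,D$ with $C\subseteq A\cap B\cap D$, $A\indi T_C BD$, and, writing $\acl_0$ for the $\F_{q_1}$-linear span,
$$\acl_0(A\cup\acl_T(BD))\cup\acl_T(AD)\ \neq\ \acl_0(\acl_T(AD)\cup\acl_T(BD)).$$
Since $\acl_T(AD)$ and $\acl_T(BD)$ are subfields, hence $\F_{q_1}$-subspaces closed under addition, the right-hand side equals $\acl_T(AD)+\acl_T(BD)$ and the left-hand side equals $(A+\acl_T(BD))\cup\acl_T(AD)$.

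Here is the configuration I would build. Fix $E\models T$ and a model $D\succ E$ large enough to contain an element $\gamma$ transcendental over $E$, and set $C=E$. Using existence and extension of Kim-independence over the models $E$ and $D$, together with Fact~\ref{fact_kimfield}, symmetry, monotonicity and transitivity over models of $\indi T$, pick $\alpha,\beta\notin E$ and put $A:=\acl_T(E\alpha)$, $B:=\acl_T(E\beta)$ so that $A\indi T_E D$ and $\acl_T(D\beta)\indi T_D \acl_T(D\alpha)$. Then $A\indi T_E BD$: by monotonicity $B\indi T_D A$, by symmetry $D\indi T_E A$, hence $BD\indi T_E A$ by transitivity over models, and one concludes by symmetry and \ref{CLO}. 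By Fact~\ref{fact_kimfield}(1) we have $A\indi{ld}_E D$, so $\alpha$ is transcendental over $D$, the elements $\alpha,\gamma$ are algebraically independent over $E$, and $A\cap D(\alpha)=E(\alpha)$; and $\acl_T(D\beta)\cap\acl_T(D\alpha)=D$ because $\indi T\rightarrow\indi a$, which, as $\acl_T(AD)=\acl_T(D\alpha)$ and $\acl_T(BD)=\acl_T(D\beta)$, says $\acl_T(AD)\cap\acl_T(BD)=D$; in particular $\beta\notin\acl_T(AD)$.

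Now take $w=(\alpha+\beta)\gamma$. Since $\alpha\gamma\in\acl_T(AD)$ and $\beta\gamma\in\acl_T(BD)$, we have $w\in\acl_T(AD)+\acl_T(BD)$. If $w\in\acl_T(AD)$, then $\beta\gamma=w-\alpha\gamma\in\acl_T(AD)$, so $\beta\in\acl_T(AD)$, a contradiction; thus $w\notin\acl_T(AD)$. Suppose finally $w=u+v$ with $u\in A$ and $v\in\acl_T(BD)$. Then $\alpha\gamma-u=v-\beta\gamma\in\acl_T(AD)\cap\acl_T(BD)=D$; as $u=\alpha\gamma-(\alpha\gamma-u)$ with $\alpha\gamma\in D(\alpha)$ and $\alpha\gamma-u\in D$, we get $u\in A\cap D(\alpha)=E(\alpha)$, whence $\alpha\gamma-u\in E(\alpha,\gamma)\cap D=E(\gamma)$ (the last equality because $\alpha$ is transcendental over $D\supseteq E(\gamma)$). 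Therefore $\alpha\gamma=u+(\alpha\gamma-u)\in E(\alpha)+E(\gamma)$ inside the rational function field $E(\alpha,\gamma)$, contradicting Lemma~\ref{lm_sumprodgen}. Hence $w\notin(A+\acl_T(BD))\cup\acl_T(AD)$, the displayed inequality holds, and $TV_1$ --- and so $TV_1\cdots V_n$ --- is not simple. The main obstacle is the independence bookkeeping: since Kim-independence need not satisfy base monotonicity, the configuration must be assembled so that precisely $A\indi T_E BD$ and $\acl_T(AD)\cap\acl_T(BD)=D$ are available (this is why $\alpha$ is chosen over the model $E$ and $\beta$ over the model $D$), and one must keep the model-theoretic algebraic closures of the relevant sets under control --- which is exactly the role of the hypothesis on $T$, entering through the verification of $(A)$ that makes Corollary~\ref{cor_notsimple} applicable.
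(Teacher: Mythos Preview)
Your approach is essentially the paper's: reduce to $n=1$, invoke Corollary~\ref{cor_notsimple}, build a witness of the form (product of an element from $A$-side and one from $D$)~$+$~(something in $\acl_T(BD)$), and derive a contradiction with Lemma~\ref{lm_sumprodgen}. The paper's witness is $ad+b$ with $a\indi T_E bd$, $b\indi T_E d$; yours is $(\alpha+\beta)\gamma=\alpha\gamma+\beta\gamma$ with $D$ taken to be a model --- both collapse to the same use of $XY\notin K(X)+K(Y)$. The paper avoids your transitivity step by choosing the three elements directly over a single model $E$ (first $b,d$ with $b\indi T_E d$, then $a\indi T_E bd$ by full existence), so no appeal to transitivity of Kim-independence is needed and $D$ need not be a model.

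One genuine slip to fix: you only require $\beta\notin E$, but your argument that $\beta\notin\acl_T(AD)$ uses $\acl_T(AD)\cap\acl_T(BD)=D$, which only yields $\beta\in D$ if $\beta\in\acl_T(AD)$ --- so you need $\beta\notin D$, not merely $\beta\notin E$. As written, $\beta$ could lie in $D\setminus E$ (the condition $\acl_T(D\beta)\indi T_D\acl_T(D\alpha)$ is vacuous for such $\beta$), and then $w=\alpha\gamma+\beta\gamma\in\acl_T(AD)$, killing the example. The fix is immediate: choose $\beta$ realizing a non-algebraic type over $D$ (and then arrange $\acl_T(D\beta)\indi T_D\acl_T(D\alpha)$ by extension). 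With that correction the argument goes through.
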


\begin{proof}
  To prove that $TV_1\cdots V_n$ is not simple, it is sufficient to prove that $TV$ is not simple. Let $E\prec F$ be models of $T$ and $a,b,d$ elements of $F$ be such that $a\indi T_E b,d$ and $b\indi T _E d$. We show that $$ad+b\in \left[\acl_T(Ead)+\acl_T(Ebd)\right]\setminus \left[(\acl_T(Ea)+\acl_T(Ebd))\cup \acl_T(Ead)\right],$$ then $TV$ is not simple, by Corollary~\ref{cor_notsimple}. Since $b\notin \acl_T(Ead)$, it is clear that $ad+b\notin \acl_T(Ead)$. Assume that $ad+b\in \acl_T(Ea)+\acl_T(Ebd)$. Then $ad\in \acl_T(Ea)+\acl_T(Ebd)$, let $u\in \acl_T(Ea)$ and $v\in \acl_T(Ebd)$ be such that $ad = u+v$. From Fact~\ref{fact_kimfield}, we have that $\acl_T(Ea)\indi{ld}_E \acl_T(Ebd)$, hence $\acl_T(Ea)(d)\indi{ld}_{E(d)} \acl_T(Ebd)$ so $\acl_T(Ea)(d)\cap \acl_T(Ebd) = E(d)$. Similarly, $\acl_T(Ebd)(a)\cap \acl_T(Ea) = E(a)$. It follows that 
  \begin{align*}
    u &= ad-v \in \acl_T(Ebd)(a)\cap \acl_T(Ea) = E(a) \\
    v &= ad-u\in \acl_T(Ea)(d)\cap \acl_T(Ebd) = E(d) 
  \end{align*}
hence $ad\in E(a)+E(d)$, which contradicts Lemma~\ref{lm_sumprodgen}.
\end{proof}

\begin{ex}[The theories $\mathrm{ACFV_1\dots V}_n$ and $\ACFG$]\label{ex_ACFG_NSOP}
Let $\mathrm{ACFV_1\dots V}_n$ and $\ACFG$ be the theories as in Example~\ref{ex_ACFG}. By Theorem~\ref{thm_NSOPfields} and Proposition~\ref{prop_fieldsnotsimple} those theories are $\NSOP{1}$ not simple. In $\mathrm{ACFV_1\dots V}_n$, Kim-independence agrees with the relation 
  $$A\indi w _C B\iff A\indi{\ACF}_C\ \ B\text{ and for all $i\leq n$, } {V_i}(\overline{AC}+ \overline{BC}) = V_i(\overline{AC})+ V_i(\overline{BC}).$$
Furthermore, $\indi w$ satisfies 
\begin{itemize}
\item \bsc{Strong Finite Character over algebraically closed sets.} For algebraically closed $E$, if $a\nindi w _E b$, then there is a formula $\phi(x,b,e)\in tp^{ACFV_1\dots V_n}(a/bE)$ such that for all $a'$, if $a'\models \phi(x,b,e)$ then $a'\nindi w _E b$.
\item \bsc{$\indi a$-amalgamation over algebraically closed sets}. For algebraically closed set $E$ if there exists tuples $c_1,c_2$ and sets $A,B$ such that
\begin{itemize}
\item $c_1\equiv^{ACFV_1\dots V_n}_E c_2$
\item $\overline{AE}\cap \overline{BE} = E$
\item $c_1\indi w_E A$ and $c_2\indi w_E B$ 
\end{itemize}
then there exists $c\indi w_E A,B$ such that $c\equiv^{ACFV_1\dots V_n}_A c_1$, $c \equiv^{ACFV_1\dots V_n}_B c_2$, $A\indi a _{Ec} B$, $c\indi a _{EA} B$ and $c\indi a _{EB} A$.
\end{itemize}
This is Theorem~\ref{thm_conserve}, knowing that $\indi{\ACF}\quad $ is stationary over algebraically closed sets hence satisfies the independence theorem over algebraically closed sets without any assumption on the parameters.
\end{ex}

\begin{ex}\label{ex_NSOPfield}
  Perfect $\omega$-free $\PAC_p$ fields are $\NSOP 1$, furthermore, as they are algebraically bounded, the condition on the algebraic closure in Theorem~\ref{thm_NSOPfields} is satisfied. If $T$ is a theory of a perfect $\omega$-free $\PAC_p$-field in an expansion of the language $\LLr$ such that $T$ is model-complete, then $TG_1 \cdots G_n$ (Proposition~\ref{prop_PACG}) is $\NSOP 1$. This holds of course for any $\NSOP 1$ perfect $\PAC_p$ field.
\end{ex}

\subsection{Algebraically closed fields with a generic multiplicative subgroup}\label{sec_genmult}

We are now interested in using Theorem~\ref{model_com_gen} to prove that the theory of algebraically closed fields of fixed arbitrary characteristic with 
a predicate for a multiplicative subgroup admits a model companion. Consider $\LLf=\set{+,-,\cdot,^{-1},0,1}$ and $\LL_0 = \set{\cdot, ^{-1},1}\subseteq \LLf$.

The pure multiplicative group of any field is an $\aleph_1$-categorical abelian group, its model theory is described in \cite{Mac71}, see also \cite[Chapter VI]{Cher76}.

Fix $p$ a prime or $0$. Consider the theory $\ACF_p$. The theory $\ACF_p\upharpoonright \LL_0$ is complete and we will identify it with the theory of the multiplicative group of an algebraically closed field of characteristic $p$, denoted by $T_p$. 
The theory $T_p$ is axiomatised by adding to the theory of abelian groups the following sets of axiom:
\begin{itemize}
\item \emph{If $p>0$: }$\set{\forall x \ \exists^{=n} y \ \ y^n = x\mid n\in \N\setminus p\N}\cup \set{\forall x \exists^{=1}y\ y^p = x}$
\item \emph{If $p=0$: }$\set{\forall x \  \exists^{=n} y \ \ y^n = x\mid n\in \N\setminus \set{0}}.$
\end{itemize}

\begin{prop}\label{prop_pregeo_mult0}
The theory $T_p$ has quantifier elimination in the language $\LL_0$. It is strongly minimal hence $\aleph_1$-categorical. Furthermore for any subset $A$ of a model $M$ of $T_p$, the algebraic closure is given by $$\acl_p(A) := \set{u\in M, u^n\in \vect{A} \text{ for some $n\in \N\setminus\set{0}$}}$$
where $\vect{A}$ is the group spanned by $A$. Every $\acl_p$-closed set is a model of $T_p$. Furthermore $\acl_p$ defines a pregeometry which is modular and the associated independence relation in $T_p$ is given by 
$$A\indi{p}_C B :\iff \acl_p(AC)\cap \acl_p(BC) = \acl_p(C).$$
\end{prop}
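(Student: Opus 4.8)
The plan is to derive the statement from the classical analysis of the pure multiplicative group of a field (\cite{Mac71}, \cite[Chapter VI]{Cher76}), filling in the details about the algebraic closure and the pregeometry. Throughout, I work inside a monster model $G$ of $T_p$, which is an infinite divisible abelian group whose torsion subgroup $\mathrm{tor}(G)$ consists of all roots of unity of order prime to $p$ (in characteristic $0$, all roots of unity); since every model is infinite and the theory is $\aleph_1$-categorical, completeness of $T_p$ follows from {\L}o\'s--Vaught.

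First I would establish quantifier elimination in $\LL_0$. A substructure of a model of $T_p$ is simply a subgroup, and for a subgroup $A \le G$ and $a \in G$ the structure $\vect{A,a}$ is determined over $A$ by whether $a^n \in A$ for some $n>0$ --- in which case, for the least such $n$, one has $\vect{A,a} = \bigsqcup_{k<n} a^k A$ with isomorphism type over $A$ pinned down by $n$ and by $a^n \in A$ --- or not, in which case $\vect{A,a} \cong A \oplus \Z$. Using the divisibility axioms and the exact counts of $n$-th roots of unity (in particular the uniqueness of $p$-th roots in characteristic $p$), one checks that any such one-generated extension of $A$ can be realised over any isomorphic copy of $A$ in another sufficiently saturated model; this is the standard embedding test for quantifier elimination. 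From QE, a subset of $G$ defined in one variable over a parameter set $B$ is a Boolean combination of sets $\set{x : x^n = b}$ with $n \in \Z$, $b \in \vect{B}$, each of which is finite; hence such a set is finite or cofinite and $T_p$ is strongly minimal. As $\acl_p(\emptyset)$ is the countable set $\mathrm{tor}(G)$, strong minimality gives $\aleph_1$-categoricity.

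Next comes the description of $\acl_p$. If $u^n \in \vect{A}$ for some $n>0$, then $u$ satisfies the $A$-definable formula $x^n = u^n$, which has at most $n$ realisations, so $u \in \acl_p(A)$. Conversely, if $u^n \notin \vect{A}$ for all $n>0$, then by QE $\mathrm{tp}^{T_p}(u/A)$ is isolated by $\set{x^n \neq a : n>0,\ a \in \vect{A}}$; this is the unique non-algebraic $1$-type over $A$ and has unboundedly many realisations (the quotient of $G$ by $\vect{A}$ modulo torsion is an infinite-dimensional $\Q$-vector space), so $u \notin \acl_p(A)$. This yields the formula for $\acl_p(A)$. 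Given an $\acl_p$-closed set $A$: it is a subgroup; every $n$-th root in $G$ of an element of $A$ lies in $A$, so $A$ is divisible (uniquely $p$-divisible in characteristic $p$); and every root of unity lies in $\acl_p(\emptyset) \subseteq A$, so $\mathrm{tor}(A) = \mathrm{tor}(G)$. Hence $A$ is an infinite divisible abelian group verifying the axioms of $T_p$, so $A \models T_p$.

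For modularity and the independence relation I would pass to the $\Q$-vector space $V := G/\mathrm{tor}(G)$. The $\acl_p$-closed subsets of $G$ are exactly the preimages of $\Q$-subspaces of $V$, in a way compatible with intersections and with closures of unions, and the $T_p$-dimension of a closed set $X$ equals $\dim_\Q \bar X$; thus the vector-space identity $\dim_\Q(\bar X + \bar Y) + \dim_\Q(\bar X \cap \bar Y) = \dim_\Q \bar X + \dim_\Q \bar Y$ gives that any two closed sets are independent over their intersection, i.e.\ the pregeometry is modular. Finally, in any strongly minimal theory $A \ind_C B$ forces $\acl_p(AC) \cap \acl_p(BC) = \acl_p(C)$: if $d$ lies in the intersection then, by symmetry and monotonicity, $d \ind_C A$, and since $d \in \acl_p(AC)$ this gives $\dim(d/C) = \dim(d/AC) = 0$, so $d \in \acl_p(C)$. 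For the converse one uses modularity: $\acl_p(AC)$ is always independent over $\acl_p(AC) \cap \acl_p(BC)$ from $\acl_p(BC)$, so if that intersection equals $\acl_p(C)$ then $A \ind_C B$. This identifies the independence relation of $T_p$ with $\indi{p}$.

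The step I expect to demand the most care is quantifier elimination --- concretely, verifying that an arbitrary one-generated extension of a subgroup can be reproduced in another saturated model, where the uniqueness of $p$-th roots and the precise counts of roots of unity have to be tracked by hand; alternatively, one simply cites \cite{Mac71}.
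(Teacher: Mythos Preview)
The paper does not actually supply a proof of this proposition: it is stated without proof, the surrounding text simply pointing to \cite{Mac71} and \cite[Chapter VI]{Cher76} for the model theory of the pure multiplicative group. Your write-up therefore cannot be compared line-by-line with the paper's argument, but it is a correct and self-contained justification of everything claimed.

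A couple of small remarks. In the strong minimality step you say that each set $\set{x : x^n = b}$ with $n\in\Z$ is finite; this fails for $n=0$, where the set is all of $G$ or empty. The fix is trivial (restrict to $n\neq 0$, or note that the $n=0$ case contributes only $\emptyset$ or $G$), and the conclusion that one-variable definable sets are finite or cofinite is unaffected. Your identification of $\acl_p$-closed sets with preimages of $\Q$-subspaces of $G/\mathrm{tor}(G)$ is exactly the right picture for modularity and makes the last part clean; this is in the spirit of the classical references but spelled out more explicitly than they do.
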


 \begin{lm}\label{lm_var}
 Let $K\models \ACF$, $V\subset K^{n}$ an affine (irreducible) variety, $\mathcal O \subset K^n$ a Zariski open set. The following are equivalent:
 \begin{enumerate}
 \item for all $k_1,\dots,k_n \in \N$, $c\in K$ the quasi affine variety $V\cap \mathcal O$ is not included  in the zero set of $x_1^{k_1}\cdot \dots \cdot x_n^{k_n} = c$
  \item for all $k_1,\dots,k_n \in \N$, $c\in K$ the variety $V$ is not included  in the zero set of $x_1^{k_1}\cdot \dots \cdot x_n^{k_n} = c$
  \item there exist $L\succ K$ and a tuple $a$ which is multiplicatively independent over $K$ and with $a\in (V\cap \mathcal O)(L)$
 \end{enumerate}
 \end{lm}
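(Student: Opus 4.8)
The plan is to establish the cycle of implications $(3)\Rightarrow(1)\Rightarrow(2)\Rightarrow(3)$, the last one carrying essentially all the content. Throughout I would keep in mind that the monomial equations $x_1^{k_1}\cdots x_n^{k_n}=c$ are meant with a \emph{nontrivial} exponent tuple (so that their zero sets are proper closed subvarieties of $V$ precisely when $V$ is not contained in them), and I would assume $V\cap\mathcal O\neq\emptyset$, since otherwise $(1)$ and $(3)$ fail outright; if $\mathcal O$ does not already force the coordinates to be invertible one restricts attention to the case $V\cap\mathcal O\subseteq\G_m^n$, so that after clearing denominators the monomial conditions with negative exponents still cut out closed subsets.

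For $(3)\Rightarrow(1)$: if $a=(a_1,\dots,a_n)\in(V\cap\mathcal O)(L)$ is multiplicatively independent over $K$, then for every nontrivial $(k_1,\dots,k_n)$ and every $c\in K$ we have $a_1^{k_1}\cdots a_n^{k_n}\neq c$, as otherwise $a$ would satisfy a nontrivial multiplicative relation over $K$. Thus $a$ witnesses inside $L$ that $V\cap\mathcal O$ is not contained in the zero set of $x_1^{k_1}\cdots x_n^{k_n}=c$; since $V$, $\mathcal O$ and this hypersurface are definable over $K$ and $K\prec L$, the same non-inclusion holds over $K$, which is exactly $(1)$. The implication $(1)\Rightarrow(2)$ is immediate: $V\cap\mathcal O\subseteq V$, so if $V$ were contained in such a zero set, so would $V\cap\mathcal O$ be.

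The heart of the argument is $(2)\Rightarrow(3)$, which I would carry out by compactness. Work in a monster model $\M\succ K$ of $\ACF$ and let $\Sigma(x)$ be the partial type over $K$ consisting of the equations cutting out $V$, the formula expressing $x\in\mathcal O$, and the formulas $x_1^{m_1}\cdots x_n^{m_n}\neq c$ for every nontrivial exponent tuple $(m_1,\dots,m_n)$ and every $c\in K$. Any realisation $a$ of $\Sigma$ in $\M$ is multiplicatively independent over $K$ by construction and lies in $(V\cap\mathcal O)(\M)$; choosing a small $L\prec\M$ containing $K$ and $a$ then yields $(3)$. So it suffices to show $\Sigma$ is finitely satisfiable. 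A finite subtype involves $V$, $\mathcal O$, and finitely many forbidden hypersurfaces $Z_1,\dots,Z_N$, each of the form $\{x^{m^{(j)}}=c^{(j)}\}$. By $(2)$ each $V\cap Z_j$ is a proper closed subset of $V$, and $V\setminus\mathcal O$ is a proper closed subset of $V$ as well since $V\cap\mathcal O\neq\emptyset$; because $V$ is irreducible it is not the union of these finitely many proper closed subsets, so $(V\cap\mathcal O)\setminus(Z_1\cup\dots\cup Z_N)$ is a nonempty Zariski-open subset of $V$, definable over $K$, and therefore contains a point of $K$ as $K\models\ACF$. That point satisfies the finite subtype, so $\Sigma$ is consistent.

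I expect the only delicate part to be bookkeeping rather than genuine mathematics: ensuring the monomial conditions are stated so that ``$V$ not contained in the zero set'' is equivalent to ``$V\cap Z_j\subsetneq V$'', and recording the standing assumption $V\cap\mathcal O\neq\emptyset$ (and the reduction to $V\cap\mathcal O\subseteq\G_m^n$ when dealing with negative exponents). The one substantive ingredient is the irreducibility of $V$, which forbids covering $V$ by finitely many proper closed subsets and is precisely what makes the finite satisfiability step go through; that is the step I would flag as the crux.
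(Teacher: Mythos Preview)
Your proof is correct and rests on the same idea as the paper's: the irreducibility of $V$ is what makes $(2)\Rightarrow(3)$ go through. The paper packages your compactness argument into a one-line invocation of a generic point: take $a\in L\succ K$ generic in $V$ over $K$; then $a\in\mathcal O$ automatically (since $V\setminus\mathcal O$ is a proper $K$-definable closed subset of $V$), and if $a$ satisfied a monomial relation $a_1^{k_1}\cdots a_n^{k_n}=c$ with $c\in K$, genericity would force all of $V$ into that hypersurface, contradicting $(2)$. Your finite-satisfiability step is exactly this argument unrolled --- the generic point is precisely a realisation of your type $\Sigma$. The bookkeeping caveats you flag (nontrivial exponent tuples, the standing assumption $V\cap\mathcal O\neq\emptyset$) are points the paper leaves implicit; you are right that they are needed for the statement to be literally correct, and right that they carry no real content.
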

 
 \begin{proof}
 \textit{(1)} implies \textit{(2)} is trivial. We show that \textit{(2)} implies \textit{(3)}. Assume that \textit{(3)} does not hold. Take a generic $a$ over $K$ of the variety $V$ in some $L\succ K$. We have $a\in \mathcal O$. Then there exists $k_1,\dots,k_n\in \N$ such that $a_1^{k_1}\cdot \dots \cdot a_n^{k_n} = c$ for some $c\in K$. By genericity of $a$, it follows that $V$ is included in the zero set of $x_1^{k_1}\cdot \dots \cdot x_n^{k_n} = c$, hence \textit{(2)} does not hold. \textit{(3)} implies \textit{(1)} follows easily from the fact that $V$ and $\mathcal O$ are definable over $K$.
 \end{proof}

The following fact was first observed in the proof of Theorem 1.2 in~\cite{BGH13}, it is also Corollary 3.12 in~\cite{MCT17}.
\begin{fact}\label{fact_BGH}
 Let $p$ be a prime number or $0$. Let $\phi(x,y)$ an $\LLf$-formula such that for all tuple $b$ in a model of $\ACF_p$, $\phi(x,b)$ defines an affine variety. Then there exists an $\LLf$-formula $\theta_\phi(y)$ such that for any model $K$ of $\ACF_p$ and tuple $b$ from $K$, we have $K\models \theta_\phi(b)$ if and only if 
for all $k_1,\dots,k_n \in \N$, $c\in K$, the set $\phi(K,b)$ is not included  in the zero set of $x_1^{k_1}\cdot \dots \cdot x_n^{k_n} = c$.
 \end{fact}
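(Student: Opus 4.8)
The plan is to present $\theta_\phi$ as the negation of a \emph{finite} disjunction of first-order formulas, the finiteness resting on a uniform bound on exponents. Fix $k=(k_1,\dots,k_n)\in\N^n\setminus\{0\}$ and write $x^k:=x_1^{k_1}\cdots x_n^{k_n}$. Over a model $K$ of $\ACF_p$, the condition ``there is $c\in K$ with $\phi(K,b)\subseteq Z(x^k-c)$'' (with $Z$ denoting the zero set) says exactly that the monomial $x^k$ is constant on $\phi(K,b)$, so it is expressed by the $\LLf$-formula
\[
\chi_k(y)\ :=\ \forall x\,\forall x'\ \big(\,\phi(x,y)\wedge\phi(x',y)\ \rightarrow\ \textstyle\prod_{i}x_i^{k_i}=\prod_{i}x_i'^{k_i}\,\big),
\]
which covers the cases $c\neq0$, $c=0$ and $\phi(K,b)=\emptyset$ uniformly. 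Thus $K\models\neg\theta_\phi(b)$ iff $K\models\chi_k(b)$ for some $k\in\N^n\setminus\{0\}$, and the statement follows once one produces $N=N(\phi)\in\N$ such that, in every model and for every $b$, if $\chi_k(b)$ holds for some $k\in\N^n\setminus\{0\}$ then it holds for some such $k$ with $\sum_i k_i\le N$; then one sets $\theta_\phi(y):=\neg\bigvee_{0\neq k\in\N^n,\ \sum_i k_i\le N}\chi_k(y)$.

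Two ingredients give $N$. First, since $\phi(x,y)$ defines an affine variety for every parameter in every model of $\ACF_p$, the sets $\phi(K,b)$ form a definable family of affine varieties, so by compactness and Noetherianity there is a uniform bound $d=d(\phi)$ on the number of irreducible components of $\phi(K,b)$ and on their degrees. Second, one needs the geometric statement: if $W\subseteq\A^n$ is an affine variety with at most $d$ components, each of degree at most $d$, and $W\subseteq Z(x^k-c)$ for some $0\neq k\in\N^n$, $c\in K$, then $W\subseteq Z(x^{k'}-c')$ for some $0\neq k'\in\N^n$ with $\sum_i k_i'\le N(n,d)$ and some $c'\in K$. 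The case $c=0$ is trivial, since $Z(x^k)=Z\big(\prod_{i\in\operatorname{supp}(k)}x_i\big)$, so $k'=\mathbf 1_{\operatorname{supp}(k)}$ works, of $\ell^1$-norm $\le n$. The case $c\neq0$ with $W$ irreducible of dimension $n-1$ is immediate: taking $k$ primitive in $\Z^n$ (legitimate, since $(x^{k_0})^m$ constant on an irreducible variety forces $x^{k_0}$ constant), the hypersurface $Z(x^k-c)$ is irreducible of degree $\sum_i k_i$, hence equals $W$, so $\sum_i k_i=\deg W\le d$. The substantive case is $\dim W\le n-2$ (and reducibility): one passes to the smallest algebraic subgroup $H$ of the torus $\G_m^n$ a coset of which contains $W\cap\G_m^n$, bounds $\deg H$ polynomially in $d$ and $n$ by a B\'ezout-type estimate, deduces that the character lattice $\Gamma:=X^\ast(\G_m^n/H^\circ)\le\Z^n$ --- which contains the given $k$ --- admits a $\Z$-basis of vectors of norm $\le N(n,d)$, and then, since $\Gamma$ meets $\N^n$ nontrivially, extracts from an extremal ray of the cone $(\Gamma\otimes\R)\cap\R^n_{\ge 0}$ a short nonzero lattice point $k'\in\Gamma\cap\N^n$ by a Cramer's-rule bound on bounded-entry matrices. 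Reducibility is absorbed by replacing $\Gamma_{W}$ with the subgroup $\bigcap_j\Gamma_{V_j}\cap\{k:\ x^k\text{ takes a common value on all components }V_j\}$, which is again of this form.

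The main obstacle is precisely this last case: the quantitative principle that a multiplicative relation $x^k=c$ ($k\in\N^n$, $c\in K$) holding on a variety of bounded complexity is always witnessed by such a relation of bounded exponent --- equivalently, that the smallest coset of a subtorus containing a given variety has degree controlled by the degree of the variety. This is exactly what is extracted in the proof of \cite[Theorem 1.2]{BGH13} (see also \cite[Corollary 3.12]{MCT17}); once it is granted, the reduction to fixed $k$, the compactness step producing $d$, and the final assembly of $\theta_\phi$ are all routine.
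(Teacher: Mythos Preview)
The paper does not prove this statement: it is recorded as a \emph{Fact} with attribution to \cite[Theorem~1.2]{BGH13} and \cite[Corollary~3.12]{MCT17}, and no argument is given beyond that citation. So there is no in-paper proof to compare against.

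Your sketch is a reasonable outline of how one would unpack those references. The reduction is sound: the formula $\chi_k(y)$ correctly captures ``$x^k$ is constant on $\phi(K,b)$'', compactness over $\ACF_p$ does give a uniform bound on the number and degree of components of the family $\phi(K,b)$, and once a uniform exponent bound $N$ is in hand the assembly of $\theta_\phi$ is immediate. You also correctly isolate the substantive content --- that a multiplicative relation $x^k=c$ holding on a variety of bounded complexity is witnessed by one of bounded exponent, equivalently a degree bound on the smallest torus coset containing the variety --- and you defer to exactly the same sources the paper cites for it.

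One small caveat on the sketch itself: the step where you extract a short nonzero vector in $\Gamma\cap\N^n$ from an extremal ray of $(\Gamma\otimes\R)\cap\R_{\ge 0}^n$ deserves a line of justification. You need the primitive lattice point on such a ray to lie in $\Gamma$ (not merely in $\Z^n$) and to have bounded norm; both follow, but it is worth saying that the extremal rays are cut out by coordinate hyperplanes together with a basis of $\Gamma$ of bounded norm, so Cramer's rule bounds the primitive $\Gamma$-point on each ray. Likewise, the treatment of reducible $W$ via the intersection of the lattices $\Gamma_{V_j}$ is correct in spirit but would need a sentence to confirm that this intersection still contains the original $k$ and still admits a bounded basis. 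None of this is a genuine gap --- the argument is the one in the cited references --- but as written these two passages are more of a gesture than a proof.
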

 
 It is standard that every definable set in $\ACF_p$ can be written as a finite union of quasi-affine varieties. Furthermore, by~\cite[Lemma 3.10]{MCT17}, given any $\LLr$-formula $\vartheta(x,z)$, the set of $c$ such that $\vartheta(x,c)$ is a quasi-affine variety is a definable set. Let $\CCCC$ be the class of formulae $\vartheta(x,z)$ such that for all $K\models \ACF_p$ and $c$ tuple from $K$, the set $\vartheta(K,c)$ is a quasi-affine variety. 
 
 \begin{lm}\label{lm_cm}
   Let $p$ be a prime number or $0$. For any $\vartheta(x,z)\in \CCCC$ there exists an $\LLf$-formula $\theta_\vartheta(z)$ such that for any model $K$ of $\ACF_p$ and tuple $c$ from $K$, we have $K\models \theta_\vartheta(c)$ if and only if there exists $a$ such that $\models \vartheta(a,c)$ and $a$ is $\indi p$-independent over $K$.
\end{lm}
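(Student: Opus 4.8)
The plan is to reduce the statement to Fact~\ref{fact_BGH} by passing from the quasi-affine variety $\vartheta(K,c)$ to its Zariski closure, which is an affine irreducible variety, and to observe that the containment conditions governing both Fact~\ref{fact_BGH} and Lemma~\ref{lm_var} are insensitive to this passage.

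First I fix $\vartheta(x,z)\in\CCCC$ and invoke the standard fact that in $\ACF_p$ the Zariski closure of a definable family is again a definable family: there is a formula $\rho(x,z)$ such that $\rho(K,c)=\overline{\vartheta(K,c)}$ for every $K\models\ACF_p$ and every tuple $c$ from $K$. Since $\vartheta(K,c)$ is quasi-affine (irreducible and locally closed), $\rho(K,c)$ is an irreducible affine variety and $\vartheta(K,c)=\rho(K,c)\cap\mathcal O_c$ for some Zariski open set $\mathcal O_c$ definable over $c$; in particular $\vartheta(K,c)$ is Zariski dense in $\rho(K,c)$. As the instances $\rho(x,c)$ define affine varieties, Fact~\ref{fact_BGH} applied to $\rho$ yields an $\LLf$-formula $\theta_\rho(z)$ such that $K\models\theta_\rho(c)$ if and only if $\rho(K,c)$ is not contained in the zero set of $x_1^{k_1}\cdots x_n^{k_n}=c'$ for any $k_1,\dots,k_n\in\N$ and $c'\in K$. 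I set $\theta_\vartheta:=\theta_\rho$.

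It remains to verify the equivalence. Because each such zero set $Z=Z(x_1^{k_1}\cdots x_n^{k_n}=c')$ is Zariski closed, one has $\vartheta(K,c)\subseteq Z$ if and only if $\rho(K,c)=\overline{\vartheta(K,c)}\subseteq Z$; hence $K\models\theta_\vartheta(c)$ holds exactly when $\vartheta(K,c)$ lies in no such $Z$. If $K\models\theta_\vartheta(c)$, set $V:=\rho(K,c)$, an irreducible affine variety with $\vartheta(K,c)=V\cap\mathcal O_c$; the implication $(2)\Rightarrow(3)$ of Lemma~\ref{lm_var}, applied with this $K$, produces $L\succ K$ and a tuple $a\in(V\cap\mathcal O_c)(L)$ multiplicatively independent over $K$, i.e.\ $\indi p$-independent over $K$ by Proposition~\ref{prop_pregeo_mult0}. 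As $V\cap\mathcal O_c$ and $\vartheta(\cdot,c)$ are definable over $c$ and coincide on $K$, they coincide on $L$ by model-completeness of $\ACF_p$, so $L\models\vartheta(a,c)$, as wanted. Conversely, suppose $a$ in some $L\succ K$ satisfies $\vartheta(a,c)$ and is multiplicatively independent over $K$. If $K\not\models\theta_\vartheta(c)$, then by the observation above $\vartheta(K,c)\subseteq Z(x_1^{k_1}\cdots x_n^{k_n}=c')$ for some nonzero $(k_1,\dots,k_n)$ and some $c'\in K$; this containment is expressed by an $\LLr(c)$-sentence true in $K$, hence true in $L\succ K$, so $a_1^{k_1}\cdots a_n^{k_n}=c'$, contradicting the multiplicative independence of $a$ over $K$. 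Thus $K\models\theta_\vartheta(c)$, which finishes the proof.

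The only genuinely non-formal ingredient is the uniform definability of the Zariski closure $\rho$; the rest is bookkeeping around Lemma~\ref{lm_var}, Fact~\ref{fact_BGH} and model-completeness of $\ACF_p$. The main point to be careful with is irreducibility: if one does not build irreducibility into the notion of quasi-affine variety, one first splits $\vartheta(K,c)$ uniformly into its boundedly many irreducible components $V_i\cap\mathcal O_c$ and takes $\theta_\vartheta:=\bigvee_i\theta_{\rho_i}$, since a witness $a$ as above must lie in the open part of one of the components.
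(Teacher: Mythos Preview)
Your proof is correct and follows essentially the same route as the paper: pass to the Zariski closure via uniform definability (the paper cites \cite[Theorem 10.2.1]{Joh16} for this), apply Fact~\ref{fact_BGH} to the resulting family of affine varieties, and then use Lemma~\ref{lm_var} to translate the non-containment condition into the existence of a multiplicatively independent point. Your write-up spells out the equivalence verification and the irreducibility caveat in more detail than the paper does, but the argument is the same.
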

\begin{proof}
  Let $K\models \ACF_p$ and $\vartheta(x,z)\in \CCCC$. Using~\cite[Theorem 10.2.1]{Joh16}, there exists a formula $\tilde\vartheta(x,z)$ such that for all tuple $c$ from $K$, the set $\tilde\vartheta(K,c)$ is the Zariski closure of $\vartheta(K,c)$. Now by Fact~\ref{fact_BGH}, there exists a formula $\theta(z)$ such that $K\models \theta(c)$ if and only if $\tilde\vartheta(K,c)$ is not included  in the zero set of $x_1^{k_1}\cdot \dots \cdot x_n^{k_n} = d$, for all $d\in K$, $k_1,\cdots,k_n\in \N$. By Lemma~\ref{lm_var}, $K\models \theta(c)$ if and only if there exist $L\succ K$ and a tuple $a$ which is multiplicatively independent over $K$ and with $a\models \vartheta(x,c)$.
\end{proof}

If $G^\times$ is a symbol for a unary predicate, we denote by $\ACF_{G^\times}$ the theory in the language $\LLr\cup \set{G^\times}$ whose models are algebraically closed fields of characteristic $p$ in which the predicate $G^\times$ consists of a multiplicative subgroup.

\begin{thm}\label{thm_genmult}
The theory $\ACF_{G^\times}$ admits a model companion, which we denote by $\ACFG^\times$.
\end{thm}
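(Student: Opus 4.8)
The plan is to obtain $\ACFG^\times$ from the weak-$(H_4)$ variant of Theorem~\ref{model_com_gen} recorded in Remark~\ref{rk_mcvar}, applied with $T=\ACF_p$ in the language $\LLf$, the reduct $T_0=T_p$ in $\LL_0=\set{\cdot,{}^{-1},1}$, and the class $\CCCC$ of $\LLf$-formulae cutting out quasi-affine varieties. Over $\ACF_p$ the languages $\LLr$ and $\LLf$ are interdefinable (with the convention $0^{-1}=0$), so the theory $T_S$ of the construction, with the predicate $S$ renamed $G^\times$, is exactly $\ACF_{G^\times}$; the theory produced by Remark~\ref{rk_mcvar}, axiomatised over $\ACF_{G^\times}$ by the instances of the scheme of Theorem~\ref{model_com_gen} associated to formulae in $\CCCC$, will be $\ACFG^\times$. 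Because $\acl_p$ is moreover modular (hypothesis $(H_3^+)$), strong extensions coincide with arbitrary extensions, so this theory will be the genuine model companion of $\ACF_{G^\times}$, not merely a companion for strong extensions.

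The structural hypotheses are all in place. Condition $(H_1)$ is the model-completeness of $\ACF_p$. Proposition~\ref{prop_pregeo_mult0} supplies the rest: $T_p$ has quantifier elimination in $\LL_0$, hence is model complete, and every $\acl_p$-closed set is a model of $T_p$, so $(H_2)$ holds; and $\acl_p$ is a modular pregeometry, which is $(H_3^+)$, a fortiori $(H_3)$. Thus $(H_1)$, $(H_2)$, $(H_3)$ --- the hypotheses required by Remark~\ref{rk_mcvar} --- hold.

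For the class $\CCCC$, let it consist of the $\LLf$-formulae $\vartheta(x,z)$ such that $\vartheta(K,c)$ is a quasi-affine variety for every $K\models\ACF_p$ and every tuple $c$ from $K$ (counting $\emptyset$ as such). Two things are then needed. First, the decomposition hypothesis of Remark~\ref{rk_mcvar}: every definable subset of a model of $\ACF_p$ is a finite union of quasi-affine varieties defined over that model, and by \cite[Lemma 3.10]{MCT17} the set of parameters $c$ for which a given $\LLr$-formula $\vartheta(x,c)$ defines a quasi-affine variety is itself definable; relativising each formula defining a piece to the definable set of its good parameters, one gets, for any $\MM\models\ACF_p$ and any $\phi(x,b)$, a decomposition $\phi(\MM,b)=\vartheta_1(\MM,c)\cup\cdots\cup\vartheta_n(\MM,c)$ with all $\vartheta_i\in\CCCC$ and $c$ from $\MM$. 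Second, condition $(H_4)$ restricted to $\CCCC$: this is exactly Lemma~\ref{lm_cm}, which for each $\vartheta\in\CCCC$ produces an $\LLf$-formula $\theta_\vartheta(z)$ holding of $c$ in $K\models\ACF_p$ precisely when $\vartheta(x,c)$ has a realisation that is $\indi p$-independent (that is, multiplicatively independent) over $K$. With these in hand Remark~\ref{rk_mcvar} delivers $\ACFG^\times\supseteq\ACF_{G^\times}$ with every model of $\ACF_{G^\times}$ embedding in a model of $\ACFG^\times$ and every model of $\ACFG^\times$ existentially closed in each extension to a model of $\ACF_{G^\times}$; i.e.\ $\ACFG^\times$ is the model companion of $\ACF_{G^\times}$.

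The main obstacle is upstream of this theorem: it is the verification of $(H_4)$ on $\CCCC$, namely Lemma~\ref{lm_cm}, whose proof rests on the definability of Zariski closure \cite[Theorem 10.2.1]{Joh16}, on Lemma~\ref{lm_var} translating ``$V\cap\mathcal O$ lies in no coset of a proper subtorus'' into ``$V\cap\mathcal O$ carries a multiplicatively independent tuple over $K$'', and above all on Fact~\ref{fact_BGH}, the \cite{BGH13} definability-of-freeness result --- which is exactly why the off-the-shelf Theorem~\ref{model_com_gen} cannot be applied (full $(H_4)$ for arbitrary $\LLf$-formulae is not available, only its restriction to $\CCCC$). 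Granting Lemma~\ref{lm_cm}, the one remaining point requiring care is making the quasi-affine decomposition uniform enough that its pieces are instances of formulae from a single fixed class $\CCCC$ rather than of formulae depending on $\MM$ and $b$; this is routine from the definability of ``quasi-affine variety'', but it is the only place a small argument is still owed.
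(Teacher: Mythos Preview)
Your proposal is correct and follows essentially the same route as the paper: verify $(H_1)$, $(H_2)$, $(H_3^+)$ from Proposition~\ref{prop_pregeo_mult0}, invoke Lemma~\ref{lm_cm} for $(H_4)$ restricted to the class $\CCCC$ of quasi-affine formulae, and conclude via Remark~\ref{rk_mcvar}. Your additional remarks on modularity ensuring a genuine model companion and on the uniformity of the quasi-affine decomposition are accurate elaborations of points the paper leaves implicit.
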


\begin{proof}
We check the conditions of Definition~\ref{def_suitabletriple}
 \begin{enumerate}[label={$(H_{\arabic*})$}]
\item $\ACF_p$ is model complete;
\item $T_p$ is model-complete and for all infinite $A$, $\acl_p(A) \models T_p$;
\item[$(H_3^+)$] $\acl_p$ defines a modular pregeometry;
\item[$(H_4)$] for all $\LLf$-formula $\phi(x,y)$ there exists an $\LLf$-formula $\theta_\phi(y)$ such that for $b\in K\models \ACF_p$
\begin{eqnarray*}
\MM \models \theta_\phi(b) &\iff& \text{there exists  $L\succ K$ and $a\in L$ such that}\\
&\mbox{ }& \text{$\phi(a,b)$ and $a$ is $\indi p$-independent over $K$.}
 \end{eqnarray*}
\end{enumerate}
$\ACF_p$ is model complete by quantifier elimination. Conditions $(H_2)$ and $(H_3^+)$ follow from Proposition~\ref{prop_pregeo_mult0}. 
We don't have condition $(H_4)$ for all formulae, but only for the formulae in $\CCCC$ (Lemma~\ref{lm_cm}), which is enough for the existence of the model-companion by Remark~\ref{rk_mcvar}.
\end{proof}

Let $\ACFG^{\times}$ be the theory obtained in Theorem~\ref{thm_genmult}. We denote by $A\cdot B$ the product set $\set{a\cdot b \mid a\in A,\ b\in B}$.
\begin{thm}
Any completion of $\ACFG^\times$ is $\NSOP{1}$ and not simple. Furthermore, Kim-independence coincide over models with the relation
$$A\indi w _C B\iff A\indi{\ACF}_C\ \ B\text{ and } {G^\times}(\overline{AC}\cdot \overline{BC}) = G^\times(\overline{AC})\cdot G^\times(\overline{BC}).$$
Furthermore, $\indi w$ satisfies 
\begin{itemize}
\item \bsc{Strong Finite Character over algebraically closed sets.} For algebraically closed $E$, if $a\nindi w _E b$, then there is a formula $\phi(x,b,e)\in tp^{\ACFG^\times}(a/bE)$ such that for all $a'$, if $a'\models \phi(x,b,e)$ then $a'\nindi w _E b$.

\item \bsc{$\indi a$-amalgamation over algebraically closed sets}. For algebraically closed set $E$ if there exists tuples $c_1,c_2$ and sets $A,B$ such that
\begin{itemize}
\item $c_1\equiv^{\ACFG^\times}_E c_2$
\item $\overline{AE}\cap \overline{BE} = E$
\item $c_1\indi w_E A$ and $c_2\indi w_E B$ 
\end{itemize}
then there exists $c\indi w_E A,B$ such that $c\equiv^{\ACFG^\times}_A c_1$, $c \equiv^{\ACFG^\times}_B c_2$, $A\indi a _{Ec} B$, $c\indi a _{EA} B$ and $c\indi a _{EB} A$.
\end{itemize}
\end{thm}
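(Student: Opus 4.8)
The plan is to deduce the theorem from Theorem~\ref{thm_conserve} applied to the triple $(\ACF_p, T_p, \LL_0)$ with $\indi T := \indi{\ACF}$ (algebraic independence), together with Corollary~\ref{cor_notsimple} for non‑simplicity. By Theorem~\ref{thm_genmult} and Remark~\ref{rk_mcvar} this triple is suitable relative to the class $\CCCC$ of quasi‑affine varieties, so $\ACFG^\times$ exists. Since $\ACF_p$ is $\omega$‑stable it is $\NSOP{1}$, and $\indi{\ACF}$ is at once forking‑ and Kim‑independence there; it satisfies \ref{INV}, \ref{SYM}, \ref{CLO}, \ref{MON}, \ref{EX}, \ref{EXT}, \ref{WIT} and \ref{STRFINC} over algebraically closed sets, and, being stationary over algebraically closed sets, it satisfies $\ind'$-\ref{AM} over every $E=\ol E$ with $\ind'=\indi a$ (which maps into $\indi a$ and has \ref{MON}, \ref{SYM}, \ref{CLO}), exactly as recorded for the additive case in Example~\ref{ex_ACFG_NSOP}. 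Thus the only hypothesis of Theorem~\ref{thm_conserve} left to check is $(A)$; once it is verified the theorem gives that every completion of $\ACFG^\times$ is $\NSOP{1}$, that $\indi w$ is Kim‑independence over models, and that the last two displayed bullets are literally the \ref{STRFINC}‑ and $\ind'$-\ref{AM}‑over‑algebraically‑closed‑sets clauses of its conclusion. Unwinding Definition~\ref{def_weakstrong} with $\acl_0=\acl_p$, and using that $\ol{AC}^\times\cdot\ol{BC}^\times$ is divisible and contains all roots of unity, hence coincides with $\acl_p(\ol{AC}\cup\ol{BC})$, identifies $\indi w$ with the relation displayed in the statement.

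The heart of the matter is the verification of $(A)$. Fix $E=\ol E$ and algebraically closed $A,B,C$ containing $E$ with $C\indi{\ACF}_E A,B$ and $A\indi{\ACF}_E B$. Combining these via \ref{TRA} and \ref{BMON} of $\indi{\ACF}$ yields $\ol{AC}\indi{\ACF}_C\ol{BC}$, hence $\ol{AC}\cap\ol{BC}=C$ as fields, hence (all being $\acl_p$‑closed) $\ol{AC}\indi{p}_C\ol{BC}$, which is the first half of $(A)$. For the second half, by modularity of $\acl_p$ it suffices to prove $\acl_p(\ol{AC}\cup\ol{BC})\cap\ol{AB}=\acl_p(A\cup B)$, i.e. $(\ol{AC}\cdot\ol{BC})\cap\ol{AB}=A\cdot B$ as product sets. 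Here I would invoke Lemma~\ref{lm_CH}(2) — the multiplicative companion of the computation used in the proof of Theorem~\ref{thm_NSOPfields} — with $D=\ol{AB}$ and base $k=E$: the hypothesis it requires is that $tp^{\ACF}(\ol{AB}/C)$ be finitely satisfiable in $E$, and this holds because $\ACF_p$ is strongly minimal, so the non‑forking extension of $tp^{\ACF}(\ol{AB}/E)$ to $C$ is realised inside any $E$‑variety by an $E$‑point (a proper $C$‑subvariety cannot contain the Zariski‑dense set of $E$‑points of that variety). This is the step I expect to be the main obstacle: everything else is formal independence calculus, but one must spot the right reformulation — via modularity and divisibility — that turns $(A)$ into the purely field‑theoretic intersection statement already provided by Lemma~\ref{lm_CH}.

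For non‑simplicity I would use Corollary~\ref{cor_notsimple}: since $\ACF_p$ is simple, it suffices to produce algebraically closed $A,B,C,D$ with $A,B,D\supseteq C$, $A\indi{\ACF}_C BD$, violating the base‑monotonicity identity there. Take $E\models\ACF_p$, elements $a,b,d$ mutually algebraically independent over $E$, and set $C=E$, $A=\ol{E(a)}$, $B=\ol{E(b)}$, $D=\ol{E(d)}$, so that $A\indi{\ACF}_E BD$. The element $w=(a+d)\cdot b$ lies in $\ol{AD}^\times\cdot\ol{BD}^\times=\acl_p(\ol{AD}\cup\ol{BD})$; it is not in $\ol{AD}=\ol{E(a,d)}$ (else $b\in\ol{E(a,d)}$); and it is not in $\acl_p(A\cup\ol{BD})$, because if $w^m=x\cdot y$ with $x\in\ol{E(a)}^\times$ and $y\in\ol{BD}^\times$, then $yb^{-m}\in\ol{AD}\cap\ol{BD}=\ol{E(d)}$, so $(a+d)^m\in\ol{E(a)}^\times\cdot\ol{E(d)}^\times$, which is impossible since the discrete valuation $v$ of $E(a,d)$ attached to the prime divisor $a+d$ restricts trivially to both $\ol{E(a)}$ and $\ol{E(d)}$, whereas $v((a+d)^m)=m>0$ (this sharpens the product part of Lemma~\ref{lm_sumprodgen}). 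Corollary~\ref{cor_notsimple} then yields that $\ACFG^\times$ is not simple, and, over algebraically closed parameters, the two remaining bullets are the clauses inherited from $\indi{\ACF}$ through Theorem~\ref{thm_conserve}, available without restriction on $A,B$ by stationarity of $\indi{\ACF}$ over algebraically closed sets.
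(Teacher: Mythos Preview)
Your proposal is correct and follows essentially the same route as the paper: verify hypothesis $(A)$ by reducing $(\ol{AC}\cdot\ol{BC})\cap\ol{AB}=A\cdot B$ to Lemma~\ref{lm_CH}(2) via finite satisfiability in the algebraically closed base $E$ (which is automatically a model of $\ACF_p$), then invoke Theorems~\ref{thm_KF}/\ref{thm_conserve}; for non-simplicity use Corollary~\ref{cor_notsimple} with the witness $(a+d)b$. The only notable divergence is in the last step: where the paper reduces $(a+d)b\notin\ol{Ea}\cdot\ol{Ebd}$ by linear disjointness to $a+d\notin E(a)\cdot E(d)$ and then cites Lemma~\ref{lm_sumprodgen}, you instead pass to $(a+d)^m\in\ol{E(a)}^\times\cdot\ol{E(d)}^\times$ and rule this out with the $(a+d)$-adic valuation; both work, and your version has the mild advantage of handling all powers $m$ at once (though, as you note, divisibility makes $m=1$ suffice).
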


\begin{proof}
Using Theorem~\ref{thm_KF}, it is enough to show that for $E$ algebraically closed and $A,B,C$ algebraically closed containing $E$, if $C\indi{\ACF} _E \ A,B$ and $A\indi{\ACF} _E \ B$ then $$\overline{AC}\cdot \overline{BC} \cap \overline{AB} = A\cdot B.$$
This easily follows from the fact that $tp^{\ACF}(C/AB))$ is finitely satisfiable in $E$, as in the proof of Theorem~\ref{thm_NSOPfields}. The rest is Theorem~\ref{thm_conserve}, knowing that $\indi{\ACF}\quad$ is stationnary over algebraically closed sets, similarly to Example~\ref{ex_ACFG_NSOP}.
To prove that $\ACFG^{\times}$ is not simple, we use Corollary~\ref{cor_notsimple}, as in the proof of Proposition~\ref{prop_fieldsnotsimple}. Let $E$ be a model of $\ACF_p$ and $a,b,d$ in an extension be such that $a\indi{\ACF}_E\quad b,d$ and $b\indi{\ACF}_E\quad d$. We claim that $$(a+d)b\in \left[\ol{Ead}\cdot \ol{Ebd}\right]\setminus \left[(\ol{Ea}\cdot \ol{Ebd})\cup \ol{Ead}\right].$$ Since $b\notin \ol{Ead}$, it is clear that $(a+d)b\notin \ol{Ead}$. Assume that $(a+d)b\in \ol{Ea}\cdot \ol{Ebd}$. Then $a+d\in \ol{Ea}\cdot \ol{Ebd}$, let $u\in \ol{Ea}$ and $v\in \ol{Ebd}$ be such that $a+d = uv$. We have that $\ol{Ea}\indi{ld}_E \ol{Ebd}$, hence $\ol{Ea}(d)\indi{ld}_{E(d)} \ol{Ebd}$ so $\ol{Ea}(d)\cap \ol{Ebd} = E(d)$. Similarly, $\ol{Ebd}(a)\cap \ol{Ea} = E(a)$. It follows that 
  \begin{align*}
    u &= (a+d)v^{-1} \in \ol{Ebd}(a)\cap \ol{Ea} = E(a) \text{ and } \\
    v &= (a+d)u^{-1}\in \ol{Ea}(d)\cap \ol{Ebd} = E(d)
  \end{align*}
hence $a+d\in E(a)\cdot E(d)$, which contradicts Lemma~\ref{lm_sumprodgen}.
\end{proof}

\subsection{Pairs of geometric structures}
Let $T$ be a pregeometric theory in a language $\LL$ with monster $\M$, $b$ a tuple from $\M$ and $\phi(x,b)$ a formula. By $\di(\phi(x,b))$ we mean the maximum dimension (in the sense of the pregeometry) of $\acl(cb)$ over $\acl(b)$, for realisations $c$ of $\phi(x,b)$.
\begin{fact}\label{fact_gagstr}
  Let $T$ be a geometric theory and $\M$ a monster model for $T$. Then for all formula $\phi(x,y)$ there exists a formula $\theta_\phi(y)$ such that $\theta_\phi(b)$ holds if and only if there exists a realisation $a$ of $\phi(x,b)$ which is an independent tuple over $\acl_T(b)$.
\end{fact}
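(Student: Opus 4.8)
The plan is to induct on $n=|x|$, peeling off one variable at a time and feeding each stage into the elimination of $\exists^{\infty}$, which here takes the form of Fact~\ref{fact_ChaPil}. Throughout I work inside the monster $\M$ (in particular $\aleph_0$-saturated), write $\acl=\acl_T$ and $\di$ for the dimension of the pregeometry associated with $\acl$, and use the two standard features of a geometric theory: $\acl$ has exchange, so $\di$ is well-defined and additive, $\di(a_1a'/C)=\di(a_1/C)+\di(a'/Ca_1)$; and a tuple $a$ is an independent tuple over $\acl(b)$ precisely when $\di(a/b)=|a|$ (so the conclusion of the statement says exactly that $\di(\phi(x,b))=|x|$).

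For $n=1$ the statement is exactly Fact~\ref{fact_ChaPil} applied to $\phi(x_1,y)$ with $x_1$ a single variable: it yields an $\LL$-formula $\theta_\phi(y)$ with $\M\models\theta_\phi(b)$ if and only if some $a_1\models\phi(x_1,b)$ satisfies $a_1\notin\acl(b)$, that is $\di(a_1/b)=1$.

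For the inductive step write $x=x_1x'$ with $|x'|=n-1$. I would first apply the induction hypothesis to $\phi$ read as a formula in the object variables $x'$ over the parameter variables $(x_1,y)$, getting an $\LL$-formula $\rho(x_1,y)$ such that $\M\models\rho(a_1,b)$ iff there is $a'\models\phi(a_1,x',b)$ which is an independent tuple over $\acl(ba_1)$. I would then apply Fact~\ref{fact_ChaPil} to the single variable $x_1$ and the formula $\rho(x_1,y)$, producing $\theta_\rho(y)$ with $\M\models\theta_\rho(b)$ iff some $a_1\models\rho(x_1,b)$ satisfies $a_1\notin\acl(b)$, and then set $\theta_\phi:=\theta_\rho$. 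Correctness is a one-line computation with additivity: if $a=a_1a'\models\phi(x,b)$ is independent over $\acl(b)$ then $\di(a_1/b)=1$ (otherwise $\di(a/b)\leq\di(a'/b)\leq n-1$) and $\di(a'/ba_1)=\di(a/b)-\di(a_1/b)=n-1$, so $a_1$ witnesses $\theta_\rho(b)$; conversely, from $a_1\notin\acl(b)$ together with a tuple $a'$ independent over $\acl(ba_1)$ with $\phi(a_1,a',b)$ one gets $\di(a_1a'/b)=1+(n-1)=n$, so $a_1a'$ is the desired realisation.

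There is no serious obstacle here: the mathematical content is carried entirely by Fact~\ref{fact_ChaPil} (hence by elimination of $\exists^{\infty}$) and by exchange. The only points requiring care are the bookkeeping of which variables are treated as parameters at each level of the recursion, and the fact that Fact~\ref{fact_ChaPil} pins down the auxiliary formulas only in $\aleph_0$-saturated models — which is harmless, since all formulas produced are evaluated only in the monster $\M$.
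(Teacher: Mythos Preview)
Your argument is correct. The approach, however, differs from the paper's: the paper simply cites \cite[Fact 2.4]{Ga05}, which asserts that in a geometric theory the dimension of a definable family is uniformly definable, and then takes $\theta_\phi$ to be the formula expressing $\di(\phi(x,b))=|x|$. You instead reprove this top-dimension case by induction on $|x|$, using only Fact~\ref{fact_ChaPil} (elimination of $\exists^{\infty}$ for a single variable) at each step. Your route is more self-contained within the paper and makes explicit that nothing beyond exchange and elimination of $\exists^\infty$ is needed; the paper's route is terser by offloading to the literature. In substance the two agree: your induction is essentially a direct proof of the special case $k=|x|$ of the cited fact from~\cite{Ga05}.
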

\begin{proof}
  From~\cite[Fact 2.4]{Ga05}, for each $k\leq \abs{x}$ there exists a formula $\theta_k(y)$ such that $\theta_k(b)$ if and only if $\di(\phi(\M,b)) = k$. The formula $\theta_{\abs{x}}(y)$ holds if and only if there is a realisation $a$ of $\phi(x,b)$ such that $\di(\acl(ab)/\acl(b)) = \abs{x}$, hence $a$ is independent over $\acl_T(b)$.
  \end{proof}

Let $\LL_S$ be the expansion of $\LL$ by a unary predicate $S$. A \emph{pair} of models of $T$ is an $\LL_S$-structure $(\MM,\MM_0)$, where $\MM\models T$ and $S(\MM) = \MM_0$ is a substructure of $\MM$ model of $T$. We call $T_S$ the theory of the \emph{pairs of models of $T$}. 

\begin{prop}\label{prop_weakmc}
  Let $T$ be a model-complete geometric theory in a language $\LL$. Assume that every $\acl_T$-closed set is a model of $T$. Then there exists an $\LL_S$-theory $TS$ containing $T_S$ such that:
  \begin{enumerate}
    \item every model $(\NN,\NN_0)$ of $T_S$ has a strong extension which is a model of $TS$;
    \item every model of $TS$ is existentially closed in every strong extension model of $T_S$.
  \end{enumerate}
  Furthermore, $TS$ satisfies the conclusions of Proposition~\ref{cor_com}. Finally, if $T$ is stable, then so is $TS$.
 \end{prop}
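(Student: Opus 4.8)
The plan is to read this as the generic expansion by a reduct with $T_0 := T$ and $\LL_0 := \LL$ (the ``reduct'' being $T$ itself), and to invoke Theorem~\ref{model_com_gen}; one should keep in mind that here $\acl_0 = \acl_T$, so a strong extension $(\MM,\MM_0)\subseteq(\NN,\NN_0)$ is precisely one with $\NN_0\cap\MM = \MM_0$, and that we do \emph{not} have $(H_3^+)$ available --- a geometric theory need not be modular (e.g.\ $\ACF$) --- so the triple $(T,T,\LL)$ need not be suitable and $TS$ need not be the model-companion of $T_S$, only the weaker statement above. First I would verify $(H_1)$--$(H_4)$: $(H_1)$ is the model-completeness of $T$; $(H_2)$ holds since $T_0 = T$ is model-complete and, for infinite $A$, $\acl_T(A)$ is $\acl_T$-closed hence a model of $T$ by hypothesis; $(H_3)$ is exchange, which holds in a geometric theory. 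For $(H_4)$ I would feed $\phi(x,y)$ to Fact~\ref{fact_gagstr}, getting $\theta_\phi(y)$ detecting a realisation of $\phi(x,b)$ that is $\indi 0$-independent over $\acl_T(b)$, and then use \ref{EXT} for $\indi 0$ (which over an algebraically closed set lets one push such a realisation off a given elementary extension) to conclude that this same $\theta_\phi$ witnesses $(H_4)$, exactly as in the proof of Theorem~\ref{thm_gensubvect}. Theorem~\ref{model_com_gen} then produces $TS\supseteq T_S$ together with bullets (1) and (2).

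For the assertion that $TS$ satisfies the conclusions of Proposition~\ref{cor_com}, I would re-run the proofs of Proposition~\ref{prop_type}, Lemma~\ref{amalgam_T_S} and Proposition~\ref{cor_com}, checking that each invocation of modularity can be replaced using base monotonicity of $\indi 0$ together with the identity $\acl_0 = \acl_T$. For example, in Proposition~\ref{prop_type} the equality $\acl_T(\MM_0,S_{XB})\cap\MM = \MM_0$ follows because $S_{XB}\cap\MM = S(\acl_T(B))$ says precisely $S_{XB}\indi 0_{S(\acl_T(B))}\MM$, and \ref{BMON} with $S(\acl_T(B))\subseteq\MM_0$ gives $S_{XB}\indi 0_{\MM_0}\MM$, i.e.\ $\acl_T(\MM_0,S_{XB})\cap\MM = \MM_0$; likewise the conclusion of Remark~\ref{rk_hyp5} is immediate here because $\indi 0 = \indi a$. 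With these substitutions the amalgamation and the type computations go through, yielding the type description over common subsets, the list of completions, and $\acl_{TS} = \acl_T$.

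For stability I would not pass through Corollary~\ref{cor_notsimple} (which needs the suitable-triple framework) but count types directly, in the manner of beautiful pairs. By the type description just obtained, $tp^{TS}(a/\MM)$ is determined by $tp^T(a/\MM)$ together with the $\acl_T$-closed set $D := S(\acl_T(\MM a))$, which satisfies $\MM_0\subseteq D\subseteq\acl_T(\MM a)$ and $D\cap\MM = \MM_0$. The crucial estimate is $\di(D/\MM_0)\le|a|$: since $D\cap\MM=\MM_0$ reads $D\indi 0_{\MM_0}\MM$, if $d_1,\dots,d_k\in D$ are $\indi 0$-independent over $\MM_0$ then, writing $k' = \di(\{d_1,\dots,d_k\}/\MM)$ and reordering so that $d_1,\dots,d_{k'}$ are $\indi 0$-independent over $\MM$, \ref{BMON} and \ref{SYM} give $D\cap\acl_T(\MM,d_1,\dots,d_{k'}) = \acl_T(\MM_0,d_1,\dots,d_{k'})$; but $d_{k'+1},\dots,d_k$ lie in the left-hand side, hence in $\acl_T(\MM_0,d_1,\dots,d_{k'})$, which forces $k = k'\le\di(\acl_T(\MM a)/\MM)\le|a|$. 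Thus each such $D$ equals $\acl_T(\MM_0,e)$ for some tuple $e$ from $\acl_T(\MM a)$ of length $\le|a|$, so there are at most $|\MM|^{|a|}$ of them given $tp^T(a/\MM)$; combined with the stability of $T$ this bounds the number of $TS$-types over a suitably large $\MM$ by $|\MM|$, so $TS$ is stable.

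The main point --- and the place I expect most care to be needed --- is precisely that one loses the suitable-triple machinery: the conclusions of Proposition~\ref{cor_com} and, especially, the stability statement must be re-derived from Theorem~\ref{model_com_gen} by hand, the technical heart being the dimension bound $\di(D/\MM_0)\le|a|$ that keeps the type count under control.
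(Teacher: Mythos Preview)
Your verification of $(H_1)$--$(H_4)$ and the invocation of Theorem~\ref{model_com_gen} for (1) and (2) is correct and is exactly what the paper does. The problem lies in what follows, and it stems from one recurring confusion: you identify the pregeometry independence $\indi 0$ with $\indi a$. These coincide precisely when the pregeometry is modular --- the very hypothesis you correctly observe is \emph{missing}. In $\ACF$, for instance, $\indi 0$ is algebraic (transcendence-degree) independence while $\indi a$ is mere trivial intersection of algebraic closures, strictly weaker. So ``a strong extension is precisely one with $\NN_0\cap\MM=\MM_0$'', ``$\indi 0=\indi a$'', and ``$D\cap\MM=\MM_0$ reads $D\indi 0_{\MM_0}\MM$'' are all false as stated.

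This breaks the stability count concretely. Take $T=\ACF_0$, $\MM_0=\overline{\Q}$, $\MM=\overline{\Q(m_1,m_2)}$ with $m_1,m_2$ algebraically independent, $a$ a single element transcendental over $\MM$, and $D=\overline{\Q(a,\,m_1a+m_2)}$. Then $\MM_0\subseteq D\subseteq\acl_T(\MM a)$ and $D\cap\MM=\MM_0$ (if $c\in D\cap\MM$, the leading $a$-coefficient of a relation $P(a,m_1a+m_2,c)=0$ over $\Q$ yields a nontrivial relation between $m_1$ and $c$, so $c\in\overline{\Q(m_1)}$; but $a,\,m_1a+m_2,\,m_1$ are algebraically independent, whence $\overline{\Q(m_1)}\cap D=\overline{\Q}$). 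Yet $\di(D/\MM_0)=2>1=|a|$. Both $(\MM,\MM_0)$ and $(\acl_T(\MM a),D)$ are proper pairs of algebraically closed fields, hence models of $TS$, so this $D$ genuinely arises as $S(\acl_T(\MM a))$ for a $TS$-type over $\MM$; your bound $\di(D/\MM_0)\le|a|$ is therefore false and the type count collapses. (The example also shows that extensions between models of $TS$ need not be strong.) The paper, for its part, does not attempt a type count: for stability it simply cites Proposition~\ref{prop_stable}, and it does not spell out the ``conclusions of Proposition~\ref{cor_com}'' claim at all.
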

\begin{proof}
  We check that $T,T_0,\LL_0$ satisfies the hypotheses of Theorem~\ref{model_com_gen}. $(H_1)$, $(H_2)$ and $(H_3)$ are clear, and $(H_4)$ is Fact~\ref{fact_gagstr}. The last assertion is Proposition~\ref{prop_stable}.
\end{proof}
We call this theory the \emph{weak model companion of the pairs of models of $T$}. If the pregeometry is modular, it is the model-companion.

\begin{ex}
  The theory of pairs of any strongly minimal theory with quantifier elimination admits a weak model companion. For instance, the weak model companion of the theory of pairs of algebraically closed fields is the theory of proper pairs of algebraically closed fields and coincides with the theory of belle paires of algebraically closed fields (see~\cite{D12}, \cite{Po83}). The theory $\RCF$ also satisfies the hypotheses of Proposition~\ref{prop_weakmc}, hence the theory of pairs of real closed fields admits a weak model-companion. Connections with lovely pairs of geometric structures \cite{BV10} could be made, although we did not investigate.
\end{ex}

\subsection{A non-example: the expansion of a field of characteristic $0$ by an additive subgroup}

  \begin{prop}\label{prop_car0}
    Let $T$ be the theory of a field of characteristic $0$ in a language $\LL$ containing $\LLr$, such that $T$ is inductive. Let $\LL_G = \LL\cup\set{G}$ and let $T_G$ be the $\LL_G$-theory of models of $T$ in which $G$ is a predicate for an additive subgroup of the field.
    Let $(K,G)$ be an existentially closed model of $T_G$. Then $$S_K(G) := \set{a\in K \mid aG \subseteq G} = \Z.$$
    In particular, the theory $T_G$ does not admit a model-companion.
  \end{prop}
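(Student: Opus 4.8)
The plan is to prove that in \emph{every} existentially closed model $(K,G)$ of $T_G$ one has $S_K(G)=\Z$, and then to deduce the non-existence of a model companion by a standard compactness argument. Observe first that $S_K(G)$ is uniformly definable, namely by the $\LL_G$-formula
$$\sigma(y)\;:=\;\forall x\,\bigl(G(x)\rightarrow G(y\cdot x)\bigr),$$
and that a direct check shows $S_K(G)$ is a subring of $K$. Since $\operatorname{char}K=0$ the prime ring embeds in $K$, and since $G$ is an additive subgroup it is stable under multiplication by integers; hence $\Z\subseteq S_K(G)$ holds in every model of $T_G$, with no genericity needed. (Note also that existentially closed models of $T_G$ do exist, $T_G$ being $T$ together with universal axioms, hence inductive.)

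The reverse inclusion is where existential closedness enters. Given $a\in K\setminus\Z$, I would exhibit an $\LL_G$-extension of $(K,G)$ witnessing $a\notin S_K(G)$. Pick a proper elementary extension $K^{*}\succ K$ (one exists, $K$ being infinite), choose $t\in K^{*}\setminus K$, and set $G':=G+\Z t$. Then $G'$ is an additive subgroup of $K^{*}$ and $K^{*}\models T$, so $(K^{*},G')\models T_G$; moreover $G'\cap K=G$, because $g+nt\in K$ with $n\neq0$ would force $t=\bigl((g+nt)-g\bigr)/n\in K$. Hence $(K,G)$ is an $\LL_G$-substructure of $(K^{*},G')$. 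Now for each $n\in\Z$ the element $a-n$ is a nonzero element of $K$ (as $a\notin\Z$), so $(a-n)t\notin K$ and in particular $(a-n)t\notin G$; equivalently $at\neq g+nt$ for all $g\in G$ and all $n\in\Z$, so $at\notin G'$, whereas $t\in G'$. Therefore
$$(K^{*},G')\;\models\;\exists x\,\bigl(G'(x)\wedge\neg G'(a\cdot x)\bigr),$$
an existential $\LL_G$-formula with the single parameter $a\in K$. Existential closedness of $(K,G)$ then yields $(K,G)\models\exists x\,\bigl(G(x)\wedge\neg G(a\cdot x)\bigr)$, contradicting $a\in S_K(G)$. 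This shows $S_K(G)\subseteq\Z$, hence $S_K(G)=\Z$. I expect this extension step to be the only construction needed: one must take the enlarged predicate $G+\Z t$ small enough to meet $K$ in exactly $G$, yet large enough that $a$ visibly fails to stabilise it.

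For the last assertion, suppose towards a contradiction that $T_G$ had a model companion $T_G^{*}$. Then the models of $T_G^{*}$ are exactly the existentially closed models of $T_G$ (a general property of model companions), so by the first part $\sigma$ defines the prime ring — an infinite set — in every model of $T_G^{*}$. Introduce a new constant $c$; by compactness the theory
$$T_G^{*}\;\cup\;\{\sigma(c)\}\;\cup\;\{\,c\neq\underline{n}\;:\;n\in\Z\,\}$$
(where $\underline{n}$ is the closed term $1+\dots+1$, respectively its negative) is consistent, since each of its finite subsets is satisfied in some model of $T_G^{*}$, where $\sigma$ has infinitely many realisations. A model of this theory is an existentially closed model $(K',G')$ of $T_G$ together with an element of $S_{K'}(G')$ lying outside the prime ring, contradicting the first part. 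Hence $T_G$ admits no model companion.
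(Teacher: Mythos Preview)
Your proof is correct and follows essentially the same approach as the paper. The only cosmetic difference is the choice of generator for the enlarged predicate: the paper adjoins $\Z\cdot\frac{t}{a}$ and uses the witness $x=\frac{t}{a}$ (so that $ax=t$), whereas you adjoin $\Z t$ and use the witness $x=t$; the verifications are the same either way, and your compactness argument for the failure of the model companion is a spelled-out version of the paper's one-line remark that a uniformly definable infinite set of fixed cardinality obstructs axiomatisability.
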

  \begin{proof}
    The right to left inclusion is trivial. Assume that $a\in K\setminus \Z$, let $L$ be a proper elementary extension of $K$ and $t\in L\setminus K$. Then $(L, G+\Z\frac{t}{a})$ is an $\LL_G$-extension of $(K,G)$. Furthermore, as $a\notin \Z$, we have $t\notin G+\Z\frac{t}{a}$. Then $\frac{t}{a}\in G+\Z\frac{t}{a}$ and $a\frac{t}{a} \notin G+\Z\frac{t}{a}$. As $(K,G)$ is existentially closed in $(L, G+\Z\frac{t}{a})$, we have that $$(K,G)\models \exists x (x\in G\wedge ax\notin G)$$hence $a\notin S_K(G)$. The class of existentially closed models of $T_G$ is not axiomatisable as the definable infinite set $S_L(G)$ is of fixed cardinality. As $T_G$ is inductive, this is equivalent to saying that $T_G$ does not admit a model-companion.
  \end{proof}

  \begin{rk}\label{rk_div}
    Let $T$ be the theory of a field of characteristic $0$ in a language $\LL$ containing $\LLr$, such that $T$ is inductive. Let $\LL_D = \LL\cup\set{D}$ and let $T_D$ be the $\LL_D$-theory of models of $T$ in which $D$ is a predicate for a \emph{divisible} additive subgroup of the field. Let $(K,D)$ be an existentially closed model of $T_D$.
    A similar argument yields that $\set{a\in K \mid aD = D} = \Q$, so $T_D$ does not admits a model-companion either.
  \end{rk}

\begin{rk}
  Let $K = \C$ (or $\R$). Using Remark~\ref{rk_div} and compactness arguments, one deduces that there exist $k,l\in \N$ and a constructible set if $K=\C$ (or a semialgebraic set if $K=\R$) $V\subseteq K^{k}\times K^{l}$ such that for all polynomials $P(X,Y)\in K[X,Y]$ with $\abs{X}=1$, $\abs{Y} = l$ and for all $n\in \N$ and all $q_1,\dots q_n, s_1,\dots,s_k \in \Q$ there exists $b\in K^{l}$ such that for all $a\in K^{k}$, if $(a,b)\in V$ then 
    \begin{enumerate}
      \item $a$ is not $\Q$-linearly independent over $\overline{\Q(b)}\cap K$; 
     \item $\sum_{i=1}^k s_i a_i \notin  q_1 R+\cdots + q_n R$ for $R$ the set of roots of $P(X,b)$ in $K$.
   \end{enumerate}
\end{rk}

\begin{rk}
  Recent work from Haykazyan and Kirby~\cite{HK18}, highlights a new source of $\NSOP 1$ theories, in the sense of positive logic. They study the class of existentially closed exponential fields (an exponential field is a field with a group homomorphism from the additive group to the multiplicative group of the field). 
Haykazyan and Kirby~\cite{HK18} adapted the result of Chernikov and Ramsey ~\cite{CR16} to prove that the class of existentially closed exponential fields is $\NSOP 1$ in the sense of positive logic, using the existence of a well-behaved independence relation. It is likely that the theory developed by Haykazyan and Kirby can be used to show that the class of algebraically closed fields of characteristic $0$ with a generic additive subgroup is $\NSOP 1$ in the sense of positive logic.
\end{rk}

\textbf{Acknowledgements}. This work is part of the author's Ph.D. dissertation. The author would like to thank Thomas Blossier and Zoé Chatzidakis for their countless advices and comments. Many thanks also to Nick Ramsey and Gabriel Conant for many fruitful discussions on $\NSOP{1}$ theory and examples. Thanks to Minh Chieu Tran for pointing out to me the results leading to the example of Section~\ref{sec_genmult} and many useful discussions. The author is grateful to Amador Martin-Pizarro for his professional and personal help. Finally, the author is grateful to Itay Kaplan for numerous remarks on the content.

\bibliographystyle{plain}
\bibliography{biblio}

\end{document}